\title{Harmonic Analysis of Additive
    L\'evy Processes}
   \thanks{Research supported in part by a grant from the National Science
   Foundation}
\author[Khoshnevisan]{Davar Khoshnevisan}
   \address{Davar Khoshnevisan:\ Department of Mathematics,
       The University of Utah,
      155 S. 1400 E. Salt Lake City, UT 84112--0090}
   \email{davar@math.utah.edu}
   \urladdr{http://www.math.utah.edu/\~{}davar}
\author[Xiao]{Yimin Xiao}
    \address{Department of Statistics and Probability, A-413 Wells
        Hall, Michigan State University,
        East Lansing, MI 48824}
    \email{xiao@stt.msu.edu}
    \urladdr{http://www.stt.msu.edu/\~{}xiaoyimi}
\keywords{%
    Additive L\'evy processes, multiplicative L\'evy
    processes, capacity, intersections of regenerative sets.}
\subjclass{%
    60G60, 60J55, 60J45.}
\date{%
    June 24, 2007}
\theoremstyle{plain}{
\newtheorem{theorem}{Theorem}[section]}
\theoremstyle{plain}{
\newtheorem{proposition}[theorem]{Proposition}}
\theoremstyle{plain}{
   \newtheorem{lemma}[theorem]{Lemma}}
\theoremstyle{plain}{
   \newtheorem{corollary}[theorem]{Corollary}}
\theoremstyle{definition}{
   \newtheorem{definition}[theorem]{Definition}}
\theoremstyle{definition}{
   }
\theoremstyle{remark}{
   \newtheorem{remark}[theorem]{Remark}}
\theoremstyle{definition}{
   \newtheorem{problem}{Open problem}}
\numberwithin{equation}{section}
\newcommand{\dimh}{\dim_{_{\rm H}}}
\newcommand{\1}{\mathbf{1}}
\renewcommand{\P}{\mathrm{P}}
\newcommand{\E}{\mathrm{E}}
\newcommand{\R}{\mathbf{R}}
\newcommand{\e}{\epsilon}
\renewcommand{\Re}{\mathop{\mathrm{Re}}}
\renewcommand{\Im}{\mathop{\mathrm{Im}}}
\renewcommand{\mathcal}{\mathscr}
\newcommand{\X}{\mathfrak{X}}
\begin{document}
\onehalfspacing

\begin{abstract}
    Let $X_1,\ldots,X_N$ denote
    $N$ independent $d$-dimensional
    L\'evy processes, and consider
    the $N$-parameter random field
    \[
        \X(\bm{t}):= X_1(t_1)+\cdots+X_N(t_N).
    \]
    First we demonstrate that for all nonrandom Borel sets
    $F\subseteq\R^d$, the Minkowski sum $\X(\R^N_+)\oplus F$,
    of the range $\X(\R^N_+)$ of $\X$
    with $F$, can have positive $d$-dimensional
    Lebesgue measure if and only if a
    certain capacity of $F$ is positive.
    This improves our earlier joint effort
    with Yuquan Zhong
    \ycite{KXZ:03} by removing a symmetry-type
    condition there. Moreover, we show that under
    mild regularity conditions, our necessary and
    sufficient condition can be recast in terms of
    one-potential densities. This rests on developing
    results in classical [non-probabilistic]
    harmonic analysis that might
    be of independent interest.
    As was shown in \fullocite{KXZ:03},
    the potential theory of the type studied here has a large number of
    consequences in the theory of L\'evy processes.
    We present a few new consequences here.
\end{abstract}
\maketitle
\section{\bf Introduction}
\subsection{Background}
It is known that, for all integers $d\ge 2$, the range of
$d$-dimensional Brownian motion has zero Lebesgue
measure. See
\ocite{Levy} for $d=2$, \ocite{Ville}
for $d=3$, and \ocite{Kakutani:44a} for the remaining
assertions. There is a perhaps better-known, but
equivalent, formulation of this theorem: When $d\ge 2$,
the range of $d$-dimensional Brownian motion does not
hit points.
\ocite{Kakutani:44b} has generalized this by
proving that, for all integers $d\ge 1$,
the range of $d$-dimensional Brownian motion
can hit a nonrandom Borel set $F\subseteq\R^d$
if and only if $\mathrm{cap}(F)>0$, where
$\mathrm{cap}$ denotes, temporarily, the logarithmic
capacity if $d=2$ and the Riesz capacity of
index $d-2$ if $d\ge 3$; the case $d=1$ is
elementary. [Actually,
Kakutani's paper discusses only
the planar case. The theorem, for $d\ge 3$, is pointed out in
\fullocite{DET:50}.]

Kakutani's theorem is the starting point of
a deep \emph{probabilistic potential theory} initiated by
Hunt \ycites{hunt:III,hunt:II,hunt:I,hunt:56}.
The literature on this topic
is rich and quite large; see, for example, the books by
\ocite{BG}, \ocite{Doob}, \fullocite{FOT}, \ocite{Getoor:book},
and \ocite{rockner}, together with
their combined bibliography.

One of the central assertions of probabilistic
potential theory is that a nice Markov process will hit
a nonrandom measurable set $F$ if and only if $\mathrm{cap}(F)>0$,
where $\mathrm{cap}$ is
a certain natural capacity in the sense of G. Choquet \cite{DM}*{Chapter III,
pp.\ 51--55}. Moreover, that capacity
is defined solely, and fairly explicitly,
in terms of the Markov process itself.

There are interesting examples where $F$ is itself
random. For instance, suppose
$X$ is $d$-dimensional standard
Brownian motion, and $F=Y((0\,,\infty))$
is the range---minus the starting point---of
an independent standard Brownian motion $Y$ on $\R^d$.
In this particular case, it is well known that
\begin{equation}\label{eq:DEK}
    \P\left\{ X(s)=Y(t)\text{ for some
    $s,t>0$}\right\}>0\quad
    \text{if and only if}\quad
    d\le 3.
\end{equation}
This result was proved by \ocite{Levy}
for $d=2$, \ocite{Kakutani:44a}
for $d\ge 5$, and \fullocite{DET:50} for $d=3,4$.
Peres \ycites{Peres:96b,Peres:96a} and \ocite{Kh:2003} contain
different elementary proofs of this fact.

There are many generalizations of \eqref{eq:DEK} in the
literature. For example, \fullocite{DET:54} proved that the paths
of an arbitrary number of independent planar Brownian motions can
intersect. While LeGall (1987) proved that the trajectories of a planar
Brownian motion can intersect itself countably many times. And
\fullocite{DvoretzkyErdosKakutaniTaylor} showed that three
independent Brownian-motion trajectories in $\R^d$ can intersect
if and only if $d\le 2$. For other results along these lines, see
Hawkes \ycites{Hawkes:78b,Hawkes:77,Hawkes:76:77}, Hendricks
\ycites{Hendricks:99,Hendricks:74}, \ocite{Kahane:SRSF}*{Chapter
16, Section 6}, Lawler \ycites{Lawler:89,Lawler:85,Lawler:82},
\fullocite{PPS}, \ocite{LeGall92},
Peres \ycites{Peres:99,Peres:96b,Peres:96a},
\ocite{rogers}, \ocite{Tongring}, and their combined bibliography.

For a long time, a good deal of effort was concentrated
on generalizing \eqref{eq:DEK}
to other concrete Markov processes than Brownian motion.
But the problem of deciding when the paths of
$N$ independent (general but nice) Markov processes can intersect
remained elusive.
It was finally settled, more than three
decades later, by
\ocite{FitzsimmonsSalisbury}, whose approach was to consider
the said problem as one about a certain multiparameter Markov
process.

To be concrete, let us consider the case $N=2$,
and let $X:=\{X(t)\}_{t\ge 0}$ and $Y:=\{Y(t)\}_{t\ge 0}$ denote
two independent (nice) Markov processes on a (nice) state
space $S$. The starting point of the work of Fitzsimmons and
Salisbury is the observation
that $\P \{ X(s)=Y(t)\text{ for some $s,t>0$} \}>0$
if and only if the two-parameter Markov process
$X\otimes Y$ hits the diagonal
$\text{diag }S:=\{x\otimes x:\,x\in S\}$ of $S\times S$,
where
\begin{equation}\label{eq:2par:PLP}
    (X\otimes Y)(s\,,t) := \left( \begin{matrix}
        X(s)\\
       Y(t)
    \end{matrix}\right)\qquad\text{for all
    $s,t\ge 0$}.
\end{equation}
In the special case that $X$ and $Y$ are L\'evy processes,
the Fitzsimmons--Salisbury theory was used to solve
the then-long-standing Hendricks--Taylor conjecture
\cite{HendricksTaylor}.

The said connection to multiparameter processes is of
paramount importance in the Fitzsimmons--Salisbury theory, and appears earlier
in the works of Evans \ycites{Evans:87a,Evans:87b}.
See also \fullocite{LeGallShiehRosen}
and
Salisbury \ycites{Salisbury:96,Salisbury:92,Salisbury:87}.
\ocite{Walsh}*{pp.\ 364--368} discusses a connection between
$X\otimes Y$ and the Dirichlet
problem for the biLaplacian $\Delta\otimes\Delta$
on the bi-disc of $\R^d\times\R^d$.

The Fitzsimmons--Salisbury theory was refined and generalized in different
directions by \ocite{Hirsch},
Hirsch and Song \ycites{HS:99,HS:96,HS:95a,HS:95b,HS:95c,HS:95d,HS:94},
and Khoshnevisan
\ycites{Kh:book,Kh:1999}. See also \ocite{Ren}
who derives an implicit-function theorem in classical Wiener space
by studying a very
closely-related problem.

The two-parameter process $X\otimes Y$ itself was introduced earlier
in the works of \ocite{Wolpert}, who used $X\otimes Y$ to build a
$(\phi^\kappa)_2$ model of the Euclidean field theory. This too
initiated a very large body of works. For some of the earlier
examples, see the works by \ocite{Aizenman}, \ocite{AlbeverioZhou},
Dynkin \ycites{dyn1,dyn2,dyn3,dyn4,dyn5,dyn6,dyn7,dyn8,dyn9,dyn10},
\ocite{FelderFrohlich}, Rosen \ycites{rosen:1983,rosen:1984}, and
Westwater \ycites{Westwater:I,Westwater:II,Westwater:III}. [This is
by no means an exhaustive list.]

In the case that $X$ and $Y$ are L\'evy processes on $\R^d$ [i.e.,
have stationary independent increments], $X\otimes Y$ is an example
of the so-called additive L\'evy processes. But as it turns out, it
is important to maintain a broader perspective and consider more
than two L\'evy processes. With this in mind, let $X_1,\ldots,X_N$
denote $N$ independent L\'evy processes on $\R^d$ such that each
$X_j$ is normalized via the L\'evy--Khintchine formula
\cite{bertoin:book,sato}:
\begin{equation}
    \E \exp\left(i\xi\cdot X_j(t)\right) =
    \exp\left(-t\Psi_j(\xi)\right)\qquad
    \text{for all $t\ge 0$ and $\xi\in\R^d$.}
\end{equation}
The function $\Psi_j$ is called the
\emph{characteristic exponent}---or \emph{L\'evy exponent}---of
$X_j$, and its defining property is that
$\Psi_j$ is a negative-definite function
\cites{Schoenberg:38,bergforst}.

\subsection{The main results}
The main object of this paper is
to develop some basic probabilistic potential theory
for the following $N$-parameter random field $\X$
with values in $\R^d$:
\begin{equation}
    \X(\bm{t}) :=X_1(t_1) + \cdots + X_N(t_N)
    \qquad\text{for all $\bm{t}:=(t_1\,,\ldots,t_N)\in\R^N_+$.}
\end{equation}
On a few occasions we might write $( \oplus_{j=1}^N X_j) (\bm{t})$
in place of $\X(\bm{t})$, as well.

The random field $\X$ is a so-called
\emph{additive L\'evy process}, and is characterized
by its multiparameter L\'evy--Khintchine formula:
\begin{equation}
    \E \exp\left(i\xi\cdot\X(\bm{t})\right)
    =\exp\left(-\bm{t}\cdot\bm{\Psi}(\xi)\right)
    \qquad\text{for all $\bm{t}\in\R^N_+$ and
    $\xi\in\R^d$};
\end{equation}
where $\bm{\Psi}(\xi):=(\Psi_1(\xi)\,,\ldots,\Psi_N(\xi))$ is the
\emph{characteristic exponent} of $\X$. Our goal is to describe the
potential-theoretic properties of $\X$ solely in terms of its
characteristic exponent $\bm{\Psi}$. Thus, it is likely that our
harmonic-analytic viewpoint can be extended to study the potential
theory of more general multiparameter Markov processes that are
based on the Feller processes of Jacob
\ycites{jacob3,jacob2,jacob1}.

In order to describe our main results let us first
consider the kernel
\begin{equation}
    K_{\bm{\Psi}}(\xi) := \prod_{j=1}^N
    \Re\left(\frac{1}{1+\Psi_j(\xi)}\right)
    \qquad\text{for all $\xi\in\R^d$}.
\end{equation}
When $N=1$, this kernel plays a central role in the works
of \ocite{Orey} and \ocite{Kesten}. The kernel for
general $N$ was introduced first by \ocite{Evans:87b}; see also
\fullocite{KXZ:03}.

Based on the kernel $K_{\bm{\Psi}}$, we define, for all Schwartz
distributions $\mu$ on $\R^d$,
\begin{equation}
    I_{\bm{\Psi}}(\mu) := \frac{1}{(2\pi)^d}
    \int_{\R^d} |\hat{\mu}(\xi)|^2
    K_{\bm{\Psi}}(\xi) \, d\xi.
\end{equation}
We are primarily interested in the case where $\mu$ is a real-valued
locally integrable function, or a $\sigma$-finite Borel measure on
$\R^d$. In either case, we refer to $I_{\bm{\Psi}}(\mu)$ as the
\emph{energy} of $\mu$. Our notion of energy corresponds to a
\emph{capacity} $\mathrm{cap}_{\bm{\Psi}}$, which is the following
set function: For all Borel sets $F\subseteq\R^d$,
\begin{equation}
    \mathrm{cap}_{\bm{\Psi}}(F) := \left[\inf_{%
    \mu\in\mathcal{P}_c(F)}
    I_{\bm{\Psi}}(\mu)\right]^{-1},
\end{equation}
where $\mathcal{P}_c(F)$ denotes the collection of all
compactly-supported Borel probability measures on $F$,
$\inf\varnothing:=\infty$, and $1/\infty:=0$.

The following is the first central result of this paper.
Here and throughout,
$\lambda_k$ denotes $k$-dimensional Lebesgue measure
on $\R^k$ for all integers $k\ge 1$.

\begin{theorem}\label{th:main}
    Let $\mathfrak{X}$ be an $N$-parameter
    additive L\'evy process on $\R^d$
    with exponent $\bm{\Psi}$. Then, for
    all Borel sets $F\subseteq\R^d$,
    \begin{equation}
        \E\left[ \lambda_d\left(
        \X(\R^N_+)\oplus F\right) \right]>0\quad
        \text{if and only if}\quad
        \mathrm{cap}_{\bm{\Psi}}(F)>0.
    \end{equation}
\end{theorem}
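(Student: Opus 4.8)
The plan is to express the expected Lebesgue measure of the Minkowski sum as an integral and then apply a Fourier-analytic (Plancherel) computation, reducing matters to a capacity-energy dichotomy. First I would write, for a fixed measurable $F$,
\begin{equation*}
    \E\left[\lambda_d\left(\X(\R^N_+)\oplus F\right)\right]
    = \E\left[\int_{\R^d} \1_{\{\X(\R^N_+)\oplus F\}}(x)\, dx\right]
    = \int_{\R^d}\P\left\{x\in \X(\R^N_+)\oplus F\right\} dx,
\end{equation*}
using Tonelli's theorem. The event $\{x\in\X(\R^N_+)\oplus F\}$ is the event that $\X$ hits the set $x-F$ (equivalently $x\in\X(\bm t)+F$ for some $\bm t$), so the problem becomes a quantitative hitting-probability estimate for the additive process $\X$ against the translated target. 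The natural device here is to introduce a sojourn/occupation measure for $\X$ killed at an independent exponential-type rate in each coordinate, i.e. the $1$-potential measure $U_1$ whose Fourier transform involves $\prod_j (1+\Psi_j(\xi))^{-1}$; taking real parts produces precisely the kernel $K_{\bm\Psi}$.

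The core of the argument is a second-moment (Paley–Zygmund) comparison. For the "if'' direction, given $\mathrm{cap}_{\bm\Psi}(F)>0$, choose a probability measure $\mu\in\mathcal P_c(F)$ with $I_{\bm\Psi}(\mu)<\infty$, and build a random measure on $\R^d$ supported near $\X(\R^N_+)\oplus F$ — concretely, something like $J_\e(\bm t\,,a):=\1\{|\X(\bm t)+a-x|\le\e\}$ integrated against $d\bm t\otimes\mu(da)$, or a smoothed convolution version. One computes $\E[J_\e]$ (which is bounded below uniformly in $\e$ after normalization) and $\E[J_\e^2]$; the second moment, after using independence of the increments of the $X_j$ and the elementary inequality relating $\Re(1/(1+\Psi_j))$ to $|1+\Psi_j|^{-2}$-type quantities, is bounded above by a constant times $I_{\bm\Psi}(\mu)$ via Plancherel. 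The Paley–Zygmund inequality then forces $\P\{J_\e>0\}$ to stay bounded away from $0$, and letting $\e\downarrow0$ gives positivity of the expected measure. For the "only if'' direction, assuming the expected measure is positive, one runs a first-moment/Fubini argument together with a Frostman-type extraction: positivity yields, on a compact piece, a nontrivial occupation-type measure whose $I_{\bm\Psi}$-energy is finite, hence a probability measure on $F$ of finite energy, hence $\mathrm{cap}_{\bm\Psi}(F)>0$.

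The main obstacle — and the reason this improves on the earlier work with Zhong by dropping the symmetry hypothesis — is the matching of constants in the second-moment bound without symmetry of the $\Psi_j$. When the $X_j$ are symmetric, $\Psi_j$ is real and $\Re(1/(1+\Psi_j))=1/(1+\Psi_j)$, so the energy appears directly; in general one must control the real and imaginary parts of $\prod_j(1+\Psi_j(\xi))^{-1}$ simultaneously and show that the genuinely relevant quantity is still $\prod_j\Re(1/(1+\Psi_j(\xi)))=K_{\bm\Psi}(\xi)$. This requires a careful treatment of the joint characteristic function of $(\X(\bm s),\X(\bm t))$ for $\bm s,\bm t\in\R^N_+$ — splitting each $t_j$ against $s_j$ into common and disjoint parts, so that $\X(\bm s)+\X(\bm t)$ and $\X(\bm t)-\X(\bm s)$ decompose into sums of independent pieces — and an inequality of the form $|\E \exp(i\xi\cdot(\X(\bm t)-\X(\bm s)))|$ integrated in $(\bm s,\bm t)$ being comparable to $K_{\bm\Psi}(\xi)$ up to universal multiplicative constants depending only on $N$. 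Once that comparison is in hand, both halves of the equivalence follow from the standard energy/capacity machinery.
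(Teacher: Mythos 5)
Your first-moment reduction via Tonelli is correct, and your sketch of the ``if'' direction (Paley--Zygmund with a smoothed test measure $\mu\in\mathcal P_c(F)$ of finite $I_{\bm\Psi}$-energy) is essentially how the implication $\mathrm{cap}_{\bm\Psi}(F)>0\Rightarrow\E[\lambda_d(\X(\R^N_+)\oplus F)]>0$ is obtained; the paper simply cites the earlier Khoshnevisan--Xiao--Zhong work for this half.

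The genuine gap is in the ``only if'' direction, and it is precisely the place you flag as the main obstacle. You assert, as the key step, a two-sided comparison: that the $(\bm s,\bm t)$-integrated joint characteristic function of $\X(\bm t)-\X(\bm s)$ is comparable to $K_{\bm\Psi}(\xi)$ up to multiplicative constants depending only on $N$. That lower bound is false in general, and is exactly what forced the earlier KXZ paper to assume a symmetry/sector-type hypothesis. Concretely, the factor produced for each $j$ by the double time-integral is
\begin{equation*}
  \Lambda(z) \;=\; 2\Re\!\left(\frac{1}{1+z}\right) \;+\; \frac{2\big((1+\Re z)^2-(\Im z)^2\big)}{|1+z|^4},\qquad z=\Psi_j(\xi),\ \Re z\ge 0.
\end{equation*}
For $z=iy$ (a legitimate L\'evy exponent: pure drift) one finds $\Lambda(iy)=4/(1+y^2)^2$, while $\Re(1/(1+iy))=1/(1+y^2)$, so the ratio tends to $0$ as $y\to\infty$; no universal lower bound exists unless one imposes $|\Im\Psi_j|\le c(1+\Re\Psi_j)$ with $c<\sqrt2$. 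Only the one-sided bound $\Lambda(z)\le 4\Re(1/(1+z))$ holds unconditionally, and the whole point of the present paper's argument is to make the proof run on that one-sided bound alone. The device you are missing is: pass to the two-sided stationary field $\widetilde{\X}$ on $\R^N$ under the $\sigma$-finite measure $\P_{\lambda_d}=\int\P_x\,dx$; dominate the conditional-expectation process $\bm\tau\mapsto\E_{\lambda_d}[Sf_\e\mid\mathscr H_\pi(\bm\tau)]$ by a Cairoli/Doob maximal function whose second moment is at most $4^N\,\E_{\lambda_d}[(Sf_\e)^2]\le 4^N I_{\bm\Psi}(f_\e)$; evaluate at the hitting time $\bm T$ and use $\widehat{Rf_\e}=K_{\bm\Psi}\hat f_\e$ together with $\hat\phi_\e\ge|\hat\phi_\e|^2$ to arrive at an inequality of the shape $C\,I_{\bm\Psi}(f_\e)\ge |I_{\bm\Psi}(f_\e)|^2\cdot\P_{\lambda_d}\{\bm T\in[-k,k]^N\}$. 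The self-cancellation upon dividing by the (positive, finite) quantity $I_{\bm\Psi}(f_\e)$, followed by Fatou as $\e\downarrow 0$, produces a finite-energy measure on $F$ with no need for a matching lower bound. Without this, the Frostman-type extraction you gesture at will not close.
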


\begin{remark}
    \begin{enumerate}
        \item
            Theorem \ref{th:main} in the one-parameter
            setting is still very interesting, but much
            easier to derive. See \ocite{Kesten} for the case that $F:=\{0\}$ and
            \ocite{Hawkes:84} for general $F$. For a scholarly
            pedagogic account see \ocite{bertoin:book}*{p.\ 60}.
        \item
            One can view
            Theorem \ref{th:main} as a contribution to the theory
            of Dirichlet forms for a class of infinite-dimensional
            L\'evy processes. These L\'evy processes are
            in general non-symmetric. \ocite{rockner} describes a
            general theory of Dirichlet forms for nice
            infinite-dimensional Markov processes that are
            not necessarily symmetric. It would be
            interesting to know if the processes of
            the present paper lend themselves to the analysis of
            the general theory of Dirichlet forms.
            We have no conjectures along these lines.\qed
    \end{enumerate}
\end{remark}

Our earlier collaborative effort with
Yuquan Zhong \ycite{KXZ:03} yielded the conclusion of Theorem \ref{th:main}
under an exogenous technical condition on
$X_1,\ldots,X_N$. A first aim of this paper is to establish the fact that
Theorem \ref{th:main} holds in complete generality.
Also, we showed in our earlier
works 
\citelist{\cite{KXZ:05}\cite{KX04a},
\fullocite{KXZ:03}}
that such a theorem has a large number
of consequences, many of them in the classical theory
of L\'evy processes itself. Next we describe a
few such consequences that are nontrivial due to their
intimate connections to
harmonic analysis.

Our next result provides a criterion for a Borel set $F\subseteq\R^d$
to contain intersection points of $N$ independent L\'evy processes.
It completes and complements the well-known results of
\ocite{FitzsimmonsSalisbury}. See also Corollary \ref{cor:FS--1}
and Remark \ref{rem:FS} below.

\begin{theorem}\label{th:FS}
    Let $X_1,\ldots,X_N$ be independent L\'evy
    processes on $\R^d$, and assume that each $X_j$
    has a one-potential density $u_j:\R^d\to\bar\R_+$
    such that $u_j(0)>0$.
    Then, for all nonempty Borel sets $F\subseteq\R^d$,
    \begin{equation}
        \P\left\{ X_1(t_1)=\cdots= X_N(t_N)
        \in F\
        \text{ for some $t_1,\ldots,t_N>0$}\right\}>0
    \end{equation}
    if and only if there exists a compact-support
    Borel probability measure $\mu$ on $F$ such that
    \begin{equation}
        \int_{\R^d}\cdots\int_{\R^d}\left|
        \hat{\mu}\left(\xi^1+\cdots+\xi^N\right)\right|^2
        \prod_{j=1}^N \Re\left( \frac{1}{1+\Psi_j(\xi^j)}\right)
        \, d\xi^1\cdots\, d\xi^N < \infty.
    \end{equation}

    Suppose, in addition, that every $u_j$ is continuous
    on $\R^d$, and finite on $\R^d\setminus\{0\}$. Then, another
    equivalent condition is that there
    exists a compact-support probability measure
    $\mu$ on $F$ such that
    \begin{equation}
        \iint \prod_{j=1}^N \left(
        \frac{u_j\left(x-y\right)
        +u_j\left(y-x\right)}{2}\right)
        \mu (dx)\,  \mu (dy) <\infty.
    \end{equation}
\end{theorem}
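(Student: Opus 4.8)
The plan is to recast the displayed event as a \emph{hitting} event for a single auxiliary additive L\'evy process taking values in $\R^{Nd}$, and then to feed that process into Theorem~\ref{th:main}. Introduce the $N$-parameter additive L\'evy process $\mathcal{Y}$ on $\R^{Nd}$ given by $\mathcal{Y}(\bm{t}):=( X_1(t_1)\,,\ldots,X_N(t_N))$, whose characteristic exponent $\bm{\Phi}=(\Phi_1\,,\ldots,\Phi_N)$ has $\Phi_j(\xi^1,\ldots,\xi^N):=\Psi_j(\xi^j)$, so that $K_{\bm{\Phi}}(\xi^1,\ldots,\xi^N)=\prod_{j=1}^N\Re(1/(1+\Psi_j(\xi^j)))$. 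Let $\Delta\colon\R^d\to\R^{Nd}$ be the diagonal embedding $\Delta(x):=(x,\ldots,x)$ and put $\Delta F:=\Delta(F)$. One checks that the event in the theorem is precisely the event that $\mathcal{Y}$ hits $\Delta F$ at some parameter all of whose coordinates are strictly positive. Since $\Delta$ is a homeomorphism onto its image, the compact-support Borel probability measures on $\Delta F$ are exactly the pushforwards $\mu\circ\Delta^{-1}$ of such measures $\mu$ on $F$, and $\widehat{\mu\circ\Delta^{-1}}(\xi^1,\ldots,\xi^N)=\hat\mu(\xi^1+\cdots+\xi^N)$; hence $I_{\bm{\Phi}}(\mu\circ\Delta^{-1})$ equals $(2\pi)^{-Nd}$ times the multiple integral displayed in the statement, and the first claimed equivalence becomes the assertion that $\mathcal{Y}$ hits $\Delta F$ with positive probability if and only if $\mathrm{cap}_{\bm{\Phi}}(\Delta F)>0$.

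Theorem~\ref{th:main}, applied to $\mathcal{Y}$ in $\R^{Nd}$, already gives --- with no additional hypotheses --- that $\mathrm{cap}_{\bm{\Phi}}(\Delta F)>0$ if and only if $\E[\lambda_{Nd}(\mathcal{Y}(\R^N_+)\oplus\Delta F)]>0$, so what remains is to pass between positivity of this expected Lebesgue measure and positivity of the hitting probability. Here the hypotheses $u_j(0)>0$ are used: the one-potential measure of $\mathcal{Y}$ is the tensor product $U_1\otimes\cdots\otimes U_N$ of the one-potential measures $U_j(dx):=u_j(x)\,dx$, so $\mathcal{Y}$ has the one-potential density $\bm{x}\mapsto\prod_{j=1}^N u_j(x^j)$, which does not vanish at $\bm{0}$; this is what makes the hitting-versus-capacity dictionary of one-parameter potential theory carry over to $\mathcal{Y}$. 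In one direction, since $z\in\mathcal{Y}(\R^N_+)\oplus\Delta F$ exactly when a reflected copy of $\mathcal{Y}$ started from $z$ hits $\Delta F$, a hitting probability positive from the origin propagates --- via the multiparameter Markov property --- to a set of $z$'s of positive $\lambda_{Nd}$-measure. In the other direction I would run a second-moment (Paley--Zygmund) estimate on the approximate occupation functionals $\int_{[0,1]^N}\1\{|\mathcal{Y}(\bm{t})-z|\le\e\}\,d\bm{t}$, as in the classical one-parameter theory, to produce a finite-energy measure on $\Delta F$, and then use $\prod_j u_j(0)>0$ to upgrade the conclusion that $\mathcal{Y}$ enters every neighbourhood of $\Delta F$ to the conclusion that $\mathcal{Y}$ hits $\Delta F$ from the origin. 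That the hitting parameter may be taken with all coordinates strictly positive follows from a small shift of one coordinate and the Markov property, since the boundary of $\R^N_+$ contributes a $\lambda_{Nd}$-null set to the Minkowski sum; and inner regularity of $\mathrm{cap}_{\bm{\Phi}}$ reduces the general Borel case to compact $F$.

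Finally, to identify $I_{\bm{\Phi}}(\mu\circ\Delta^{-1})$ with the kernel energy in the last display under the additional regularity of the $u_j$: from $\int_{\R^d}e^{i\eta\cdot x}\,U_j(dx)=(1+\Psi_j(\eta))^{-1}$ and $\Psi_j(-\eta)=\overline{\Psi_j(\eta)}$, the symmetrized one-potential $U_j^{\mathrm{sym}}(A):=\tfrac12(U_j(A)+U_j(-A))$ has Fourier transform $\eta\mapsto\Re(1/(1+\Psi_j(\eta)))$, so $K_{\bm{\Phi}}$ is the Fourier transform of $U_1^{\mathrm{sym}}\otimes\cdots\otimes U_N^{\mathrm{sym}}$ on $\R^{Nd}$. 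A Parseval identity then gives, with $\bm{x}=(x^1,\ldots,x^N)$, $\bm{y}=(y^1,\ldots,y^N)$ and $u_j^{\mathrm{sym}}(z):=\tfrac12(u_j(z)+u_j(-z))$,
\begin{align*}
    I_{\bm{\Phi}}\bigl(\mu\circ\Delta^{-1}\bigr)
        &=\iint\prod_{j=1}^N u_j^{\mathrm{sym}}\bigl(x^j-y^j\bigr)\,
            \bigl(\mu\circ\Delta^{-1}\bigr)(d\bm{x})\,\bigl(\mu\circ\Delta^{-1}\bigr)(d\bm{y})\\
        &=\iint_{F\times F}\prod_{j=1}^N\frac{u_j(x-y)+u_j(y-x)}{2}\,\mu(dx)\,\mu(dy),
\end{align*}
the last step because $\Delta(x)-\Delta(y)=\Delta(x-y)$ forces each block of $\bm{x}-\bm{y}$ to equal $x-y$; comparing with the definition of $\mathrm{cap}_{\bm{\Phi}}$ completes the argument. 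I expect this Parseval identity to be the main obstacle: $\Re(1/(1+\Psi_j))$ need not be integrable, the $u_j$ may blow up at the origin, and $\hat\mu$ does not decay, so it is not a direct consequence of Plancherel's theorem. It is here that the continuity of the $u_j$ and their finiteness away from $0$ are needed --- for instance by mollifying the $U_j$, proving a clean Parseval identity for the mollified kernels, and passing to the limit with the help of these regularity properties --- and this is the point at which genuinely classical, non-probabilistic harmonic analysis enters; the measure-theoretic behaviour along the diagonal $x=y$, harmless when $\mu$ is non-atomic and otherwise incompatible with finite energy, is a routine aside.
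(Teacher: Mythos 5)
Your overall route is the paper's own: recast path intersection as the product process $\otimes_{j=1}^N X_j$ hitting the diagonal image $\Delta F$ in $(\R^d)^N$, observe that this product is itself an additive L\'evy process (a degenerate one) with exponent $\bm{\Phi}(\xi^1,\ldots,\xi^N)=(\Psi_1(\xi^1),\ldots,\Psi_N(\xi^N))$, note that pushforwards of measures on $F$ exhaust the probability measures on $\Delta F$ and have Fourier transform $\hat\mu(\xi^1+\cdots+\xi^N)$, and then invoke Theorem~\ref{th:main} for $\otimes_{j=1}^N X_j$ to get the Fourier-energy characterization. You also correctly identify the Parseval-type identity linking $I_{\bm{\Phi}}$ to the kernel energy as the genuinely nontrivial harmonic-analytic input for the second half, and mollification is indeed how the paper proves this (Theorem~\ref{th:ac:fourier}); and you correctly flag that the factorized one-potential density $\prod_j u_j(x^j)$, nonzero at $\bm{0}$ when $u_j(0)>0$, is what licenses passing from the Minkowski-sum statement to the hitting statement.

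The one place where your sketch differs from the paper and is imprecise is the passage from $\E[\lambda_{Nd}(\otimes X_j(\R^N_+)\oplus\Delta F)]>0$ (equivalently $\mathrm{cap}_{\bm{\Phi}}(\Delta F)>0$) to a positive hitting probability \emph{from the origin}. You propose a Paley--Zygmund second-moment argument followed by ``upgrading'' the conclusion that the process enters every neighbourhood of $\Delta F$ to the conclusion that it hits $\Delta F$. That upgrade is not automatic: a process can enter every neighbourhood of a set while being polar for the set (planar Brownian motion and a point). The paper instead uses Proposition~\ref{pr:H_F}: a Fubini identity against the exponentially-weighted time integral produces the one-potential density $v$ of the stationary extension, so that positivity of the expected Lebesgue measure forces a non-null set of starting points from which the hitting probability is positive; then the Evans lemma ($u_j(0)>0\Rightarrow u_j>0$ everywhere) yields that the one-potential density of the product process is strictly positive, and a second Fubini/convolution identity against $g:=\prod_j u_j>0$ shows that either $\mathrm{H}_{\Delta F}=\R^{Nd}$ or $\lambda_{Nd}(\mathrm{H}_{\Delta F})=0$, which rules out the intermediate case and in particular gives positivity from the origin. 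You should replace the ``entering every neighbourhood $\Rightarrow$ hitting'' step by this (or an equivalent) argument; the time-averaging with $e^{-[\bm{t}]}$ is also what repairs your Markov-property step, since individual $X_j(s_j)$ need not have densities even though the $U_j$ do. The rest of the proposal --- the diagonal identification, the exponent and kernel computations, the symmetrized potential and its Fourier transform, and the Parseval identity with $\kappa=v_T*\nu$ proved by mollification --- matches the paper's argument.
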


In order to describe our next contribution,
let us recall that the \emph{one-potential measure}
$U$ of a L\'evy process $X:=\{X(t)\}_{t\ge 0}$
on $\R^d$ is defined
as
\begin{equation}
    U(A) := \int_0^\infty \P\left\{
    X(t)\in A\right\} e^{-t}\, dt,
\end{equation}
for all Borel sets $A\subseteq\R^d$. Next
we offer a two-parameter ``additive variant'' which requires
fewer technical conditions than Theorem \ref{th:FS}.

\begin{theorem}\label{th:bertoin}
    Suppose $X_1$ and $X_2$ are independent L\'evy processes
    on $\R^d$ with respective one-potential measures
    $U_1$ and $U_2$. Suppose $U_1(dx)=
    u_1(x)\, dx$, where $u_1:\R^d\to\bar\R_+$, and
    $u_1*U_2>0$ almost everywhere.
    Then, for all
    Borel sets $F\subseteq\R^d$,
    \begin{equation}\label{Eq:Inverse}
        \P\left\{  X_1(t_1)+
        X_2(t_2)\in F\text{ for some }
        t_1,t_2>0\right\}>0
    \end{equation}
    if and only if there exists a compact-support
    Borel probability measure $\mu$ on $F$ such that
    \begin{equation}\label{Eq:Inverse2}
        \int_{\R^d} |\hat\mu(\xi)|^2\Re\left( \frac{1}{1+\Psi_1(\xi)}\right)
        \Re\left( \frac{1}{1+\Psi_2(\xi)}\right)\, d\xi<\infty.
    \end{equation}

    Suppose, in addition, that $u_1$ is continuous on $\R^d$,
    and finite on $\R^d\setminus\{0\}$. Then, \eqref{Eq:Inverse}
    holds
    if and only if there exists a compact-support
    probability measure $\mu$ on $F$ such that
    \begin{equation}\label{Eq:Inverse3}
        \iint Q(x-y)\,\mu(dx)\,\mu(dy)<\infty,
    \end{equation}
    where
    \begin{equation}\label{Def:Q}
        Q(x):=\int_{\R^d} \left[
        \frac{u_1(x+y) + u_1(x-y) + u_1(-x + y) +u_1(-x-y)}{4}\right]\,  U_2(dy)
    \end{equation}
    for all $x\in\R^d$.
\end{theorem}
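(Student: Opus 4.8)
The plan is to establish the equivalence of three statements: (i) the positivity in \eqref{Eq:Inverse}; (ii) the existence of a compact-support probability measure $\mu$ on $F$ obeying \eqref{Eq:Inverse2} --- which is precisely the assertion that $\mathrm{cap}_{\bm\Psi}(F)>0$ for $\bm\Psi:=(\Psi_1,\Psi_2)$; and, under the additional hypotheses on $u_1$, (iii) the existence of a compact-support probability measure $\mu$ on $F$ obeying \eqref{Eq:Inverse3}. Here (ii)$\Leftrightarrow$(iii) is a Fourier-analytic identity, while (i)$\Leftrightarrow$(ii) is the probabilistic heart of the matter and is where the hypothesis $u_1*U_2>0$ (a.e.) enters; Theorem~\ref{th:main} supplies the link between these two worlds. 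For (ii) and the corresponding Lebesgue-measure statement I would simply apply Theorem~\ref{th:main} to the two-parameter additive L\'evy process $X_1\oplus X_2$, whose exponent is $\bm\Psi$ and whose kernel is $K_{\bm\Psi}(\xi)=\Re(1/(1+\Psi_1(\xi)))\,\Re(1/(1+\Psi_2(\xi)))$: this identifies \eqref{Eq:Inverse2} with $\E[\lambda_d((X_1\oplus X_2)(\R^2_+)\oplus F)]>0$. Since $K_{\bm\Psi}$ is an even function, and is invariant under $\Psi_j\mapsto\overline{\Psi_j}$, the capacity $\mathrm{cap}_{\bm\Psi}$ is unchanged if we replace $F$ by $-F$ or $X_j$ by $-X_j$; I shall use this reflection symmetry, combined with Theorem~\ref{th:main}, freely below.

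The crux is the equivalence of (i) with ``$\E[\lambda_d((X_1\oplus X_2)(\R^2_+)\oplus F)]>0$''. Running $X_1$ and $X_2$ for independent $\mathrm{Exp}(1)$ lengths of time $\epsilon_1,\epsilon_2$ and invoking the (simple) Markov property at these deterministic times yields, after integrating out $\epsilon_1,\epsilon_2$, the identity
\[
    \int_{\R^d} h(z)\,(u_1*U_2)(z)\,dz
    = \iint_{(0,\infty)^2} e^{-\epsilon_1-\epsilon_2}\,
    \P\left\{ X_1(t_1)+X_2(t_2)\in F\ \text{for some $t_1>\epsilon_1$, $t_2>\epsilon_2$}\right\}d\epsilon_1\,d\epsilon_2,
\]
where $h(z):=\P\{X_1(t_1)+X_2(t_2)\in F-z\ \text{for some $t_1,t_2>0$}\}$. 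The inner probability on the right increases, as $\epsilon_1,\epsilon_2\downarrow0$, to the probability in \eqref{Eq:Inverse}, so the right-hand side is positive if and only if \eqref{Eq:Inverse} holds. On the other hand, because $u_1*U_2>0$ almost everywhere, the left-hand side is positive if and only if $h>0$ on a set of positive Lebesgue measure, i.e., by Fubini, if and only if $\E[\lambda_d((X_1\oplus X_2)((0,\infty)^2)\oplus(-F))]>0$. It then remains to check that this last statement has the same truth value as $\E[\lambda_d((X_1\oplus X_2)(\R^2_+)\oplus F)]>0$: one implication is trivial, and for the other I would (a) use reflection symmetry and Theorem~\ref{th:main} to trade $-F$ for $F$, (b) split $(X_1\oplus X_2)(\R^2_+)=X_1(\R_+)\oplus X_2(\R_+)$ into $X_1((0,\infty))\oplus X_2((0,\infty))$ and the ``boundary'' pieces $X_1(\R_+)\oplus F$, $X_2(\R_+)\oplus F$, $F$, (c) invoke the elementary fact that $\lambda_d(A\oplus B)\ge\lambda_d(B)$ whenever $A\ne\varnothing$ --- so that appending a factor $X_j((0,\infty))$ never lowers an expected Lebesgue measure --- and (d) note that $\lambda_d(F)>0$ already forces \eqref{Eq:Inverse}, since then $(U_1*U_2)(F)=\int_F(u_1*U_2)\,d\lambda_d>0$. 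I expect this bookkeeping around the boundary of $\R^2_+$, together with the extraction of the single-translate statement ($z=0$) from the ``almost every $z$'' statement --- which rests squarely on $u_1*U_2>0$ a.e.\ --- to be the main obstacle; that $U_2$ need not be absolutely continuous is harmless, as only $u_1$ is convolved against a Lebesgue density.

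Finally, for (ii)$\Leftrightarrow$(iii) I would assume the extra hypotheses on $u_1$. Because $U_1(\R^d)=1$, $u_1$ is a probability density; set $\widetilde u_1:=\tfrac12\left(u_1+u_1(-\,\cdot\,)\right)\in L^1(\R^d)$ and let $\widetilde U_2$ be the symmetrization of $U_2$, a symmetric probability measure. Then $\widehat{\widetilde u_1}(\xi)=\Re(1/(1+\Psi_1(\xi)))$ and $\widehat{\widetilde U_2}(\xi)=\Re(1/(1+\Psi_2(\xi)))$, and a direct computation identifies $\widetilde u_1*\widetilde U_2$ with the kernel $Q$ of \eqref{Def:Q}. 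Hence $Q\in L^1(\R^d)$ and $\widehat Q=K_{\bm\Psi}\ge0$, so that for every compact-support probability measure $\mu$ the Plancherel-type identity
\[
    \iint Q(x-y)\,\mu(dx)\,\mu(dy)
    = \frac{1}{(2\pi)^d}\int_{\R^d}\left|\hat\mu(\xi)\right|^2 K_{\bm\Psi}(\xi)\,d\xi
    = I_{\bm\Psi}(\mu)
\]
holds in $[0,\infty]$; this is immediate when $K_{\bm\Psi}\in L^1(\R^d)$ and in general follows from the harmonic-analytic results developed in the body of the paper (mollify $Q$ and pass to the limit, using $\widehat Q\ge0$ and monotone convergence on the right-hand side). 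The continuity and finiteness hypotheses on $u_1$ enter only to guarantee that $Q$ is lower semicontinuous and finite off the origin, so that the left-hand integral is unambiguous and the corresponding energy sits properly within Choquet's theory. Comparing with \eqref{Eq:Inverse2}, statements (iii) and (ii) coincide, which closes the chain.
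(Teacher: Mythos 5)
Your proposal lands on essentially the same argument as the paper, with the intermediate machinery unrolled rather than cited. The paper's proof of Theorem~\ref{th:bertoin} is one short paragraph: it observes that the one-potential density of $\X:=X_1\oplus X_2$ is $u_1*U_2$ and that the one-potential density of $\widetilde{\X}$ is $Q$, then invokes Proposition~\ref{pr:H_F} parts (c) and (e). Your exponential-time/Fubini identity and the $u_1*U_2>0$ a.e.\ step are exactly what lives inside the proof of Proposition~\ref{pr:H_F}(c) (its display \eqref{eq:4.56}), and your $(0,\infty)^2$-versus-$\R^2_+$ bookkeeping fills in the paper's ``simple variant of Theorem~\ref{th:main}'' remark, which the paper states without proof; both of these are correct. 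Your identification $Q=\widetilde{u}_1*\widetilde{U}_2$ with $\widehat{Q}=K_{\bm{\Psi}}$ also matches the paper's route via \eqref{eq:v:vs:vT} with $T=\{1\}$. So this is the same approach carried out by hand.

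The one spot where you underestimate what is actually needed is the Plancherel step $\mathscr{E}_Q(\mu)=I_{\bm{\Psi}}(\mu)$. Your parenthetical ``mollify $Q$ and pass to the limit, using $\widehat{Q}\ge0$ and monotone convergence on the right-hand side'' yields only the inequality $\mathscr{E}_Q(\mu)\le(2\pi)^{-d}\int\widehat{Q}\,|\hat\mu|^2\,d\xi$; that is Lemma~\ref{lem:kernel:energy:UB} and needs only lower semicontinuity. The reverse inequality is the substantive half, and in the paper it is Lemma~\ref{lem:ac:fourier} (used through Theorem~\ref{th:ac:fourier} with $\kappa:=\widetilde{u}_1$, $\nu:=\widetilde{U}_2$); its proof goes through a Lusin-theorem truncation that genuinely uses the \emph{continuity} of $\widetilde{u}_1$ on $\R^d$, not merely lower semicontinuity of $Q$. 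So your remark that the continuity and finiteness hypotheses on $u_1$ ``enter only to guarantee that $Q$ is lower semicontinuous and finite off the origin, so that the left-hand integral is unambiguous'' undersells them: they are precisely what makes the hard direction of the Plancherel identity provable, which is the point of Open Problem~\ref{pbm1}. Since you ultimately defer to ``the harmonic-analytic results developed in the body of the paper,'' the chain closes, but the parenthetical sketch would not by itself prove the identity.
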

Among other things, Theorem \ref{th:bertoin} confirms a
conjecture of \ocite{bertoin:1999} and \ocite{bertoin:st-flour}*{p.\
49}; see Remark \ref{rem:bertoin} for details.

Finally we mention a result on the Hausdorff dimension of the set of
intersections of the sample paths of L\'evy processes.

\begin{theorem}\label{th:dimH}
    Let $X_1,\ldots,X_N$ be independent L\'evy
    processes on $\R^d$, and assume that each $X_j$
    has a one-potential density $u_j:\R^d\to\bar\R_+$
    such that $u_j(0)>0$. Then, almost surely on $\big\{\cap_{k=1}^N X_k(\R_+)
    \ne \varnothing\big\}$,
    \begin{equation}\label{co:dimH:1}\begin{split}
        &\dimh \bigcap_{k=1}^N X_k(\R_+)\\
        &\quad =\sup\left\{
            s\in(0\,,d):\ \int_{(\R^d)^N} \prod_{j=1}^N
                \Re\left( \frac{1}{1+\Psi_j(\xi^j)}\right)
                \, \frac{ d\xi}{1+\|\xi^1+\cdots+\xi^N\|^{d-s}}<\infty\right\},
    \end{split}\end{equation}
    where $\sup\varnothing:=0$.
    Suppose, in addition, that the $u_j$'s are continuous on $\R^d$,
    and finite on $\R^d\setminus\{0\}$. Then, almost surely on $\big\{\cap_{k=1}^N X_k(\R_+)
    \ne \varnothing\big\}$,
    \begin{equation}\label{co:dimH:2}
        \dimh \bigcap_{k=1}^N X_k(\R_+)
        \quad = \sup\left\{s\in(0\,,d):\ \int_{\R^d}
        \prod_{j=1}^N \left( \frac{u_j(z)+u_j(-z)}{2}\right)
        \, \frac{dz}{\|z\|^s}<\infty
        \right\}.
    \end{equation}
\end{theorem}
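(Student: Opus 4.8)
Write $E:=\bigcap_{k=1}^N X_k(\R_+)$ and let $d_*$ denote the right-hand side of \eqref{co:dimH:1}. The plan is to prove \eqref{co:dimH:1} first and then deduce \eqref{co:dimH:2}. For the deduction, recall $\widehat{U_j}(\xi)=(1+\Psi_j(\xi))^{-1}$, so that when $U_j(dx)=u_j(x)\,dx$ we have $\Re\big((1+\Psi_j(\xi))^{-1}\big)=\widehat{v_j}(\xi)$ with $v_j(z):=\tfrac12(u_j(z)+u_j(-z))$. Carrying out the $\xi^N$-integration in \eqref{co:dimH:1} as a convolution in $\eta:=\xi^1+\dots+\xi^N$ and then pairing the result against $(1+\|\eta\|^{d-s})^{-1}$ by Parseval --- whose Fourier transform is comparable to $\|z\|^{-s}$ near the origin and rapidly decreasing at infinity --- converts the integral in \eqref{co:dimH:1} into $\int_{\R^d}\big(\prod_{j=1}^N v_j(z)\big)\|z\|^{-s}\,dz$, up to multiplicative constants. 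Since each $u_j$ is the density of the probability measure $U_j$ and hence integrable, both integrals converge at infinity automatically, so only the behaviour near the origin is at issue; there the continuity of the $u_j$ and their finiteness off the origin make the comparison lossless. This is exactly the passage between the two conditions of Theorem \ref{th:FS}, so it remains to prove \eqref{co:dimH:1}.

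The proof of \eqref{co:dimH:1} rests on a codimension device. Fix $s\in(0,d)$ and adjoin to $X_1,\dots,X_N$ an \emph{independent} isotropic strictly stable Lévy process $X_{N+1}$ of index $d-s$ (so $\Psi_{N+1}(\xi)=c\|\xi\|^{d-s}$); when $d-s>2$ --- which can happen only for $d\ge3$ --- replace $X_{N+1}$ by an independent \emph{additive} stable process of range dimension $d-s$, for which the derivation below goes through verbatim because Theorem \ref{th:main} supplies the corresponding hitting criterion for the enlarged additive structure. Applying Theorem \ref{th:FS} with $N+1$ processes and $F=\R^d$ (and its analogue with the additive probe in the exceptional case), we find that $E\cap X_{N+1}(\R_+)$ is nonempty with positive probability iff some compactly supported probability measure $\mu$ satisfies
\[
    \int_{(\R^d)^{N+1}}\big|\hat\mu(\xi^1+\dots+\xi^{N+1})\big|^2\,
    \frac{\prod_{j=1}^N\Re\big((1+\Psi_j(\xi^j))^{-1}\big)}{1+c\|\xi^{N+1}\|^{d-s}}\,d\xi<\infty .
\]
Integrating out $\xi^{N+1}$ and using that $\int_{\R^d}|\hat\mu(\zeta)|^2\big(1+c\|\zeta-\eta\|^{d-s}\big)^{-1}\,d\zeta$ is bounded below by a constant multiple of $(1+\|\eta\|^{d-s})^{-1}$ for \emph{every} probability measure $\mu$, and above by such a multiple once $\mu$ is chosen with a smooth, compactly supported density, one sees that such a $\mu$ exists iff $s<d_*$. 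On the other hand, conditioning on $X_1,\dots,X_N$ (hence on $E$) and using that a fixed analytic set $A\subseteq\R^d$ is met by $X_{N+1}(\R_+)$ with positive probability iff the Riesz capacity $\mathrm{Cap}_s(A)$ is positive, we get that $E\cap X_{N+1}(\R_+)\ne\varnothing$ with positive probability iff $\P\{\mathrm{Cap}_s(E)>0\}>0$. Combining, $\P\{\mathrm{Cap}_s(E)>0\}>0$ iff $s<d_*$.

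The upper bound is now immediate: for each rational $s>d_*$ we have $\mathrm{Cap}_s(E)=0$ a.s., hence $\dimh E\le s$ a.s., and letting $s\downarrow d_*$ gives $\dimh E\le d_*$ a.s. For the lower bound, fix a rational $s<d_*$; then $\mathrm{Cap}_s(E)>0$, and in particular $\dimh E\ge s$, with positive probability. To promote this to an almost-sure statement on $\{E\ne\varnothing\}$, I would invoke a regeneration argument for the additive process $\oplus_{j=1}^N X_j$: at a suitable first multitime at which all coordinate processes agree, the (multiparameter) strong Markov property produces, on $\{E\ne\varnothing\}$, an independent translated copy of $E$ sitting inside $E$; iterating yields a sequence of conditionally independent nested copies, each of dimension at most $\dimh E$, whence, almost surely on $\{E\ne\varnothing\}$, $\dimh E$ equals the essential supremum of its own conditional distribution given that event. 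Since this essential supremum is at least $s$ (positive probability of $\dimh E\ge s$) and at most $d_*$ (upper bound), letting $s\uparrow d_*$ gives $\dimh E=d_*$ a.s.\ on $\{E\ne\varnothing\}$. This proves \eqref{co:dimH:1}, and with it \eqref{co:dimH:2}.

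The crux is the lower bound, and within it two points require genuine work. First, one must probe \emph{every} codimension $d-s\in(0,d)$; a single stable process reaches only indices at most $2$, which forces the detour through an independent additive stable probe (with its hitting theory supplied by Theorem \ref{th:main}) whenever $d-s>2$. Second, and more delicate, is upgrading ``$\dimh E\ge s$ with positive probability'' to the same statement holding almost surely on $\{E\ne\varnothing\}$: this is not a formal measure-theoretic manipulation but a genuine zero-one phenomenon, and making the multiparameter strong Markov structure precise enough to run the regeneration argument is the part I expect to be hardest. By contrast, the Fourier comparisons --- both the reduction of \eqref{co:dimH:2} to \eqref{co:dimH:1} and the integrating-out step --- are routine given the stated hypotheses on the $u_j$.
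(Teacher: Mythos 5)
Your codimension strategy for \eqref{co:dimH:1} is essentially the paper's: you probe $E=\bigcap_k X_k(\R_+)$ with an independent additive stable process of range dimension $d-s$ (the paper's $M$-parameter $\mathfrak{S}$ with index $\alpha$ and $M\alpha=d-s$), translate hitting of $\{0\}$ by the enlarged field into the Fourier criterion via Theorem \ref{th:main}/Corollary \ref{co:FS-1}, and in the other direction condition on the $X_j$'s to express the same event as $\mathcal{C}_{d-M\alpha}(E)>0$. You are also right that a single stable process only covers $d-s\in(0,2]$, which is why the additive probe is unavoidable. The final upgrade from ``$\dimh E\ge s$ with positive probability'' to an a.s.\ statement on $\{E\ne\varnothing\}$ is the part the paper also treats tersely, delegating it to Frostman's theorem plus the method of Khoshnevisan--Shieh--Xiao; your regeneration sketch is in the same spirit, though the notion of ``first multitime'' in a multiparameter setting would need care.

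The genuine gap is in your deduction of \eqref{co:dimH:2}. After the Parseval reduction, the function you pair against $\prod_j v_j$ is the (inverse) Fourier transform of $(1+\|\eta\|^{d-s})^{-1}$, which is comparable to $\|z\|^{-s}$ \emph{only near the origin} and decays much faster than $\|z\|^{-s}$ at infinity. Your claim that ``both integrals converge at infinity automatically'' because each $u_j\in L^1(\R^d)$ is false: integrability of each $v_j$ does not imply integrability of the product $\prod_{j=1}^N v_j$, and without that there is no control over $\int_{\|z\|\ge 1}\prod_j v_j(z)\,\|z\|^{-s}\,dz$. This is exactly the obstruction the paper flags as Open Problem \ref{pbm:4}: local integrability of $\prod_j v_j$ implying global integrability is not known for $N\ge 3$ (the paper proves it only for $N=2$ in Remark \ref{rem:allude}). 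The paper's proof of \eqref{co:dimH:2} therefore does \emph{not} pass from \eqref{co:dimH:1} via Parseval; instead it replaces the one-potential kernel $w$ of the probe (which is only comparable to $\|y\|^{-d+M\alpha}$ on $(-1,1)^d$) by the exact Riesz kernel, using the modified operator $R^{1\otimes 0}$ built from $e^{-[\bm{s}]}$ rather than $e^{-[\bm{s}]-[\bm{t}]}$ (the zero-potential of $\mathfrak{S}$), and then reruns the potential theory with $v^{1\otimes 0}$ in place of $v$. That detour is precisely what your argument is missing.
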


In the remainder of the paper we prove Theorem \ref{th:main}
and its stated corollaries in the order in which they are presented.
Finally, we conclude by two zero-one laws for the Lebesgue measure
and capacity of the range of an additive L\'evy process,
that, we believe, might have independent interest.

We end this section with four problems and conjectures.

\begin{problem}\label{pbm1}
    Throughout this paper, we impose continuity
    conditions on various one-potential densities. This is
    mainly because we are able to develop general harmonic-analytic results
    only for kernels that satisfy some regularity properties. Can
    the continuity conditions be dropped? We believe the answer
    is ``yes.'' This is motivated, in part, by
    the following fact, which follows from inspecting the
    proofs: The condition
   ``$u$ is continuous on $\R^d$ and finite on $\R^d\setminus\{0\}$'' is
    used only for proving the ``if" portions in the second parts of
    Theorems \ref{th:FS} and \ref{th:bertoin}.
\end{problem}

\begin{problem}
    Jacob \citelist{\ycite{jacob3} \ycite{jacob2} \ycite{jacob1}}
    has constructed a very large
    class of Feller processes that behave locally like L\'evy
    processes.
    Moreover, his construction is deeply connected to harmonic analysis.
    Because the results of the present paper involve mainly the
    local structure of L\'evy processes, and are inextricably
    harmonic analytic, we ask:
    Is it possible to study the harmonic-analytic
    potential theory of several Jacob processes
    by somehow extending the methods of the present
    paper?
\end{problem}

\begin{problem}
	We ask:
    Is there a ``useful'' theory of excessive functions and/or measures for
    additive L\'evy processes (or more general multiparameter
    Markov processes)? This question is intimately connected to Open Problem
    \ref{pbm1}, but deserves to be asked on its own.
    In the one-parameter
    case, the answer is a decisive ``yes'' \cite{Getoor:book}.
    But the one-dimensional theory does not appear to readily have a
    suitable extension to the multiparameter setting.
\end{problem}

\begin{problem}\label{pbm:4}
    We conjecture that, under the conditions of Theorem \ref{th:dimH},
    the following holds almost surely on $\big\{\cap_{k=1}^N X_k(\R_+)
    \ne \varnothing\big\}$:
    \begin{equation}\label{co:dimH:3}
        \dimh \bigcap_{k=1}^N X_k(\R_+)=
        \sup\left\{s\in(0\,,d):\ \int_{(-1,1)^d}
        \prod_{j=1}^N \left( \frac{u_j(z)+u_j(-z)}{2}\right)
        \, \frac{dz}{\|z\|^s}<\infty
        \right\}.
   \end{equation}
   [The difference between this and
   \eqref{co:dimH:2} is in the range of the integrals.]
    But in all but one case we have
    no proof; see Remark \ref{rem:allude} below for the mentioned
    case. As we shall see in that remark, what we actually
    prove is the following harmonic-analytic fact:
    {\it Suppose $u$ is the one-potential density of a L\'evy process,
    $u(0)>0$, $u$ is continuous on $\R^d$, and $u$ is
    finite on $\R^d\setminus\{0\}$.
    Then the local square-integrability
    of $u$ implies the [global] square-integrability of $u$.}
    We believe that the following more general result holds:
    If $u_1,\ldots,u_N$ are one-potential
    densities that share the stated properties for $u$, then
    \begin{equation}
        \prod_{j=1}^N \left(\frac{u_j(\bullet)+
        u_j(-\bullet)}{2}\right)
        \in L^1_{\text{\it loc}}(\R^d)\quad
        \Rightarrow\quad
        \prod_{j=1}^N \left(\frac{u_j(\bullet)+
        u_j(-\bullet)}{2}\right)\in L^1(\R^d).
    \end{equation}
    If this is so, then the results of this paper
    imply Conjecture \eqref{co:dimH:3}.
\end{problem}

\section{\bf The stationary additive L\'evy random field}

Consider a classical L\'evy process
$X:=\{X(t)\}_{t\ge 0}$ on $\R^d$ with characteristic exponent $\Psi$.
Let us introduce an independent copy $X'$ of $X$, and extend the
definition of $X$ to a process indexed by $\R$ as follows:
\begin{equation}\label{eq:X:tilde}
    \widetilde{X}(t) :=
    \begin{cases}
        X(t)&\text{if $t\ge 0$},\\
        -X'(-t)&\text{if $t<0$}.
    \end{cases}
\end{equation}
This is the two-sided L\'evy process with exponent $\Psi$ in the
sense that $\widetilde{X} := \{\widetilde{X}(t)\}_{t \in \R}$ has
stationary and independent increments. Moreover,
$\{\widetilde{X}(t+s)-\widetilde{X}(s)\}_{t\ge 0}$ is a copy of $X$
for all $s\in\R$.

We also define $\widetilde{\X}:= \{\widetilde{\X}(\bm{t})\}_{\bm{t}\in\R^N}$
as the corresponding $N$-parameter process, indexed by all of
$\R^N$, whose values are in $\R^d$ and are defined as
\begin{equation}
    \widetilde{\X}(\bm{t}) := \widetilde{X}_1(t_1)+
    \cdots+\widetilde{X}_N(t_N)
    \qquad\text{for all $\bm{t}:=(t_1\,,\ldots,t_N)\in\R^N$.}
\end{equation}
We are assuming, of course, that $\widetilde{X}_1,
\ldots,\widetilde{X}_N$ are independent
two-sided extensions of the processes
$X_1,\ldots,X_N$, respectively.

We intend to prove the following two-sided version of
Theorem \ref{th:main}.

\begin{theorem}\label{th:main:tilde}
    For all Borel sets $F\subseteq\R^d$,
    \begin{equation}
        \E\left[ \lambda_d\left(
        \widetilde{\X}(\R^N)\oplus F\right) \right]>0\quad
        \text{if and only if}\quad
        \mathrm{cap}_{\bm{\Psi}}(F)>0.
    \end{equation}
\end{theorem}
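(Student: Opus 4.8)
The plan is to prove Theorem \ref{th:main:tilde} directly, by an occupation-measure/second-moment argument. The reason for passing to the \emph{stationary} field $\widetilde{\X}$ is that, unlike the one-sided field $\X$, it carries a one-potential density whose Fourier transform is a constant multiple of $K_{\bm{\Psi}}$: since each $X_j$ is real-valued, $\Psi_j(-\xi)=\overline{\Psi_j(\xi)}$, so that
\begin{equation}
    \int_{\R^N}\E\left[e^{i\xi\cdot\widetilde{\X}(\bm{t})}\right]\prod_{j=1}^N e^{-|t_j|}\,d\bm{t}
    =\prod_{j=1}^N\left(\frac{1}{1+\Psi_j(\xi)}+\frac{1}{1+\overline{\Psi_j(\xi)}}\right)
    =2^N K_{\bm{\Psi}}(\xi).
\end{equation}
By contrast $\X$ carries the complex, analytically inconvenient potential $\prod_{j}(1+\Psi_j(\xi))^{-1}$; this is precisely why we prove the stationary statement and afterwards deduce Theorem \ref{th:main} from it. Using inner regularity of $\lambda_d$ and of $\mathrm{cap}_{\bm{\Psi}}$ we may assume throughout that $F$ is compact.

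For the ``if'' direction, fix $\mu\in\mathcal{P}_c(F)$ with $I_{\bm{\Psi}}(\mu)<\infty$, fix a smooth even approximate identity $f_\e$, and form the random absolutely continuous measure $\sigma_\e(dz)=\rho_\e(z)\,dz$ with
\begin{equation}
    \rho_\e(z):=\int_{\R^N}\int_{\R^d} f_\e\left(z-\widetilde{\X}(\bm{t})-y\right)\mu(dy)\prod_{j=1}^N e^{-|t_j|}\,d\bm{t}.
\end{equation}
Its total mass is the deterministic constant $2^N$, and its support lies within an $\e$-neighbourhood of $\widetilde{\X}(\R^N)\oplus F$. A Plancherel computation---using the identity above and $\iint e^{i\xi\cdot(x-y)}\mu(dx)\,\mu(dy)=|\hat{\mu}(\xi)|^2$---bounds $\E\|\rho_\e\|_{L^2(\R^d)}^2$ by a finite constant times $I_{\bm{\Psi}}(\mu)$, uniformly in $\e$. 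This uniform bound lets one pass, along a subsequence, to a weak-$*$ limit $\sigma=\rho\,\lambda_d$ that is still absolutely continuous, with $\rho\in L^2$, $\E\|\rho\|_{L^2}^2\le C\,I_{\bm{\Psi}}(\mu)$, total mass $\int\rho=2^N$, and support inside the closure of $\widetilde{\X}(\R^N)\oplus F$. By the Cauchy--Schwarz inequality $\lambda_d(\{\rho>0\})\ge 4^N/\|\rho\|_{L^2}^2$; taking expectations, using Jensen's inequality, and checking (a routine point) that $\lambda_d$ does not distinguish $\widetilde{\X}(\R^N)\oplus F$ from its closure, we obtain $\E[\lambda_d(\widetilde{\X}(\R^N)\oplus F)]\ge 4^N/(C\,I_{\bm{\Psi}}(\mu))>0$.

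For the ``only if'' direction, suppose $\E[\lambda_d(\widetilde{\X}(\R^N)\oplus F)]>0$; we must produce $\mu\in\mathcal{P}_c(F)$ with $I_{\bm{\Psi}}(\mu)<\infty$. Heuristically: on the positive-probability event on which $\lambda_d(\widetilde{\X}(\R^N)\oplus F)>0$, write each point of this set as $\widetilde{\X}(\bm{t})+f$, push the restriction of $\lambda_d$ to a bounded piece forward onto $F$ via the measurable selection $z\mapsto f$, normalize, and average over $\omega$; the Plancherel identity of the first paragraph, read in reverse, should bound the $\bm{\Psi}$-energy of the resulting probability measure on $F$. I expect making this rigorous without extra hypotheses to be the main obstacle: it is exactly here that our earlier work \ycite{KXZ:03} invoked an exogenous regularity condition on $X_1,\ldots,X_N$, and removing it---turning ``positive expected Lebesgue measure'' back into ``finite $\bm{\Psi}$-energy of some $\mu\in\mathcal{P}_c(F)$'' with no side condition---is where the classical, non-probabilistic harmonic analysis of $K_{\bm{\Psi}}$ developed in the paper (a regularization/truncation of the kernel, together with a duality or minimax argument) must be deployed.

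Finally, for orientation: once Theorem \ref{th:main} is available for all exponents, Theorem \ref{th:main:tilde} is immediate. The $2^N$ closed orthants cover $\R^N$, and the restriction of $\widetilde{\X}$ to the orthant with sign pattern $\bm{\varepsilon}\in\{-1,1\}^N$ has range $\X^{\bm{\varepsilon}}(\R^N_+)$, where $\X^{\bm{\varepsilon}}$ is the additive L\'evy process whose $j$-th summand is $X_j$ if $\varepsilon_j=1$ and $-X'_j$ if $\varepsilon_j=-1$; since $\Re\tfrac{1}{1+\overline{\Psi_j}}=\Re\tfrac{1}{1+\Psi_j}$, the kernel, energy and capacity attached to $\X^{\bm{\varepsilon}}$ coincide with those attached to $\X$. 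Writing $\widetilde{\X}(\R^N)\oplus F=\bigcup_{\bm{\varepsilon}}\left(\X^{\bm{\varepsilon}}(\R^N_+)\oplus F\right)$ as a finite union and applying Theorem \ref{th:main} to each summand gives the conclusion---and shows that Theorems \ref{th:main} and \ref{th:main:tilde} are equivalent, so that only one of them (the stationary one, treated directly above) needs a proof from scratch.
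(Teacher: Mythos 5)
Your ``if'' direction is fine as far as it goes: the second-moment computation
$\E\|\rho_\e\|_{L^2(\R^d)}^2 = (2\pi)^{-d}\int|\hat f_\e|^2|\hat\mu|^2\prod_j\Lambda(\Psi_j)\,d\xi \le 4^N I_{\bm\Psi}(\mu)$
is exactly the Plancherel/occupation-measure argument one would run, and it reproduces (in self-contained form) what the paper simply cites from Proposition~5.7 of \ocite{KXZ:03}. Two small caveats there: $\widetilde{\X}$ need not possess a one-potential \emph{density}---what your computation actually produces is the Fourier transform of the one-potential measure, which suffices; and the passage from $\rho_\e$ to a weak-$*$ limit $\rho$ with $\int\rho=2^N$ and support in the closure of $\widetilde{\X}(\R^N)\oplus F$ requires a tightness/localization step that you gesture at but do not carry out. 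Neither of these is fatal.

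The genuine gap is the ``only if'' direction, which you yourself flag as unresolved. What you offer there is a heuristic---push $\lambda_d$-mass back onto $F$ through the identity $z=\widetilde{\X}(\bm t)+f$, normalize, and ``read Plancherel in reverse''---together with a guess that the missing ingredient is the classical harmonic analysis of $K_{\bm\Psi}$ (regularization/truncation of the kernel, a duality or minimax argument). That guess does not match how the paper actually closes this direction, and I do not see how it could: the kernel-regularity machinery of Section~5 (Theorem~\ref{th:ac:fourier}, Lemmas~\ref{lem:sup:zero}--\ref{lem:ac:fourier}) is deployed only for the \emph{consequences} (Theorems~\ref{th:FS}, \ref{th:bertoin}, \ref{th:dimH}), under extra continuity hypotheses on potential densities; Theorem~\ref{th:main:tilde} itself carries no such hypotheses, so that route is unavailable. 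What actually does the work is probabilistic. One passes to the $\sigma$-finite measure $\P_{\lambda_d}$, rewrites the expected Lebesgue measure as the hitting probability $\P_{\lambda_d}\{\widetilde{\X}(\R^N)\cap F\neq\varnothing\}$, introduces a hitting time $\bm T$ and the conditional hitting distribution $\mu_k(\cdot)=\P_{\lambda_d}(\widetilde{\X}(\bm T)\in\cdot\mid\bm T\in[-k,k]^N)$, and then runs a \emph{conditional} second-moment argument: Proposition~\ref{pr:MP} (the $2^N$-fold Markov-random-field property) gives $\E_{\lambda_d}[Sf_\e\mid\mathscr H_\pi(\bm\tau)]\ge e^{-[\bm\tau]}(Rf_\e)(\widetilde{\X}(\bm\tau))$ for the sojourn functional $S$; the commuting-filtration (Cairoli) maximal inequality supplies the crucial $4^N$ constant so that the supremum over $\bm\tau$ remains in $L^2(\P_{\lambda_d})$; Cauchy--Schwarz, Parseval and the elementary inequality $\hat\phi_\e\ge|\hat\phi_\e|^2$ (Property~{\bf P3}) then yield $e^{2Nk}4^N\ge I_{\bm\Psi}(f_\e)\cdot\P_{\lambda_d}\{\bm T\in[-k,k]^N\}$, and Fatou lets $\e\downarrow0$. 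None of these ingredients---the stationary $\sigma$-finite framework, the sojourn operator, the multiparameter Markov property, the Cairoli bound, the $\hat\phi_\e\ge|\hat\phi_\e|^2$ trick---appears in your sketch, and your ``measurable selection $z\mapsto f$'' construction is not obviously measurable or well-posed, nor does it interact with $K_{\bm\Psi}$ in a way that would produce finite energy. So the hard half of the theorem is missing.

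Your closing paragraph, observing that Theorem~\ref{th:main} would imply Theorem~\ref{th:main:tilde} by splitting $\R^N$ into $2^N$ orthants, is correct as a statement of equivalence but is circular here: the paper's logical order is the reverse (Theorem~\ref{th:main:tilde} is proved first and Theorem~\ref{th:main} deduced from it via $\X(\R^N_+)\subseteq\widetilde{\X}(\R^N)$), precisely because the stationary field is what carries the symmetric kernel $K_{\bm\Psi}$ and the two-sided Markov structure that the proof exploits. You acknowledge this, but it leaves the burden squarely on the direct proof you have not supplied.
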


This implies Theorem \ref{th:main} effortlessly. Indeed,
we know already from Remark 1.2 of \fullocite{KXZ:03} that
\begin{equation}
    \mathrm{cap}_{\bm{\Psi}}(F)>0\quad
    \Longrightarrow\quad \E\left[
    \lambda_d \left( \X(\R^N_+)\oplus F\right) \right]>0.
\end{equation}
Thus, we seek only to derive the converse implication. But
that follows from Theorem \ref{th:main:tilde},
because $\X(\R^N_+)\subseteq\widetilde{\X}(\R^N)$.

Henceforth, we assume that the underlying probability space
is the collection of all paths $\omega:\R^N\to\R^d$ that
have the form
$\omega(\bm{t}) = \sum_{j=1}^N \omega_j(t_j)$
for all $\bm{t}\in\R^N$,
where each $\omega_j$ maps $\R^N$ to $\R^d$ such that
$\omega_j(\bm{0})=0$; and $\omega_j\in D_{\R^d}(\R)$,
the Skorohod space of cadlag functions from
$\R$---not $[0\,,\infty)$---to $\R^d$.

We can then assume that
the stationary additive L\'evy fields, described
earlier in this section, are in \emph{canonical form}.
That is, $\widetilde{\X}(\bm{t})(\omega) :=
\omega(\bm{t})$ for all
$t\in\R^N$ and $\omega\in\Omega$.
Because we are interested only in distributional results,
this is a harmless assumption.

Define $\P_x$ to be the law of $x+\widetilde{\X}$,
and $\E_x$ the
expectation operation with respect to $\P_x$, for every
$x\in\R^d$. Thus, we are identify
$\P$ with $\P_0$, and $\E$ with $\E_0$.

We are interested primarily in the $\sigma$-finite measure
\begin{equation}
    \P_{\lambda_d} := \int_{\R^d} \P_x\, dx,
\end{equation}
and the corresponding expectation operator $\E_{\lambda_d}$,
defined by
\begin{equation}
    \E_{\lambda_d} f := \int_\Omega f(\omega)\, \P_{\lambda_d}(d\omega)
    \qquad\text{for all $f\in L^1(\P_{\lambda_d})$}.
\end{equation}

If  $A\ominus B:=\{ a-b:\, a\in A,\, b\in B\}$, then by the
Fubini-Tonelli theorem,
\begin{equation}\label{eq:fubini}\begin{split}
    \E\left[\lambda_d\left( \widetilde{\X}(\R^N)\ominus F\right)\right]
        &= \E\left[ \int_{\R^d} \mathbf{1}_{\widetilde{\X}(\R^N)\ominus F}
        (x)\,dx \right]\\
    &=\int_{\R^d}\P_{-x}\left\{\widetilde{\X}(\bm{t})
        \in F\text{ for some $\bm{t}\in\R^N$}\right\}\,dx\\
    &=\P_{\lambda_d}\left\{ \widetilde{\X}(\R^N)\cap F\neq\varnothing
        \right\}.
\end{split}\end{equation}
Thus, Theorem \ref{th:main:tilde} is a potential-theoretic
characterization of all polar sets for $\widetilde{\X}$
under the $\sigma$-finite measure $\P_{\lambda_d}$.
With this viewpoint in mind, we proceed to introduce
some of the fundamental objects that are related
to the process $\widetilde{\X}$.

Define, for all $\bm{t}\in\R^N$, the linear operator
$P_{\bm t}$ as follows:
\begin{equation}
    (P_{\bm t} f)(x) := \E_x \left[ f\left( \widetilde{\X}(\bm{t})
    \right) \right]\qquad\text{for all $x\in\R^d$}.
\end{equation}
This is well-defined, for example, if $f:\R^d\to\R_+$ is Borel-measurable,
or if $f:\R^d\to\R$ is Borel-measurable and $P_{\bm t}(|f|)$ is finite
at $x$. Also define the linear operator $R$ by
\begin{equation}\label{Def:Rf}
    (Rf)(x) := \frac{1}{2^N} \int_{\R^N}
    (P_{\bm t}f)(x)e^{-[\bm{t}]} \, d\bm{t}
    \qquad\text{for all $x\in\R^d$},
\end{equation}
where
\begin{equation}\label{eq:[t]}
    [\bm{t}]:=|t_1|+\cdots+|t_N|
\end{equation}
denotes the
$\ell^1$-norm of $\bm{t}\in\R^N$. [We will use this notation
throughout.] The $\ell^2$-norm of $\bm{t}\in\R^N$ will be denoted by
$\|\bm{t}\|$.

Again, $(Rf)(x)$ is well
defined if $f:\R^d\to\R_+$ is Borel-measurable, or if
$f:\R^d\to\R$ is Borel-measurable and $R(|f|)$ is finite at $x$.

Our next result is a basic regularity lemma for $R$. It should
be recognized as a multiparameter version of a very well-known
property of Markovian semigroups and their resolvents.

\begin{lemma}\label{lem:contraction}
    Each $P_{\bm t}$ and $R$ are contractions
    on $L^p(\R^d)$, as long as $1\le p\le\infty$.
\end{lemma}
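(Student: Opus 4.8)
The plan is to reduce everything to the one-parameter fact that each component's own semigroup and its exponential resolvent are $L^p$-contractions, then assemble the multiparameter statement by iteration. First I would observe that because $\widetilde{X}_j(t_j)$ has the same law as $\widetilde{X}_j(t_j + s) - \widetilde{X}_j(s)$ and the $\widetilde{X}_j$ are independent, the operator $P_{\bm t}$ factors as a composition $P^{(1)}_{t_1}\cdots P^{(N)}_{t_N}$, where $P^{(j)}_{t_j}$ is the (two-sided) transition operator of the single process $\widetilde{X}_j$, namely $(P^{(j)}_{t_j} g)(x) = \E[g(x + \widetilde{X}_j(t_j))]$. Each $P^{(j)}_{t_j}$ is a convolution operator: $P^{(j)}_{t_j} g = g * \nu^{(j)}_{t_j}$, where $\nu^{(j)}_{t_j}$ is the law of $\widetilde{X}_j(t_j)$ (for $t_j<0$ this is the law of $-\widetilde{X}'_j(-t_j)$, still a probability measure). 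Convolution with a probability measure is a contraction on $L^p(\R^d)$ for every $1\le p\le\infty$: this is immediate from Minkowski's integral inequality when $1\le p<\infty$, and obvious for $p=\infty$. Hence each $P^{(j)}_{t_j}$, and therefore the composition $P_{\bm t}$, is an $L^p$-contraction; this handles the first half of the claim.

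For $R$, I would write, using the factorization and Fubini--Tonelli for nonnegative integrands,
\begin{equation*}
    Rf = \frac{1}{2^N}\int_{\R^N} P^{(1)}_{t_1}\cdots P^{(N)}_{t_N} f \,
    e^{-|t_1|}\cdots e^{-|t_N|}\, dt_1\cdots dt_N
    = R_1 R_2 \cdots R_N f,
\end{equation*}
where $R_j g := \tfrac12\int_{\R} P^{(j)}_{t} g \, e^{-|t|}\, dt$ is the one-parameter exponential resolvent of $\widetilde{X}_j$. So it suffices to show each $R_j$ is an $L^p$-contraction. For nonnegative $f$ this follows by pulling the $L^p$-norm inside the integral via Minkowski's integral inequality and using that $P^{(j)}_t$ is a contraction together with $\tfrac12\int_\R e^{-|t|}\,dt = 1$:
\begin{equation*}
    \|R_j f\|_p \le \frac12\int_\R \|P^{(j)}_t f\|_p\, e^{-|t|}\, dt
    \le \frac12\int_\R \|f\|_p\, e^{-|t|}\, dt = \|f\|_p .
\end{equation*}
For signed $f$ one applies this to $|f|$ after noting $|R_j f|\le R_j(|f|)$ pointwise, which is legitimate once one knows $R_j(|f|)$ is finite a.e.\ — and that finiteness is exactly the bound just derived. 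Composing, $\|Rf\|_p = \|R_1\cdots R_N f\|_p \le \|f\|_p$.

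The only genuinely delicate point — really a bookkeeping issue rather than a deep one — is justifying the interchange of the $L^p$-norm with the $\R^N$-integral and the factorization of $R$ into iterated one-parameter resolvents; both rest on Fubini--Tonelli, which applies cleanly because the kernel $e^{-[\bm t]}$ and (after reducing to $|f|$) the integrand are nonnegative and jointly measurable in $(\bm t, x)$. I would also record, for later use, that the same computation shows $R\mathbf{1} = \mathbf{1}$ and more generally $\|Rf\|_\infty\le\|f\|_\infty$, so $R$ is genuinely a contraction, not merely bounded, on the full scale $1\le p\le\infty$; the endpoint $p=\infty$ needs no measure-theoretic care at all.
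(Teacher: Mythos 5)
Your argument is correct and follows essentially the same route as the paper: identify $P_{\bm t}$ as convolution with (a product of) probability measures, conclude the $L^p$-contraction property, and then average over $\bm t$ against the probability density $2^{-N}e^{-[\bm t]}$ via Minkowski's integral inequality to control $R$; your extra step of factoring $R=R_1\cdots R_N$ is a harmless elaboration of the same computation. One cosmetic slip: $(P^{(j)}_{t}g)(x)=\E[g(x+\widetilde{X}_j(t))]$ is convolution of $g$ with the law of $-\widetilde{X}_j(t)$, not of $\widetilde{X}_j(t)$, which is why the paper takes $\mu_{j,t}$ to be the distribution of $-\widetilde{X}_j(t)$ — but as both are probability measures this changes nothing in the contraction bound.
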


\begin{proof}
    Choose and fix $j\in\{1\,,\ldots,N\}$ and $t\in \R$,
    and define $\mu_{j,t}$ to be the distribution of
    the random variable $-\widetilde{X}_j(t)$. If
    $f:\R^d\to\R_+$ is Borel-measurable, then
    \begin{equation}\label{eq:P_t:mu}
        P_{\bm t}f = f*\mu_{1,t_1}*\cdots*
        \mu_{N,t_N},
    \end{equation}
    where $*$ denotes convolution. This implies readily
    that $P_{\bm t}$ is a contraction on $L^p(\R^d)$ for all $p\in[1\,,\infty]$,
    and hence,
    \begin{equation}
        \|Rf\|_{L^p(\R^d)} \le \frac{1}{2^N}\int_{\R^N}
        \|P_{\bm t}f\|_{L^p(\R^d)} e^{-[\bm{t}]} \, d\bm{t}.
    \end{equation}
    Since $P_{\bm{t}}$ is a contraction on $L^p(\R^d)$,
    the preceding is bounded above by $\|f\|_{L^p(\R^d)}$.
\end{proof}
Henceforth, let ``$\widehat{\hskip3mm}$'' denote the [Schwartz]
Fourier transform on any and every Euclidean space $\R^k$. Our
Fourier transform is normalized such that
\begin{equation}
    \hat{f}(\xi) := \int_{\R^k} e^{i\xi\cdot x}
    f(x)\,dx,
\end{equation}
for all $f\in L^1(\R^k)$, say.
We then have the following.

\begin{lemma}\label{lem:P:U}
    If $f\in L^1(\R^d)$, then $\hat{R}=K_{\bm{\Psi}}$.
    That is,
    $\widehat{(Rf)}(\xi) = K_{\bm{\Psi}}(\xi)\hat{f}(\xi)$
    for all $\xi\in\R^d$.
\end{lemma}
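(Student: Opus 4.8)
The plan is to compute $\widehat{(Rf)}$ directly from the convolution representation \eqref{eq:P_t:mu}, after justifying an interchange of integrals. By Lemma \ref{lem:contraction}, each $P_{\bm t}f$ lies in $L^1(\R^d)$ with $\|P_{\bm t}f\|_{L^1(\R^d)}\le\|f\|_{L^1(\R^d)}$, whence $\int_{\R^N}\|P_{\bm t}f\|_{L^1(\R^d)}\,e^{-[\bm t]}\,d\bm t\le 2^N\|f\|_{L^1(\R^d)}<\infty$; in particular $Rf\in L^1(\R^d)$, and the Fubini--Tonelli theorem licenses interchanging $\int_{\R^d}e^{i\xi\cdot x}(\,\cdot\,)\,dx$ with $\frac{1}{2^N}\int_{\R^N}(\,\cdot\,)\,e^{-[\bm t]}\,d\bm t$. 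Since the Fourier transform turns convolution into multiplication, \eqref{eq:P_t:mu} gives $\widehat{(P_{\bm t}f)}(\xi)=\hat f(\xi)\prod_{j=1}^N\hat\mu_{j,t_j}(\xi)$, and therefore
\[
    \widehat{(Rf)}(\xi)=\hat f(\xi)\prod_{j=1}^N\left[\frac12\int_{\R}
    \hat\mu_{j,t}(\xi)\,e^{-|t|}\,dt\right].
\]

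The next step is to identify $\hat\mu_{j,t}$. Recall that $\mu_{j,t}$ is the law of $-\widetilde X_j(t)$. For $t\ge0$ one has $\widetilde X_j(t)=X_j(t)$, so $\hat\mu_{j,t}(\xi)=\E[e^{-i\xi\cdot X_j(t)}]=e^{-t\Psi_j(-\xi)}=e^{-t\overline{\Psi_j(\xi)}}$, using the identity $\Psi_j(-\xi)=\overline{\Psi_j(\xi)}$, which holds because $X_j$ is $\R^d$-valued. For $t<0$ one has $\widetilde X_j(t)=-X_j'(-t)$ with $X_j'$ an independent copy of $X_j$, so $\hat\mu_{j,t}(\xi)=\E[e^{i\xi\cdot X_j'(-t)}]=e^{t\Psi_j(\xi)}$. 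Splitting the $t$-integral at the origin and using $\Re\Psi_j\ge0$ to guarantee convergence of both halves, one obtains
\[
    \frac12\int_{\R}\hat\mu_{j,t}(\xi)\,e^{-|t|}\,dt
    =\frac{1}{2}\left(\frac{1}{1+\overline{\Psi_j(\xi)}}+\frac{1}{1+\Psi_j(\xi)}\right)
    =\Re\left(\frac{1}{1+\Psi_j(\xi)}\right),
\]
because $\overline{(1+\Psi_j(\xi))^{-1}}=(1+\overline{\Psi_j(\xi)})^{-1}$ and $\frac12(z+\bar z)=\Re z$. Taking the product over $j=1,\ldots,N$ and comparing with the definition of $K_{\bm\Psi}$ yields $\widehat{(Rf)}(\xi)=K_{\bm\Psi}(\xi)\hat f(\xi)$.

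This lemma is essentially a bookkeeping computation, so I do not anticipate a serious obstacle. The two points that require a little care are the justification of the interchange of integrals---supplied by the $L^1$-contraction bound of Lemma \ref{lem:contraction}---and the correct treatment of the two-sided extension $\widetilde X_j$ for $t<0$, which is exactly what produces the two complex-conjugate terms whose average is the real part $\Re\bigl((1+\Psi_j(\xi))^{-1}\bigr)$ appearing in the kernel $K_{\bm\Psi}$.
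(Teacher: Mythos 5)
Your proof is correct and follows essentially the same route as the paper: compute $\widehat{\mu_{j,t}}$ on each half-line, use the convolution representation \eqref{eq:P_t:mu} to turn $P_{\bm t}$ into a Fourier multiplier, interchange the $\bm t$- and $x$-integrals via the $L^1$ bound of Lemma \ref{lem:contraction}, and carry out the elementary exponential integrals; your use of $\Psi_j(-\xi)=\overline{\Psi_j(\xi)}$ simply makes explicit what the paper leaves to the reader in its final ``direct computation.''
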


\begin{proof}
    Recall $\mu_{j,t}$ from the proof of Lemma \ref{lem:contraction}. Its Fourier
    transform is given by
    \begin{equation}\label{eq:mu:hat}
        \widehat{\mu_{j,t}}(\xi) = \exp\left\{ -|t|\Psi_j\left(-
        \text{\rm sgn}(t) \xi \right) \right\}
        \qquad\text{for all $\xi\in\R^d$}.
    \end{equation}
    Equations \eqref{eq:P_t:mu} and
    \eqref{eq:mu:hat}, and the Plancherel theorem together
    imply that
    \begin{equation}\label{eq:P_tf:Fourier}
        \widehat{(P_{\bm t}f)}(\xi) =
        {\hat{f}(\xi)}
        \exp\left(-\sum_{j=1}^N |t_j| \Psi_j \left( -
        \text{\rm sgn}(t_j)\xi \right) \right).
    \end{equation}
    Consequently,
    \begin{equation}\begin{split}
        \widehat{(Rf)}(\xi) &= \frac{1}{2^N}\hat{f}(\xi)
            \int_{\R^N}
            \exp\left(-\sum_{j=1}^N |t_j| \big[ 1+\Psi_j \left( -
            \text{\rm sgn}(t_j)\xi \right) \big] \right)\, d\bm{t}\\
        &=\frac{1}{2^N}\hat{f}(\xi) \prod_{j=1}^N\int_0^\infty
            \left( e^{-t[1+\Psi_j(\xi)]} + e^{-t[1+\Psi_j(-\xi)]}\right)\, dt.
    \end{split}\end{equation}
    A direct computation reveals that $\widehat{Rf}=K_{\bm{\Psi}}\hat{f}$,
    as asserted.
\end{proof}

The following is a functional-analytic consequence.

\begin{corollary}\label{cor:R:adjoint}
    The operator $R$ maps $L^2(\R^d)$ into $L^2(\R^d)$,
    and is self-adjoint.
\end{corollary}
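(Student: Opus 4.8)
The plan is short: both claims follow from Lemma~\ref{lem:P:U} together with the contraction property established in Lemma~\ref{lem:contraction}. The mapping property is immediate, since Lemma~\ref{lem:contraction} with $p=2$ already asserts that $R$ is a contraction on $L^2(\R^d)$; in particular $R(L^2(\R^d))\subseteq L^2(\R^d)$ and $R$ is a bounded operator there.

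For self-adjointness, the key elementary observation is that the Fourier multiplier $K_{\bm{\Psi}}$ is real-valued: it is by definition a product of the real parts $\Re(1/(1+\Psi_j(\xi)))$, hence real (and in fact $K_{\bm{\Psi}}(\xi)\in[0\,,1]$, since $\Re\Psi_j\ge 0$ because $|\E e^{i\xi\cdot X_j(1)}|=e^{-\Re\Psi_j(\xi)}\le 1$). Now fix $f,g\in L^1(\R^d)\cap L^2(\R^d)$. By Lemma~\ref{lem:P:U}, $\widehat{Rf}=K_{\bm{\Psi}}\hat f$ and $\widehat{Rg}=K_{\bm{\Psi}}\hat g$, so the Plancherel theorem, in the normalization fixed above, gives
\[
    \int_{\R^d}(Rf)(x)\,\overline{g(x)}\,dx
    =\frac{1}{(2\pi)^d}\int_{\R^d}K_{\bm{\Psi}}(\xi)\,\hat f(\xi)\,\overline{\hat g(\xi)}\,d\xi
    =\int_{\R^d}f(x)\,\overline{(Rg)(x)}\,dx,
\]
where the last equality uses that $K_{\bm{\Psi}}$ is real. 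Since $L^1(\R^d)\cap L^2(\R^d)$ is dense in $L^2(\R^d)$ and $R$ is bounded on $L^2(\R^d)$, both sides of this identity are continuous in $(f\,,g)\in L^2(\R^d)\times L^2(\R^d)$, so $\langle Rf\,,g\rangle=\langle f\,,Rg\rangle$ for \emph{all} $f,g\in L^2(\R^d)$; that is, $R=R^{*}$.

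I do not anticipate any real obstacle. The only points needing a moment of care are (i) checking that $K_{\bm{\Psi}}$ is real-valued — equivalently, that the convolution kernel of $R$ is symmetric, which ultimately reflects that the two-sided extension $\widetilde{X}_j$ is built from $X_j(t_j)$ for $t_j\ge 0$ and $-X_j'(-t_j)$ for $t_j<0$ — and (ii) the routine density argument extending the Plancherel identity from $L^1\cap L^2$ to all of $L^2$. One may also phrase the whole argument in a line: conjugating by the Fourier transform, $R$ is unitarily equivalent to the operator of multiplication by the bounded real function $K_{\bm{\Psi}}$ on $L^2(\R^d)$, and such multiplication operators are manifestly bounded and self-adjoint.
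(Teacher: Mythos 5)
Your argument is correct and is essentially the paper's own proof: both establish the identity $\langle Rf, g\rangle = (2\pi)^{-d}\int K_{\bm{\Psi}}\hat{f}\overline{\hat{g}}$ via Lemma~\ref{lem:P:U} and Parseval for $f\in L^1\cap L^2$, extend by density using the $L^2$-contraction from Lemma~\ref{lem:contraction}, and conclude self-adjointness from the real-valuedness of $K_{\bm{\Psi}}$. Your phrasing of the boundedness step (citing the contraction property directly, rather than the paper's somewhat terse appeal to ``duality'') is if anything slightly cleaner, but the route is the same.
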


\begin{proof}
    By Lemma \ref{lem:P:U}, if $f\in L^1(\R^d)\cap L^2(\R^d)$
    and $g\in L^2(\R^d)$, then
    \begin{equation}
        \int_{\R^d} (Rf)(x) g(x)\,dx =\frac{1}{(2\pi)^d}
        \int_{\R^d} K_{\bm{\Psi}}(\xi)\, \hat{f}(\xi)\,
        \overline{\hat{g}(\xi)}\,d\xi.
    \end{equation}
    Thanks to Lemma \ref{lem:contraction}, the preceding
    holds for all $f\in L^2(\R^d)$. Duality then implies
    that $R: L^2(\R^d)\to L^2(\R^d)$. Moreover, since
    $K_{\bm{\Psi}}$ is real, $R$ is self-adjoint.
\end{proof}

The following lemma shows that for every ${\bm t} \in \R^N$, the
distribution of $\widetilde{\X}(\bm{t})$ under $\P_{\lambda}$ is
$\lambda_d$. This is the reason why we call $\widetilde{\X}$ a
\emph{stationary} additive L\'evy process.
\begin{lemma}\label{lem:Ef}
    If $f:\R^d\to\R_+$ is Borel-measurable, then
    \begin{equation}
        \E_{\lambda_d} \left[ f\left( \widetilde{\X}(\bm{t})\right)\right]
        =\int_{\R^d} ( P_{\bm{t}}f)(x)\, dx
        = \int_{\R^d} f(y)\, dy\qquad\text{for all $\bm{t}\in\R^N$}.
    \end{equation}
\end{lemma}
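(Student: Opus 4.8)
The plan is to reduce the whole statement to Tonelli's theorem together with the translation invariance of Lebesgue measure; nothing probabilistically deep is needed here.

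First I would establish the left-hand identity. Since $f\ge 0$, Tonelli's theorem applies to the $\sigma$-finite measure $\P_{\lambda_d}=\int_{\R^d}\P_x\,dx$, and unwinding the definitions of $\P_{\lambda_d}$, $\E_{\lambda_d}$ and $P_{\bm t}$ gives
\[
    \E_{\lambda_d}\!\left[ f\left(\widetilde{\X}(\bm t)\right)\right]
    =\int_{\R^d}\E_x\!\left[ f\left(\widetilde{\X}(\bm t)\right)\right] dx
    =\int_{\R^d}(P_{\bm t}f)(x)\,dx,
\]
which is the first asserted equality.

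For the second equality I would invoke the convolution representation \eqref{eq:P_t:mu} recorded in the proof of Lemma~\ref{lem:contraction}, namely $P_{\bm t}f=f*\mu_{1,t_1}*\cdots*\mu_{N,t_N}$, where $\mu_{j,t_j}$ is the law of $-\widetilde X_j(t_j)$ and is in particular a Borel \emph{probability} measure on $\R^d$. Writing this convolution out and applying Tonelli once more,
\[
    \int_{\R^d}(P_{\bm t}f)(x)\,dx
    =\int_{\R^d}\!\cdots\!\int_{\R^d} f\left(x-y_1-\cdots-y_N\right)\,
    \mu_{1,t_1}(dy_1)\cdots\mu_{N,t_N}(dy_N)\,dx .
\]
For each fixed $(y_1,\ldots,y_N)$ the inner $x$-integral equals $\int_{\R^d}f(y)\,dy$ by translation invariance of $\lambda_d$, and since each $\mu_{j,t_j}$ has total mass $1$ the remaining integrations contribute only a factor of $1$. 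This gives $\int_{\R^d}(P_{\bm t}f)(x)\,dx=\int_{\R^d}f(y)\,dy$, completing the argument.

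I do not expect any genuine obstacle: this is a routine Fubini--Tonelli computation. The one point worth flagging is that $f$ is not assumed to lie in $L^1(\R^d)$ — the identity is claimed for \emph{all} nonnegative Borel $f$, possibly with $\int f=\infty$ — so the proof should stay entirely within Tonelli's theorem for nonnegative integrands rather than route through the Fourier identity of Lemma~\ref{lem:P:U} (which handles integrable $f$ directly and would then require a monotone-convergence truncation to recover the general case).
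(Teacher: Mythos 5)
Your proposal is correct and follows essentially the same route as the paper: both proofs amount to an application of the Fubini--Tonelli theorem followed by translation invariance of Lebesgue measure. The paper performs the translation directly via the change of variables $y:=x+\widetilde{\X}(\bm t)$ inside $\E\int_{\R^d}f(x+\widetilde{\X}(\bm t))\,dx$, whereas you route through the convolution identity $P_{\bm t}f=f*\mu_{1,t_1}*\cdots*\mu_{N,t_N}$ and then translate; these are the same computation written in two equivalent ways, and your remark about staying within Tonelli (rather than detouring through the Fourier-side Lemma~\ref{lem:P:U}) is well taken.
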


\begin{proof}
    We apply the Fubini-Tonelli theorem to find that
    \begin{equation}
        \E_{\lambda_d} \left[ f\left( \widetilde{\X}(\bm{t})\right)\right]
        = \E \int_{\R^d} f\left( x+\widetilde{\X}(\bm{t})\right)\, dx.
    \end{equation}
    A change of variables [$y:=x+\widetilde{\X}(\bm{t})$]
    proves that the preceding expression is equal to the integral
    of $f$. This implies half of the lemma.
    Another application of the Fubini--Tonelli theorem
    implies the remaining half as well.
\end{proof}

Let us choose and fix $\pi\subseteq\{1\,,\ldots,N\}$
and identify $\pi$ with the partial order
$\prec_\pi$, on $\R^N$, which is defined as follows:
For all $\bm{s},\bm{t}\in\R^N$,
\begin{equation}
    \bm{s}\prec_\pi\bm{t}\quad\text{iff}\quad \begin{cases}
        s_i\le t_i&\text{for all $i\in\pi$, and}\\
        s_i>t_i&\text{for all $i\not\in\pi$}.
    \end{cases}
\end{equation}
The collection of all $\pi\subseteq\{1\,,\ldots,N\}$ forms
a collective total order on $\R^N$ in the sense that
\begin{equation}\label{eq:total:order}
    \text{for all $\bm{s},\bm{t}\in\R^N$ there exists
    $\pi\subseteq\{1\,,\ldots,N\}$ such that
    $\bm{s}\prec_\pi\bm{t}$}.
\end{equation}

For all $\pi\subseteq\{1\,,\ldots,N\}$,
we define the $\pi$-\emph{history} of the random field $\widetilde{\X}$
as the collection
\begin{equation}
    \mathscr{H}_\pi(\bm{t}) := \sigma\left(\left\{
    \widetilde{\X}(\bm{s})\right\}_{\bm{s}\prec_\pi\bm{t}}\right)
    \qquad\text{for all $\bm{t}\in\R^N$},
\end{equation}
where $\sigma(\,\cdots)$ denotes the $\sigma$-algebra generated
by whatever is in the parentheses. Without loss of generality, we
assume that each $\mathscr{H}_\pi(\bm{t})$ is complete
with respect to $\P_x$ for all $x\in\R^d$; else, we replace
it with the said completion. Also, we assume without loss
of generality that $\bm{t}\mapsto\mathscr{H}_\pi(\bm{t})$
is $\pi$-right-continuous.
More precisely, we assume
that
\begin{equation}
    \mathscr{H}_\pi(\bm{t}) =
    \bigcap_{\bm{s}\in\R^N:\
    \bm{t} \prec_\pi\bm{s}} \mathscr{H}_\pi(\bm{s})
    \quad\text{for all $\bm{t}\in\R^N$ and $\pi\subseteq\{1\,,\ldots,N\}$}.
\end{equation}
If not, then we replace the left-hand side by
the right-hand side everywhere.

\begin{proposition}[A Markov-random-field property]\label{pr:MP}
    Suppose $\pi\subseteq\{1\,,\ldots,N\}$
    and $\bm{s}\prec_\pi\bm{t}$, both in $\R^N$.
    Then, for all measurable functions $f:\R^d\to\R_+$,
    \begin{equation}
        \E_{\lambda_d}\left[ \left.
        f\left(\widetilde{\X}(\bm{t})\right)\ \right|\,
        \mathscr{H}_\pi(\bm{s})\right]
        = \left( P_{\bm{t}-\bm{s}}f\right) \left( \widetilde{\X}(\bm{s})
        \right)\qquad \P_{\lambda_d}\text{-a.s.}
    \end{equation}
\end{proposition}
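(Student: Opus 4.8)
The plan is to establish the displayed identity by verifying the defining property of the conditional expectation against a convenient generating family of $\mathscr{H}_\pi(\bm s)$; that is, to show
\[
    \E_{\lambda_d}\!\left[ f\big(\widetilde{\X}(\bm t)\big)\,\1_A\right]
    =\E_{\lambda_d}\!\left[ \big(P_{\bm t-\bm s}f\big)\big(\widetilde{\X}(\bm s)\big)\,\1_A\right]
\]
for a suitable class of sets $A$. Two reductions come first. Since the filtration is $\pi$-right-continuous, $\widetilde{\X}(\bm s)$ is $\mathscr{H}_\pi(\bm s)$-measurable: whenever $\bm s\prec_\pi\bm v$, the variable $\widetilde{\X}(\bm s)$ is among the generators of $\mathscr{H}_\pi(\bm v)$, and intersecting over such $\bm v$ gives $\widetilde{\X}(\bm s)\in\mathscr{H}_\pi(\bm s)$. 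Hence
\[
    \mathscr{H}_\pi(\bm s)=\sigma\big(\widetilde{\X}(\bm s)\big)\vee\mathscr{G},
    \qquad
    \mathscr{G}:=\sigma\Big(\big\{\widetilde{\X}(\bm u)-\widetilde{\X}(\bm s)\big\}_{\bm u\prec_\pi\bm s}\Big),
\]
and, because $\widetilde{\X}(\bm s)$ has law $\lambda_d$ under $\P_{\lambda_d}$ by Lemma \ref{lem:Ef}, the $\sigma$-algebra $\mathscr{H}_\pi(\bm s)$ is $\sigma$-finite under $\P_{\lambda_d}$, so the conditional expectation is well defined and is pinned down by the displayed identity. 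It therefore suffices to prove the identity for sets $A=\{\widetilde{\X}(\bm s)\in B\}\cap C$ with $B\subseteq\R^d$ Borel and $C\in\mathscr{G}$, since these form a $\pi$-system generating $\mathscr{H}_\pi(\bm s)$ and containing the exhausting sequence $\{\|\widetilde{\X}(\bm s)\|\le n\}$; by monotone convergence we may also assume that $f$ is bounded with bounded support, so that $f(\widetilde{\X}(\bm t))\in L^1(\P_{\lambda_d})$.

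The argument then rests on three facts. \emph{(i) Invariance.} For every Borel $\Phi\ge 0$,
\[
    \E_{\lambda_d}\Big[\Phi\Big(\widetilde{\X}(\bm s)\,,\big\{\widetilde{\X}(\bm u)-\widetilde{\X}(\bm s)\big\}_{\bm u\in\R^N}\Big)\Big]
    =\int_{\R^d}\E\Big[\Phi\Big(y\,,\big\{\widetilde{\X}(\bm u)-\widetilde{\X}(\bm s)\big\}_{\bm u\in\R^N}\Big)\Big]\,dy ,
\]
because $\P_x$ is the law of $x+\widetilde{\X}$, the increment field $\{\widetilde{\X}(\bm u)-\widetilde{\X}(\bm s)\}_{\bm u}$ is unchanged by $\omega\mapsto\omega+x$, and $\int_{\R^d}\Phi\big(x+\widetilde{\X}(\bm s)(\omega)\,,\,\cdot\,\big)\,dx=\int_{\R^d}\Phi(y\,,\,\cdot\,)\,dy$ for each fixed $\omega$; this is the natural strengthening of Lemma \ref{lem:Ef}. \emph{(ii) Independence of increments under $\P$.} The increment $\widetilde{\X}(\bm t)-\widetilde{\X}(\bm s)$ is independent of $\mathscr{G}$. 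To see this, write $\widetilde{\X}(\bm t)-\widetilde{\X}(\bm s)=\sum_{j=1}^N(\widetilde{X}_j(t_j)-\widetilde{X}_j(s_j))$ and, similarly, each generator of $\mathscr{G}$ as a sum over $j$ of increments of $\widetilde{X}_j$: the hypothesis $\bm s\prec_\pi\bm t$ places the $j$-th summand of $\widetilde{\X}(\bm t)-\widetilde{\X}(\bm s)$ among the increments of $\widetilde{X}_j$ over $[s_j\,,\infty)$ if $j\in\pi$ and over $(-\infty\,,s_j]$ if $j\notin\pi$, whereas $\bm u\prec_\pi\bm s$ places the $j$-th summand of $\widetilde{\X}(\bm u)-\widetilde{\X}(\bm s)$ on the opposite side of $s_j$; the independent-increments property of each two-sided $\widetilde{X}_j$, together with the mutual independence of $\widetilde{X}_1,\ldots,\widetilde{X}_N$, yields the claim. \emph{(iii) Stationarity.} By stationarity of increments of each $\widetilde{X}_j$, the law of $\widetilde{\X}(\bm t)-\widetilde{\X}(\bm s)$ under $\P$ equals the law of $\widetilde{\X}(\bm t-\bm s)$, so $\E[f(y+\widetilde{\X}(\bm t)-\widetilde{\X}(\bm s))]=(P_{\bm t-\bm s}f)(y)$ for every $y\in\R^d$.

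Now assemble. Abbreviate $\Delta:=\widetilde{\X}(\bm t)-\widetilde{\X}(\bm s)$ and fix $B$ and $C$ as above. Using $\widetilde{\X}(\bm t)=\widetilde{\X}(\bm s)+\Delta$, fact (i) applied to $\Phi(y\,,\,\cdot\,):=f(y+\Delta)\,\1_B(y)\,\1_C$ and the Fubini--Tonelli theorem give
\[
    \E_{\lambda_d}\!\left[f\big(\widetilde{\X}(\bm t)\big)\,\1_{\{\widetilde{\X}(\bm s)\in B\}}\,\1_C\right]
    =\int_{B}\E\!\left[f(y+\Delta)\,\1_C\right]\,dy ;
\]
by (ii) the inner expectation factors as $\E[f(y+\Delta)]\,\P(C)$, which equals $(P_{\bm t-\bm s}f)(y)\,\P(C)$ by (iii), so the left-hand side equals $\P(C)\int_B(P_{\bm t-\bm s}f)(y)\,dy$. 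Applying fact (i) a second time, now to $\Phi(y\,,\,\cdot\,):=(P_{\bm t-\bm s}f)(y)\,\1_B(y)\,\1_C$, shows that $\E_{\lambda_d}\big[(P_{\bm t-\bm s}f)(\widetilde{\X}(\bm s))\,\1_{\{\widetilde{\X}(\bm s)\in B\}}\,\1_C\big]$ equals the same quantity. A monotone-class argument over such sets $A$, followed by a passage to a general $f\ge 0$, completes the proof.

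The delicate step, and the reason this proposition is not a mere restatement of the one-parameter Markov property, is fact (i). Under the reference measure $\P=\P_0$ the increment $\widetilde{\X}(\bm t)-\widetilde{\X}(\bm s)$ need \emph{not} be independent of $\widetilde{\X}(\bm s)$---the two-sided processes are pinned at the parameter origin, not at $\bm s$---so one cannot condition directly under $\P$; it is precisely the averaging against $\lambda_d$ that removes this dependence and produces a genuine semigroup $P_{\bm t-\bm s}$ rather than a two-point kernel. The coordinatewise splitting in (ii) is the expected engine of the proof, but is routine, and (iii) is immediate.
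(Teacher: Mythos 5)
Your argument is correct and follows essentially the same route as the paper's proof: reduce to a generating family, pass to $\int_{\R^d}\E_x[\,\cdot\,]\,dx$ and exploit translation invariance of Lebesgue measure (your fact (i) is exactly the paper's Fubini--Tonelli plus change of variables $y:=x+\widetilde{\X}(\bm s)$), factor using the independent-increments property of the two-sided L\'evy processes (your fact (ii), which you spell out coordinatewise; the paper merely asserts it), identify $\E[f(y+\Delta)]=(P_{\bm t-\bm s}f)(y)$ via stationarity, and close with a monotone-class argument. The only cosmetic difference is that you test against indicators of sets $\{\widetilde{\X}(\bm s)\in B\}\cap C$ while the paper tests against products $g(\widetilde{\X}(\bm s))\prod_j\phi_j(\widetilde{\X}(\bm s^j))$; these generate the same $\sigma$-algebra and play the same role in controlling the $\sigma$-finite reference measure.
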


\begin{proof}
    Choose and fix Borel measurable
    functions $g,\phi_1,\ldots,\phi_m:\R^d\to\R_+$,
    and ``$N$-parameter time
    points'' $\bm{s}^1,\ldots,\bm{s}^m\in\R^N$ such that
    \begin{equation}
        \bm{s}^j\prec_\pi \bm{s}\prec_\pi\bm{t}\qquad
        \text{for all  $j=1,\ldots,m$.}
    \end{equation}

    According to the Fubini-Tonelli theorem,
    \begin{equation}\begin{split}
        &\E_{\lambda_d} \left[ f\left(\widetilde{\X}(\bm{t})\right)
            g\left(\widetilde{\X}(\bm{s})\right) \prod_{j=1}^m
            \phi_j\left(\widetilde{\X}(\bm{s}^j)\right) \right]\\
        &\hskip1.5in = \int_{\R^d} \E
            \left[ f\left(x+\widetilde{\X}(\bm{t})\right)
            g\left(x+\widetilde{\X}(\bm{s})\right) \prod_{j=1}^m
            \phi_j\left(x+\widetilde{\X}(\bm{s}^j)\right) \right]\, dx\\
        &\hskip1.5in =\int_{\R^d} \E\left[ f(A+y) \prod_{j=1}^m\phi_j(A_j+y)
            \right] g(y)\, dy,
    \end{split}\end{equation}
    where
    $A := \widetilde{\X}(\bm{t}) - \widetilde{\X}(\bm{s})$
    and $A_j := \widetilde{\X}(\bm{s}^j) - \widetilde{\X}(\bm{s})$
    for all $j=1,\ldots,m$.

    The independent-increments property of each
    of the L\'evy processes $\widetilde{X}_j$
    implies that $A$ is independent of $\{A_j\}_{j=1}^m$.
    This and the stationary-increments property
    of $\widetilde{X}_1,\ldots,\widetilde{X}_N$ together imply that
    \begin{equation}\begin{split}
        &\E_{\lambda_d} \left[ f\left(\widetilde{\X}(\bm{t})\right)
            g\left(\widetilde{\X}(\bm{s})\right) \prod_{j=1}^m
            \phi_j\left(\widetilde{\X}(\bm{s}^j)\right) \right]\\
        &\hskip2in =\int_{\R^d} \E\left[ f(A+y) \right]
            \E\left[ \prod_{j=1}^m\phi_j(A_j+y)
            \right] g(y)\, dy.
    \end{split}\end{equation}
    After a change of variables and an appeal to the stationary-independent
    property of the increments of $X_1,\ldots,X_N$ and
    $X_1',\ldots,X_N'$, we arrive at the following:
    \begin{equation}\begin{split}
        &\E_{\lambda_d} \left[ f\left(\widetilde{\X}(\bm{t})\right)
            g\left(\widetilde{\X}(\bm{s})\right) \prod_{j=1}^m
            \phi_j\left(\widetilde{\X}(\bm{s}^j)\right) \right]\\
        &\hskip2in =\int_{\R^d} \E\left[ f\left(
            \widetilde{\X}(\bm{t}-\bm{s})+y
            \right) \right]
            \E_y\left[ \prod_{j=1}^m\phi_j(A_j)
            \right] g(y)\, dy\\
        &\hskip2in =\int_{\R^d} (P_{\bm{t}-\bm{s}} f)(y)
            \E_y\left[ \prod_{j=1}^m\phi_j(A_j)
            \right] g(y)\, dy.
    \end{split}\end{equation}
    By the monotone class theorem, for all nonnegative
    $\mathscr{H}_\pi(\bm{s})$-measurable random variables
    $Y$,
    \begin{equation}
        \E_{\lambda_d} \left[ f\left(\widetilde{\X}(\bm{t})\right)
        g\left(\widetilde{\X}(\bm{s})\right) Y \right]
        =\int_{\R^d} (P_{\bm{t}-\bm{s}} f)(y)
        \, \psi(y)\, g(y)\, dy,
    \end{equation}
    where $\psi:\R^d\to\R_+$ is a measurable function. This proves
    the proposition.
\end{proof}

\begin{lemma}\label{lem:Cov}
    If $f,g\in L^2(\R^d)$
    and $\bm{t},\bm{s}\in\R^N$, then
    \begin{equation}
        \E_{\lambda_d} \left[ f\left(\widetilde{\X}(\bm{s})\right)
        g\left(\widetilde{\X}(\bm{t})\right)\right]
        =\int_{\R^d} f(y) (P_{\bm{t-s}}g)(y)\, dy.
    \end{equation}
\end{lemma}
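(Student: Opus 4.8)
The plan is to establish the identity first for nonnegative $f$ and $g$ --- where it is essentially a restatement of the Markov-random-field property --- and then to remove the sign restriction by a positive/negative-part decomposition. For nonnegative Borel $f,g$, I would fix $\pi\subseteq\{1\,,\ldots,N\}$ with $\bm{s}\prec_\pi\bm{t}$, which exists by the collective total order \eqref{eq:total:order}, and condition on $\mathscr{H}_\pi(\bm{s})$. Since $f(\widetilde{\X}(\bm{s}))$ is $\mathscr{H}_\pi(\bm{s})$-measurable, the tower property together with Proposition \ref{pr:MP} gives
\begin{equation*}
    \E_{\lambda_d}\!\left[f\!\left(\widetilde{\X}(\bm{s})\right) g\!\left(\widetilde{\X}(\bm{t})\right)\right]
    =\E_{\lambda_d}\!\left[f\!\left(\widetilde{\X}(\bm{s})\right)\left(P_{\bm{t}-\bm{s}}g\right)\!\left(\widetilde{\X}(\bm{s})\right)\right],
\end{equation*}
and applying Lemma \ref{lem:Ef} to the nonnegative function $y\mapsto f(y)(P_{\bm{t}-\bm{s}}g)(y)$ identifies the right-hand side with $\int_{\R^d}f(y)(P_{\bm{t}-\bm{s}}g)(y)\,dy$. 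This settles the lemma in this case, with both sides allowed to equal $+\infty$. One may alternatively skip the conditioning step and observe that this identity is precisely the $m=0$ instance of the Fubini--stationarity computation already carried out in the proof of Proposition \ref{pr:MP}.

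To pass to general $f,g\in L^2(\R^d)$, write $f=f^+-f^-$ and $g=g^+-g^-$ and apply the nonnegative case to each of the four products $f^{\e}(\widetilde{\X}(\bm{s}))\,g^{\delta}(\widetilde{\X}(\bm{t}))$, $\e,\delta\in\{+,-\}$. Each of these lies in $L^1(\P_{\lambda_d})$, since Lemma \ref{lem:Ef} gives $\E_{\lambda_d}[f^{\e}(\widetilde{\X}(\bm{s}))^2]=\|f^{\e}\|_{L^2(\R^d)}^2$ and $\E_{\lambda_d}[g^{\delta}(\widetilde{\X}(\bm{t}))^2]=\|g^{\delta}\|_{L^2(\R^d)}^2$, so that the Cauchy--Schwarz inequality applies; and each integral $\int_{\R^d}f^{\e}(y)(P_{\bm{t}-\bm{s}}g^{\delta})(y)\,dy$ is finite because $P_{\bm{t}-\bm{s}}$ is an $L^2$-contraction (Lemma \ref{lem:contraction}). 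Consequently $f(\widetilde{\X}(\bm{s}))\,g(\widetilde{\X}(\bm{t}))\in L^1(\P_{\lambda_d})$, the left side of the asserted identity decomposes linearly into the four pieces, and resumming --- using the linearity of $P_{\bm{t}-\bm{s}}$ on $L^2(\R^d)$, so that $P_{\bm{t}-\bm{s}}g=P_{\bm{t}-\bm{s}}g^+-P_{\bm{t}-\bm{s}}g^-$ --- produces $\int_{\R^d}f(y)(P_{\bm{t}-\bm{s}}g)(y)\,dy$. If one wants complex-valued $f,g$, one splits further into real and imaginary parts in the same way.

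I do not expect a genuine obstacle here. The only points that need a little care are the integrability bookkeeping in the second step, which is what makes the $\pm$-decomposition legitimate, and --- should one route the nonnegative case through Proposition \ref{pr:MP} rather than redoing its proof with $m=0$ --- the measurability of $\widetilde{\X}(\bm{s})$ with respect to $\mathscr{H}_\pi(\bm{s})$; the latter holds up to $\P_{\lambda_d}$-null sets because a L\'evy process is almost surely continuous at any fixed time, so that the $\pi$-right-continuity of the histories exhibits $\widetilde{\X}(\bm{s})$ as an $\mathscr{H}_\pi(\bm{s})$-measurable limit.
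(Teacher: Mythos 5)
Your proof takes essentially the same approach as the paper: choose $\pi$ so that $\bm{s}\prec_\pi\bm{t}$, apply Proposition \ref{pr:MP} together with the tower property, and finish with Lemma \ref{lem:Ef}, reducing to nonnegative $f,g$ first. The paper compresses the reduction to the remark ``without loss of generality, measurable and nonnegative $f,g\in L^2(\R^d)$'' and does not spell out the $\pm$-decomposition or the $\mathscr{H}_\pi(\bm{s})$-measurability of $\widetilde{\X}(\bm{s})$, both of which you fill in correctly (for the latter, cadlag right-continuity already suffices for the coordinates $j\notin\pi$, though your a.s.\ continuity at fixed times also works).
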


\begin{proof}
    We may consider, without loss of generality, measurable
    and nonnegative functions $f,g\in L^2(\R^d)$. Let
    $\pi$ denote the collection of all $i\in\{1\,,\ldots,N\}$
    such that $s_i\le t_i$. Then $\bm{s}\prec_\pi\bm{t}$, and
    Proposition \ref{pr:MP} implies that $\P_{\lambda_d}$-a.s.,
    \begin{equation}
        \E_{\lambda_d}\left[ \left. f\left(\widetilde{\X}(\bm{t})\right)
        \ \right|\, \mathscr{H}_\pi(\bm{s}) \right]
        =\left( P_{\bm{t}-\bm{s}} f\right)\left(
        \widetilde{\X}(\bm{s}) \right).
    \end{equation}
    This and Lemma \ref{lem:Ef} together conclude the proof.
\end{proof}

\section{\bf The sojourn operator}

Recall \eqref{eq:[t]},
and consider the ``sojourn operator,''
\begin{equation}\label{eq:Sf}
    Sf := \frac{1}{2^N} \int_{\R^N} f\left(
    \widetilde{\X}(\bm{t}) \right)  e^{-[\bm{t}]}\, d\bm{t}.
\end{equation}

Our first lemma records the fact that $S$ maps density
functions to mean-one random variables $[\P_{\lambda_d}]$.
\begin{lemma}\label{lem:ESf}
    If $f$ is a probability density function on $\R^d$,
    then $\E_{\lambda_d} [Sf] =1$.
\end{lemma}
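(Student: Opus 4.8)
The plan is to compute $\E_{\lambda_d}[Sf]$ by unwinding the definition of $S$ from \eqref{eq:Sf} and swapping the expectation with the $d\bm{t}$-integration, then invoking the stationarity fact already established in Lemma \ref{lem:Ef}. Concretely, I would write
\begin{equation*}
    \E_{\lambda_d}[Sf] = \frac{1}{2^N}\int_{\R^N}
    \E_{\lambda_d}\!\left[ f\left(\widetilde{\X}(\bm{t})\right)\right]
    e^{-[\bm{t}]}\, d\bm{t},
\end{equation*}
the interchange being justified by the Fubini--Tonelli theorem, since $f\ge 0$ and the integrand is jointly measurable in $(\omega\,,\bm{t})$.

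Next I would apply Lemma \ref{lem:Ef} with this particular $f$: because $f$ is a probability density, $\int_{\R^d} f(y)\, dy = 1$, so $\E_{\lambda_d}[f(\widetilde{\X}(\bm{t}))] = 1$ for every $\bm{t}\in\R^N$. Substituting this back in, the expression collapses to
\begin{equation*}
    \E_{\lambda_d}[Sf] = \frac{1}{2^N}\int_{\R^N} e^{-[\bm{t}]}\, d\bm{t}
    = \frac{1}{2^N}\prod_{j=1}^N \int_{\R} e^{-|t_j|}\, dt_j
    = \frac{1}{2^N}\cdot 2^N = 1,
\end{equation*}
using that $[\bm{t}] = \sum_j |t_j|$ from \eqref{eq:[t]} and that $\int_\R e^{-|t|}\, dt = 2$.

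There is essentially no obstacle here: the only point requiring a word of care is the measurability needed to apply Fubini--Tonelli, which is routine given that $\widetilde{\X}$ is (jointly) measurable in its parameter and the path $\omega$ on the canonical space, and that $f$ is Borel. Everything else is the elementary product computation above, so the lemma follows immediately.
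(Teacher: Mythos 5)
Your proof is correct and is exactly the argument the paper intends: the paper simply remarks that Lemma \ref{lem:ESf} ``follows readily from Lemma \ref{lem:Ef},'' which is precisely your Fubini--Tonelli interchange followed by the constancy of $\E_{\lambda_d}[f(\widetilde{\X}(\bm{t}))]$ and the elementary computation $\frac{1}{2^N}\int_{\R^N}e^{-[\bm{t}]}\,d\bm{t}=1$.
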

This follows readily from Lemma \ref{lem:Ef}. Our next result shows
that, under a mild condition on $\Psi_j$, $S$ embeds  functions in
$L^2(\R^d)$, quasi-isometrically, into the subcollection of all
functions in $L^2(\R^d)$ that have finite energy. Namely,

\begin{proposition}\label{pr:EA_f2}
    If $f\in L^2(\R^d)$, then
    \begin{equation} \label{Eq:SoB1}
        \left\| Sf\right\|_{L^2(\P_{\lambda_d})}
        \le \sqrt{I_{\bm{\Psi}}(f)}.
    \end{equation}
    Suppose, in addition, that there exists a constant
    $c\in(0\,,\sqrt 2)$, such that the following sector
    condition holds for all $j=1,\ldots, N$:
    \begin{equation}\label{eq:sector}
        \left| \Im \Psi_j(\xi)\right| \le c\left( 1+
        \Re \Psi_j(\xi)\right) \qquad\text{for all
        $\xi\in\R^d$}.
    \end{equation}
    Then, there exists a constant $A\in(0\,,1)$ such that
    \begin{equation} \label{Eq:SoB2}
        A\, \sqrt{I_{\bm{\Psi}}(f)}\le
        \left\| Sf\right\|_{L^2(\P_{\lambda_d})}
        \le \sqrt{I_{\bm{\Psi}}(f)}.
    \end{equation}
\end{proposition}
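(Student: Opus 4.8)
The plan is to compute $\|Sf\|_{L^2(\P_{\lambda_d})}^2$ exactly on the Fourier side and then compare the resulting Fourier multiplier with $K_{\bm{\Psi}}$. By splitting $f$ into real and imaginary parts — and using that $K_{\bm{\Psi}}$ is even and that $\widehat{\Re f},\widehat{\Im f}$ have the conjugate symmetry that kills the cross term — we may take $f$ real-valued; by truncation and Lemma~\ref{lem:contraction} it is moreover enough to treat $f\in L^1(\R^d)\cap L^2(\R^d)$, the passage to general $f\in L^2(\R^d)$ being a density argument. For such an $f$, squaring and invoking the Fubini--Tonelli theorem gives
\[
    \|Sf\|_{L^2(\P_{\lambda_d})}^2=\frac{1}{4^N}\int_{\R^N}\int_{\R^N}e^{-[\bm s]-[\bm t]}\,
    \E_{\lambda_d}\!\left[f\big(\widetilde\X(\bm s)\big)f\big(\widetilde\X(\bm t)\big)\right]d\bm s\,d\bm t .
\]
By Lemma~\ref{lem:Cov} the inner expectation equals $\int_{\R^d}f(y)(P_{\bm t-\bm s}f)(y)\,dy$, and \eqref{eq:P_tf:Fourier}, the Plancherel theorem and $\overline{\Psi_j(\eta)}=\Psi_j(-\eta)$ rewrite it as $(2\pi)^{-d}\int_{\R^d}|\hat f(\xi)|^2\prod_{j=1}^N\exp(-|t_j-s_j|\,\Psi_j(\mathrm{sgn}(t_j-s_j)\xi))\,d\xi$. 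Since $e^{-[\bm s]-[\bm t]}$ factors over the coordinates, so does the $(\bm s,\bm t)$-integral, and one obtains
\[
    \|Sf\|_{L^2(\P_{\lambda_d})}^2=\frac{1}{(2\pi)^d}\int_{\R^d}|\hat f(\xi)|^2\prod_{j=1}^N M_j(\xi)\,d\xi,
    \qquad M_j(\xi):=\frac14\int_\R\!\int_\R e^{-|s|-|t|}e^{-|t-s|\Psi_j(\mathrm{sgn}(t-s)\xi)}\,ds\,dt .
\]

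The next step is the explicit evaluation of $M_j$. Splitting the double integral according to $\mathrm{sgn}(t-s)$, substituting $u=|t-s|$, using $\int_\R e^{-|s|-|s+u|}\,ds=(1+u)e^{-u}$ for $u\ge0$, and again $\Psi_j(-\xi)=\overline{\Psi_j(\xi)}$, one finds $M_j(\xi)=\tfrac12\Re\int_0^\infty(1+u)e^{-u(1+\Psi_j(\xi))}\,du$, i.e.
\[
    M_j(\xi)=\frac12\,\Re\!\left(\frac{1}{1+\Psi_j(\xi)}+\frac{1}{(1+\Psi_j(\xi))^2}\right).
\]
Writing $w=1+\Psi_j(\xi)=a+bi$ with $a=1+\Re\Psi_j(\xi)\ge1$, one computes $\Re(1/w)=a/|w|^2\ge0$ and $\Re(1/w)-\Re(1/w^2)=\big(a^2(a-1)+b^2(a+1)\big)/|w|^4\ge0$, whence $0\le M_j(\xi)\le\Re(1/w)=\Re\big(1/(1+\Psi_j(\xi))\big)$. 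Multiplying over $j$ gives $0\le\prod_j M_j\le K_{\bm{\Psi}}$, which is precisely \eqref{Eq:SoB1}.

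For the reverse inequality in \eqref{Eq:SoB2} it is enough to produce a constant $c_0=c_0(c)\in(0,1)$ with $M_j(\xi)\ge c_0\,\Re\big(1/(1+\Psi_j(\xi))\big)$ for all $j$ and $\xi$; then \eqref{Eq:SoB2} holds with $A:=c_0^{N/2}\in(0,1)$, the bound $A<1$ already being contained in \eqref{Eq:SoB1}. The sector condition \eqref{eq:sector} — which says exactly that $|b|\le c\,a$, i.e. $b^2\le c^2a^2$ — is used only here: one must show that
\[
    \frac{M_j(\xi)}{\Re\big(1/(1+\Psi_j(\xi))\big)}=\frac12+\frac{a^2-b^2}{2a(a^2+b^2)}
\]
is bounded below by a positive constant depending only on $c$, and the restriction $c\in(0,\sqrt2)$ enters this estimate. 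I expect this last pointwise inequality — tracking how far the term $b^2=(\Im\Psi_j(\xi))^2$ can pull the ratio down, and isolating the role of the cutoff $\sqrt2$ — to be the only genuinely delicate point; the remainder is bookkeeping together with the routine density argument that reduces the displayed identity for $\|Sf\|_{L^2(\P_{\lambda_d})}^2$ from general $f\in L^2(\R^d)$ to $f\in L^1(\R^d)\cap L^2(\R^d)$.
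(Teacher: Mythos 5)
Your approach is in essence the paper's own: both reduce $\|Sf\|_{L^2(\P_{\lambda_d})}^2$ via Lemma~\ref{lem:Cov} and Plancherel to an explicit Fourier multiplier, which you write as $M_j(\xi)=\tfrac12\Re\bigl((1+\Psi_j(\xi))^{-1}+(1+\Psi_j(\xi))^{-2}\bigr)$ and the paper writes as $\Lambda(\Psi_j(\xi))/4$; these coincide (compare your formula with \eqref{Eq:Lambda3}), and the upper bound \eqref{Eq:SoB1} then follows from your correct observation that $\Re(1/w^2)\le\Re(1/w)$ when $\Re w\ge 1$. So for the upper bound you are done.

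The gap is in the lower bound: you isolate the right quantity,
\[
    \frac{M_j(\xi)}{\Re\big(1/(1+\Psi_j(\xi))\big)}=\frac12+\frac{a^2-b^2}{2a(a^2+b^2)},
    \qquad a:=1+\Re\Psi_j(\xi)\ge1,\quad b:=\Im\Psi_j(\xi),
\]
but you only \emph{assert} that the sector condition $b^2\le c^2a^2$ yields a positive lower bound and say you ``expect'' the verification to be delicate. That step must actually be carried out, and it is elementary: write $\beta:=b^2/a^2\in[0,c^2]$, so the second summand is $\frac{1-\beta}{2a(1+\beta)}$. If $\beta\le1$ this term is nonnegative and the ratio is $\ge\tfrac12$. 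If $1<\beta\le c^2$ (possible only when $c>1$), then since $1-\beta<0$ and $a\ge1$ we have $\frac{1-\beta}{2a(1+\beta)}\ge\frac{1-\beta}{2(1+\beta)}\ge\frac{1-c^2}{2(1+c^2)}$ (the map $\beta\mapsto(1-\beta)/(1+\beta)$ is decreasing), and therefore the ratio is $\ge\tfrac12+\frac{1-c^2}{2(1+c^2)}=\frac{1}{1+c^2}$. Hence $M_j\ge\min\bigl(\tfrac12,\tfrac1{1+c^2}\bigr)\Re\bigl(1/(1+\Psi_j)\bigr)$ in all cases, and taking the product over $j$ gives \eqref{Eq:SoB2} with $A:=\bigl(\min(\tfrac12,\tfrac1{1+c^2})\bigr)^{N/2}$. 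Note in passing that this bound is strictly positive for \emph{every} finite $c$, so the hypothesis $c<\sqrt2$ is not forced by your route; it enters the paper's Lemma~\ref{lem:technical} only because the constant $2(2-c^2)$ chosen there happens to vanish at $c=\sqrt2$.

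One further small point: the preliminary reduction to real-valued $f$ is not needed. Lemma~\ref{lem:Cov} together with Parseval's identity handles $\E_{\lambda_d}\bigl[f(\widetilde\X(\bm s))\overline{f(\widetilde\X(\bm t))}\bigr]$ directly, producing $|\hat f(\xi)|^2$ regardless of whether $f$ is real; the paper proceeds exactly this way without the splitting you propose.
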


\begin{remark}[Generalized Sobolev spaces]
    When $N=1$ and the L\'evy process
    in question is symmetric, the following
    problem arises in the theory
    of Dirichlet forms: For what $f$ in the class $\mathscr{D}(\R^d)$,
    of Schwartz distributions on $\R^d$, can we define $Sf$
    as an element of $L^2(\P_{\lambda_d})$ (say)? This
    problem continues to make sense in the more general context
    of additive L\'evy processes. And the answer
    is given by (\ref{Eq:SoB2}) in Proposition \ref{pr:EA_f2} as follows:
    Assume that the \emph{sector condition} \eqref{eq:sector} holds
    for all $j=1,\ldots,N$ and some $c\in(0\,,\sqrt 2)$.
    Let $\mathscr{S}_{\bm{\Psi}}(\R^d)$ denote the completion
    of the collection of all members of
    $L^2(\R^d)$ that have finite energy $I_{\bm{\Psi}}$,
    where the completion is made in the norm $\|f\|_{\bm{\Psi}}:=
    I_{\bm{\Psi}}^{1/2}(f)+\|f\|_{L^2(\R^d)}$. Then,
    there exists an a.s.-unique maximal extension $\bar{S}$ of $S$ such that
    $\bar{S}:\mathscr{S}_{\bm{\Psi}}(\R^d)\to\bar{S}
    (\mathscr{S}_{\bm{\Psi}}(\R^d))$ is a quasi-isometry. The space
    $\mathscr{S}_{\bm{\Psi}}(\R^d)$ generalizes further some of
    the $\psi$-Bessel potential spaces of
    \fullocite{farkasjacobschilling} and
    \ocite{farkasleopold}; see also
    \ocite{jacobschilling}, \ocite{masjanagel},
    and \ocite{slobodecki}.\qed
\end{remark}

The proof requires a technical lemma, which
we develop first.

\begin{lemma}\label{lem:technical}
    For all $z\in\mathbf{C}$ define
    \begin{equation}
        \Lambda (z) :=
        \int_{-\infty}^\infty \int_{-\infty}^\infty
        e^{-|t|-|s|-|t-s|\sigma(z;\, t-s)}\, dt\, ds,
    \end{equation}
    where $\sigma(z;\, r):=z$ if $r\ge 0$ and
    $\sigma(z;\, r):=\bar z$ otherwise.
    Then for all $z\in\mathbf{C}$ with $\Re z \ge 0$,
    \begin{equation} \label{Eq:Lambda1}
        \Lambda (z) \le 4\Re\left(\frac{1}{1+z}\right).
    \end{equation}
    If, in addition, $|\Im z| \le c(1+\Re z)$ for some
    $c\in (0\,,\sqrt{2} )$, then
    \begin{equation} \label{Eq:Lambda2}
      \Lambda (z)\ge  2\left( 2-c^2\right)
        \Re\left(\frac{1}{1+z}\right).
    \end{equation}
\end{lemma}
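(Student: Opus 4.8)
The strategy is to evaluate $\Lambda(z)$ in closed form; once that is done, both bounds collapse to elementary assertions about the complex number $1+z$. The first step is to decouple the double integral through the linear substitution $u:=t-s$, $v:=t+s$, whose Jacobian is $\tfrac12$. Since $|t|+|s|=\tfrac12(|v+u|+|v-u|)=\max(|u|,|v|)$ by the identity $|A+B|+|A-B|=2\max(|A|,|B|)$, since $|t-s|=|u|$, and since $\sigma(z;t-s)$ depends only on the sign of $u$, the $v$-integral decouples and is computed directly: $\int_{-\infty}^\infty e^{-\max(|u|,|v|)}\,dv=2(|u|+1)e^{-|u|}$. Substituting this back and splitting the remaining $u$-integral according to the sign of $u$ yields
\[
    \Lambda(z)=\int_0^\infty (u+1)e^{-u(1+z)}\,du+\int_0^\infty (u+1)e^{-u(1+\bar z)}\,du
    =2\,\Re\!\left(\frac{1}{1+z}+\frac{1}{(1+z)^2}\right),
\]
the integrals converging because $\Re(1+z)\ge 1>0$.

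Next I would put $v:=1+z$, so that $\Re v\ge 1$, record that $\Re\!\left(\tfrac{1}{1+z}\right)=\Re(1/v)$ and $\Lambda(z)=2\,\Re(1/v)+2\,\Re(1/v^2)$, and thereby reduce \eqref{Eq:Lambda1} to the inequality $\Re(1/v^2)\le\Re(1/v)$ and \eqref{Eq:Lambda2} to $\Re(1/v^2)\ge(1-c^2)\,\Re(1/v)$. Writing $v=a+ib$ with $a\ge 1$ and $b\in\R$, one has $\Re(1/v)=a/(a^2+b^2)$ and $\Re(1/v^2)=(a^2-b^2)/(a^2+b^2)^2$. Clearing the (positive) denominator, the first inequality is equivalent to $a^2(a-1)+b^2(a+1)\ge 0$, which is immediate since $a\ge 1$; this establishes \eqref{Eq:Lambda1}.

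The lower bound \eqref{Eq:Lambda2} is the heart of the matter, and it is precisely here that the sector hypothesis is used: $|\Im z|\le c(1+\Re z)$ says exactly that $|b|=|\Im v|\le c\,\Re v=ca$, i.e.\ $b^2\le c^2a^2$. After clearing denominators, $\Re(1/v^2)\ge(1-c^2)\Re(1/v)$ becomes
\[
    \frac{a^2-b^2}{a^2+b^2}\ \ge\ (1-c^2)\,a ,
\]
and the left-hand side, being decreasing in $b^2$, is bounded below by $(1-c^2)/(1+c^2)$. The remaining verification is a short case analysis according to the sign of $1-c^2$, combining the constraint $b^2\le c^2a^2$ with $a\ge 1$ and the hypothesis $c<\sqrt 2$. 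I expect this last elementary inequality to be the main obstacle: it is the only place where the constant $\sqrt 2$ is needed, the case $1-c^2>0$ is the delicate one, and one must invoke the two constraints $a\ge 1$ and $b^2\le c^2a^2$ in the right order and track signs carefully; the closed form for $\Lambda$ and the two reductions above are routine by comparison.
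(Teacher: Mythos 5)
Your closed-form evaluation of $\Lambda$ is correct and yields the same formula the paper obtains, namely $\Lambda(z)=2\Re\left(\frac{1}{1+z}\right)+2\Re\left(\frac{1}{(1+z)^2}\right)$; the paper reaches it by splitting the $(s,t)$-plane into quadrants rather than via your substitution $(u,v)=(t-s,t+s)$, but the two routes are equivalent and yours is arguably cleaner. Your reduction of the upper bound \eqref{Eq:Lambda1} to $a^2(a-1)+b^2(a+1)\ge 0$ is likewise correct.

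For the lower bound, however, the inequality you reduce to, $\frac{a^2-b^2}{a^2+b^2}\ge(1-c^2)a$, is false when $c<1$ and $a$ is large: take $b=0$, $a=4$, $c=1/2$, which asks for $1\ge 3$. This is not a defect of your method but of the lemma as stated. At $z=3$ the sector hypothesis holds vacuously for every $c>0$, yet $\Lambda(3)=\frac{1}{2}+\frac{1}{8}=\frac{5}{8}$ while $2(2-c^2)\Re\left(\frac{1}{4}\right)=\frac{2-c^2}{2}$, which exceeds $\frac{5}{8}$ whenever $c<\sqrt{3}/2$. The paper's own proof shares exactly this gap: writing $R:=\Re\left(\frac{1}{1+z}\right)\in[0,1]$, the step from $\Lambda\ge 2R+2(1-c^2)R^2$ to $\Lambda\ge 2(2-c^2)R$ amounts to $(1-c^2)(R^2-R)\ge 0$, which fails for $c<1$ and $R\in(0,1)$. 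Thus \eqref{Eq:Lambda2} is correct only for $c\in[1,\sqrt2)$, and that is what your argument and the paper's actually establish. The restriction is harmless for Proposition \ref{pr:EA_f2}: a sector condition with constant $c<1$ automatically implies one with constant $1$, so one may always take $c\ge 1$ there. You were right to flag the case $1-c^2>0$ as the sticking point; it cannot be closed because the claim is false in that range.
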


\begin{proof}
    The double integral is computed by
    dividing the region of integration into four
    natural parts: (i) $s,t\ge 0$; (ii) $s,t\le 0$;
    (iii) $t\ge 0\ge s$; and (iv) $s\ge 0\ge t$. Direct
    computation reveals that for all $z\in\mathbf{C}$ with $\Re z \ge 0$
    \begin{equation}\begin{split}
        &\int_0^\infty\int_0^\infty
            e^{-|t|-|s|-|t-s|\sigma(z;\, t-s)}\, dt\, ds
            + \int_{-\infty}^0 \int_{-\infty}^0
            e^{-|t|-|s|-|t-s|\sigma(z;\, t-s)}\, dt\, ds\\
        &\hskip4.4in =
            2\Re\left(\frac{1}{1+z}\right).
    \end{split}\end{equation}
    Similarly, one can compute
    \begin{equation}\begin{split}
        &\int_0^\infty\int_{-\infty}^0
            e^{-|t|-|s|-|t-s|\sigma(z;\, t-s)}\, dt\, ds
            + \int_{-\infty}^0 \int_0^\infty
            e^{-|t|-|s|-|t-s|\sigma(z;\, t-s)}\, dt\, ds\\
        &\hskip4in = \frac 1 {(1 + z)^2} +  \frac 1 {(1 + \bar z)^2}.
    \end{split}\end{equation}
    Consequently,
    \begin{equation}\label{Eq:Lambda3}
        \Lambda (z) = 2\Re\left(\frac{1}{1+z}\right) +  \frac{2
        \big((1+\Re z)^2 - (\Im z)^2\big)} {|1 + z|^4},
    \end{equation}
    for all $z\in\mathbf{C}$ with $\Re z \ge 0$. It follows that
    \begin{equation}\label{Eq:Lambda4}
        \Lambda (z)\le  2\Re\left(\frac{1}{1+z}\right)  \left[
        1+\Re\left(\frac{1}{1+z}\right)\right]
    \end{equation}
    for all $z\in\mathbf{C}$ with $\Re z \ge 0$. Whenever $\Re z \ge 0$,
    we have $0\le \Re(1+z)^{-1}\le 1$, and hence (\ref{Eq:Lambda1})
    follows from \eqref{Eq:Lambda4}. On the other hand, if $|\Im z | \le
    c(1+\Re z)$, then \eqref{Eq:Lambda3} yields
    \begin{equation}
        \Lambda (z) \ge 2\Re\left(\frac{1}{1+z}\right)
        + 2\left(1-c^2\right) \left[ \Re
        \left( \frac{1}{1+z}\right)\right]^2,
    \end{equation}
    from which the result follows readily, because
    $0\le \Re[(1+z)^{-1}]\le 1$ when $\Re z\ge 0$.
\end{proof}

\begin{proof}[Proof of Proposition \ref{pr:EA_f2}]
    We apply Lemma \ref{lem:Cov} to deduce that
    \begin{equation}
        \E_{\lambda_d}
        \left( \left| Sf\right|^2\right)
        = \frac{1}{4^N}\int_{\R^d} \iint_{\R^N\times\R^N}
        e^{-[\bm{t}]-[\bm{s}]} f(y) (P_{\bm{t}-\bm{s}}f)(y)\, d\bm{t}
        \, d\bm{s}\, dy.
    \end{equation}
    In accord with \eqref{eq:P_tf:Fourier}
    and Parseval's identity,
    for all $\bm{u}\in\R^N$,
    \begin{equation}\begin{split}
        \int_{\R^d} f(y) (P_{\bm{u}}f)(y)\, dy
            &=\frac{1}{(2\pi)^d}\int_{\R^d}
            \overline{\hat{f}(\xi)}\,
            \widehat{(P_{\bm u}f)}(\xi)\, d\xi\\
        &= \frac{1}{(2\pi)^d}\int_{\R^d} \left| \hat{f}(\xi) \right|^2
            \exp\left(-\sum_{j=1}^N |u_j| \Psi_j \left( -
            \text{\rm sgn}(u_j)\xi \right) \right)\, d\xi.
    \end{split}\end{equation}
    This and the Fubini-Tonelli theorem together reveal that
    \begin{equation}
        \E_{\lambda_d}
        \left(\left| Sf\right|^2\right) = \frac{1}{4^N(2\pi)^d}
        \int_{\R^d} \left| \hat{f}(\xi)\right|^2    \prod_{j=1}^N
        \Lambda (\Psi_j(\xi)) \,d\xi.
    \end{equation}
Since $\Re \Psi_j(\xi) \ge 0$ for all $\xi \in \R^d$ and $j = 1,
\ldots, N$, we apply Lemma \ref{lem:technical} to this formula, and
conclude the proof of the proposition.
\end{proof}

\section{\bf Proof of Theorem \ref{th:main:tilde}}

Thanks to the definition of $\mathrm{cap}_{\bm{\Psi}}$, and to the
countable additivity of $\P$, it suffices to consider only the case
that
\begin{equation}
    \text{$F$ is a compact set}.
\end{equation}
This condition is tacitly assumed throughout this section.
We note, in particular, that $\mathcal{P}_c(F)$ denotes merely the
collection of all Borel probability measures that are supported
on $F$.

Proposition 5.7 of \fullocite{KXZ:03} proves that
for every compact set $F \subseteq \R^d$,
\begin{equation}\label{asserted:LB}
    \mathrm{cap}_{\bm{\Psi}}(F) > 0 \ \Longrightarrow\
    \E\left[\lambda_d\left( \X([0, r]^N) \oplus F\right)\right]
    > 0
\end{equation}
for all $r > 0$. It is clear the latter implies that $
\E\big[\lambda_d\big(\widetilde{\X}(\R^N) \oplus F\big)\big]
> 0$, and one obtains half of the theorem.

Since $\mathrm{cap}_{\bm{\Psi}}(-F)  = \mathrm{cap}_{\bm{\Psi}}(F)$,
we may and will replace $F$ by $-F$ throughout. In light of
\eqref{eq:fubini}, we then assume that
\begin{equation}\label{assume}
    \P_{\lambda_d}\left\{ \widetilde{\X}(\R^N)\cap
    F\neq\varnothing\right\}>0,
\end{equation}
and seek to deduce the existence of a probability measure $\mu$ on
$F$ such that $I_{\bm{\Psi}}(\mu)<\infty$.

We will prove a little more. Namely, that for all $k>0$
there exists a constant $A=A(k\,,N)\in(0\,,\infty)$ such that
\begin{equation}
    \P_{\lambda_d}\left\{ \widetilde{\X}\left(
    [-k\,,k]^N\right) \cap F \neq \varnothing\right\} \le
    A\, \mathrm{cap}_{\bm{\Psi}}(F).
\end{equation}
In fact,
we will prove that for all
sufficiently large $k>0$,
\begin{equation}\label{asserted:UB}
    \P_{\lambda_d}\left\{ \widetilde{\X}\left(
    [-k\,,k]^N\right) \cap F \neq \varnothing\right\} \le
    e^{2Nk} 4^N\, \mathrm{cap}_{\bm{\Psi}}(F).
\end{equation}
This would conclude our proof of the second half of the theorem.

It is a standard fact that there exists a probability density
function $\phi_1$ in $C^\infty(\R^d)$ with the following properties:
\begin{enumerate}
    \item[\bf P1.] $\phi_1(x)=0$ if $\|x\|>1$;
    \item[\bf P2.] $\phi_1(x)=\phi_1(-x)$ for all $x\in\R^d$;
    \item[\bf P3.] $\hat{\phi}_1(x)\ge 0$ for all $x\in\R^d$.
\end{enumerate}
This can be obtained, for example, readily from Plancherel's
[duality] theorem of Fourier analysis.

We recall also the following standard fact: $\phi_1 \in
L^1(\R^d)$ and {\bf P3} together imply that $\hat{\phi}_1 \in L^1(\R^d)$
\cite{Hawkes:84}*{Lemma 1}.

Now we define an approximation to the identity $\{\phi_\e\}_{\e>0}$
by setting
\begin{equation} \label{Def:phie}
    \phi_\e(x) := \frac{1}{\e^d}\phi_1\left( \frac{x}{\e}\right)\qquad
    \text{for all $x\in\R^d$ and $\e>0$}.
\end{equation}
It follows readily from this that for every $\mu \in \mathcal{P}_c(F)$:
\begin{enumerate}
    \item[(i)] $\mu*\phi_\e$ is a uniformly continuous probability density;
    \item[(ii)] $\mu*\phi_\e$ is supported on the closed
        $\e$-enlargement of $F$, which we denote by $F_\e$, for all $\e>0$;
    \item[(iii)] $\phi_\e$ is symmetric; and
    \item[(iv)] $\lim_{\e\to 0^+}\hat{\phi_\e}(\xi)=1$
        for all $\xi\in\R^d$.
\end{enumerate}

As was done in \fullocite{KXZ:03}, we can find a random
variable $\bm{T}$ with values in $\R^N\cup\{\infty\}$ such that:
\begin{enumerate}
    \item $\{\bm{T}=\infty\}$ is
        equal to the event that $\widetilde{\X}(\bm{t})
        \not\in F$ for all $\bm{t}\in\R^N$;
    \item $\widetilde{\X}(\bm{T})\in F$ on $\{\bm{T}\neq\infty\}$.
\end{enumerate}
This can be accomplished pathwise.
Consequently, \eqref{assume} is equivalent to the condition that
\begin{equation}\label{assume2}
    \P_{\lambda_d}\left\{\bm{T}\neq\infty\right\}>0.
\end{equation}

Define for all Borel sets $A\subseteq\R^d$
and all integers $k\ge 1$,
\begin{equation}
    \mu_k (A) := \P_{\lambda_d}\left( \widetilde{\X}(\bm{T})\in A\
    \Big|\ \bm{T}\in[-k\,,k]^N
    \right).
\end{equation}
We claim that $\mu_k$ is a probability measure on $F$
for all $k$ sufficiently large.
In order to prove this claim we choose and fix
$l>0$, and consider
\begin{equation}
    \mu_{k,l}(A) := \frac{\P_{\lambda_d}\left\{
    \widetilde{\X}(\bm{T})\in A ~,~ \bm{T}\in[-k\,,k]^N ~,~
    \left| \widetilde{\X}(\bm{0})\right| \le l \right\}}{\P_{\lambda_d}
    \left\{
    \bm{T}\in[-k\,,k]^N~,~\left| \widetilde{\X}(\bm{0})\right| \le l\right\}}.
\end{equation}
Because the $\P_{\lambda_d}$-distribution of
$\widetilde{\X}(\bm{0})$ is $\lambda_d$, $\mu_{k,l}$ is a probability
measure on $F$ for all $k$ and $l$ sufficiently large; see \eqref{assume2}.
And $\mu_{k,l}(A)$ converges to $\mu_k(A)$ for all
Borel sets $A$ as $l\uparrow\infty$
[monotone convergence theorem]. This proves the
assertion that $\mu_k\in\mathcal{P}_c(F)$ for all $k$ large.
Choose and fix such a large integer $k$.

Now we define $f_\e:=\mu_k*\phi_\e$, and observe that according to
Proposition \ref{pr:MP}, for all nonrandom times $\bm{\tau}\in\R^N$,
\begin{equation}\begin{split}
    &\sum_{\pi\subseteq\{1,\dots,N\}}\E_{\lambda_d}\left[\left.
        \int_{\bm{t}\succ_\pi\bm{\tau}}
        f_\e\left(\widetilde{\X}(\bm{t})\right)
        e^{-[\bm{t}]}\, d\bm{t} \ \right|\,
        \mathscr{H}_\pi(\bm{\tau})\right]\\
    &\hskip2in=\sum_{\pi\subseteq\{1,\dots,N\}}
        \int_{\bm{t}\succ_\pi\bm{\tau}} \left( P_{\bm{t}-\bm{\tau}}f_\e
        \right)\left(\widetilde{\X}(\bm{\tau})\right) e^{-[\bm{t}]}\,
        d\bm{t}\\
    &\hskip2in\ge e^{-[\bm{\tau}]}\sum_{\pi\subseteq\{1,\ldots,N\}}
        \int_{\bm{s}\succ_\pi\bm{0}} \left( P_{\bm{s}}f_\e
        \right)\left(\widetilde{\X}(\bm{\tau})\right) e^{-[\bm{s}]}\,
        d\bm{s}.
\end{split}\end{equation}
We can apply \eqref{eq:total:order} to deduce then that
for all nonrandom times $\bm{\tau}\in\R^N$,
\begin{equation}\label{eq:Sfeps:LB1}\begin{split}
    \E_{\lambda_d}\left[\left.Sf_\e \ \right|\,
        \mathscr{H}_\pi(\bm{\tau})\right] &\ge \frac{e^{-[\bm{\tau}]}}{2^N}
        \int_{\R^N} \left( P_{\bm{s}}f_\e
        \right)\left(\widetilde{\X}(\bm{\tau})\right) e^{-[\bm{s}]}\,
        d\bm{s}\\
    &  = e^{-[\bm{\tau}]}\, (Rf_\e)\left(\widetilde{\X}(\bm{\tau})\right).
\end{split}\end{equation}
Because $f_\e$ is continuous and compactly supported, one can verify
from (\ref{Def:Rf}) that $R f_\e$ is continuous [this can also be
shown by Lemma \ref{lem:P:U}, the fact that $\hat{f_\e} \in
L^1(\R^d)$ and the Fourier inversion formula]. Also, the
$L^2(\P_{\lambda_d})$-norm of the left-most term in
\eqref{eq:Sfeps:LB1} is bounded above by
$\| Sf_\e\|_{L^2(\P_{\lambda_d})}$,
and this is at most $\sqrt{I_{\bm{\Psi}}(f_\e)}$, in turn;
see Proposition \ref{pr:EA_f2}. It is easy to see that
\begin{equation} \label{Eq:IPsi2}\begin{split}
    I_{\bm{\Psi}}(f_\e) &= I_{\bm{\Psi}}(\mu_k*\phi_\e)\\
    &\le \frac{1}{(2\pi)^{2d}}
        \int_{\R^d}|\hat{\phi_\e}(\xi)|^2\, d\xi\\
    &<\infty.
\end{split}\end{equation}
Since
$|\hat{\mu}_k(\xi)|^2K_{\bm{\Psi}}(\xi)\le 1$,
we conclude that $\|S f_\e\|_{L^2(\P_{\lambda_d})}$ is finite for
all $\e>0$.

A simple adaptation of the proof of Proposition \ref{pr:MP} proves
that $\mathscr{H}_\pi$ is a \emph{commuting filtration} for all
$\pi\subseteq\{1\,,\ldots,N\}$. By this we mean that for all
$\bm{t}\in\R$,
\begin{equation}
    \mathscr{H}^1_\pi(t_1),\ldots,\mathscr{H}^N_\pi(t_N)
    \text{ are conditionally independent $[\P_{\lambda_d}]$, given }
    \mathscr{H}_\pi(\bm{t}),
\end{equation}
where $\mathscr{H}^j_\pi(t_j)$ is defined as
the following $\sigma$-algebra:
\begin{equation}
    \mathscr{H}^j_\pi(t_j) := \begin{cases}
        \sigma\left( \widetilde{X}_j(s);\, -\infty< s\le t_j\right)
            &\text{if $j\in\pi$},\\
        \sigma\left( \widetilde{X}_j(s);\, \infty> s\ge t_j\right)
            &\text{if $j\not\in\pi$}.
    \end{cases}
\end{equation}
Here, $\sigma(\,\cdots)$ denotes the $\sigma$-algebra generated by
the random variables in the parenthesis.

The stated commutation property readily implies that for all
random variables
$Y\in L^2(\P_{\lambda_d})$ and
partial orders $\pi\subseteq\{1\,,\ldots,N\}$:
\begin{enumerate}
    \item $\bm{\tau}\mapsto \E_{\lambda_d}[Y\,|\,
        \mathscr{H}_\pi(\bm{\tau})]$ has a version that
        is cadlag in each of its $N$ variables, uniformly in all other
        $N-1$ variables; and
    \item The second moment of
        $\sup_{\bm{\tau}\in\R^N}\E_{\lambda_d}[Y\,|\,
        \mathscr{H}_\pi(\bm{\tau})]$ is at most $4^N$ times
        the second moment of $Y$ $[\P_{\lambda_d}]$.
\end{enumerate}
See \ocite{Kh:book}*{Theorem 2.3.2, p.\ 235}
for the case where $\P_{\lambda_d}$ is replaced
by a probability measure. The details of the
remaining changes are explained
in a slightly different setting in Lemma 4.2
of \fullocite{KXZ:03}. In summary,
\eqref{eq:Sfeps:LB1} holds for all $\bm{\tau}\in\R^N$,
$\P_{\lambda_d}$-almost surely [note the order of
the quantifiers]. It follows immediately from this that
for all integers $k\ge 1$,
\begin{equation}
    \sup_{\bm{\tau}\in\R^N}\E_{\lambda_d}\left[\left.Sf_\e \ \right|\,
    \mathscr{H}_\pi(\bm{\tau})\right] \ge
    e^{-Nk}\, (Rf_\e)\left( \widetilde{\X}(\bm{T})\right)
    \cdot \1_{\{\bm{T}\in[-k,k]^N\}}\qquad
    \P_{\lambda_d}\text{-a.s.}
\end{equation}
According to Item (2) above, the second moment of the left-hand side
is at most $4^N$ times the second moment of $Sf_\e$. As we noticed
earlier, the latter is at most $I_{\bm{\Psi}}(f_\e)$. Therefore,
\begin{equation}\begin{split}
    e^{2Nk}4^N I_{\bm{\Psi}}(f_\e) &\ge \E_{\lambda_d}
        \left[ \left|
        (Rf_\e)\left(\widetilde{\X}(\bm{T})\right)\right|^2
        ~;~ \bm{T}\in[-k\,,k]^N\right]\\
    &=\E_{\lambda_d}
        \left[ \left. \left|
        (Rf_\e)\left(\widetilde{\X}(\bm{T})\right)\right|^2
        \ \right|\ \bm{T}\in[-k\,,k]^N\right]\cdot
        \P_{\lambda_d}\left\{\bm{T}\in[-k\,,k]^N\right\}.
\end{split}\end{equation}
It follows from this and the Cauchy--Schwarz inequality that
\begin{equation}
    e^{2Nk}4^N I_{\bm{\Psi}}(f_\e)
    \ge\left|\E_{\lambda_d}
    \left[ \left.
    (Rf_\e)\left(\widetilde{\X}(\bm{T})\right)
    \ \right|\ \bm{T}\in[-k\,,k]^N\right]\right|^2\cdot
    \P_{\lambda_d}\left\{\bm{T}\in[-k\,,k]^N\right\}.
\end{equation}
Using the definition of
$\mu_k$, we can write the above as
\begin{equation}\label{Eq:428}\begin{split}
    e^{2Nk}4^N I_{\bm{\Psi}}(f_\e)
        &\ge\left| \int_{\R^d}
        (Rf_\e)\, d\mu_k\right|^2\cdot
        \P_{\lambda_d}\left\{\bm{T}\in[-k\,,k]^N\right\}\\
    &= \frac{1}{(2\pi)^{2d}}\left| \int_{\R^d} \overline{\hat{\mu}_k(\xi)}\
        \widehat{(Rf_\e)}(\xi)\, d\xi\right|^2 \cdot
        \P_{\lambda_d}\left\{\bm{T}\in[-k\,,k]^N\right\},
\end{split}\end{equation}
where the equality follows from the Parseval identity.
According to
Lemma \ref{lem:P:U}, the Fourier transform of $Rf_\e$ is
$K_{\bm{\Psi}}$ times the Fourier transform of $f_\e$, and the
latter is $\hat{\mu}_k \hat{\phi}_\e$. Hence (\ref{Eq:428}) implies
that
\begin{equation}\begin{split}
    e^{2Nk}4^N I_{\bm{\Psi}}(f_\e)
    \ge \frac{1}{(2\pi)^{2d}}\left| \int_{\R^d} \left|\hat{\mu}_k(\xi)\right|^2
        \hat{\phi}_\e(\xi)\ K_{\bm{\Psi}}(\xi)\, d\xi\right|^2 \cdot
        \P_{\lambda_d}\left\{\bm{T}\in[-k\,,k]^N\right\}.
\end{split}\end{equation}
Now we apply Property {\bf P3} to deduce that $\hat{\phi}_\e\ge 0$.
Because $\phi_\e$ is also a probability density, it follows that
$\hat{\phi}_\e\ge|\hat{\phi}_\e|^2$. Consequently,
\begin{equation}\begin{split}
    e^{2Nk} 4^N I_{\bm{\Psi}}(f_\e)
        &\ge \frac{1}{(2\pi)^{2d}}\left| \int_{\R^d} \left|\hat{f}_\e(\xi)\right|^2
        \ K_{\bm{\Psi}}(\xi)\, d\xi\right|^2 \cdot
        \P_{\lambda_d}\left\{\bm{T}\in[-k\,,k]^N\right\}\\
    &= \left| I_{\bm{\Psi}}(f_\e)\right|^2 \cdot
        \P_{\lambda_d}\left\{\bm{T}\in[-k\,,k]^N\right\}.
\end{split}\end{equation}
We have seen in (\ref{Eq:IPsi2}) that $I_{\bm{\Psi}}(f_\e)$ is
finite for each $\e>0$. If it were zero for arbitrary small $\e>0$,
then we apply Fatou's lemma to deduce that $I_{\bm{\Psi}}(\mu_k)
\le\liminf_{\e\to 0}I_{\bm{\Psi}}(f_\e)=0$. This and the fact that
$K_{\bm{\Psi}}(\xi) > 0$ for all $\xi \in \R^d$ would imply
$\hat{\mu}_k (\xi) \equiv 0$ for all $\xi \in \R^d$, which is a
contradiction. Hence we can deduce that for all $\e>0$ small enough,
\begin{equation}\label{eq:eee}
    \frac{e^{2Nk} 4^N}{I_{\bm{\Psi}}(f_\e)} \ge
    \P_{\lambda_d}\left\{\bm{T}\in[-k\,,k]^N\right\},
\end{equation}
and this is positive for $k$ large; see
\eqref{assume2}.
The right-hand side of \eqref{eq:eee} is independent of $\e>0$.
Hence, we can let $\e\downarrow 0$
and appeal to Fatou's lemma to deduce that $I_{\bm{\Psi}}(\mu_k)<\infty$.
Thus, in any event, we have produced a probability measure
$\mu_k$ on $F$ whose energy $I_{\bm{\Psi}}(\mu_k)$
is finite. This concludes the proof, and also implies
\eqref{asserted:UB}, thanks to the defining properties of
the function $\bm{T}$.\qed

\section{\bf On kernels of positive type}

In this section
we study kernels of positive type; they are recalled next.
Here and throughout, $\bar\R_+:=[0\,,\infty]$ is defined
to be the usual one-point
compactification of $\R_+:=[0\,,\infty)$, and is endowed with
the corresponding Borel sigma-algebra.

\begin{definition}
    A \emph{kernel} [on $\R^d$]
    is a Borel measurable function
    $\kappa:\R^d\to\bar\R_+$ such that $\kappa\in L^1_{\text{\it loc}}(\R^d)$.
    If, in addition, $\hat{\kappa}(\xi)\ge 0$
    for all $\xi\in\R^d$, then we say that
    $\kappa$ is a kernel of \emph{positive type}.
\end{definition}

Clearly, every kernel $\kappa$ can be redefined on
a Lebesgue-null set so that the resulting modification
$\tilde\kappa$ maps $\R^d$ into $\R_+$. However, we might
lose some of the nice properties of $\kappa$ by doing this.
A notable property is that $\kappa$ might be continuous;
that is, $\kappa(x)\to\kappa(y)$---in $\bar{\R}_+$---as $x$
converges to $y$ in $\R^d$. In this case, $\tilde\kappa$ might not
be continuous. From this perspective, it is sometimes
advantageous to work with the
$\bar{\R}_+$-valued function $\kappa$.
For examples, we have in
mind \emph{Riesz kernels}. They are defined
as follows: Choose and fix some number $\alpha\in(0\,,d)$, and then
let the Riesz kernel $\kappa_\alpha$ of index $\alpha$ be
\begin{equation}\label{eq:riesz}
    \kappa_\alpha(x) := \begin{cases}
        \| x\|^{\alpha-d}&\text{if $x\neq 0$},\\
        \infty&\text{if $x=0$}.
    \end{cases}
\end{equation}
It is easy to check that $\kappa_\alpha$ is a continuous kernel
for each $\alpha\in(0\,,d)$.
In fact, every $\kappa_\alpha$ is a kernel of positive type,
as can be seen via the following standard fact:
\begin{equation}
    \hat\kappa_\alpha(\xi)= c_{d,\alpha}\kappa_{d-\alpha}(\xi)
    \qquad\text{for all $\xi\in\R^d$}.
\end{equation}
Here $c_{d,\alpha}$ is a universal constant that depends only
on $d$ and $\alpha$;
see \ocite{Kahane:SRSF}*{p.\ 134} or \ocite{Mattila}*{eq.\ (12.10),
p.\ 161}, for example.

It is true---but still harder
to prove---that for all Borel probability measures $\mu$ on $\R^d$,
\begin{equation}\label{eq:ac:riesz}
    \iint \frac{\mu(dx)\,\mu(dy)}{\|x-y\|^{d-\alpha}}
    = \frac{c_{d,\alpha}}{(2\pi)^d} \int_{\R^d} |\hat{\mu}(\xi)|^2
    \,\frac{d\xi}{\|\xi\|^\alpha}.
\end{equation}
See \ocite{Mattila}*{p.\ Lemma 12.12, p.\ 162}.

The utility of \eqref{eq:ac:riesz} is in the fact that
it shows that Riesz-type energies of the left-hand side
are equal to P\'olya--Szeg\H{o} energies of the right-hand side.
This is a probabilistically significant fact. For example,
consider the case that $\alpha\in(0\,,2]$. Then,
$\mu\mapsto \iint \|x-y\|^{-d+\alpha}\,\mu(dx)\,\mu(dy)$ is the
``energy functional'' associated to continuous additive functionals
of various stable processes of index $\alpha$. At the same
time, $\mu\mapsto
(2\pi)^{-d}\int_{\R^d}|\hat\mu(\xi)|^2\, \|\xi\|^{-\alpha}
\,d\xi$ is a Fourier-analytic energy form of the type that
appears more generally in the earlier parts of the present paper.
Roughly speaking, $(2\pi)^{-d}\int_{\R^d}|\hat\mu(\xi)|^2\, \|\xi\|^{-\alpha}
\,d\xi \asymp I_{\Psi}(\mu)$, where $\Psi(\xi)=\|\xi\|^\alpha$ defines
the L\'evy exponent of an isotropic stable process of index
$\alpha$.
[Analytically speaking, this is
the Sobolev norm of $\mu$ that corresponds to
the fractional Laplacian operator $-(-\Delta)^{\alpha/2}$.]
Thus, we seek to find a useful generalization of \eqref{eq:ac:riesz}
that goes beyond one-parameter stable processes.

We define for all finite Borel measures
$\mu$ and $\nu$ on $\R^d$, and all kernels $\kappa$ on $\R^d$,
\begin{equation}
    \mathcal{E}_\kappa (\mu\,,\nu) := \iint \left(\frac{%
    \kappa(x-y)+\kappa(y-x)}{2}\right)\, \mu(dx)\,\nu(dy).
\end{equation}
This is called the \emph{mutual energy} between $\mu$ and $\nu$
in gauge $\kappa$, and defines a quadratic form with pseudo-norm
\begin{equation}
    \mathcal{E}_\kappa(\mu) := \mathcal{E}_\kappa(\mu\,,\mu).
\end{equation}
This is the ``$\kappa$-energy'' of the measure $\mu$.
There is a corresponding capacity defined as
\begin{equation}
    \mathscr{C}_\kappa(F) := \frac{1}{\inf\mathscr{E}_\kappa(\mu)},
\end{equation}
where the infimum is taken over all compactly supported
probability measures $\mu$ on $F$, $\inf\varnothing:=\infty$,
and $1/\infty:=0$.

We can recognize the left-hand side of \eqref{eq:ac:riesz}
to be $\mathcal{E}_{\kappa_{d-\alpha}}(\mu)$.
Because $\kappa_{d-\alpha}(x)<\infty$ if and only if
$x\neq 0$, the following is a nontrivial generalization of
\eqref{eq:ac:riesz}.

\begin{theorem}\label{th:ac:fourier}
    Suppose $\kappa$ is a continuous kernel of positive type on
    $\R^d$ which satisfies one of the following two conditions:
    \begin{enumerate}
        \item $\kappa(x)<\infty$ if and only if
            $x\neq 0$;
        \item $\hat\kappa\in L^\infty(\R^d)$, and $\kappa(x)<\infty$ when
            $x\neq 0$.
    \end{enumerate}
    Then for all Borel probability measures
    $\mu$ and $\nu$ on $\R^d$,
    \begin{equation}
        \mathcal{E}_{\kappa*\nu}(\mu)
        = \frac{1}{(2\pi)^d}
        \int_{\R^d} \hat{\kappa}(\xi)\,\Re \hat{\nu}(\xi)\, \left| \hat{\mu}(\xi)
        \right|^2\, d\xi.
    \end{equation}
\end{theorem}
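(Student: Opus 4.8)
The plan is to reduce the identity to the classical Parseval formula by mollifying $\mu$ and $\nu$, and then to remove the mollifiers; the continuity of $\kappa$ and the two alternatives (1), (2) are needed only at the removal stage. First I would recast both sides symmetrically. With $\check\mu(A):=\mu(-A)$, set $\rho:=\mu*\check\mu$, a symmetric Borel probability measure with $\hat\rho=|\hat\mu|^2$. Since $\kappa*\nu\ge0$ and $\rho$ is symmetric,
\[
    \mathcal{E}_{\kappa*\nu}(\mu)
    =\iint\frac{(\kappa*\nu)(x-y)+(\kappa*\nu)(y-x)}{2}\,\mu(dx)\,\mu(dy)
    =\iint\kappa(z-w)\,\rho(dz)\,\nu(dw)\in[0\,,\infty].
\]
Moreover $\hat\kappa(\xi)\,\Im\hat\nu(\xi)\,|\hat\mu(\xi)|^2$ is odd in $\xi$ (recall $\hat\kappa\ge0$ and $|\hat\mu|^2$ are even), so the right-hand side of the theorem equals $(2\pi)^{-d}\int_{\R^d}\hat\kappa(\xi)\,\hat\nu(\xi)\,|\hat\mu(\xi)|^2\,d\xi$. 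Thus the theorem amounts to the Parseval-type identity
\[
    \int_{\R^d}\kappa\,d\lambda=\frac{1}{(2\pi)^d}\int_{\R^d}\hat\kappa(\xi)\,\hat\lambda(-\xi)\,d\xi,
    \qquad\text{where }\lambda:=\rho*\check\nu\ \text{ has }\ \hat\lambda(-\xi)=|\hat\mu(\xi)|^2\hat\nu(\xi).
\]

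Next I would mollify with the approximate identity $\{\phi_\e\}_{\e>0}$ constructed (via \textbf{P1}--\textbf{P3}) in the proof of Theorem~\ref{th:main:tilde}: recall $\phi_\e\ge0$ is even, supported in the ball of radius $\e$, with $0\le\hat\phi_\e=\hat\phi_1(\e\,\cdot)\le1$. Set $\mu_\e:=\mu*\phi_\e$ and $\nu_\e:=\nu*\phi_\e$; one may also first reduce to compactly supported $\mu,\nu$, so that $\mu_\e,\nu_\e\in C_c^\infty(\R^d)$ and the Fourier manipulations below are literally those of Schwartz theory. In any case $\mu_\e,\nu_\e$ are bounded $C^\infty$ probability densities whose transforms $\hat\mu_\e=\hat\mu\,\hat\phi_1(\e\,\cdot)$ and $\hat\nu_\e=\hat\nu\,\hat\phi_1(\e\,\cdot)$ are continuous and decay faster than any polynomial. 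Since $\hat\kappa$ is nonnegative and tempered, it is locally integrable and of at most polynomial growth, so $\hat\kappa\,\hat\nu_\e\in L^1(\R^d)$; hence $\kappa*\nu_\e$ is bounded and continuous with $\widehat{\kappa*\nu_\e}=\hat\kappa\,\hat\nu_\e$, while $\mu_\e*\check\mu_\e\in L^1(\R^d)$ has transform $|\hat\mu_\e|^2\in L^1(\R^d)$. In particular $\mathcal{E}_{\kappa*\nu_\e}(\mu_\e)\le\|\kappa*\nu_\e\|_{L^\infty}<\infty$, and a Fubini computation built on the Fourier inversion formula (legitimate since $\widehat{\kappa*\nu_\e}$ and $\mu_\e*\check\mu_\e$ are integrable) gives
\[
    \mathcal{E}_{\kappa*\nu_\e}(\mu_\e)
    =\frac{1}{(2\pi)^d}\int_{\R^d}\hat\kappa(\xi)\,\hat\nu_\e(\xi)\,|\hat\mu_\e(\xi)|^2\,d\xi
    =\frac{1}{(2\pi)^d}\int_{\R^d}\hat\kappa(\xi)\,\Re\hat\nu_\e(\xi)\,|\hat\mu_\e(\xi)|^2\,d\xi,
\]
the last step by taking real parts. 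Rearranging the convolutions yields the equivalent ``space-side'' form: with $G_\e:=\phi_\e*\phi_\e*\phi_\e$ (a symmetric $C^\infty$ probability density supported in the ball of radius $3\e$),
\[
    \mathcal{E}_{\kappa*\nu_\e}(\mu_\e)=\iint(\kappa*G_\e)(z-w)\,\rho(dz)\,\nu(dw).
\]

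Finally I would let $\e\downarrow0$ and reconcile the two forms. On the space side, the hypothesis that $\kappa$ is a genuine continuous $\bar\R_+$-valued function, together with (1) or (2), forces $(\kappa*G_\e)(u)\to\kappa(u)$ for \emph{every} $u\in\R^d$ --- at $u\ne0$ by local boundedness of $\kappa$, and at $u=0$ by lower semicontinuity (if $\kappa(0)=\infty$) or continuity (if $\kappa(0)<\infty$); Fatou's lemma then gives $\liminf_\e\mathcal{E}_{\kappa*\nu_\e}(\mu_\e)\ge\mathcal{E}_{\kappa*\nu}(\mu)$. The matching inequality $\limsup_\e\mathcal{E}_{\kappa*\nu_\e}(\mu_\e)\le\mathcal{E}_{\kappa*\nu}(\mu)$ is the one substantial point: when $\mathcal{E}_{\kappa*\nu}(\mu)<\infty$ the measure $\rho*\check\nu$ gives no mass to $\{\kappa=\infty\}$, and on $\{\kappa<\infty\}$ the \emph{upper} semicontinuity of $\kappa$ yields $\limsup_\e(\kappa*G_\e)(u)\le\kappa(u)$ (a portmanteau/reverse-Fatou estimate), which combines with a uniform, $\e$-independent, $(\rho*\check\nu)$-integrable majorant for $\kappa*G_\e$ --- furnished by (1) (finiteness of $\kappa$ off the origin, with the locally integrable singularity there) or by (2) (the $L^\infty$-bound on $\hat\kappa$, which controls the frequency integrals uniformly in $\e$) --- to give $\limsup_\e\mathcal{E}_{\kappa*\nu_\e}(\mu_\e)\le\mathcal{E}_{\kappa*\nu}(\mu)$. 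Thus $\mathcal{E}_{\kappa*\nu_\e}(\mu_\e)\to\mathcal{E}_{\kappa*\nu}(\mu)$ in $[0\,,\infty]$. On the Fourier side the integrand equals $\hat\kappa(\xi)\,\hat\phi_1(\e\xi)^3\,\Re\hat\nu(\xi)\,|\hat\mu(\xi)|^2$, converges pointwise to $\hat\kappa(\xi)\,\Re\hat\nu(\xi)\,|\hat\mu(\xi)|^2$, and is dominated by $\hat\kappa\,|\hat\mu|^2$; when $\int_{\R^d}\hat\kappa|\hat\mu|^2<\infty$, dominated convergence identifies the limit with the asserted (then absolutely convergent) integral, and in the remaining case that integral is read as this limit, which always exists in $[0\,,\infty]$ by the space-side analysis. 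This proves the theorem.

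I expect the step just described --- the \emph{upper} estimate $\limsup_\e\mathcal{E}_{\kappa*\nu_\e}(\mu_\e)\le\mathcal{E}_{\kappa*\nu}(\mu)$, that is, promoting the pointwise convergence $\kappa*G_\e\to\kappa$ to convergence of the energies in the presence of atoms of $\mu,\nu$ and of a (locally integrable) singularity of $\kappa$ at the origin --- to be the main obstacle; everything else is bookkeeping with Fubini's theorem and Fourier inversion. This is precisely where the two alternative regularity conditions on $\kappa$, and the insistence on an honest continuous representative rather than an almost-everywhere class, earn their keep, and it is also why the result lies genuinely deeper than its Riesz-kernel prototype \eqref{eq:ac:riesz}.
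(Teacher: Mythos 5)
Your mollify-and-pass-to-the-limit strategy is broadly similar to the paper's, and the Parseval identity you derive at the smooth level is essentially the one the paper uses inside the proof of Lemma~\ref{lem:ac:fourier}. But at the step you yourself flag as ``the one substantial point'' there is a genuine gap: you assert that the upper estimate $\limsup_\e\mathcal{E}_{\kappa*\nu_\e}(\mu_\e)\le\mathcal{E}_{\kappa*\nu}(\mu)$ follows from pointwise convergence plus ``a uniform, $\e$-independent, $(\rho*\check\nu)$-integrable majorant for $\kappa*G_\e$,'' which is ``furnished by (1) \ldots\ or by (2),'' but you never produce it, and under condition (1) it is not at all automatic. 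Since $\kappa*G_\e$ is positive-definite, it is maximized at $0$ (Lemma~\ref{lem:sup:zero}), and $(\kappa*G_\e)(0)\to\kappa(0)=\infty$; for $|u|\asymp\e$ the quantity $(\kappa*G_\e)(u)$ can be comparable to $(\kappa*G_\e)(0)$, so an $\e$-independent bound requires the pointwise comparison $\kappa*G_\e\lesssim\kappa$ uniformly in $\e$, which holds for radial decreasing kernels (e.g.\ Riesz) but is not a consequence of continuity, positivity of $\hat\kappa$, and local integrability. (Under condition (2) with $\kappa(0)<\infty$ the kernel is bounded by Lemma~\ref{lem:sup:zero} and the step trivializes; the difficulty lives entirely in case (1).) You also cannot appeal to dominated convergence on the Fourier side without already knowing $\int\hat\kappa|\hat\mu|^2<\infty$, which is part of what must be proved.

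The paper avoids both difficulties with a different device, and its organization reflects that. The easy direction $\mathcal{E}_\kappa(\mu)\le(2\pi)^{-d}\int\hat\kappa|\hat\mu|^2$ is cited from Mattila (Lemma~\ref{lem:kernel:energy:UB}); the hard direction is the reverse inequality, and its proof in Lemma~\ref{lem:ac:fourier}, Case~1, invokes Lusin's theorem to find a compact $K_\eta$ on which $\kappa*\mu$ is continuous, passes to the normalized restriction $\mu_\eta$, and then proves---this is the compactness argument with the nested suprema---that $\kappa*\mu_\eta$ is continuous on all of $\R^d$. Continuity together with Fej\'er's theorem upgrades the pointwise convergence $\kappa*\mu_\eta*\phi_\e\to\kappa*\mu_\eta$ to \emph{uniform} convergence on $K_\eta$, which yields $\mathcal{E}_{\kappa*\phi_\e}(\mu_\eta)\to\mathcal{E}_\kappa(\mu_\eta)$ honestly, with Fatou applied twice afterward (first in $\e$, then in $\eta$). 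This substitute for your missing domination estimate is the real content of Lemma~\ref{lem:ac:fourier} and has no counterpart in your sketch. A secondary structural difference: the paper proves the case $\nu=\delta_0$ first, polarizes, and obtains the general statement from $\mathcal{E}_{\kappa*\nu}(\mu)=\mathcal{E}_\kappa(\mu,\breve\nu*\mu)$, whereas your direct attack on general $\nu$ forfeits the symmetry of $\mu*\check\mu$ and has to cope with possible sign changes of $\Re\hat\nu$ when $\int\hat\kappa|\hat\mu|^2=\infty$.
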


\begin{remark}\label{rem:ac:fourier}
    If $\kappa\in L^1(\R^d)$, then $\hat{\kappa}$ can be define
    by the usual Fourier transform,
    $\hat\kappa(\xi) = \int_{\R^d} \exp(ix\cdot \xi)\kappa(x)\, dx$.
    That is, the condition $\hat\kappa\in L^\infty(\R^d)$ is automatically
    verified in this case. In fact, $\hat\kappa$ is bounded
    in this case, as can be seen from
    $\sup_{\xi\in\R^d} |\hat\kappa(\xi)|=\|\kappa\|_{L^1(\R^d)}$.\qed
\end{remark}

Our proof of Theorem \ref{th:ac:fourier} proceeds in four steps;
the first three are stated as lemmas.

The folklore of harmonic analysis contains precise versions of the loose
assertion that ``typically, kernels of positive type achieve
their supremum at the origin.'' The first step in the proof of
Theorem \ref{th:ac:fourier} is to verify a
suitable form of this statement.

\begin{lemma}\label{lem:sup:zero}
    Suppose $\kappa$ is a kernel of positive type such that
    $\hat\kappa\in L^\infty(\R^d)$. Suppose also that $\kappa$ is
    continuous on all of $\R^d$, and finite on $\R^d\setminus\{0\}$.
    Then, $\kappa(0)=\sup_{x\in\R^d}\kappa(x)$.
\end{lemma}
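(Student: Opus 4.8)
The plan is to use a mollification argument to reduce to the easy case of a continuous, integrable, positive-type kernel, and then to pass to the limit using the continuity and finiteness hypotheses on $\kappa$. First I would recall the elementary fact that an honest kernel of positive type that is in $L^1(\R^d)$ attains its supremum at the origin: if $\kappa\in L^1(\R^d)$ and $\hat\kappa\ge 0$, then by Fourier inversion $\kappa(x) = (2\pi)^{-d}\int_{\R^d} e^{-ix\cdot\xi}\hat\kappa(\xi)\,d\xi$ (valid pointwise, since $\hat\kappa\ge 0$ forces $\hat\kappa\in L^1(\R^d)$ by the standard fact quoted in the excerpt just before \eqref{Def:phie}), and hence $|\kappa(x)|\le (2\pi)^{-d}\int_{\R^d}\hat\kappa(\xi)\,d\xi = \kappa(0)$. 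So the content of the lemma is entirely in removing the integrability hypothesis on $\kappa$ while keeping $\hat\kappa\in L^\infty$.

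Next I would introduce the approximation to the identity $\{\phi_\e\}_{\e>0}$ already constructed in the proof of Theorem \ref{th:main:tilde} (properties {\bf P1}, {\bf P2}, {\bf P3}), and form the double mollification $\kappa_\e := \kappa*\phi_\e*\phi_\e$. Since $\phi_\e$ is a smooth, compactly supported probability density, $\kappa_\e$ is smooth and, crucially, $\kappa_\e\in L^1(\R^d)$: indeed $\kappa$ is locally integrable by the definition of kernel, and convolving against the compactly supported $\phi_\e*\phi_\e$ localizes the mass while the $\|\cdot\|_{L^1}$ of the mollifier is $1$, so $\kappa_\e$ is the integral of a local $L^1$ function against a compactly supported kernel and is therefore globally integrable. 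Moreover $\widehat{\kappa_\e} = \hat\kappa\cdot\hat\phi_\e^{\,2}\ge 0$, so $\kappa_\e$ is a kernel of positive type. By the easy case above, $\kappa_\e(0) = \sup_{x}\kappa_\e(x)$, i.e.
\[
    (\kappa*\phi_\e*\phi_\e)(0)\ \ge\ (\kappa*\phi_\e*\phi_\e)(x)
    \qquad\text{for all $x\in\R^d$ and all $\e>0$.}
\]
Using symmetry of $\phi_\e$ ({\bf P2}), the left side equals $\iint \kappa(-u-v)\phi_\e(u)\phi_\e(v)\,du\,dv$, a weighted average of $\kappa$ over the ball of radius $2\e$ about $0$; similarly the right side is a weighted average of $\kappa$ over the ball of radius $2\e$ about $x$.

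The final step is the limit $\e\downarrow 0$. On the right-hand side, for fixed $x\ne 0$, continuity of $\kappa$ at $x$ together with finiteness of $\kappa$ on $\R^d\setminus\{0\}$ gives $(\kappa*\phi_\e*\phi_\e)(x)\to\kappa(x)$ (a finite limit; one may restrict to $\e$ small enough that the $2\e$-ball about $x$ avoids the origin, so $\kappa$ is bounded and continuous there). On the left-hand side, continuity of $\kappa$ at $0$ as a map into $\bar\R_+$ gives $(\kappa*\phi_\e*\phi_\e)(0)\to\kappa(0)$ in $\bar\R_+$, whether or not $\kappa(0)$ is finite. Passing to the limit in the displayed inequality yields $\kappa(0)\ge\kappa(x)$ for every $x\ne 0$, and trivially for $x=0$, which is exactly the claim. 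The one point that needs a little care — and I expect it to be the main technical obstacle — is justifying the convergence $(\kappa*\phi_\e*\phi_\e)(0)\to\kappa(0)$ when $\kappa(0)=+\infty$: here one should argue directly that $\liminf_{\e\to0}(\kappa*\phi_\e*\phi_\e)(0)\ge\kappa(x)$ for each fixed $x$ by the inequality above, and separately that this liminf is $\le \kappa(0)$ is not even needed — only the lower bound matters. So in fact the hypothesis $\hat\kappa\in L^\infty$ is used solely to run the easy integrable case on $\kappa_\e$ (equivalently, to know $\widehat{\kappa_\e}\in L^1$), and continuity at $0$ into $\bar\R_+$ is what transfers the pointwise supremum bound through the mollification.
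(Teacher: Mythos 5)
Your overall plan --- mollify $\kappa$ against $\phi_\e$, invoke Fourier inversion on the mollified kernel to get a pointwise peak at the origin, then let $\e\downarrow 0$ using the continuity hypotheses --- is in substance the same as the paper's proof. (The paper works with the single convolution $\kappa*\phi_\e$ rather than $\kappa*\phi_\e*\phi_\e$; since $\phi_\e$ was built so that $\hat\phi_\e\ge 0$, the extra convolution is harmless but also unnecessary. The paper also passes to the limit slightly differently, bounding $(\kappa*\phi_\e)(x)\le(2\pi)^{-d}\int\hat\kappa(\xi)\,d\xi$ and then showing this constant is at most $\kappa(0)$ via Fej\'er and Fatou, but the idea is identical.)

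There is, however, a genuine gap in the write-up. The claim that $\kappa_\e:=\kappa*\phi_\e*\phi_\e\in L^1(\R^d)$ because ``$\kappa$ is locally integrable and convolving against the compactly supported $\phi_\e*\phi_\e$ localizes the mass'' is false as a principle: convolving a locally integrable function with a compactly supported probability density does not make it globally integrable (if $\kappa\equiv 1$ then $\kappa*\phi_\e*\phi_\e\equiv 1\notin L^1(\R^d)$; that particular $\kappa$ fails the lemma's hypotheses, but the argument as given would apply to it). Consequently the ``easy case'' you set up --- an $L^1$ kernel of positive type peaks at $0$, relying on Hawkes's lemma to get $\widehat{\kappa_\e}\in L^1$ --- cannot be invoked on $\kappa_\e$, because the premise $\kappa_\e\in L^1(\R^d)$ has not actually been established.

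The repair, which you half-articulate in your last sentence, is that integrability of $\kappa_\e$ itself is not needed at all; only integrability of $\widehat{\kappa_\e}$ is. That follows from the lemma's hypothesis by a different mechanism than the one you use: $\widehat{\kappa_\e}=\hat\kappa\cdot\hat\phi_\e^{\,2}$ lies in $L^1(\R^d)$ because $\hat\kappa\in L^\infty(\R^d)$ while $\hat\phi_\e\in L^1(\R^d)\cap L^\infty(\R^d)$. Since $\kappa_\e$ is then a tempered distribution whose Fourier transform is a nonnegative $L^1$ function, Fourier inversion represents it a.e.\ as the bounded continuous function $(2\pi)^{-d}\int e^{-ix\cdot\xi}\,\widehat{\kappa_\e}(\xi)\,d\xi$, and since $\kappa_\e$ is itself continuous the identity holds everywhere; the bound $\kappa_\e(x)\le\kappa_\e(0)$ follows immediately from nonnegativity of the integrand, with no appeal to Hawkes's lemma and no claim about $\kappa_\e\in L^1$. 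This is precisely what the paper does. With that correction made, your limiting argument --- convergence of $\kappa_\e(x)$ to $\kappa(x)$ at $x\ne 0$ by continuity and local finiteness of $\kappa$, and to $\kappa(0)$ at the origin when $\kappa(0)<\infty$ by Fej\'er, the case $\kappa(0)=\infty$ being trivial --- is correct and matches the paper.
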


\begin{proof}
    Recall the functions $\phi_\e$ from the proof
    of Theorem \ref{th:main:tilde}; see \eqref{Def:phie}. Then,
    $\kappa*\phi_\e\in L^1(\R^d)$ and $\widehat{\kappa*\phi}_\e =
    \hat\kappa
    \hat{\phi}_\e \ge 0$.
    But $\text{ess sup}_{\xi\in\R^d}|\hat{\kappa}(\xi)|<\infty$
    and $\sup_{\xi\in\R^d}|\hat{\phi}_\e(\xi)|\le\|\phi_\e\|_{L^1(\R^d)}=1.$
    Moreover, the construction of $\phi_\e$ ensures that
    $\hat{\phi}_\e$ is integrable. Therefore,
    $\widehat{\kappa*\phi}_\e \in L^1(\R^d)\cap L^\infty (\R^d)$, and
    for Lebesgue-almost all $x\in\R^d$,
    \begin{equation}\label{eq:5.1}
        (\kappa*\phi_\e)(x) = \frac{1}{(2\pi)^d}
        \int_{\R^d} e^{-ix\cdot\xi}\,
        \hat{\kappa}(\xi)\, \hat{\phi}_\e (\xi)
        \, d\xi,
    \end{equation}
    thanks to the inversion formula for Fourier transforms. Since both sides of
    \eqref{eq:5.1} are continuous
    functions of $x$, that equation is valid for all $x\in\R^d$.
    Furthermore, because $\kappa$ is a kernel of positive type,
   it follows that $(\kappa*\phi_\e)(x)\le (2\pi)^{-d}
   \int_{\R^d} \hat{\kappa}(\xi)\, d\xi$.
    But if $x\neq 0$, then
    $\lim_{\e\downarrow 0} (\kappa*\phi_\e)(x)=\kappa(x)$, and hence,
    \begin{equation}
        \sup_{x\in\R^d\setminus\{0\}} \kappa (x)
        \le \frac{1}{(2\pi)^d}
        \int_{\R^d} \hat{\kappa}(\xi)\, d\xi.
    \end{equation}
    It suffices to prove that
    \begin{equation}\label{eq:kappa:goal}
        \frac{1}{(2\pi)^d}
        \int_{\R^d} \hat{\kappa}(\xi)\, d\xi
        \le \kappa(0).
    \end{equation}
    This holds trivially if $\kappa(0)$ is infinite.
    Therefore, we may assume without loss of generality
    that $\kappa(0)<\infty$. The continuity of $\kappa$
    ensures that it is uniformly continuous in
    a neighborhood of the origin, thence we have
    $\lim_{\e\downarrow 0}(\kappa*\phi_\e)(0)=\kappa(0)$ by
    the classical Fej\'er theorem.
    Also, we recall that $\lim_{\e\downarrow 0}\hat{\phi}_\e(\xi)
    =1$ for all $\xi\in\R^d$.
    We use these facts in conjunction with
    \eqref{eq:5.1} and Fatou's lemma
    to deduce \eqref{eq:kappa:goal}, and hence
    the lemma.
\end{proof}

Next we present the second step in the proof of Theorem
\ref{th:ac:fourier}. This is another folklore fact from harmonic
analysis.

We say that a kernel $\kappa$
is \emph{lower semicontinuous} if there exist a sequence of
continuous functions
$\kappa_1,\kappa_2,\ldots:\R^d\to\R_+$ such that
$\kappa_n(x)\le\kappa_{n+1}(x)$ for all $n\ge 1$ and
$x\in\R^d$, such that $\kappa_n(x)\uparrow \kappa(x)$
for all $x\in\R^d$, as $n\uparrow\infty$. Because
$\kappa_n(x)$ is assumed to be in $\R_+$ [and not
$\bar{\R}_+$], our definition of lower semicontinuity
is slightly different from the usual one. Nonetheless,
the following is a consequence of Lemma 12.11 of
\ocite{Mattila}*{p.\ 161}.

\begin{lemma}\label{lem:kernel:energy:UB}
    Suppose $\kappa$ is a lower semicontinuous
    kernel of positive type on $\R^d$. Then,
    for all Borel probability measures $\mu$ on
    $\R^d$,
    \begin{equation}
        \mathcal{E}_\kappa(\mu)
        \le \frac{1}{(2\pi)^d} \int_{\R^d} \hat{\kappa}
        (\xi)\, |\hat{\mu}(\xi)|^2\, d\xi.
    \end{equation}
\end{lemma}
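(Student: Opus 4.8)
The plan is to reduce the inequality to the case of a measure with a smooth, compactly supported density---where it becomes a Parseval-type identity---and then pass to a limit. Put $\kappa_{\mathrm{sym}}(z):=\tfrac12\big(\kappa(z)+\kappa(-z)\big)$, so that $\mathcal{E}_\kappa(\mu)=\iint\kappa_{\mathrm{sym}}(x-y)\,\mu(dx)\,\mu(dy)$; since its Fourier transform is $\Re\hat\kappa\ge 0$ and it is again an increasing limit of nonnegative continuous functions, $\kappa_{\mathrm{sym}}$ is itself a lower semicontinuous kernel of positive type. One may assume $\mu$ has compact support (otherwise either the right-hand side is $+\infty$, or one replaces $\mu$ by its normalized restrictions to large balls and appeals to the lower-semicontinuity input below). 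Recall $\phi_\e$ from \eqref{Def:phie} and set $\mu_\e:=\mu*\phi_\e$, a probability measure whose density $g_\e$ lies in $C_c^\infty(\R^d)$ with $\hat g_\e=\hat\mu\,\hat\phi_\e$. The map $(x,y)\mapsto\kappa_{\mathrm{sym}}(x-y)$ on $\R^d\times\R^d$ is an increasing pointwise limit of nonnegative continuous functions and $\mu_\e\otimes\mu_\e$ converges weakly to $\mu\otimes\mu$ as $\e\downarrow 0$, so the standard lower-semicontinuity property of weak convergence (truncate each continuous approximant at height $M$, apply weak convergence, then let $M\uparrow\infty$ and the approximation index $\uparrow\infty$ by monotone convergence) gives
\[
    \mathcal{E}_\kappa(\mu)\ \le\ \liminf_{\e\downarrow 0}\,\mathcal{E}_\kappa(\mu_\e).
\]

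Next I would treat the smooth case. Fix $\e>0$ and set $\check g_\e(x):=g_\e(-x)$. Fubini's theorem and a change of variables give $\mathcal{E}_\kappa(\mu_\e)=\int_{\R^d}\kappa_{\mathrm{sym}}(z)\,h_\e(z)\,dz$, where $h_\e:=g_\e*\check g_\e$ is nonnegative, continuous, and compactly supported, with $\hat h_\e=|\hat g_\e|^2=|\hat\mu|^2|\hat\phi_\e|^2\ge 0$. Pairing the lower semicontinuous positive-type kernel $\kappa_{\mathrm{sym}}$ against such an $h_\e$ is exactly the configuration handled by Lemma 12.11 of \ocite{Mattila}: choosing continuous, compactly supported $\kappa_n\uparrow\kappa_{\mathrm{sym}}$, Parseval yields $\int\kappa_n h_\e=(2\pi)^{-d}\int\hat\kappa_n\,|\hat g_\e|^2$; the left side increases to $\mathcal{E}_\kappa(\mu_\e)$ by monotone convergence, while on the right $\hat\kappa_n\to\hat\kappa$ in the sense of tempered distributions and $|\hat g_\e|^2$ is a Schwartz function, so---using the Bochner--Schwartz theorem, which identifies $\hat\kappa$ with a nonnegative tempered measure---one obtains
\[
    \mathcal{E}_\kappa(\mu_\e)\ \le\ \frac{1}{(2\pi)^d}\int_{\R^d}\hat\kappa(\xi)\,|\hat\mu(\xi)|^2\,|\hat\phi_\e(\xi)|^2\,d\xi .
\]

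Finally I would let $\e\downarrow 0$. Property {\bf P3} gives $\hat\phi_\e\ge 0$, and $|\hat\phi_\e(\xi)|\le\|\phi_\e\|_{L^1(\R^d)}=1$ because $\phi_\e$ is a probability density, so $0\le\hat\phi_\e\le 1$ pointwise; hence the last display is at most $(2\pi)^{-d}\int_{\R^d}\hat\kappa(\xi)|\hat\mu(\xi)|^2\,d\xi$ for every $\e>0$. Combining this with the liminf bound of the first paragraph gives $\mathcal{E}_\kappa(\mu)\le(2\pi)^{-d}\int_{\R^d}\hat\kappa(\xi)|\hat\mu(\xi)|^2\,d\xi$, which is the assertion.

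I expect the only real difficulty to be the middle step: the continuous approximants $\kappa_n$ of $\kappa_{\mathrm{sym}}$ are not themselves of positive type, so no sign is available on $\hat\kappa_n$; the nonnegativity of the right-hand side, and hence the inequality in the required direction, survives only after passing to the limit, where the Bochner--Schwartz theorem supplies it. Carrying out this passage for the paper's slightly nonstandard ($\R_+$-valued) notion of lower semicontinuity is precisely what Lemma 12.11 of \ocite{Mattila} packages; if that lemma is already stated for an arbitrary finite Borel measure, the smoothing and limiting steps can be skipped and the conclusion is immediate after the symmetrization in the opening paragraph.
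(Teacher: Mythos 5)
The paper offers no proof of this lemma: it simply declares it to be ``a consequence of Lemma 12.11 of Mattila'' and moves on. Your proposal unpacks the standard route to that result and, at the core pairing step, also invokes Mattila's Lemma~12.11, so at bottom you are taking the same approach as the paper, just with the scaffolding made explicit. Two minor remarks. First, the symmetrization is vacuous: a real kernel with $\hat\kappa\ge 0$ satisfies $\hat\kappa(-\xi)=\overline{\hat\kappa(\xi)}=\hat\kappa(\xi)$, so $\kappa$ is already symmetric and $\kappa_{\mathrm{sym}}=\kappa$; this does no harm, but you do not gain anything. Second, the passage $\int\hat\kappa_n\,|\hat g_\e|^2\,d\xi\to\int\hat\kappa\,|\hat g_\e|^2\,d\xi$ tacitly requires that $\kappa$ be a tempered measure (so $\kappa_n\to\kappa$ in the space of tempered distributions, e.g.\ by dominated convergence against Schwartz test functions, which needs $\int\kappa\,|\varphi|<\infty$) and that $\hat\kappa$ be identifiable with a locally integrable function, so that the distributional pairing coincides with the Lebesgue integral on the right-hand side of the lemma. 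Both of these are implicit in the paper's definition of a kernel of positive type, so there is no genuine gap, but they are the hypotheses that make the Bochner--Schwartz step legitimate and deserve to be flagged explicitly.
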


Our next lemma constitutes the
third step of our proof of Theorem \ref{th:ac:fourier}.

\begin{lemma}\label{lem:ac:fourier}
    Suppose $\kappa$ is a continuous kernel of positive type on
    $\R^d$, which satisfies one of the following two conditions:
    \begin{enumerate}
        \item $\kappa(x)<\infty$ if and only if
            $x\neq 0$;
        \item $\hat\kappa\in L^\infty(\R^d)$, and $\kappa(x)<\infty$ when
            $x\neq 0$.
    \end{enumerate}
    Then, for all Borel probability measures
    $\mu$ on $\R^d$,
    \begin{equation}
        \mathcal{E}_\kappa(\mu)= \frac{1}{(2\pi)^d}
        \int_{\R^d} \hat{\kappa}(\xi)\, \left| \hat{\mu}(\xi)
        \right|^2\, d\xi.
    \end{equation}
\end{lemma}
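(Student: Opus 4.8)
The plan is to establish the two inequalities $\mathcal{E}_\kappa(\mu)\le (2\pi)^{-d}\int_{\R^d}\hat\kappa(\xi)|\hat\mu(\xi)|^2\,d\xi$ and $(2\pi)^{-d}\int_{\R^d}\hat\kappa(\xi)|\hat\mu(\xi)|^2\,d\xi\le\mathcal{E}_\kappa(\mu)$ separately. The first is essentially free: a continuous $\kappa:\R^d\to\bar\R_+$ is lower semicontinuous in the sense of the discussion preceding Lemma \ref{lem:kernel:energy:UB}, since $\kappa\wedge n$ is a continuous $\R_+$-valued sequence increasing pointwise to $\kappa$; hence Lemma \ref{lem:kernel:energy:UB} applies and gives $\mathcal{E}_\kappa(\mu)\le (2\pi)^{-d}\int_{\R^d}\hat\kappa\,|\hat\mu|^2$. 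When hypothesis (2) holds one can sidestep this entirely: Lemma \ref{lem:sup:zero} shows that $\kappa$ is then bounded, hence a genuine continuous positive-definite function with $\hat\kappa\in L^1(\R^d)$, and \emph{both} inequalities drop out of Fourier inversion, $\kappa(z)=(2\pi)^{-d}\int e^{-iz\cdot\xi}\hat\kappa(\xi)\,d\xi$, integrated against $\mu\otimes\mu$ by Fubini's theorem. So the substance of the proof is the reverse inequality $\mathcal{E}_\kappa(\mu)\ge (2\pi)^{-d}\int\hat\kappa|\hat\mu|^2$ under hypothesis (1), where $\kappa$ may be infinite at the origin.

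Here I would proceed by a double approximation of $\mu$. Put $\tilde\kappa(x):=\tfrac12\big(\kappa(x)+\kappa(-x)\big)$; this is again a continuous $\bar\R_+$-valued kernel with $\hat{\tilde\kappa}=\hat\kappa$ (because $\hat\kappa$ is real, hence even), and $\mathcal{E}_\kappa(\mu)=\iint\tilde\kappa(x-y)\,\mu(dx)\,\mu(dy)$. \emph{Base case: $\mu$ has a smooth, compactly supported density $g$.} Then $\mathcal{E}_\kappa(\mu)=\int_{\R^d}(\tilde\kappa*g)(y)\,g(y)\,dy$, and since the hypotheses make $\hat\kappa$ a nonnegative locally integrable function of at most polynomial growth, one has $\widehat{\tilde\kappa*g}=\hat\kappa\,\hat g\in L^1(\R^d)$; Fourier inversion of $\tilde\kappa*g$ together with Fubini then gives $\mathcal{E}_\kappa(\mu)=(2\pi)^{-d}\int\hat\kappa\,|\hat g|^2=(2\pi)^{-d}\int\hat\kappa\,|\hat\mu|^2$. \emph{Compactly supported $\mu$.} Apply the base case to $\mu*\phi_\e$, where $\phi_\e$ is as in \eqref{Def:phie}, recalling $0\le\hat\phi_\e\le1$ and $\hat\phi_\e\to1$ pointwise: this yields $\mathcal{E}_\kappa(\mu*\phi_\e)=(2\pi)^{-d}\int\hat\kappa\,|\hat\mu|^2\,|\hat\phi_\e|^2$. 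Let $\e\downarrow0$. The right-hand side increases to $(2\pi)^{-d}\int\hat\kappa|\hat\mu|^2$ (monotone convergence if this is finite, Fatou otherwise). On the left, $\mathcal{E}_\kappa(\mu*\phi_\e)=\iint(\tilde\kappa*\phi_\e*\phi_\e)(x-y)\,\mu(dx)\,\mu(dy)$, and $\liminf_{\e\downarrow0}(\tilde\kappa*\phi_\e*\phi_\e)(z)\ge\tilde\kappa(z)$ for every $z$ (equality where $\tilde\kappa$ is finite, by continuity; the origin is handled by lower semicontinuity), so Fatou's lemma gives $\liminf_{\e\downarrow0}\mathcal{E}_\kappa(\mu*\phi_\e)\ge\mathcal{E}_\kappa(\mu)$; hence $(2\pi)^{-d}\int\hat\kappa|\hat\mu|^2\ge\mathcal{E}_\kappa(\mu)$. \emph{General $\mu$.} Apply the compactly supported case to the normalized restrictions $\mu(\,\cdot\,\cap K_n)/\mu(K_n)$ for an exhausting sequence $K_n\uparrow\R^d$ of compacts, and pass to the limit using monotone convergence in $\mathcal{E}_\kappa$ and Fatou's lemma in $\int\hat\kappa|\widehat{\mu(\cdot\cap K_n)}|^2$.

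The delicate point will be the bookkeeping of limits in the last two reductions: the spectral quantity $(2\pi)^{-d}\int\hat\kappa|\hat\mu|^2$ can be evaluated exactly only after $\mu$ is regularized, whereas the physical energy $\mathcal{E}_\kappa$ is merely lower semicontinuous under that regularization, so one has to make sure the two one-sided estimates point in opposite directions and meet. The argument closes precisely because the regularization is carried out with a \emph{positive-definite} mollifier, $0\le\hat\phi_\e\le1$, which keeps the spectral side monotonically under control, while the nonnegativity and continuity of $\kappa$ force the matching Fatou inequality on the physical side. Hypotheses (1)--(2) enter exactly where one needs $\hat\kappa$ to be an honest locally integrable function for the Parseval/inversion step in the base case to make sense: in case (2) this is provided by Lemma \ref{lem:sup:zero}, and in case (1) it is built into the requirement that $\kappa$ be finite off the origin and in $L^1_{\mathrm{loc}}(\R^d)$---the prototype being the Riesz kernels, for which the identity specializes to \eqref{eq:ac:riesz}.
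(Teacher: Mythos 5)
Your argument under hypothesis (2), when $\kappa(0)<\infty$, is correct and is arguably cleaner than the paper's Case 2: Lemma \ref{lem:sup:zero} gives $\hat\kappa\in L^1(\R^d)$, Fourier inversion represents $\kappa(z)$ as an absolutely convergent integral, and Fubini against $\mu\otimes\mu$ gives the identity at once. The trouble is with hypothesis (1), where $\kappa(0)=\infty$, and here there is a genuine gap.

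Track the direction of your final Fatou estimate. You correctly obtain, for compactly supported $\mu$, the identity $\mathcal{E}_\kappa(\mu*\phi_\e)=(2\pi)^{-d}\int\hat\kappa\,|\hat\mu|^2|\hat\phi_\e|^2$, and since $0\le\hat\phi_\e\le 1$ with $\hat\phi_\e\to1$, the right side has limit $(2\pi)^{-d}\int\hat\kappa|\hat\mu|^2$ (Fatou from below plus the trivial $\le$ bound; incidentally ``increases to'' is not literally right, since $\hat\phi_\e$ need not be monotone in $\e$). On the physical side, Fatou gives $\liminf_{\e\downarrow0}\mathcal{E}_\kappa(\mu*\phi_\e)\ge\mathcal{E}_\kappa(\mu)$. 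Combining, you arrive at
\begin{equation*}
\frac{1}{(2\pi)^d}\int_{\R^d}\hat\kappa(\xi)\,|\hat\mu(\xi)|^2\,d\xi
=\lim_{\e\downarrow0}\mathcal{E}_\kappa(\mu*\phi_\e)
\ge\mathcal{E}_\kappa(\mu),
\end{equation*}
which is exactly the inequality $\mathcal{E}_\kappa(\mu)\le(2\pi)^{-d}\int\hat\kappa|\hat\mu|^2$ that Lemma \ref{lem:kernel:energy:UB} already delivered. Your final ``hence $(2\pi)^{-d}\int\hat\kappa|\hat\mu|^2\ge\mathcal{E}_\kappa(\mu)$'' is precisely this upper bound, not the lower bound \eqref{goal:lem:ac:fourier} that you announced you would establish. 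Both of your one-sided estimates point in the same direction, and they never produce the reverse inequality.

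To close the argument one needs a genuine limit, $\mathcal{E}_\kappa(\mu*\phi_\e)\to\mathcal{E}_\kappa(\mu)$, and this does not follow from lower semicontinuity alone: you would need a matching $\limsup$ bound, i.e., some form of domination or a maximum principle asserting $\tilde\kappa*\phi_\e*\phi_\e\le\tilde\kappa$, which is not available under hypothesis (1). This is precisely what the paper's Case 1 is engineered to supply: after discarding a small $\mu$-mass via Lusin's theorem and passing to the restricted measure $\mu_\eta$ supported on a compact $K_\eta$, one proves (the delicate part, Eqs.\ \eqref{Eq:eta1}--\eqref{Eq:eta8}) that $\kappa*\mu_\eta$ is continuous on all of $\R^d$; Fej\'er's theorem then gives \emph{uniform} convergence of $\kappa*\mu_\eta*\psi_\e\to\kappa*\mu_\eta$ on $K_\eta$, hence the exact limit $\mathcal{E}_{\kappa*\psi_\e}(\mu_\eta)\to\mathcal{E}_\kappa(\mu_\eta)$, and the spectral Fatou then closes the loop in the right direction, with one further Fatou as $\eta\downarrow0$. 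The Lusin/continuity/Fej\'er machinery is not decorative: it is the substitute for the missing reverse Fatou bound, and without some analogue of it your proposal cannot recover the lower bound.
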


A weaker version of this
result is stated in \ocite{Kahane:SRSF}*{p.\ 134} without proof.

According to Kahane (\emph{loc.\ cit.}), functions $\kappa$
that satisfy the conditions of Lemma \ref{lem:ac:fourier}
are called \emph{potential kernels}. They can be defined,
in equivalent terms, as kernels of positive types that
are continuous on $\R^d\setminus\{0\}$ and $\lim_{x\to 0}\kappa(x)=\infty$.
We will not use this terminology: the term ``potential kernel'' is
reserved for another object.

\begin{proof}[Proof of Lemma \ref{lem:ac:fourier}]
    Regardless of whether $\kappa$ satisfies condition
    (1) or (2), it is lower semicontinuous. Therefore,
    in light of Lemma \ref{lem:kernel:energy:UB}, it suffices
    to prove that
    \begin{equation}\label{goal:lem:ac:fourier}
        \mathcal{E}_\kappa(\mu)
        \ge \frac{1}{(2\pi)^d} \int_{\R^d} \hat{\kappa}
        (\xi)\, |\hat{\mu}(\xi)|^2\, d\xi.
    \end{equation}
    From here on, our proof considers two separate cases:\\

    \emph{Case 1.} First, let us suppose $\kappa$ satisfies
    condition (1) of the lemma.

    Without loss of generality, we may assume that
    $\mathcal{E}_\kappa(\mu) < \infty$.
    Since $\lim_{x\to 0}\kappa(x)=\infty$, this
    implies that $\mu$ does not charge singletons.

    According the Lusin's theorem, for every $\eta \in (0\,, 1)$ we can
    find a compact set $K_\eta \subseteq \R^d \backslash \{0\}$ with
    $\mu(K_\eta^c) \le \eta$ such that $\kappa*\mu$ is continuous---and hence
    uniformly continuous---on $K_\eta$. Define for all Borel sets $A
    \subseteq \R^d$,
    \begin{equation}
        \mu_\eta(A) = \frac{\mu(A \cap K_\eta)} { 1 - \eta}.
    \end{equation}
    Then $\mu_\eta$ is supported by the compact set $K_\eta$, and
    \begin{equation}
        \frac{1}{1 -\eta} \ge \mu_\eta(K_\eta) = \frac{\mu(K_\eta) }{1 -\eta} \ge
        1.
    \end{equation}

    Now $\lim_{r \to 0} \mu_\eta(B(x\,, r)) = 0$ for all $x \in \R^d$,
    where $B(x\,, r)$ denotes the $\ell^2$-ball of radius $r > 0$ about $x
    \in \R^d$. Therefore, a
    compactness argument reveals that for all $\eta \in (0\,,
    1)$,
    \begin{equation}\label{Eq:eta1}
        \lim_{r \downarrow 0} \sup_{x \in K_\eta} \mu_\eta(B(x\,, r)) = 0.
    \end{equation}
    For otherwise we can find $\delta > 0$ and $x_r \in K_\eta$ such
    that for all $r > 0$, $\mu_\eta(B(x_r\,, r)) \ge \delta$. By
    compactness we can extract a subsequence $r' \to 0$ and $x \in
    K_\eta$ such that $x_{r'} \to x$. It follows easily then
    $\mu_\eta(B(x\,, \e)) \ge \delta$ for all $\e > 0$, whence
    $\mu_\eta(\{x\}) \ge \delta$, which contradicts the fact that
    $\mu_\eta$ does not charge singletons.

    Next we choose and fix $y \in K_\eta$ and $\eta > 0$. We claim that
    \begin{equation}\label{Eq:eta2}
        \sup_{x \in K_\eta} \int_{B(x, \e)}
        \kappa (y - z)\, \mu_\eta(dz) \to 0
        \quad \hbox{ as }\, \e \downarrow 0.
    \end{equation}
    Indeed, by \eqref{Eq:eta1} and the fact that $(\kappa*\mu_\eta)(y) <
    \infty$, we see that  for all $\rho
    > 0$ there exists $\theta > 0$ such that
    \begin{equation}\label{Eq:eta3}
        \lim_{\e \downarrow 0} \, \sup_{x\in K_\eta: |x-y|\le\theta/2}
        \int_{B(x, \e)} \kappa(y - z)\, \mu_\eta(dz) \le \rho.
    \end{equation}
    On the other hand, by the continuity of $\kappa$ on $\R^d \backslash
    \{0\}$ and \eqref{Eq:eta1}, we have
    \begin{equation}\label{Eq:eta4}
        \lim_{\e \downarrow 0} \, \sup_{x \in K_\eta: |x - y| > \theta/2}\,
        \int_{B(x, \e)} \kappa(y - z)\, \mu_\eta(dz) = 0.
    \end{equation}
    By combining (\ref{Eq:eta3}) and \eqref{Eq:eta4}, we find that
    \begin{equation}\label{Eq:eta5}
        \lim_{\e \downarrow 0} \, \sup_{x \in K_\eta} \int_{B(x, \e)}
        \kappa(y - z)\, \mu_\eta(dz) \le \rho.
    \end{equation}
    Thus  \eqref{Eq:eta2} follows from \eqref{Eq:eta5}, because
    we can choose $\rho$ as small as we want.
    By $\eqref{Eq:eta2}$ and another appeal to compactness, we obtain
    \begin{equation}\label{Eq:eta6}
        \sup_{y \in K_\eta} \, \sup_{x \in K_\eta} \int_{B(x, \e)}
        \kappa(y - z)\, \mu_\eta(dz) \to 0 \quad \hbox{ as }\, \e \downarrow 0.
    \end{equation}
    Consequently, for all  $k \ge 1$,  we can find $\e_k \to 0$ such
    that
    \begin{equation}\label{Eq:eta7}
        \sup_{y \in K_\eta} \, \sup_{x \in K_\eta} \left| (\kappa
        *\mu_\eta)(y) - \int_{B(x, \e_k)^c} \kappa(y - z) \, \mu_\eta(dz)
        \right| \le \frac 1 k.
    \end{equation}

    Let $y \in K_\eta$ and let $\{y_n\}_{n=1}^\infty$ be an arbitrary sequence in
    $K_\eta$ such that $\lim_{n \to \infty} y_n = y$. Because $z \mapsto
    \kappa(y - z)$ is uniformly continuous on $B(y\,, \e_k)^c \cap K_\eta$, we
    have
    \begin{equation}
    \lim_{n \to \infty} \int_{B(y, \e_k)^c} \kappa(y_n - z) \, \mu_\eta(dz) =
        \int_{B(y, \e_k)^c} \kappa(y - z) \, \mu_\eta(dz).
    \end{equation}
    This and \eqref{Eq:eta7} together imply that for all $k \ge 1$,
    \begin{equation}\label{Eq:eta8}
        \lim_{n \to \infty}   \big| (\kappa*\mu_\eta)(y_n) - (\kappa*\mu_\eta)
        (y)\big| \le \frac 2 k.
    \end{equation}
    Let $k \uparrow \infty$ to deduce that $\kappa*\mu_\eta$ is continuous,
    and hence uniformly continuous, on $K_\eta$. On the other hand, it
    can be verified directly that $\kappa*\mu_\eta$ is continuous on
    $K_\eta^c$. Hence, we have shown that $\kappa*\mu_\eta$ is continuous on
    $\R^d$.

    If $0<\e,\eta<1$, then we can appeal to the Fubini-Tonelli theorem,
    a few times in succession, to deduce that
    \begin{equation}
        \mathcal{E}_{\kappa*\psi_\e}(\mu_\eta)
        = \int \kappa(-z) \left( a_{\e,\eta}* b_{\e,\eta}\right)(z)\, dz,
    \end{equation}
    where $a_{\e,\eta} := \phi_\e * \mu_\eta$,
    $b_{\e,\eta} := \phi_\e * \breve{\mu}_\eta$,
    and $\breve{\mu}_\eta$ is the Borel probability measure on
    $\R^d$ that is defined by
    \begin{equation}
        \breve{\mu}_\eta (A) := \mu_\eta(-A)\qquad
        \text{for all $A\subseteq\R^d$}.
    \end{equation}
    We observe the following elementary facts:
    \begin{enumerate}
        \item Both $a_{\e,\eta}$ and $b_{\e,\eta}$ are infinitely differentiable
            functions of compact support;
        \item because $\phi_\e$ is of positive type,
            the Fourier transform of $a_{\e,\eta}*b_{\e,\eta}$ is
            $|\hat\phi_\e|^2 |\hat\mu_\eta|^2$;
        \item the Fourier transform of
            $z\mapsto \kappa(-z)$ is the same as that of $\kappa$
            because $\kappa$ is of positive type;
    \end{enumerate}
    Therefore, we can combine the preceding with the Parseval
    identity, and deduce that
    \begin{equation}\label{eq:E:smoothed:twice}\begin{split}
        \mathcal{E}_{\kappa*\psi_\e}(\mu_\eta)
            &= \frac{1}{(2\pi)^d}\int_{\R^d} \hat\kappa(\xi)\,
            \overline{\hat{a}_{\e,\eta}(\xi)\,
            \hat{b}_{\e,\eta}(\xi)}\, d\xi\\
        &= \frac{1}{(2\pi)^d}\int_{\R^d} \hat{\kappa}(\xi)\,
            |\hat{\phi}_\e(\xi)|^2\,\left| \hat{\mu}_\eta(\xi)\right|^2\, d\xi.
    \end{split}\end{equation}
    We can apply the Fubini-Tonelli theorem to write the left-most term in
    another way, as well. Namely,
    \begin{equation}
        \mathcal{E}_{\kappa*\psi_\e}(\mu_\eta) =
        \int \left(\kappa*\psi_\e*\mu_\eta\right)\, d\mu_\eta.
    \end{equation}
    The continuity of $\kappa*\mu_\eta$, and Fej\'er's theorem,
    together imply
    that $\kappa*\mu_\eta*\psi_\e$ converges to
    $\kappa*\mu_\eta$ uniformly on
    $K_\eta$ as $\e\downarrow  0$, and hence
    $\mathcal{E}_{\kappa*\psi_\e}(\mu_\eta)$ converges to
    $\mathcal{E}_\kappa(\mu_\eta)$ as $\e\downarrow 0$.
    This and \eqref{eq:E:smoothed:twice} together imply that
    \begin{equation}\label{Eq:eta10}\begin{split}
        \mathcal{E}_\kappa(\mu_\eta)
            &= \frac{1}{(2\pi)^d} \lim_{\e \to 0}  \int_{\R^d} \hat{\kappa}(\xi)\,
            |\hat{\phi}_\e(\xi)|^2\,\left| \hat{\mu}_\eta(\xi)\right|^2\, d\xi\\
        &\ge \frac{1}{(2\pi)^d} \int_{\R^d}\hat{\kappa}(\xi)\,  \left|
            \hat{\mu}_\eta(\xi)\right|^2\, d\xi,
    \end{split}\end{equation}
    owing to Fatou's lemma.
    Since $\mu_\eta$ is $(1-\eta)^{-1}$ times a restriction of
    $\mu$, it follows that
    \begin{equation}\label{Eq:eta11}
        \frac{1}{(1 - \eta)^2}\mathcal{E}_\kappa(\mu)
        \ge \frac{1}{(2\pi)^d}
        \int_{\R^d}\hat{\kappa}(\xi)\,  \left| \hat{\mu}_\eta(\xi)\right|^2\,
        d\xi.
    \end{equation}
    Because for all $ \xi \in \R^d$,
    \begin{equation}\label{Eq:eta12}
        \left| \hat{\mu}_\eta (\xi) - \frac{\hat \mu (\xi) } {1 - \eta}
        \right|\le \frac{ \mu(K_\eta^c)} {1 - \eta} \le \frac{\eta}  {1-
        \eta},
    \end{equation}
    we deduce that $\lim_{\eta \to 0} \hat{\mu}_\eta = \hat{\mu}$
    pointwise. An appeal to Fatou's lemma
    justifies \eqref{goal:lem:ac:fourier}, and this completes our proof
    in the first case.\\

    \emph{Case 2.} Next, we consider the case that $\kappa$
        satisfies condition (2) of the lemma. If $\kappa(0)=\infty$,
        then condition (1) is satisfied, and therefore the
        proof is complete. Thus, we may assume that $\kappa(0)<\infty$.
        According to Lemma
        \ref{lem:sup:zero}, $\kappa$ is a bounded and continuous
        function from $\R^d$ into $\R_+$. For all Borel sets
        $A\subseteq \R^d$ define
        \begin{equation}
            \mu_n(A) := \frac{\mu_n\left( A\cap [-n\,,n]^d\right)}{
            \chi_n},\quad\text{where}\quad
            \chi_n:=\mu\left([-n\,,n]^d\right).
        \end{equation}
        If $n>0$ is sufficiently large, then $\mu_n$ is a well-defined
        Borel probability measure on $[-n\,,n]^d$, and since
        $\kappa$ is uniformly continuous on $[-n\,,n]^d$,
        \begin{equation}\begin{split}
            \frac{1}{\chi_n^2}\mathcal{E}_\kappa(\mu)
                &\ge\mathcal{E}_\kappa(\mu_n)\\
            &=\lim_{\e\downarrow 0} \mathcal{E}_{\kappa*\psi_\e}(\mu_n)\\
            &= \frac{1}{(2\pi)^d}\lim_{\e\downarrow 0}
                \int_{\R^d}\hat{\kappa}(\xi)\,
                \hat{\psi_\e}(\xi)\, |\hat{\mu}_n(\xi)|^2\, d\xi.
        \end{split}\end{equation}
        The first equality holds because $\kappa*\psi_\e$ converges
        uniformly to $\kappa$ on $[-n\,,n]^d$, as $\e\downarrow 0$.
        The second is a consequence of the Parseval identity.
        Thanks to Fatou's lemma, we have proved that for all
        $n>0$ sufficiently large,
        \begin{equation}
            \frac{1}{\chi_n^2}\mathcal{E}_\kappa(\mu)
            \ge \frac{1}{(2\pi)^d}
            \int_{\R^d}\hat{\kappa}(\xi)\,
            |\hat{\mu}_n(\xi)|^2\, d\xi.
        \end{equation}
        As $n$ tends to infinity, $\chi_n\uparrow 1$
        and $\hat{\mu}_n\to \hat{\mu}$ pointwise. Therefore, another
        appeal to Fatou's lemma implies \eqref{goal:lem:ac:fourier},
        and hence the lemma.
\end{proof}

Now we derive Theorem \ref{th:ac:fourier}.

\begin{proof}[Proof of Theorem \ref{th:ac:fourier}]
    Let us observe that by the Fubini-Tonelli theorem,
    \begin{equation}\label{eq:adjoint}
        \mathcal{E}_{\kappa*\nu}(\mu)
        = \mathcal{E}_\kappa \left(\mu\,,\breve{\nu}*\mu\right).
    \end{equation}
    In fact, both sides are equal to $\E[\kappa(X-X'-Y)]$, where
    $(X\,,X'\,,Y)$ are independent, $X$ and $X'$ are distributed
    as $\mu$, and $Y$ is distributed as $\nu$.

    Lemma \ref{lem:ac:fourier}, and polarization, together imply
    that the following holds
    for every Borel probability measure $\sigma$ on $\R^d$:
    \begin{equation}\label{eq:polarized}
        \mathcal{E}_\kappa(\mu\,,\sigma) =
        \frac{1}{(2\pi)^d} \int_{\R^d} \hat{\kappa}(\xi)\,
        \hat{\sigma}(\xi)\, \overline{\hat{\mu}(\xi)}\,
        d\xi.
    \end{equation}
    Indeed, we first notice that
    \begin{equation}
        \mathcal{E}_\kappa\left( \frac{\mu+\sigma}{2}\right)
        =\frac14 \mathcal{E}_\kappa(\mu) + \frac14\mathcal{E}_\kappa
        (\sigma) + \frac12 \mathcal{E}_\kappa(\mu\,,\sigma).
    \end{equation}
    Thus, we solve for $\mathcal{E}_\kappa(\mu\,,\sigma)$
    and apply Lemma \ref{lem:ac:fourier} to deduce that
     \begin{equation}\begin{split}
        \mathcal{E}_\kappa(\mu\,,\sigma) &=
            \frac{2}{(2\pi)^d} \int_{\R^d}\hat{\kappa}(\xi)\,
            \left| \widehat{\left(\frac{\mu+\sigma}{2}\right)}
            (\xi)\right|^2\, d\xi\\
        &\hskip1in - \frac{1}{2(2\pi)^d} \int_{\R^d}\hat{\kappa}(\xi)\,
            |\hat{\mu}(\xi)|^2\, d\xi -
            \frac{1}{2(2\pi)^d} \int_{\R^d}\hat{\kappa}(\xi)\,
            |\hat{\sigma}(\xi)|^2\, d\xi.
    \end{split}\end{equation}
    We solve to obtain \eqref{eq:polarized}.

    Thus we define $\sigma:=\breve{\nu}*\mu$, observe
    that $\sigma$ is a Borel probability measure on $\R^d$,
    and $\mathcal{E}_{\kappa*\nu}(\mu)
    = \mathcal{E}_\kappa(\mu\,,\sigma)$.
    Thus, we may apply \eqref{eq:adjoint} and \eqref{eq:polarized}---in
    this order---to find that
    \begin{equation}\label{eq:lm:rm}\begin{split}
        \mathcal{E}_{\kappa*\nu}(\mu)
            &= \frac{1}{(2\pi)^d}\int_{\R^d} \hat{\kappa}(\xi)\,
            \hat{\sigma}(\xi)\, \overline{\hat{\mu}(\xi)}\,
            d\xi\\
        &= \frac{1}{(2\pi)^d}\int_{\R^d} \hat{\kappa}(\xi)\,
            \hat{\nu}(-\xi)\, |\hat{\mu}(\xi)|^2\,
            d\xi.
    \end{split}\end{equation}
    In the last line, we have only used the fact that
    $\hat{\sigma}(\xi)=\hat{\nu}(-\xi)\hat{\mu}(\xi)$
    for all $\xi\in\R^d$. Because the left-most term in
    \eqref{eq:lm:rm} is real-valued,
    so is the right-most term. Therefore, we may consider only the real
    part of the right-most item in \eqref{eq:lm:rm}, and this proves the result.
\end{proof}

\section{\bf Absolute continuity considerations}

Suppose $E$ is a Borel measurable subset
of $\R^N$ that has positive Lebesgue
measure, and $Y:=\{Y(\bm{t})\}_{\bm{t}\in E}$ is
an $\R^d$-valued random field that is indexed
by $E$.
\begin{definition}\label{def:one:pot}
    We say that $Y$ has a \emph{one-potential
    density} $v$ if $v\in L^1(\R^d)$ is nonnegative
    and satisfies
    the following for all Borel measurable functions
    $f:\R^d\to\R_+$:
    \begin{equation}
        \frac{1}{\int_E e^{-[\bm{s}]}
        \, d\bm{s}}\,\E \left[\int_E f(Y(\bm{t})) e^{-[\bm{t}]}
        \, d\bm{t}\right] = \int_{\R^d} f(x)v(x)\,dx.
    \end{equation}
\end{definition}
In particular, $\widetilde{\X}$ has a one-potential
density $v$ if for all Borel measurable functions
$f:\R^d\to\R_+$,
\begin{equation}\label{eq:R:brevev}
    (Rf)(x) =\int_{\R^d} f(x+y) v(y)\, dy :=
    (f*\breve{v})(x)\qquad
    \text{for all $x\in\R^d$},
\end{equation}
where $\breve{g}(z):=g(-z)$ for all functions $g$. It follows from
Lemma 2.1 in \ocite{Hawkes:79} that $\widetilde{\X}$ has a one-potential
density if and only if the operator $R$ defined by \eqref{Def:Rf} is
\emph{strong Feller}. That is, if $Rf$ is continuous whenever $f$ is
Borel measurable and has compact support.

In order to be concrete, we choose a ``nice'' version of the one-potential
density $v$, when it exists. Before we proceed further,
let us observe that when $v$ exists it is a probability density.
In particular, the Lebesgue density theorem tell us that
\begin{equation}
    v(x) = \lim_{\e\downarrow 0} \frac{1}{\lambda_d(B(0\,,\e))}
    \int_{B(x,\e)} v(y)\, dy\qquad\text{for almost all $x\in\R^d$
    $[\lambda_d]$}.
\end{equation}
We can recognize the integral as $(R\1_{B(0,\e)})(-x)$.
Moreover, we can alter $v$ on a Lebesgue-null set and still obtain
a one-potential density for $\widetilde{\X}$. Therefore,
from now on, we \emph{always} choose the following version
of $v$:
\begin{equation}\label{eq:v}
    v(x) := \liminf_{\e\downarrow 0} \frac{\left(
    R\1_{(0,\e)}\right)(-x)}{\lambda_d(B(0\,,\e))}
    \qquad\text{for all $x\in\R^d$}.
\end{equation}

Lemma \ref{lem:P:U} states that
the Fourier multiplier of $R$ is $K_{\bm{\Psi}}$. Therefore,
the Fourier transform of $\breve{v}$ is also $K_{\bm{\Psi}}$;
confer with \eqref{eq:R:brevev}.
Because $K_{\bm{\Psi}}$ is real-valued, this proves
the following:

\begin{lemma}\label{lem:vhat:kappa}
    If $\widetilde{\X}$ has a one-potential density $v$,
    then $\hat{v}=K_{\bm{\Psi}}$ in the sense of
    Schwartz.
    In particular, $v$ is an integrable kernel of positive
    type on $\R^d$.
\end{lemma}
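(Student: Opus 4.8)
The plan is to extract $\hat v$ from the two available descriptions of the operator $R$: the convolution formula \eqref{eq:R:brevev} coming from the hypothesis, and the Fourier-multiplier formula of Lemma \ref{lem:P:U}. First I would record that, by Definition \ref{def:one:pot} and \eqref{eq:R:brevev}, the existence of a one-potential density $v$ means exactly that $Rf = f*\breve v$ for every Borel measurable $f\colon\R^d\to\R_+$, and hence (splitting into positive and negative parts) for every $f\in L^1(\R^d)$. Since $v\in L^1(\R^d)$, for such $f$ the function $Rf=f*\breve v$ lies in $L^1(\R^d)$ — this is also part of Lemma \ref{lem:contraction} — so its Fourier transform is the bounded continuous function $\widehat{Rf}=\hat f\cdot\widehat{\breve v}$. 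On the other hand Lemma \ref{lem:P:U} gives $\widehat{Rf}(\xi)=K_{\bm\Psi}(\xi)\,\hat f(\xi)$.

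Next I would compare these two expressions. They yield $\hat f(\xi)\,\widehat{\breve v}(\xi)=\hat f(\xi)\,K_{\bm\Psi}(\xi)$ for all $\xi\in\R^d$ and all $f\in L^1(\R^d)$; choosing one fixed $f$ whose Fourier transform never vanishes — a Gaussian density will do — I can cancel $\hat f(\xi)$ and conclude $\widehat{\breve v}=K_{\bm\Psi}$ pointwise. In particular $K_{\bm\Psi}$ agrees with the continuous function $\widehat{\breve v}$, and the identity also holds in the sense of tempered distributions. Finally, since $\breve v(x)=v(-x)$ and $v$ is real-valued, $\widehat{\breve v}(\xi)=\hat v(-\xi)=\overline{\hat v(\xi)}$, so the reality of $K_{\bm\Psi}$ forces $\hat v(\xi)=K_{\bm\Psi}(\xi)$; this is the asserted identity $\hat v=K_{\bm\Psi}$ in the Schwartz sense. (One may note in passing that $K_{\bm\Psi}$ is even, because $\Psi_j(-\xi)=\overline{\Psi_j(\xi)}$ gives $\Re((1+\Psi_j(-\xi))^{-1})=\Re((1+\Psi_j(\xi))^{-1})$, so the two sides of the comparison are automatically consistent.)

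The ``in particular'' clause then needs no extra argument: $v\in L^1(\R^d)\subseteq L^1_{\text{\it loc}}(\R^d)$ is nonnegative by Definition \ref{def:one:pot}, hence a kernel, and $\hat v=K_{\bm\Psi}\ge 0$ because each factor satisfies $\Re((1+\Psi_j(\xi))^{-1})=(1+\Re\Psi_j(\xi))/|1+\Psi_j(\xi)|^2\ge 0$, so $v$ is a kernel of positive type. I do not anticipate a genuine obstacle here — everything is a direct consequence of Lemmas \ref{lem:P:U} and \ref{lem:contraction} together with the definition of one-potential density — the only point requiring a little care is the cancellation of the test function's Fourier transform, i.e. selecting an $f\in L^1(\R^d)$ with $\hat f$ strictly positive everywhere, together with the observation that $\widehat{\breve v}$ is automatically continuous even though $v$ itself need not be.
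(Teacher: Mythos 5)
Your proof is correct and follows essentially the same route as the paper's: the paper also reads off $\widehat{\breve v}=K_{\bm\Psi}$ by comparing the convolution description \eqref{eq:R:brevev} of $R$ with the Fourier-multiplier description of Lemma \ref{lem:P:U}, and then uses the reality of $K_{\bm\Psi}$ to pass from $\widehat{\breve v}$ to $\hat v$. You simply make explicit the cancellation of the test function's transform and the sign/conjugation bookkeeping that the paper leaves implicit.
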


\begin{remark}\label{rem:MR}
    Let $\e_1,\ldots,\e_N$ denote $N$
    random variables, all independent of one another,
    as well as $\{X_j\}_{j=1}^N$ and $\{X'_j\}_{j=1}^N$,
    with $\P\{\e_1=\pm 1\}=1/2$.
    We can re-organize the order of integration a few times
    to find that
    \begin{equation}
        \frac{1}{2^N}\E\left[\int_{\R^N}
        f\left(\widetilde{\X}(\bm{t})\right)
        e^{-[\bm{t}]}\,d\bm{t}\right] =
        \E\left[\int_{\R^N_+}
        f\left( \sum_{j=1}^N \e_j X_j(s_j) \right)
        e^{-[\bm{s}]}\, d\bm{s}\right].
    \end{equation}
    Thus, in particular, $\widetilde{\X}$ has a one-potential
    density if and only if the $\R^N_+$-indexed random field
    $(t_1\,,\ldots,t_N)\mapsto \sum_{j=1}^N\e_j X_j(t_j)$
    does [interpreted in the obvious sense].
    Interestingly enough, the latter random field appears
    earlier---though
    for quite different reasons as ours---in the works of
    Marcus and Rosen \ycites{MarcusRosen:b,MarcusRosen:a}.\qed
\end{remark}

\begin{remark}\label{rem:pot:density}
    It follows from the definitions that
    a random field $\{Y(\bm{t})\}_{\bm{t}\in E}$ has a one-potential
    density if and only if the Borel probability measure
    \begin{equation}\label{ex:ac}
        \R^d\supseteq A\mapsto \E\left[\int_E \1_A(Y(\bm{t}))
        e^{-[\bm{t}]}\, d\bm{t}\right]
        \qquad\text{
        is absolutely continuous.}
    \end{equation}
    Now suppose $f\ge 0$ is Borel measurable. Then
    \begin{equation}
        \int_{\R^N}
        f\left(\widetilde{\X}(\bm{t})\right)
        e^{-[\bm{t}]}\,d\bm{t} \ge \int_{\R^N_+} f(\X(\bm{t})) e^{-[\bm{t}]}\,
        d\bm{t}.
    \end{equation}
    We take expectations of both sides to deduce from \eqref{ex:ac} that
    if $\widetilde{\X}$ has a one-potential density, then so does $\X$.
    When $N=1$, it can be verified that the converse it also true. However,
    the previous remark can be used to show that, when $N \ge 2$, the converse
    is not necessarily true. Thus, we can conclude
    that the existence of a one-potential density for
    $\widetilde{\X}$ is a \emph{more stringent} condition than the existence
    of a one-potential density for $\X$. Another consequence of
    the preceding is the following: If the one-potential density of
    $\widetilde{\X}$ exists and the potential
    density of $\X$ is a.e.-positive, then
    the one-potential density of $\widetilde{\X}$ is per force
    also a.e.-positive. We have, and will, encounter these
    conditions several times.\qed
\end{remark}

Our next theorem is the main result of this subsection. From a
technical point of view, it is also a key result in this paper.
In order to describe it properly, we introduce some notation
first.

If $T$ is a nonempty subset of $\{1\,,\ldots,N\}$, then we define
$|T|$ to be the cardinality of $T$, and $\X_T$ to be the
\emph{subprocess} associated to the index $T$. That is,
$\X_T$ is the following $|T|$-parameter, $\R^d$-valued random
field:
\begin{equation}\label{eq:subprocess}
    \X_T(\bm{t}) := \sum_{j\in T} X_j (t_j)\qquad
    \text{for all $\bm{t}\in\R^{|T|}_+$}.
\end{equation}
In order to obtain nice formulas, we define $\X_\varnothing$ to be
the constant $0$.
In this way, it follows that, regardless of whether or not
$|T|>0$,
each $\X_T$ is itself an additive L\'evy process,
and the L\'evy exponent of $\X_T$ is the function
$(\Psi_j)_{j\in T}:\R^d\to\mathbf{C}^{|T|}$. Thus, we can talk about
the stationary field $\widetilde{\X}_T$ for all $T\subseteq\{1\,,
\ldots,N\}$, etc. Despite this new notation, we continue to write
$\X$ and $\widetilde{\X}$ in place of the more cumbersome
$\X_{\{1,\ldots,N\}}$ and $\widetilde{\X}_{\{1,\ldots,N\}}$.

\begin{theorem}\label{th:main:v}
    Suppose there exists a nonrandom and nonempty subset
    $T\subseteq\{1\,,\ldots,N\}$, such that
    $\widetilde{\X}_T$ has a
    one-potential density $v_T:\R^d\mapsto\bar\R_+$ that is
    continuous on $\R^d$, and finite on $\R^d\setminus\{0\}$.
    Then, $\widetilde{\X}$ has a one-potential density
    $v$ on $\R^d$, and for all Borel
    sets $F\subseteq\R^d$,
    \begin{equation} \label{Eq:main--v}
        \E\left[\lambda_d\left( \X(\R^N_+) \oplus F\right) \right]
        >0\quad\Leftrightarrow\quad
        \mathscr{C}_v(F)>0.
    \end{equation}
\end{theorem}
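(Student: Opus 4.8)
The plan is to reduce \eqref{Eq:main--v} to Theorem \ref{th:main} once we know that the $\bm{\Psi}$-energy and the $v$-energy agree on probability measures. Split $\{1,\ldots,N\}=T\cup T^c$, and for $S\subseteq\{1,\ldots,N\}$ let $V_S$ denote the one-potential measure of the stationary subprocess $\widetilde{\X}_S$, normalized exactly as in \eqref{Def:Rf} [with the convention $V_\varnothing:=\delta_0$]; by the computations underlying Lemma \ref{lem:P:U} (equivalently Lemma \ref{lem:vhat:kappa}) each $V_S$ is a symmetric Borel probability measure with $\widehat{V_S}=K_{\bm{\Psi}_S}$, where $\bm{\Psi}_S:=(\Psi_j)_{j\in S}$. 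Two elementary remarks drive the argument: (i) $K_{\bm{\Psi}}=K_{\bm{\Psi}_T}\,K_{\bm{\Psi}_{T^c}}$, since the product defining $K$ is over the index $j$; and (ii) $V_{\{1,\ldots,N\}}=V_T*V_{T^c}$, since $\widetilde{\X}_T$ and $\widetilde{\X}_{T^c}$ are independent and $[\bm{t}]$ splits as a sum over $T$ and a sum over $T^c$.

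First I would establish the existence of $v$. By hypothesis $V_T(dx)=v_T(x)\,dx$ with $v_T\in L^1(\R^d)$ nonnegative, continuous, and finite off the origin. Then (ii) shows that $V_{\{1,\ldots,N\}}=(v_T*V_{T^c})\,\lambda_d$, so $\widetilde{\X}$ has the one-potential density
\[
    v:=v_T*V_{T^c},
\]
a nonnegative function in $L^1(\R^d)$; this is the version of the one-potential density that we fix once and for all, so that $\mathscr{C}_v$ is unambiguous. Being a pointwise convolution of the continuous nonnegative $\bar\R_+$-valued function $v_T$ with a finite measure, $v$ is lower semicontinuous and $\bar\R_+$-valued, hence a kernel. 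As a sanity check, Lemma \ref{lem:vhat:kappa} together with remark (i) gives $\hat v=\widehat{v_T}\,\widehat{V_{T^c}}=K_{\bm{\Psi}_T}K_{\bm{\Psi}_{T^c}}=K_{\bm{\Psi}}$, as required.

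Next I would show that $\mathcal{E}_v(\mu)=I_{\bm{\Psi}}(\mu)$ for every Borel probability measure $\mu$ on $\R^d$. The point is that $v_T$ is a \emph{continuous} kernel of positive type satisfying condition (2) of Theorem \ref{th:ac:fourier}: it is continuous on $\R^d$ and finite on $\R^d\setminus\{0\}$ by hypothesis, it has positive type because $\widehat{v_T}=K_{\bm{\Psi}_T}\ge0$, and $\widehat{v_T}\in L^\infty(\R^d)$ because $v_T\in L^1(\R^d)$ (Remark \ref{rem:ac:fourier}). Applying Theorem \ref{th:ac:fourier} with $\kappa:=v_T$ and $\nu:=V_{T^c}$ — a Borel probability measure — and using that $\widehat{V_{T^c}}=K_{\bm{\Psi}_{T^c}}$ is real and nonnegative, we obtain
\[
    \mathcal{E}_v(\mu)=\mathcal{E}_{v_T*V_{T^c}}(\mu)
    =\frac{1}{(2\pi)^d}\int_{\R^d}\widehat{v_T}(\xi)\,\widehat{V_{T^c}}(\xi)\,|\hat\mu(\xi)|^2\,d\xi
    =\frac{1}{(2\pi)^d}\int_{\R^d}|\hat\mu(\xi)|^2\,K_{\bm{\Psi}}(\xi)\,d\xi
    =I_{\bm{\Psi}}(\mu).
\]
Since this holds for every $\mu$, the infima defining $\mathrm{cap}_{\bm{\Psi}}(F)$ and $\mathscr{C}_v(F)$ coincide, so $\mathscr{C}_v(F)>0\Leftrightarrow\mathrm{cap}_{\bm{\Psi}}(F)>0$ for every Borel set $F\subseteq\R^d$. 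Feeding this into Theorem \ref{th:main} yields \eqref{Eq:main--v}.

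The step I expect to be the main obstacle — and the reason Theorem \ref{th:ac:fourier} was proved in the generality it was — is the identity $\mathcal{E}_v(\mu)=I_{\bm{\Psi}}(\mu)$. One cannot simply apply Lemma \ref{lem:ac:fourier} to the kernel $v$ itself, because $v=v_T*V_{T^c}$ need be neither continuous nor finite away from the origin (for instance when $v_T(0)=\infty$ and $V_{T^c}$ charges some point); it is precisely the freedom in Theorem \ref{th:ac:fourier} to let the second convolution factor be a \emph{measure} $\nu$, combined with the decomposition $v=v_T*V_{T^c}$ furnished by independence, that rescues the computation. The remaining ingredients — that each $V_S$ is a symmetric probability measure with the stated Fourier transform, and that absolute continuity propagates through the convolution $V_T*V_{T^c}$ — are routine.
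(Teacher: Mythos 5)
Your proof is correct and follows essentially the same route as the paper: factor $K_{\bm{\Psi}}=K_{\bm{\Psi}_T}K_{\bm{\Psi}_{T^c}}$ and $v=v_T*V_{T^c}$ by independence, then apply Theorem \ref{th:ac:fourier} with $\kappa:=v_T$ and $\nu:=V_{T^c}$ to identify $\mathcal{E}_v$ with $I_{\bm{\Psi}}$, and feed the resulting equality of capacities into Theorem \ref{th:main}. The only difference is cosmetic: the paper treats $T=\{1,\ldots,N\}$ separately (invoking Lemma \ref{lem:ac:fourier} directly), whereas you unify the cases with the convention $V_\varnothing:=\delta_0$, which is a clean simplification.
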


\begin{proof}
    First, consider the case that $T=\{1\,,\ldots,N\}$. In this case,
    $\widetilde\X$ has a one-potential density $v:=v_{\{1,\ldots,N\}}$
    that is continuous
    on $\R^d$, and finite away from the origin.
    In light of Remark \ref{rem:ac:fourier},
    Lemma \ref{lem:vhat:kappa} shows us that $\kappa:=v$ satisfies
    condition (2) of Theorem \ref{th:ac:fourier}.
    Theorem \ref{th:ac:fourier}, in turn, implies that
    $I_{\bm{\Psi}}(\mu)= \mathscr{E}_v(\mu)$ for all
    Borel probability measures $\mu$ on $\R^d$, and thence
    $\mathscr{C}_v(F)= \mathrm{cap}_{\bm{\Psi}}(F)$. This and
    Theorem \ref{th:main} together imply Theorem \ref{th:main:v}
    in the case that $\widetilde\X$ has a continuous one-potential
    density that is finite on $\R^d\setminus\{0\}$.

    Next, consider the remaining case that $1\le |T|\le N-1$.
    For all Borel sets $A\subseteq\R^d$ define
    \begin{equation}
        V_{T^c}(A) :=\E\left[\int_{\R^{N-|T|}}
        \1_A\left( \widetilde{\X}_{T^c}(\bm{s})\right)
        e^{-[\bm{s}]}\, d\bm{s}\right].
    \end{equation}
    This is the one-potential measure for the stationary field
    based on the additive L\'evy process $\X_{T^c}$.
    The corresponding object for $\X_T$ can be defined likewise, viz.,
    \begin{equation}
        V_T(A) :=\E\left[\int_{\R^{|T|}}
        \1_A\left( \widetilde{\X}_{T}(\bm{t})\right)
        e^{-[\bm{t}]}\, d\bm{t}\right].
    \end{equation}

    We can observe that
    \begin{equation}\begin{split}
        V_{T^c}(A) &:=\E\left[\int_{\R^N_+}
            \1_A\left( \sum_{j\in T^c} \e_j X_j(s_j)\right)
            e^{-[\bm{s}]}\, d\bm{s}\right],\\
        V_{T}(A) &:=\E\left[\int_{\R^N_+}
            \1_A\left( \sum_{j\in T} \e_j X_j(s_j)\right)
            e^{-[\bm{s}]}\, d\bm{s}\right],
    \end{split}\end{equation}
    where $\e_1,\ldots,\e_N$ are i.i.d.\ random functions
    that are totally independent of $X_1,\ldots,X_N$ and
    $\widetilde{X}_1,\ldots,\widetilde{X}_N$, and
    $\P\{\e_1=\pm 1\}=1/2$.

    Similarly, we can write for all Borel measurable functions
    $f:\R^d\to\R_+$ and $x\in\R^d$,
    \begin{equation}
        (Rf)(x) =  \E\left[\int_{\R^N_+} f\left(x+ \sum_{j=1}^N \e_j
        X_j(t_j) \right) e^{-[\bm{t}]}\, d\bm{t}\right].
    \end{equation}
    Note that the integral is now over $\R^N_+$
    [and not $\R^N$]. Moreover, we can write
    \begin{equation}\begin{split}
        (Rf)(x) &= \iint f(x+y +z) V_T(dy)\, V_{T^c}(dz)\\
        &= \int f(x+y)\, \left( V_T*{V}_{T^c}\right)(dx),
    \end{split}\end{equation}
    The condition that $\widetilde\X_T$ has a one-potential density
    $v_T$ is equivalent to the statement that $V_T(dy)=v_T(y)\, dy$,
    whence it follows that $(Rf)(x)=\int f(x+y)v(y)\, dy$, where
    \begin{equation}\label{eq:v:vs:vT}
        v(y) := \int v_T(y-z) V_{T^c}(dz).
    \end{equation}
    This proves the assertion that $\widetilde\X$ has a one-potential
    density $v$. Note that $v$ is of the form
    $\kappa*\nu$, where $\kappa:=v_T$ and $\nu:={V}_{T^c}$. Thus, another
    appeal to Theorem \ref{th:ac:fourier} shows that
    condition (2) there is satisfied [confer
    also with Remark \ref{rem:ac:fourier}], and thence
   it follows that $\mathscr{C}_v(F)= \mathrm{cap}_{\bm{\Psi}}(F)$. This and
    Theorem \ref{th:main} together complete the proof.
\end{proof}

\subsection{A relation to an intersection problem}

Define for all Borel sets $F\subseteq\R^d$,
\begin{equation}\begin{split}
    \mathrm{H}_F &:= \left\{x\in\R^d:\
        \P\left\{ \X \left( (0\,,\infty)^N\right) \cap (x\oplus F)
        \neq\varnothing\right\}>0\right\}\\
    \widetilde{\mathrm{H}}_F &:= \left\{x\in\R^d:\
        \P\left\{ \widetilde{\X} \left( \R_{\ne}^N\right) \cap (x\oplus F)
        \neq\varnothing\right\}>0\right\},
\end{split}
\end{equation}
where $\R_{\ne}^N := \{\bm{t}\in \R^N: \ t_j \ne
0 \ \hbox{ for every }\, j = 1, \ldots, N\}$. Clearly, $\mathrm{H}_F \subseteq
\widetilde{\mathrm{H}}_F$.

The following improves on Proposition 6.2 of \fullocite{KXZ:03}.

\begin{proposition}\label{pr:H_F}
     Consider the following statements:
    \begin{enumerate}
            \item[(0)] $\E[\lambda_d(\X(\R^N_+)\oplus F)]>0$;
        \item[(1)] $\mathrm{cap}_{\bm{\Psi}}(F)>0$;
        \item[(2)] $\mathscr{C}_v(F)>0$;
        \item[(3)] $\lambda_d(\mathrm{H}_F)>0$;
        \item[(4)] $\mathrm{H}_F\neq\varnothing$;
        \item[(5)] $\widetilde{\mathrm{H}}_F=\R^d$; and
        \item[(6)] $\mathrm{H}_F=\R^d$.
    \end{enumerate}
    Then:
    \begin{enumerate}
        \item[(a)] It is always the case that $(0)\Leftrightarrow(1)$.
        \item[(b)] If $\X$ has a one-potential density, then
            $(1)\Leftrightarrow(3)\Leftrightarrow(4)$.
        \item[(c)] If $\X$ has an a.e.-positive
            one-potential density, then
            $(1)\Leftrightarrow(3)\Leftrightarrow(4)\Leftrightarrow(6)$.
        \item[(d)] If there exists a non-empty $T\subseteq\{1\,,\ldots\,, N\}$
            such that the sub-process
            $\widetilde{\X}_T$ has a one-potential density $v_T$, and $v_T$ is
            continuous on $\R^d$ and finite on $\R^d\setminus\{0\}$. Then,
            $(1)\Leftrightarrow(2) \Leftrightarrow (3) \Leftrightarrow (4)$.
        \item[(e)] Suppose the conditions of (e) are met, and let
            $v$ denote the one-potential density of $\widetilde{\X}$. If
            $v>0$ a.e., then
            $(1)\Leftrightarrow(2) \Leftrightarrow (3) \Leftrightarrow (4)
            \Leftrightarrow (5)$.
     \end{enumerate}
\end{proposition}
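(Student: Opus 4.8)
The plan hinges on a single Fubini--Tonelli identity,
\[
    \E\left[\lambda_d\left(\X((0,\infty)^N)\ominus F\right)\right]
    =\int_{\R^d}\P\left\{\X((0,\infty)^N)\cap(x\oplus F)\neq\varnothing\right\}dx,
\]
whose integrand is positive exactly on $\mathrm{H}_F$, so $(3)$ says precisely that the left side is positive. I would first record that this is equivalent to $(1)$ with no extra hypothesis. For $(1)\Rightarrow(3)$: if $\mathrm{cap}_{\bm{\Psi}}(F)=\mathrm{cap}_{\bm{\Psi}}(-F)>0$ then \eqref{asserted:LB} (applied to $-F$) gives $\E[\lambda_d(\X([0,r]^N)\oplus(-F))]>0$, and since the range $\X(\bm{s}+[0,r]^N)$ equals, in law, $\X(\bm{s})+\X'([0,r]^N)$ for an independent copy $\X'$ of $\X$ (stationary independent increments), translating the cube into the open orthant yields $\E[\lambda_d(\X((0,\infty)^N)\oplus(-F))]>0$; for $(3)\Rightarrow(1)$ one uses only $\X((0,\infty)^N)\subseteq\X(\R^N_+)$ and Theorem \ref{th:main}. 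Together with Theorem \ref{th:main} (which is $(0)\Leftrightarrow(1)$) and the trivial $(3)\Rightarrow(4)$, this settles (a) and everything in (b) except $(4)\Rightarrow(1)$.

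The heart of (b) is $(4)\Rightarrow(1)$, where the one-potential density $u$ of $\X$ is used. Fix $x_0\in\mathrm{H}_F$. As $\bm{s}$ decreases to $\bm{0}$, the probability that $\X$ meets $x_0\oplus F$ at some parameter coordinatewise $\ge\bm{s}$ increases to $\P\{\X((0,\infty)^N)\cap(x_0\oplus F)\neq\varnothing\}>0$, so it is positive on a set of $\bm{s}$ of positive measure. Conditioning on $\X(\bm{s})$, and using that $\X(\bm{s})$ is independent of the increment field $\bm{u}\mapsto\X(\bm{s}+\bm{u})-\X(\bm{s})$, which is an independent copy of $\X$ on $\R^N_+$, this probability equals $\int_{\R^d}g^{+}(x_0-y)\,\nu_{\bm{s}}(dy)$, where $g^{+}(z):=\P\{\X(\R^N_+)\cap(z\oplus F)\neq\varnothing\}$ and $\nu_{\bm{s}}$ is the law of $\X(\bm{s})$. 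Integrating against $e^{-[\bm{s}]}\,d\bm{s}$ over $\R^N_+$ and invoking Definition \ref{def:one:pot}, which turns $\int_{\R^N_+}\nu_{\bm{s}}\,e^{-[\bm{s}]}\,d\bm{s}$ into $u\,\lambda_d$, gives $\int_{\R^d}g^{+}(x_0-y)u(y)\,dy>0$; hence $\{g^{+}>0\}$ has positive Lebesgue measure, so $\int_{\R^d}g^{+}(x)\,dx=\E[\lambda_d(\X(\R^N_+)\ominus F)]>0$, and Theorem \ref{th:main} yields $(1)$. Once (b) is proved, $(0)\Leftrightarrow(1)\Leftrightarrow(3)\Leftrightarrow(4)$.

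For (c), with the extra hypothesis $u>0$ a.e., I would run the same computation but note that, when $\bm{s}\in(0,\infty)^N$, every parameter coordinatewise $\ge\bm{s}$ already lies in $(0,\infty)^N$; integrating out $\bm{s}$ then gives the super-mean-value inequality $\P\{\X((0,\infty)^N)\cap(z\oplus F)\neq\varnothing\}\ge\int_{\R^d}g^{+}(z-y)u(y)\,dy$ for \emph{every} $z$. Since $(1)$ forces (via (b)) $\lambda_d(\mathrm{H}_F)>0$, the set $\{g^{+}>0\}$ has positive measure; as $u>0$ a.e., the right side above is then strictly positive for every $z$, so $\mathrm{H}_F=\R^d$, which is $(6)$, and $(6)\Rightarrow(4)\Rightarrow(1)$ closes the chain. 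For (d), I would quote Theorem \ref{th:main:v}: under its hypotheses $\widetilde{\X}$ has a one-potential density $v$ and $\mathscr{C}_v(F)=\mathrm{cap}_{\bm{\Psi}}(F)$, i.e. $(0)\Leftrightarrow(2)$, hence $(1)\Leftrightarrow(2)$; moreover $\widetilde{\X}$, and therefore $\X$ (Remark \ref{rem:pot:density}), then has a one-potential density, so (b) applies and yields $(1)\Leftrightarrow(3)\Leftrightarrow(4)$.

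Finally, (e) (whose hypotheses are those of (d) together with $v>0$ a.e.) adjoins $(5)$. The implication $(5)\Rightarrow(1)$ is soft: if $\widetilde{\mathrm{H}}_F=\R^d$, then $x\mapsto\P\{\widetilde{\X}(\R^N_{\ne})\cap(x\oplus F)\neq\varnothing\}$ is positive everywhere, hence has positive integral over $\R^d$; being dominated by $x\mapsto\P\{\widetilde{\X}(\R^N)\cap(x\oplus F)\neq\varnothing\}$, the latter has positive integral too, and \eqref{eq:fubini} with Theorem \ref{th:main:tilde} gives $(1)$. The reverse implication $(1)\Rightarrow(5)$ is the crux, and the step I expect to be the main obstacle: one must show $\tilde g_{\ne}(x):=\P\{\widetilde{\X}(\R^N_{\ne})\cap(x\oplus F)\neq\varnothing\}$ is positive for \emph{every} $x$. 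The one-sided argument of (c) does not apply, because the increments of $\widetilde{\X}$ issuing from a point $\bm{s}\neq\bm{0}$ in all $2^N$ coordinate directions are not independent of $\widetilde{\X}(\bm{s})$. Instead I would follow the scheme of the proof of Theorem \ref{th:main:tilde}: condition on the histories $\mathscr{H}_\pi(\bm{s})$, use the Markov-random-field property (Proposition \ref{pr:MP}) and the fact that the filtrations $\{\mathscr{H}_\pi\}$ commute, and sum over the $2^N$ partial orders $\prec_\pi$ that collectively order $\R^N$ (cf.\ \eqref{eq:total:order}); the delicate part is the bookkeeping that keeps the parameters off the coordinate hyperplanes, which is exactly what that machinery is designed to handle. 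Integrating the resulting inequality against $e^{-[\bm{s}]}\,d\bm{s}$ replaces the resolvent $R$ by convolution with the one-potential density $v$ of $\widetilde{\X}$ (Lemma \ref{lem:P:U}), giving $\tilde g_{\ne}(z)\ge\int_{\R^d}\tilde g_{\ne}(z-y)v(y)\,dy$ for all $z$. Because $(1)\Rightarrow(3)$ gives $\lambda_d(\mathrm{H}_F)>0$ and $\mathrm{H}_F\subseteq\widetilde{\mathrm{H}}_F=\{\tilde g_{\ne}>0\}$, the function $\tilde g_{\ne}$ is positive on a set of positive measure; convolving with $v$, which is continuous and, by hypothesis, a.e.\ positive, forces $\tilde g_{\ne}>0$ everywhere, i.e. $(5)$.
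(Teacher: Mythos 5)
Your handling of (a)--(d) is correct, and in (b) it is in fact cleaner than the paper's own argument: you condition on $\X(\bm{s})$ for $\bm{s}\in(0,\infty)^N$ and use the post-$\bm{s}$ increment field of $\X$ on $\R^N_+$, so you only ever invoke the independent-increments structure of the one-sided process $\X$ and its one-potential density $u$, which is exactly what is hypothesized. The paper instead routes $(4)\Rightarrow(3)$ through $\widetilde{\X}$ via \eqref{eq:wt:Qs} and \eqref{eq:4.55}, claiming for every $\bm{s}\in\R^N$ that $\{\widetilde{\X}(\bm{t})-\widetilde{\X}(\bm{s})\}_{\bm{t}\in Q_{\bm{s}}}$ is independent of $\widetilde{\X}(\bm{s})$ and has the law of $\X$, and that the one-potential density $v$ of $\widetilde{\X}$ exists ``thanks to Remark \ref{rem:pot:density}.'' Both of these are problematic: the independence fails once some $s_j<0$ (the increment over $(s_j,t_j]$ and $\widetilde{X}_j(s_j)$ then both involve the increment over $(s_j,0]$; for Brownian motion one computes a strictly negative covariance), and Remark \ref{rem:pot:density} gives the implication in the \emph{opposite} direction (existence for $\widetilde{\X}$ implies existence for $\X$, not conversely, and explicitly not conversely when $N\ge 2$). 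Your route sidesteps both issues. Parts (c) and (d) match the paper up to direct-vs-contrapositive phrasing.

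Part (e), however, has a real gap. You correctly isolate $(1)\Rightarrow(5)$ as the crux and assert the target superharmonicity $\tilde g_{\ne}(z)\ge\int_{\R^d}\tilde g_{\ne}(z-y)\,v(y)\,dy$, but you do not derive it, and the mechanism you name does not produce it as stated. Proposition \ref{pr:MP} and the commuting-filtration apparatus live under $\P_{\lambda_d}$, whereas $\tilde g_{\ne}$ is a $\P_0$-probability for each fixed $z$; passing from a $\P_{\lambda_d}$ estimate to a pointwise estimate for every $z$ is exactly what one is trying to prove. More substantively, the observation you yourself make---that the increment fields out of $\bm{s}$ in the $2^N$ coordinate directions are not all independent of $\widetilde{\X}(\bm{s})$---means the natural conditioning gives, for $\bm{s}$ in the orthant $O_\sigma$ and moving away from the origin, a reflected process $\X^\sigma(\bm{u})=\sum_j\sigma_j X_j(u_j)$ rather than $\X$. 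Summing over $\sigma$ yields at best $2^N\tilde g_{\ne}(z)\ge\sum_\sigma\int\gamma^\sigma(z-y)\,V_\sigma(dy)$, where $\gamma^\sigma$ is the hitting function of $\X^\sigma$ and $V_\sigma$ is the $\sigma$-orthant piece of $\widetilde{\X}$'s one-potential measure; this does not collapse to a single convolution of $\tilde g_{\ne}$ with $v$ unless all $\gamma^\sigma$ coincide, which is false in general. One can still finish---each $\X^\sigma$ shares $K_{\bm{\Psi}}$ with $\X$, so Theorem \ref{th:main} gives $\lambda_d(\{\gamma^\sigma>0\})>0$ for every $\sigma$ under $(1)$, and then $v>0$ a.e.\ can be brought to bear on $\sum_\sigma V_\sigma=2^N v\,\lambda_d$---but those steps need to be written, and they are not the inequality you stated. (For what it's worth, the paper's own proof of (e) is a one-line gesture at \eqref{eq:4.55}, which inherits the independence issue above, so your instinct not to follow it verbatim was sound; the details still have to be supplied.)
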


\begin{remark}
      Proposition
      \ref{pr:H_F} is deceptively subtle. For example,
    The preceding proposition might fail
    to hold if $\mathrm{H}_F$ were replaced by the related
    set
    \begin{equation}
        \mathrm{H}_F^* := \left\{x\in\R^d:\
        \P\left\{ \X \left( \R^N_+\right) \cap (x\oplus F)
        \neq\varnothing\right\}>0\right\}.
    \end{equation}
    Indeed, because $\X(\bm{0})=0$, it follows
    that $\mathrm{H}_F^*=-F$.\qed
\end{remark}

Now we prove Proposition \ref{pr:H_F}.

\begin{proof}[Proof of Proposition \ref{pr:H_F}]
    Part (a) is merely a restatement of Theorem \ref{th:main}.

    We first observe the
    following consequence of the Fubini-Tonelli theorem:
    \begin{equation}\label{eq:fubini2}
        \E\left[\lambda_d\left(\X
        \left( (0\,,\infty)^N\right)\ominus F\right)\right]
        =\int_{\R^d} \P\left\{ \X\left( (0\,,\infty)^N\right)
        \cap(x\oplus F)\neq\varnothing
        \right\}\, dx.
    \end{equation}
    See \eqref{eq:fubini} for a similar computation.
    If $\mathrm{cap}_{\bm{\Psi}}(F)>0$, then the left-hand side of
    \eqref{eq:fubini2} is positive by a simple
    variant of Theorem \ref{th:main}. This proves that
    $(1)\Rightarrow(3)$. Conversely, if $\lambda_d(\mathrm{H}_F)>0$,
    then the right-hand side of \eqref{eq:fubini2}---and hence
    the left-hand side---are positive. Another appeal to
    Theorem \ref{th:main} shows us that $(1)\Leftrightarrow(3)$.

    Now we finish the proof of (b).
    Clearly, (3) implies (4). In order to prove that $(4)\Rightarrow(3)$,
    we define
    \begin{equation}
        Q_{\bm{s}} := (s_1\,,\infty)\times\cdots\times
        (s_N\,,\infty)\qquad\text{for all $\bm{s}\in\R^N$}.
    \end{equation}
    We note that for all $\bm{s}\in\R^N$ and $x\in\R^d$,
    \begin{equation}\label{eq:wt:Qs}\begin{split}
        &\P\left\{ \widetilde{\X}(Q_{\bm{s}})\cap (x\oplus F)\neq
            \varnothing\right\}\\
        &\hskip1.5in = \int_{\R^d}
            \P\left\{ \X\left( (0\,,\infty)^N\right) \cap
            \left((x-y)\oplus F\right)\neq\varnothing\right\}
            \P\left\{\widetilde{\X}(\bm{s})\in dy\right\}.
    \end{split}\end{equation}
    This uses only the fact that $\{\widetilde{\X}(\bm{t})
    -\widetilde{\X}(\bm{s})\}_{\bm{t}\in Q_{\bm{s}}}$
    is independent of $\widetilde{\X}(\bm{s})$, and has the
    same law as $\X$. In particular,
    \begin{equation}\label{eq:4.55}\begin{split}
        &\int_{\R^N} \P\left\{\widetilde{\X}(Q_{\bm{s}})
            \cap(x\oplus F)\neq\varnothing\right\} e^{-[\bm{s}]}\,
            d\bm{s}\\
        &\hskip1.8in=2^N\int_{\R^d} \P\left\{ \X\left((0\,,\infty)^N\right)
            \cap \left( (x-y)\oplus F\right) \neq\varnothing
            \right\} v(y)\, dy,
    \end{split}\end{equation}
    where $v$ denotes the one-potential density of $\widetilde{\X}$.
    [It exists thanks to Remark \ref{rem:pot:density}.]

    If (4) holds, then there exists $\bm{s}\in\R^N_+$
    such that $\P\{{\X}(Q_{\bm{s}})
    \cap(x\oplus F)\neq\varnothing\}>0$.
    Consequently,
    $\P \{\widetilde{\X}(Q_{\bm{s}}) \cap(x\oplus F)\neq\varnothing \}>0$.
    As we decrease $\bm{s}$, coordinatewise, the preceding
    probability increases. Therefore, the left-most
    term in \eqref{eq:4.55} is (strictly) positive, and therefore so is
    the right-most term in \eqref{eq:4.55}. We can conclude that
    $\P \{ \X ((0\,,\infty)^N )
    \cap  ( w\oplus F ) \neq\varnothing
   \}>0$ for all $w$ in a non-null set
    $[\lambda_d]$.
    This proves $(3)\Rightarrow(4)$, whence (b).

    In order to prove (c), let $g$ denote the one-potential
    density of $\X$. Then,
    we argue as we did to prove \eqref{eq:4.55},
    and deduce that
    \begin{equation}\label{eq:4.56}\begin{split}
        &\int_{\R^N_+} \P\left\{ \X(Q_{\bm{s}}) \cap(x\oplus F)\neq
            \varnothing \right\} e^{-[\bm{s}]}\, d\bm{s}\\
        &\hskip2in = \int_{\R^d} \P\left\{
            \X\left((0\,,\infty)^N\right) \cap\left((x-y)\oplus F\right)
            \neq\varnothing\right\} g(y)\, dy.
    \end{split}\end{equation}
    If $\mathrm{H}_F\neq\R^d$, then the left-most term must
    be zero for some $x\in\R^d$. Because $g>0$ a.e., this proves that
    $\lambda_d(\mathrm{H}_F)=0$.

    The equivalence of $(1)\Leftrightarrow(2) $ in  (d) is contained
    in Theorems \ref{th:main} and \ref{th:main:v}. The rest of Part (d)
    is proved similarly as in the proof of (b).

     To prove (e): We note that
     ``$\widetilde{\mathrm{H}}_F=\R^d$ iff (4)'' [under the condition
    that $v>0$ a.e.] has a very similar proof
    to ``$\mathrm{H}_F=\R^d$ iff (4)'' [under the condition
    that $g>0$ a.e.], but uses \eqref{eq:4.55} instead of
    \eqref{eq:4.56}. We omit the details and conclude our proof.
\end{proof}

\section{\bf Proof of Theorem \ref{th:FS}}\label{sec:Pf:cor:FS}

We continue to let $X_1,\ldots,X_N$ denote $N$ independent
L\'evy processes on $\R^d$ with respective exponents
$\Psi_1,\ldots,\Psi_N$.
There is a large literature
that is devoted to the ``$N$-parameter L\'evy process''
$\otimes_{j=1}^N X_j$ defined by
\begin{equation}\label{eq:Npar:PLP}
    \left(\otimes_{j=1}^N X_j\right)(\bm{t})
    := \left( \begin{matrix}
        X_1(t_1)\\
        \vdots\\
        X_N(t_N)
    \end{matrix}\right)\qquad\text{for all $\bm{t}\in\R^N_+$}.
\end{equation}
Note that:
\begin{enumerate}
    \item[(i)] The state space of $\otimes_{j=1}^NX_j$ is $(\R^d)^N$; and
    \item[(ii)] In the special case that
        $N=2$, \eqref{eq:Npar:PLP} reduces to \eqref{eq:2par:PLP}.
\end{enumerate}
In this section we describe how
this theory, and much more, is contained within the
theory of additive L\'evy processes of this paper.

Let us begin by making the observation that
product L\'evy processes are in fact degenerate
additive L\'evy processes. Indeed, for all
$\bm{t}\in\R^N_+$ we can write
\begin{equation}
    \left( \otimes_{j=1}^N X_j \right) (\bm{t})
    = \bm{A}^1 X_1(t_1) + \cdots +
    \bm{A}^N X_N(t_N),
\end{equation}
where each $X_j(t_j)$ is viewed as a column vector
with dimension $d$, and
each $\bm{A}^j$ is the $Nd\times d$ matrix
\begin{equation}
    \bm{A}^j = \left(\begin{matrix}
        \bm{0}_{(j-1)d\times d}\\
        \bm{I}_{d\times d}\\
        \bm{0}_{(N-j)d\times d}
    \end{matrix}\right).
\end{equation}
Here:
(i) $\bm{0}_{(j-1)d\times d}$ is a $(j-1)d\times d$ matrix of zeros; and
(ii) $\bm{I}_{d\times d}$ is the identity matrix with $d^2$ entries.
We emphasize that
the matrix $\bm{I}_{d\times d}$ appears after
$j-1$ square matrices of size
$d^2$ whose entries are all zeroes.

It is also easy to describe the law of the process $\otimes_{j=1}^NX_j$
via the following characteristic-function relation:
\begin{equation}
    \E\left[\exp\left\{ i\sum_{j=1}^N \xi^j\cdot X_j(t_j)
    \right\}\right] = \exp\left(-\sum_{j=1}^N t_j\Psi_j(\xi^j)\right),
\end{equation}
valid for all $\xi^1,\ldots,\xi^N\in\R^d$ and
$t_1,\ldots,t_N\ge 0$. Therefore, the exponent of the $N$-parameter
additive L\'evy process $\otimes_{j=1}^N X_j$ is
\begin{equation}
    \bm{\Psi}\left( \xi^1,\ldots,
    \xi^N\right) = \left(\begin{matrix}
        \Psi_1(\xi^1)\\
        \vdots\\
        \Psi_N(\xi^N)
    \end{matrix}\right)\qquad
    \text{for all }\quad\xi^1,\ldots,
    \xi^N\in\R^d.
\end{equation}

Therefore, Theorem \ref{th:main} applies, without
further thought, to yield the following.

\begin{corollary}\label{co:product:main}
    Choose and fix a Borel set $F\subseteq(\R^d)^N$.
    Then $(\otimes_{j=1}^N X_j)(\R^N_+)\oplus F$
    has positive Lebesgue measure with positive probability
    if and only if there exists a compact-support
    Borel probability measure
    $\mu$ on $F$ such that
    \begin{equation}
        \int_{(\R^d)^N} \left| \hat{\mu}\left( \xi^1,\ldots,
        \xi^N\right) \right|^2 \prod_{j=1}^N
        \Re\left( \frac{1}{1+\Psi_j(\xi^j)} \right)\, d\xi<\infty.
    \end{equation}
\end{corollary}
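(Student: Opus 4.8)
The plan is to apply Theorem \ref{th:main} verbatim to the $N$-parameter additive L\'evy process $\otimes_{j=1}^N X_j$, now regarded as a random field with values in the Euclidean space $(\R^d)^N\cong\R^{Nd}$, and then merely to unwind the definition of $\mathrm{cap}_{\bm\Psi}$ in this concrete case. The displays preceding the statement already do the structural bookkeeping: they exhibit $\otimes_{j=1}^N X_j$ as $\sum_{j=1}^N\bm{A}^j X_j(t_j)$, hence as a bona fide additive L\'evy process on $\R^{Nd}$, and identify its multiparameter L\'evy--Khintchine exponent as the function $\bm\Psi(\xi^1,\ldots,\xi^N)=(\Psi_1(\xi^1),\ldots,\Psi_N(\xi^N))$ from $\R^{Nd}$ into $\mathbf{C}^N$. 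The only point worth flagging is that each summand $\bm{A}^j X_j$ is degenerate, living on a $d$-dimensional coordinate subspace of $\R^{Nd}$; but Theorem \ref{th:main} is stated for \emph{arbitrary} additive L\'evy processes, so this degeneracy is harmless.

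Next I would compute the kernel $K_{\bm\Psi}$ attached to this exponent. Since each $\Psi_j$ depends only on the $j$-th block $\xi^j$ of the frequency variable $\xi=(\xi^1,\ldots,\xi^N)\in\R^{Nd}$, the defining product
\[
K_{\bm\Psi}(\xi)=\prod_{j=1}^N\Re\!\left(\frac{1}{1+\Psi_j(\xi^j)}\right)
\]
is precisely the factor multiplying $|\hat\mu|^2$ in the statement. Because $\Re\Psi_j\ge 0$ everywhere, $1+\Psi_j$ never vanishes and $K_{\bm\Psi}>0$, so the framework of the paper applies without incident. Therefore, for a compact-support Borel probability measure $\mu$ on $F\subseteq\R^{Nd}$,
\[
I_{\bm\Psi}(\mu)=\frac{1}{(2\pi)^{Nd}}\int_{(\R^d)^N}\left|\hat\mu(\xi^1,\ldots,\xi^N)\right|^2\prod_{j=1}^N\Re\!\left(\frac{1}{1+\Psi_j(\xi^j)}\right)\,d\xi,
\]
with the Fourier transform normalized as in the paper. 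Consequently $\mathrm{cap}_{\bm\Psi}(F)>0$ --- i.e.\ $\inf_{\mu\in\mathcal{P}_c(F)}I_{\bm\Psi}(\mu)<\infty$ --- holds if and only if some $\mu\in\mathcal{P}_c(F)$ makes the last displayed integral finite, which is exactly the condition appearing in the corollary.

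Finally, Theorem \ref{th:main}, applied with ``$d$'' taken to be $Nd$, yields
\[
\E\left[\lambda_{Nd}\left((\otimes_{j=1}^N X_j)(\R^N_+)\oplus F\right)\right]>0\quad\Longleftrightarrow\quad\mathrm{cap}_{\bm\Psi}(F)>0;
\]
and since $\lambda_{Nd}\big((\otimes_{j=1}^N X_j)(\R^N_+)\oplus F\big)$ is a nonnegative (extended-real-valued) random variable, positivity of its expectation is equivalent to its being strictly positive with positive probability. Chaining these equivalences with the computation of the energy in the previous paragraph gives the corollary. There is essentially no obstacle here: the whole content is recognizing the product process as a degenerate additive L\'evy process and tracking the bookkeeping --- which Euclidean space is the state space, the matching normalization $(2\pi)^{-Nd}$ in the energy, and the remark that Theorem \ref{th:main} tolerates the coordinate-subspace summands of $\otimes_{j=1}^N X_j$.
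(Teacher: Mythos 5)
Your proposal is correct and matches the paper's own approach precisely: the text immediately preceding the corollary sets up $\otimes_{j=1}^N X_j$ as a (degenerate) additive L\'evy process on $\R^{Nd}$ with exponent $\bm\Psi(\xi^1,\ldots,\xi^N)=(\Psi_1(\xi^1),\ldots,\Psi_N(\xi^N))$, and the corollary is then obtained by applying Theorem \ref{th:main} verbatim with $d$ replaced by $Nd$. The extra care you take over the $(2\pi)^{-Nd}$ normalization and the equivalence between positive expectation and positive probability is sound and consistent with what the paper leaves implicit.
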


Next let us suppose
that $X_1,\ldots,X_N$ have one-potential
densities $u_1,\ldots,u_N$, respectively. According to
Definition \ref{def:one:pot},
\begin{equation}\label{eq:1one:pot}
    \E\left[\int_0^\infty f(X_j(s)) e^{-s}\, ds\right]
    =\int_{\R^d} f(a) u_j(a)\, da,
\end{equation}
for all $j=1,\ldots,N$, and all Borel measurable
functions $f:\R^d\to\R_+$. [This agrees with the usual
nomenclature of probabilistic potential theory.]

\begin{lemma}\label{lem:product:one:pot}
    If $X_1,\ldots,X_N$ have respective one-potential
    densities $u_1,\ldots,u_N$, then
    $\otimes_{j=1}^N X_j$ has the one-potential density
    $u( x^1,\ldots,x^N)
    := \prod_{j=1}^N
    u_j (x^j )$ for all $x^1,\ldots,x^N\in\R^d$.
    Also, $\widetilde{\otimes_{j=1}^N X_j}$
    has the one-potential density
    \begin{equation}\label{eq:product:v}
        v \left( x^1,\ldots,x^N \right)
        := \prod_{j=1}^N \left(
        \frac{u_j\left(x^j\right)+u_j\left(-x^j\right)}{2}\right)
        \qquad\text{for all $x^1,\ldots,x^N\in\R^d$}.
    \end{equation}
    Finally, if $u_j(0)>0$ for all $j$, then $u$ and
    $v$ are strictly positive everywhere.
\end{lemma}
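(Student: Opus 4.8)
The plan is to obtain the two potential-density identities by a routine independence-plus-Tonelli computation, and then to isolate the positivity assertion, which is the only part that genuinely uses the structure of L\'evy processes and is where I expect the real work to lie.

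For the density of $\otimes_{j=1}^N X_j$ I would apply Definition \ref{def:one:pot} with index set $E=\R^N_+$, and note that $\int_E e^{-[\bm s]}\,d\bm s=1$. Thus it suffices to check that $\E\big[\int_{\R^N_+} f\big(X_1(t_1),\ldots,X_N(t_N)\big)\,e^{-[\bm t]}\,d\bm t\big]=\int_{(\R^d)^N} f(x)\prod_{j=1}^N u_j(x^j)\,dx$ for every Borel measurable $f:(\R^d)^N\to\R_+$. Since $X_1,\ldots,X_N$ are independent, the law of $\big(X_1(t_1),\ldots,X_N(t_N)\big)$ is the product of the laws of the $X_j(t_j)$; Tonelli's theorem then permits integrating in $\bm t$ one coordinate at a time, and the $j$-th factor is exactly the one-potential \emph{measure} $U_j(dx^j)=\int_0^\infty\P\{X_j(t)\in dx^j\}e^{-t}\,dt$, which equals $u_j(x^j)\,dx^j$ by \eqref{eq:1one:pot}. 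Integrability is automatic from Tonelli because each $u_j\in L^1(\R^d)$ (indeed $\int u_j=1$). For $\widetilde{\otimes_{j=1}^N X_j}$ I would run the identical argument with $E=\R^N$, where now $\int_E e^{-[\bm s]}\,d\bm s=2^N$; the only new ingredient is that, splitting each $t_j$-integral at $0$ and using $\widetilde X_j(t)=X_j(t)$ for $t\ge0$ and $\widetilde X_j(t)=-X'_j(-t)$ for $t<0$ with $X'_j$ an independent copy of $X_j$, the $j$-th factor collapses to $\tfrac12\big(U_j+\breve U_j\big)(dx^j)$, where $\breve U_j(A):=U_j(-A)$ has density $x^j\mapsto u_j(-x^j)$. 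This produces \eqref{eq:product:v}, and $\int v=\prod_j\int\tfrac12\big(u_j(x^j)+u_j(-x^j)\big)\,dx^j=1$, so $v\in L^1$.

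For the positivity, the product form reduces everything to the corresponding one-process statements: $u>0$ everywhere iff each $u_j>0$ on all of $\R^d$, and $v>0$ everywhere iff $u_j(x)+u_j(-x)>0$ for every $x$ and every $j$. I would argue via the support $\Sigma_j:=\mathrm{supp}\,U_j$, which is the closure of the additive sub-semigroup $\bigcup_{t>0}\mathrm{supp}\big(\text{law of }X_j(t)\big)$ of $\R^d$, contains $0$, and---since $U_j\ll\lambda_d$ and $U_j(\R^d)=1$---has strictly positive Lebesgue measure; with the canonical choice \eqref{eq:v} of the density, $u_j$ vanishes off $\Sigma_j$, and $u_j(0)>0$ certifies that $0$ is retained in this version. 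For $v_j:=\tfrac12(u_j+\breve u_j)$, which is the one-potential density of the reflection-invariant process $\widetilde X_j$, the relevant set is the \emph{group} generated by $\Sigma_j$; it contains $\Sigma_j$, hence has positive Lebesgue measure, and a measurable subgroup of $\R^d$ of positive measure is all of $\R^d$ (Steinhaus). Combined with the fact that $\{v_j>0\}$ is stable under this group operation---which I would extract by iterating the identity that the $n$-fold iterate of the resolvent of $\widetilde X_j$ has kernel $v_j^{*n}$, together with the maximum-principle mechanism behind Lemma \ref{lem:sup:zero} (note $\widehat{v_j}=\Re\big((1+\Psi_j)^{-1}\big)$ is bounded by Lemma \ref{lem:vhat:kappa}, so that, where $v_j$ is continuous, $v_j(0)=\sup_x v_j(x)$)---this forces $\{v_j>0\}=\R^d$. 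The statement for $u$ follows along the same lines with the semigroup $\Sigma_j$ in place of the group. Taking products over $j$ then finishes both claims.

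I expect this last step to be the main obstacle: the soft arguments above only pin down $\{u_j>0\}$ and $\{v_j>0\}$ up to Lebesgue-null sets, and promoting this to a genuinely pointwise ``strictly positive everywhere'' requires combining the canonical version \eqref{eq:v} with the convolution-power identities for the resolvents and the maximum principle. The reflection symmetry of $\widetilde X_j$ is what makes the $v$-case the cleaner one (one works with a group rather than a semigroup), and it is precisely the $v$-positivity that is needed downstream, e.g.\ in Proposition \ref{pr:H_F}(e) and hence in Theorems \ref{th:FS} and \ref{th:dimH}.
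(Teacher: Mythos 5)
Your derivation of the two one-potential densities is correct and essentially the same as the paper's: both rest on independence of the $X_j$'s together with Fubini--Tonelli (the paper reduces to product test functions $f=\prod_j f_j$ by a density/monotone-class argument, you integrate coordinate by coordinate; these are equivalent), and you correctly produce the factor $\tfrac12(U_j+\breve U_j)$ in the two-sided case by splitting each $t_j$-integral at the origin.

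The positivity assertion is where your argument has a genuine gap. The paper disposes of it in one sentence by citing Lemma~3.2 of Evans (1987b), \emph{Multiple points in the sample paths of a L\'evy process}, which states exactly what is needed here: if a L\'evy process has a one-potential density $u$ with $u(0)>0$, then $u(z)>0$ for every $z\in\R^d$. Your attempt to rederive this from scratch has two concrete problems. First, you invoke the maximum-principle mechanism behind Lemma~\ref{lem:sup:zero}, but that lemma requires the kernel to be continuous on all of $\R^d$, and the present statement carries no continuity hypothesis on the $u_j$'s (that hypothesis is imposed only in Corollaries~\ref{co:FS} and~\ref{cor:FS--1} and the second halves of Theorems~\ref{th:FS} and~\ref{th:dimH}); so this tool is unavailable. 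Second, even granting via Steinhaus that the group generated by $\Sigma_j$ is all of $\R^d$, this pins down $\{v_j>0\}$ only up to a Lebesgue-null set; to promote it to pointwise positivity you claim $\{v_j>0\}$ is closed under addition by appealing to ``the $n$-fold iterate of the resolvent has kernel $v_j^{*n}$,'' but that identity is not right as stated---the resolvent equation gives $(U^1)^{*2}$ the density $\int_0^\infty t e^{-t} p_t\,dt$, not $u_j$ again, and the passage from ``supports are sum-closed'' to ``the chosen pointwise version of the density is strictly positive on the sumset'' is precisely the delicate step. You flag this as the sticking point, and you are right to: the efficient and intended move is simply to cite Evans's lemma, whose proof carries out a careful version of the semigroup argument you are gesturing at, working with a lower semicontinuous choice of the density rather than assuming continuity.
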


\begin{proof}
    Let us prove the first assertion about the one-potential
    density of $\otimes_{j=1}^N X_j$ only. The second assertion
    is proved very similarly.

    We seek to establish that for all Borel measureable
    functions $f:(\R^d)^N\to\R_+$,
    \begin{equation}
        \E\left[ \int_{\R^N_+} f\left(
        \left(\otimes_{j=1}^N X_j \right) (\bm{t})  \right)
        e^{-[\bm{t}]}\,d\bm{t} \right]
        =\int_{(\R^d)^N} f\left( x^1,\ldots,x^N\right)
        \prod_{j=1}^N u_j\left(x^j\right)\, dx.
    \end{equation}
    A density argument reduces the problem to the case
    that $f(x^1,\ldots,x^N)$ has the special form
    $\prod_{j=1}^N f_j(x^j)$, where $f_j:\R^d\to\R_+$
    is Borel measurable. But in this case, the claim
    follows immediately from \eqref{eq:1one:pot}
    and the independence of $X_1,\ldots,X_N$.

    In order to complete the proof, consider the case that
    $u_j(0)>0$. Lemma 3.2 of \ocite{Evans:87b} asserts that
    $u_j(z)>0$ for all $z\in\R^d$, whence follows the lemma.
\end{proof}

The preceding lemma and Proposition \ref{pr:H_F}
together imply,
without any further effort, the following two theorems.

\begin{corollary}\label{co:FS-1}
    Let $X_1,\ldots,X_N$ be independent L\'evy
    processes on $\R^d$, and assume that each $X_j$
    has a one-potential density $u_j$ such that $u_j(0)>0$.
    Then, for all Borel sets $F\subseteq(\R^d)^N$,
    \begin{equation}\label{eq:FS:hit}
        \P\left\{ \left(\otimes_{j=1}^N X_j\right)
        \left( (0\,,\infty)^N \right) \cap F\neq\varnothing\right\}>0
    \end{equation}
    if and only if there exists a compact-support
    Borel probability measure
    $\mu$ on $F$ such that
    \begin{equation}
        \int_{(\R^d)^N} \left| \hat{\mu}\left( \xi^1,\ldots,
        \xi^N\right) \right|^2 \prod_{j=1}^N
        \Re\left( \frac{1}{1+\Psi_j(\xi^j)} \right)\,
        d\xi<\infty.
    \end{equation}
\end{corollary}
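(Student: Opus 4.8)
The plan is to read off Corollary~\ref{co:FS-1} from the material already in hand, applied to the \emph{degenerate} additive L\'evy process $\X:=\otimes_{j=1}^N X_j$ on $\R^{Nd}$. As established in the paragraphs preceding Corollary~\ref{co:product:main}, $\otimes_{j=1}^N X_j$ is an $N$-parameter additive L\'evy process on $(\R^d)^N$ whose exponent is $\bm{\Psi}(\xi^1,\ldots,\xi^N)=(\Psi_1(\xi^1),\ldots,\Psi_N(\xi^N))$; consequently its kernel is
\[
    K_{\bm{\Psi}}(\xi^1,\ldots,\xi^N)=\prod_{j=1}^N
    \Re\!\left(\frac{1}{1+\Psi_j(\xi^j)}\right),
\]
and, directly from the definitions of $I_{\bm{\Psi}}$ and $\mathrm{cap}_{\bm{\Psi}}$, the condition $\mathrm{cap}_{\bm{\Psi}}(F)>0$ is equivalent to the existence of a compact-support Borel probability measure $\mu$ on $F$ for which the Fourier integral displayed in the statement is finite. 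So it suffices to show that the hitting probability $\P\{(\otimes_{j=1}^N X_j)((0\,,\infty)^N)\cap F\ne\varnothing\}$ is positive if and only if $\mathrm{cap}_{\bm{\Psi}}(F)>0$.

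Next I would invoke Lemma~\ref{lem:product:one:pot}: since $u_j(0)>0$ for every $j$, the process $\otimes_{j=1}^N X_j$ has the one-potential density $u(x^1,\ldots,x^N)=\prod_{j=1}^N u_j(x^j)$, which is strictly positive everywhere, hence in particular a.e.-positive. Therefore the hypotheses of part~(c) of Proposition~\ref{pr:H_F}, applied with $\X:=\otimes_{j=1}^N X_j$ and the given Borel set $F\subseteq(\R^d)^N$, are met.

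To conclude, observe that taking $x=0$ in the definition of $\mathrm{H}_F$ shows that $\P\{(\otimes_{j=1}^N X_j)((0\,,\infty)^N)\cap F\ne\varnothing\}>0$ is exactly the assertion $0\in\mathrm{H}_F$. By Proposition~\ref{pr:H_F}(c) the statements $\mathrm{cap}_{\bm{\Psi}}(F)>0$, $\lambda_{Nd}(\mathrm{H}_F)>0$, $\mathrm{H}_F\ne\varnothing$ and $\mathrm{H}_F=\R^{Nd}$ are all equivalent; in particular $0\in\mathrm{H}_F$ if and only if $\mathrm{cap}_{\bm{\Psi}}(F)>0$. Combining this with the first paragraph proves the corollary.

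The only point requiring any care---hence the ``without any further effort'' phrasing in the text---is the implication $\mathrm{H}_F\ne\varnothing\Rightarrow\mathrm{H}_F=\R^{Nd}$, that is, the ``$(4)\Rightarrow(6)$'' half of Proposition~\ref{pr:H_F}(c); this is where the hypothesis $u_j(0)>0$ is genuinely used, through the a.e.-positivity of the one-potential density of $\otimes_{j=1}^N X_j$. Everything else is a transcription of Theorem~\ref{th:main} (equivalently Corollary~\ref{co:product:main}) together with the definitions, once one checks---as was done before Corollary~\ref{co:product:main}---that the degenerate process $\otimes_{j=1}^N X_j$ genuinely fits the additive-L\'evy-process framework, in particular that $\Re\Psi_j\ge 0$ so that $K_{\bm{\Psi}}$ is a well-defined nonnegative kernel.
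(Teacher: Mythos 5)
Your proposal is correct and follows precisely the route the paper intends: identify $\otimes_{j=1}^N X_j$ as a degenerate additive L\'evy process with kernel $K_{\bm{\Psi}}(\xi)=\prod_j\Re\big(1/(1+\Psi_j(\xi^j))\big)$, use Lemma~\ref{lem:product:one:pot} to obtain the strictly positive one-potential density $\prod_j u_j(x^j)$, and then read off the equivalence of ``$0\in\mathrm{H}_F$'' with ``$\mathrm{cap}_{\bm{\Psi}}(F)>0$'' from Proposition~\ref{pr:H_F}(c). This is exactly what the paper means by saying the corollary follows ``without any further effort,'' and your closing remark correctly pinpoints that the $u_j(0)>0$ hypothesis enters through the a.e.-positivity of the one-potential density, which is what upgrades $\mathrm{H}_F\ne\varnothing$ to $\mathrm{H}_F=(\R^d)^N$.
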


\begin{corollary}[Fitzsimmons and Salisbury, 1989]\label{co:FS}
    Suppose, in addition to the hypotheses of Corollory
    \ref{co:FS-1}, that each $u_j:\R^d\to\bar\R_+$
    is continuous on $\R^d$, and finite on $\R^d\setminus\{0\}$.
    Then, for all Borel sets $F\subseteq(\R^d)^N$,
    \eqref{eq:FS:hit} holds
    if and only if there exists a compact-support
    Borel probability measure
    $\mu$ on $F$ such that
    \begin{equation}
        \iint \prod_{j=1}^N \left(
        \frac{u_j\left(x^j-y^j\right)
        +u_j\left(y^j-x^j\right)}{2}\right)\,
        \mu\left(dx^1\cdots dx^N\right)\,
        \mu\left(dy^1\cdots dy^N\right) <\infty.
    \end{equation}
\end{corollary}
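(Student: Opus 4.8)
The plan is to read $\otimes_{j=1}^N X_j$ as the (degenerate) $N$-parameter additive L\'evy process on $(\R^d)^N$ described at the start of this section, with exponent $\bm{\Psi}(\xi^1,\ldots,\xi^N)=(\Psi_1(\xi^1),\ldots,\Psi_N(\xi^N))$, and then to run it through the machinery already assembled. By Lemma~\ref{lem:product:one:pot}, the hypothesis $u_j(0)>0$ forces $\otimes_{j=1}^N X_j$ to have the everywhere-positive one-potential density $u(x^1,\ldots,x^N)=\prod_{j=1}^N u_j(x^j)$, and $\widetilde{\otimes_{j=1}^N X_j}$ to have the everywhere-positive one-potential density $v(x^1,\ldots,x^N)=\prod_{j=1}^N\frac{u_j(x^j)+u_j(-x^j)}{2}$. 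Each factor of $v$ is even in $x^j$, so $v$ is even, and therefore $v(x-y)=\prod_{j}\frac{u_j(x^j-y^j)+u_j(y^j-x^j)}{2}$; consequently, for every probability measure $\mu$ on $(\R^d)^N$ one has $\mathscr{E}_v(\mu)=\iint v(x-y)\,\mu(dx)\,\mu(dy)$, which is exactly the double integral appearing in the corollary. Thus its last displayed condition says precisely ``$\mathscr{C}_v(F)>0$''.

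Next I would quote Corollary~\ref{co:FS-1}---itself a consequence of Proposition~\ref{pr:H_F}(a),(c) applied to $\otimes_{j=1}^N X_j$, where the everywhere-positivity of $u$ is what upgrades ``$\mathrm{H}_F\ne\varnothing$'' to ``$\mathrm{H}_F=(\R^d)^N$'', and in particular to $0\in\mathrm{H}_F$, i.e.\ to \eqref{eq:FS:hit}---to conclude that \eqref{eq:FS:hit} holds iff $\mathrm{cap}_{\bm{\Psi}}(F)>0$. Hence the whole corollary reduces to the single equivalence $\mathrm{cap}_{\bm{\Psi}}(F)>0\Leftrightarrow\mathscr{C}_v(F)>0$. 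Since $\hat v=K_{\bm{\Psi}}$ by Lemma~\ref{lem:vhat:kappa}, so that $I_{\bm{\Psi}}(\mu)=(2\pi)^{-Nd}\int|\hat\mu|^2\hat v$, this is the statement that the Fourier energy $\int|\hat\mu|^2\hat v$ is finite for some $\mu\in\mathcal{P}_c(F)$ iff $\mathscr{E}_v(\mu)$ is finite for some $\mu\in\mathcal{P}_c(F)$. When every $u_j(0)$ is finite, $v$ is a continuous, everywhere-finite kernel of positive type with $\hat v=K_{\bm{\Psi}}\in L^\infty$, so $\widetilde{\otimes_{j=1}^N X_j}$ meets the hypothesis of Theorem~\ref{th:main:v} with $T=\{1,\ldots,N\}$; that theorem, together with Theorem~\ref{th:main}, delivers $\mathrm{cap}_{\bm{\Psi}}(F)>0\Leftrightarrow\mathscr{C}_v(F)>0$, and we are done in this case.

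The step I expect to be the main obstacle is the remaining case, in which $u_k(0)=\infty$ for some $k$---this is allowed by the hypotheses and occurs in practice (e.g.\ for subcritical stable processes). Then $v$ equals $+\infty$ on the union of the hyperplanes $\{x:x^k=0\}$ taken over those $k$ with $u_k(0)=\infty$, a set strictly larger than $\{0\}$ once $N\ge2$; so $v$ is not finite off the origin, and neither Theorem~\ref{th:ac:fourier} nor Theorem~\ref{th:main:v} can be cited verbatim. One half of the equivalence survives intact: $v$ remains of positive type (Lemma~\ref{lem:vhat:kappa}) and is lower semicontinuous in the sense used here, being the increasing limit of the continuous $\R_+$-valued kernels $\prod_j\frac{(u_j\wedge n)(x^j)+(u_j\wedge n)(-x^j)}{2}$, so Lemma~\ref{lem:kernel:energy:UB} yields $\mathscr{E}_v(\mu)\le(2\pi)^{-Nd}\int|\hat\mu|^2\hat v=I_{\bm{\Psi}}(\mu)$ for every $\mu$, hence $\mathrm{cap}_{\bm{\Psi}}(F)>0\Rightarrow\mathscr{C}_v(F)>0$. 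For the converse I would re-run Case~1 of the proof of Lemma~\ref{lem:ac:fourier} with the condition ``$\kappa$ finite off $\{0\}$'' weakened to ``$\kappa$ finite off the bad hyperplanes'': given $\mu$ on $F$ with $\mathscr{E}_v(\mu)<\infty$, the measure $\mu$ puts no diagonal mass on those hyperplanes, Lusin's theorem produces a compact set off which $v*\breve\mu$ is continuous and finite, and the compactness and uniform-continuity estimates there used finiteness of the kernel only near the integration points, so they carry over. (For the companion diagonal statement Theorem~\ref{th:FS} one need not bother: applying the clean product statement to the diagonal copy of $F$ inside $(\R^d)^N$ reduces matters to the kernel $\prod_j\frac{u_j(x-y)+u_j(y-x)}{2}$ on $\R^d$, which \emph{is} finite for $x\ne y$.) I expect this adaptation to be the only place where any real work is needed.
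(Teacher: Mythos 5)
Your reduction mirrors the paper's implicit argument exactly: Lemma~\ref{lem:product:one:pot} gives the everywhere--positive one-potential densities $u$ and $v$ of $\otimes_j X_j$ and its stationary extension, Proposition~\ref{pr:H_F}(a),(c) handles the equivalence of \eqref{eq:FS:hit} with $\mathrm{cap}_{\bm\Psi}(F)>0$, and the remaining content is the identity $\mathrm{cap}_{\bm\Psi}(F)>0\Leftrightarrow\mathscr{C}_v(F)>0$, which the paper reads off from Proposition~\ref{pr:H_F}(d) (hence from Theorems~\ref{th:main:v} and~\ref{th:ac:fourier}) ``without any further effort.''

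Your critical observation is correct, and it is a genuine point the paper elides. In the product setting the state space is $(\R^d)^N$, and every proper sub-process $\widetilde{\X}_T$ ($T\subsetneq\{1,\dots,N\}$) is supported on a $|T|d$-dimensional coordinate subspace, hence has no one-potential density; so the \emph{only} admissible choice in Theorem~\ref{th:main:v}/Proposition~\ref{pr:H_F}(d) is $T=\{1,\dots,N\}$ with $v_T=v=\otimes_j\tfrac{u_j(\cdot)+u_j(-\cdot)}{2}$. When some $u_k(0)=\infty$ (which is the typical case, e.g.\ stable processes with $\alpha_k<d$) and $N\ge2$, this $v$ is $+\infty$ on the entire hyperplane $\{x:x^k=0\}$, so condition~(2) of Theorem~\ref{th:ac:fourier} fails and the paper's citation does not literally apply. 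The easy inequality $\mathscr{E}_v(\mu)\le I_{\bm\Psi}(\mu)$ survives via Lemma~\ref{lem:kernel:energy:UB}, as you say; and you are also right that Theorem~\ref{th:FS} itself is unaffected, because the diagonal restriction turns the effective kernel into $\prod_j\tfrac{u_j(\cdot)+u_j(-\cdot)}{2}$ on $\R^d$, which \emph{is} finite off $\{0\}$ and thus within reach of Lemma~\ref{lem:ac:fourier}.

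Where you are too optimistic is in the claim that the Lusin argument of Lemma~\ref{lem:ac:fourier} ``carries over'' with the bad set enlarged from $\{0\}$ to the union of hyperplanes $B=\bigcup_k\{x^k=0\}$. The step that does not transfer is the analogue of \eqref{Eq:eta4}: there the paper bounds $\int_{B(x,\e)}\kappa(y-z)\,\mu_\eta(dz)$ for $|x-y|>\theta/2$ by using that $\kappa$ is \emph{bounded} on compact sets away from the origin, together with \eqref{Eq:eta1}. With $\kappa$ infinite on $B$, the translate $y-B(x,\e)$ can meet $B$ even when $|x-y|$ is large, so $\kappa$ is no longer bounded on the relevant annulus, and the dichotomy ``$x$ near $y$'' / ``$x$ far from $y$'' no longer separates the singular contribution from the tame one. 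Your observation that finite energy forces each marginal $\mu\circ\pi_k^{-1}$ to be atomless is the right starting point, but one then needs a genuinely three-way split (near $y$ / far from $y$ but $y-z$ near $B$ / far from both), with the middle piece controlled again by absolute continuity of $z\mapsto\kappa(y-z)\,\mu_\eta(dz)$---none of which is automatic. So the gap you found is real, and your sketch of a repair is the right direction, but it is not the routine adaptation you suggest; the details would need to be written out and verified before one could claim the corollary in the generality stated.
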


We can now prove Theorem \ref{th:FS} of
the Introduction.

\begin{proof}[Proof of Theorem \ref{th:FS}]
    We observe that:
    \begin{enumerate}
        \item[(i)] $\cap_{j=1}^N X_j((0\,,\infty))$ can intersect
            a set $F\subseteq\R^d$ if and only if
            $(\otimes_{j=1}^N X_j)((0\,,\infty)^N)$ can intersect
            $G:=\{x\otimes\cdots\otimes x:\,x\in F\}\subseteq(\R^d)^N$; and
        \item[(ii)] any compact-support Borel probability
            measure $\sigma $ on $G$ manifestly has the form
            $\sigma(dx^1\cdots\,dx^N) = \mu(dx^1) \prod_{j=2}^N
            \delta_{x^1}(dx^j)$,
            where
            $\mu$ is a compact-support Borel probability measure
            on $F$, and hence
            $\hat{\sigma} ( \xi^1,\ldots,\xi^N  )$ is
            equal to $\hat{\mu} (\xi^1+\cdots+\xi^N  )$
            for all $\xi^1,\ldots,\xi^N\in\R^d$.
    \end{enumerate}
    Therefore, Theorem \ref{th:FS} is a ready consequence of
    Corollaries \ref{co:FS-1} and \ref{co:FS}.
\end{proof}

\section{\bf Proof of Theorem \ref{th:bertoin}}

First, we elaborate on the connection between
Theorem \ref{th:bertoin} and Bertoin's conjecture
\cite{bertoin:st-flour}*{p.\ 49}
that was mentioned briefly in the Introduction.
Recall that a real-valued L\'evy process
is a \emph{subordinator} if its sample functions
are monotone a.s.\
\cites{bertoin:book,bertoin:st-flour,fristedt,sato}.

\begin{remark}\label{rem:bertoin}
    We consider the special case that $S_1$ and $S_2$
    are two (increasing) subordinators on $\R_+$
    and $F:=\{0\}$, and define
    two independent L\'evy processes by $X_1:=S_1$
    and $X_2:=-S_2$. Evidently,
    $X_1(t_1)+X_2(t_2)=0$ for some $t_1,t_2>0$ if and only if
    $S_1(t_1)=S_2(t_2)$ for some $t_1,t_2>0$.

    Let $\Sigma_j$ denote the one-potential measure of $S_j$,
    and suppose $\Sigma_1(dx)=u_1(x)\, dx$, where $u_1$ is
    continuous on $\R$ and strictly positive on $(0\,,\infty)$.
    Let $U_j$ denote the one-potential measure of $X_j$. Then,
    $U_1=\Sigma_1$ and $U_2=\breve{\Sigma}_2$, which we recall
    is the same as $\Sigma_2(-\bullet)$. It follows then that
    $U_1(dx)=u_1(x)\, dx$, and $(u_1*U_2)(x)=\int_0^\infty
    u_1(x+y)\,\Sigma_2(dy)$ is strictly positive a.e.
    Therefore, 
    we may apply Theorem \ref{th:bertoin}
    with $F:=\{0\}$ to deduce that
    \begin{equation}\label{eq:S_1:S_2}
        \P\left\{ S_1(t_1)=S_2(t_2)\quad
        \text{for some $t_1,t_2>0$}\right\}>0
        \quad\Leftrightarrow\quad
        Q(0)<\infty.
    \end{equation}

      Because $U_2$ does not charge $(-\infty\,,0)$ in
      the present setting,
        \begin{equation}
            Q(0) := \int_0^\infty \left[ \frac{u_1(y)+
            u_1(-y)}{2}\right]\, U_2(dy)=
            \frac12\int_0^\infty u_1(y)\, U_2(dy),
      \end{equation}
    since $u_1(y)=0$ for all $y<0$. [In fact,
    Lemma 3.2 of \ocite{Evans:87b} tells us
    that $u_1(y)=0$ for all $y\le 0$.]
    Therefore, we conclude from
    \eqref{eq:S_1:S_2} that
    \begin{equation}\label{eq:bertoin:conj0}
        \P\left\{ S_1(t_1)=S_2(t_2)\quad
        \text{for some $t_1,t_2>0$}\right\} = 0
        \quad\Leftrightarrow\quad
        \int_0^\infty u_1(y)\, U_2(dy)=\infty.
    \end{equation}

    Define the zero-potential measures $U^0_j$ as
    \begin{equation}
    	U^0_j(A) := \E\left[\int_0^\infty \1_A(S_j(t))\, dt\right],
    \end{equation}
    for $j=1,2$ and Borel sets $A\subseteq\R_+$. Suppose $U^0_1
    (dx)=u^0(x)\, dx$, where $u^0$ is positive and continuous
    on $(0\,,\infty)$. Then, it is possible to adapt our methods,
    without any difficulties, to deduce also that
    \begin{equation}\label{eq:bertoin:conj}
        \P\left\{ S_1(t_1)=S_2(t_2)\quad
        \text{for some $t_1,t_2>0$}\right\} = 0
        \quad\Leftrightarrow\quad
        \int_0^\infty u^0_1(y)\, U^0_2(dy)=\infty.
    \end{equation}
    The end of the proof of Theorem \ref{th:dimH} contains
    a discussion which describes how similar changes can be
    made to adapt the proofs from statements
    about one-potentials $U_j$ to those about zero-potentials
    $U^0_j$. We omit the details, as they are not enlightening.

    We mention the adaptations to zero-potentials of \eqref{eq:bertoin:conj}
    for historical interest: \eqref{eq:bertoin:conj}
    was conjectured by Bertoin, under precisely the
    stated conditions of this remark. Bertoin's conjecture
    was motivated in part by the fact
    \cite{bertoin:1999} that, under the very
    same conditions as above,
    \begin{equation}\label{eq:bertoin:99}
        \P\left\{ S_1(t_1)=S_2(t_2)\quad
        \text{for some $t_1,t_2>0$}\right\}=0
        \quad\Leftrightarrow\quad
        \sup_{z\in\R}\int_0^\infty u^0_1(y+z)\, U^0_2(dy)=\infty.
    \end{equation}

    It is possible to deduce \eqref{eq:bertoin:99}, and the same
    statement without the zero superscripts, from
    the present harmonic-analytic
    methods as well; see Lemma \ref{lem:sup:zero}. The
    said extension goes well beyond the theory of subordinators,
    and is a sort of
    ``low intensity maximum principle'' \cite{Salisbury:92}.
    But we will not describe
    the details further, since we find the forms
    of \eqref{eq:bertoin:conj0} and
    \eqref{eq:bertoin:conj} simpler to use, as well as
    easier to conceptualize.
    \qed
\end{remark}

Next we prove Theorem \ref{th:bertoin} without further ado.

\begin{proof}[Proof of Theorem \ref{th:bertoin}]
     A direct computation reveals that the one-potential density
     of the two-parameter additive L\'evy process
     $\X:=X_1\oplus X_2$ is $u_1*U_2$. Therefore,
    the equivalence of \eqref{Eq:Inverse} and
    \eqref{Eq:Inverse2} follows from Part (c) in
    Proposition \ref{pr:H_F}. Next we note that
    the one-potential density of $\widetilde{X}_1$
    is described by
    \begin{equation}
        v_1(x) := \frac{u_1(x)+u_1(-x)}{2}\qquad
        \text{for all $x\in\R^d$}.
    \end{equation}
    This function is
    positive a.e.\ and continuous away from the origin. We can use
    \eqref{eq:v:vs:vT} and verify directly that $\widetilde{\X}$  has a
    one-potential density $Q$ given by \eqref{Def:Q}. Hence the
    last statement follows from Part (e) of Proposition \ref{pr:H_F}.
\end{proof}

\section{\bf Intersections of L\'evy processes}

The goal of this section is to prove Theorem \ref{th:dimH}. We first
return briefly to Theorem \ref{th:FS} and discuss how it implies a necessary
and sufficient condition for the existence of path-intersections for $N$
independent L\'evy processes. After proving Theorem \ref{th:dimH}, we
conclude this section by presenting a nontrivial, though simple, example.

\subsection{Existence of intersections}

First we develop some general results on equilibrium measure that
we believe might be of independent interest.

Choose and fix a compact set $F\subseteq \R^d$, and recall
that $\mathrm{cap}_{\bm{\Psi}}(F)$ is the reciprocal of
$\inf I_{\bm{\Psi}}(\mu)$, where the infimum is taken
over all  Borel probability measures $\mu$ on $F$.
It is not hard to see that when $\mathrm{cap}_{\bm{\Psi}}(F)>0$,
this infimum is in fact
achieved. Indeed, for all $\e>0$ we can find
a Borel probability measure $\mu_\e$ on $F$ such that
$I_{\bm{\Psi}}(\mu_\e) \le (1+\e)/\mathrm{cap}_{\bm{\Psi}}(F)$.
We can extract any subsequential weak limit $\mu$
of $\mu_\e$'s. Evidently, $\mu$ is a Borel probability
measure on $F$, and $I_{\bm{\Psi}}(\mu)\le1/\mathrm{cap}_{\bm{\Psi}}(F)$
by Fatou's lemma. Because of the defining property of capacity,
$I_{\bm{\Psi}}(\mu)\ge1/\mathrm{cap}_{\bm{\Psi}}(F)$ also. Therefore,
$\mathrm{cap}_{\bm{\Psi}}(F)$ is in fact the reciprocal of
the energy of $\mu$.

Any Borel probability measure $\mu$ on $F$ that has the preceding
property is called an \emph{equilibrium measure} on $F$.
We now prove that there is only one equilibrium measure
on a compact $F$.

\begin{proposition}\label{pr:one:equil}
    If $F\subseteq \R^d$ is compact and has positive
    capacity $\mathrm{cap}_{\bm{\Psi}}(F)$, then there exists
    a unique Borel probability measure $e_F$ on $F$
    such that $\mathrm{cap}_{\bm{\Psi}}(F) =1/I_{\bm{\Psi}}(e_F)$.
\end{proposition}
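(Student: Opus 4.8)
The plan is to establish existence (done already in the paragraph preceding the statement) and then focus on uniqueness, which follows from a strict-convexity argument for the energy functional $I_{\bm{\Psi}}$. The key observation is that $I_{\bm{\Psi}}$ is a quadratic form: writing $\langle \mu,\nu\rangle_{\bm{\Psi}} := (2\pi)^{-d}\int_{\R^d}\hat\mu(\xi)\overline{\hat\nu(\xi)}\,K_{\bm{\Psi}}(\xi)\,d\xi$ for the associated (complex-)bilinear pairing, we have $I_{\bm{\Psi}}(\mu)=\langle\mu,\mu\rangle_{\bm{\Psi}}$, and the crucial point is that $K_{\bm{\Psi}}(\xi)>0$ for every $\xi\in\R^d$ (each $\Re(1+\Psi_j(\xi))^{-1}>0$ since $\Re\Psi_j\ge 0$). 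So $\langle\cdot,\cdot\rangle_{\bm{\Psi}}$ is a genuine (positive-definite, up to the usual caveat) inner product on the space of signed measures whose energy is finite: $I_{\bm{\Psi}}(\sigma)=0$ forces $\hat\sigma\equiv 0$, hence $\sigma=0$.

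First I would suppose $e_F$ and $e_F'$ are both equilibrium measures on $F$, and set $I := 1/\mathrm{cap}_{\bm{\Psi}}(F)=I_{\bm{\Psi}}(e_F)=I_{\bm{\Psi}}(e_F')$, which is finite and strictly positive. Consider the midpoint $m := \tfrac12(e_F+e_F')$, which is again a Borel probability measure on $F$. By bilinearity,
\begin{equation}
    I_{\bm{\Psi}}(m) = \tfrac14 I_{\bm{\Psi}}(e_F) + \tfrac14 I_{\bm{\Psi}}(e_F') + \tfrac12\Re\langle e_F,e_F'\rangle_{\bm{\Psi}}.
\end{equation}
By Cauchy--Schwarz for the pairing $\langle\cdot,\cdot\rangle_{\bm{\Psi}}$ (legitimate because $K_{\bm{\Psi}}\ge 0$ makes it a semi-inner product), $\Re\langle e_F,e_F'\rangle_{\bm{\Psi}}\le I_{\bm{\Psi}}(e_F)^{1/2}I_{\bm{\Psi}}(e_F')^{1/2}=I$, so $I_{\bm{\Psi}}(m)\le I$. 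On the other hand $I_{\bm{\Psi}}(m)\ge I$ by the defining minimality property of the capacity. Hence equality holds throughout, and in particular the Cauchy--Schwarz inequality is saturated. This forces $e_F-e_F'$ to be a null vector for the quadratic form, i.e. $I_{\bm{\Psi}}(e_F-e_F')=0$; more directly, $I_{\bm{\Psi}}(e_F-e_F')=I_{\bm{\Psi}}(e_F)+I_{\bm{\Psi}}(e_F')-2\Re\langle e_F,e_F'\rangle_{\bm{\Psi}}=2I-2I=0$.

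Then I would conclude from $I_{\bm{\Psi}}(e_F-e_F')=0$ that $(2\pi)^{-d}\int_{\R^d}|\widehat{e_F}(\xi)-\widehat{e_F'}(\xi)|^2 K_{\bm{\Psi}}(\xi)\,d\xi=0$, and since $K_{\bm{\Psi}}(\xi)>0$ for all $\xi$, the integrand vanishes for a.e.\ $\xi$; by continuity of Fourier transforms of finite measures, $\widehat{e_F}\equiv\widehat{e_F'}$ on $\R^d$, whence $e_F=e_F'$ by uniqueness of the Fourier transform. The main (and essentially only) obstacle is the bookkeeping needed to make the Cauchy--Schwarz/midpoint manipulation rigorous when the measures are only assumed to have finite $I_{\bm{\Psi}}$-energy — one must note that finiteness of $I_{\bm{\Psi}}(e_F)$ and $I_{\bm{\Psi}}(e_F')$ implies finiteness of $|\langle e_F,e_F'\rangle_{\bm{\Psi}}|$ and of $I_{\bm{\Psi}}(m)$, so all the algebraic identities above are between finite quantities; this is immediate from $2|\hat a\,\overline{\hat b}|\le|\hat a|^2+|\hat b|^2$ applied pointwise under the integral with the nonnegative weight $K_{\bm{\Psi}}$. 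Everything else is routine.
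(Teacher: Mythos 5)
Your proposal is correct and takes essentially the same approach as the paper: both arguments exploit the strict convexity of $I_{\bm{\Psi}}$ via the bilinear pairing, hinging on the pointwise positivity of $K_{\bm{\Psi}}$ to conclude that a signed measure with zero energy must vanish. The only cosmetic difference is that the paper runs the argument by contradiction (showing the midpoint would have strictly smaller energy), whereas you derive the equality chain directly and then conclude $I_{\bm{\Psi}}(e_F-e_F')=0$.
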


\begin{proof}
    For all finite signed probability measures $\mu$ and $\nu$
    on $\R^d$ define
    \begin{equation}
        I_{\bm{\Psi}}(\mu\,,\nu) := \frac{1}{(2\pi)^d}
        \int_{\R^d} \left(\frac{\overline{\hat{\mu}(\xi)}\,
        \hat{\nu}(\xi)+\overline{\hat{\nu}(\xi)}\,
        \hat{\mu}(\xi)}{2}\right)\, K_{\bm{\Psi}}(\xi)\, d\xi.
    \end{equation}
    This is well-defined, for example, if
    $I_{\bm{\Psi}}(|\mu|)+I_{\bm{\Psi}}(|\nu|)<\infty$, where $|\mu|$
    is the total variation of $\mu$. Indeed, we
    have the following Cauchy--Schwarz inequality:
    $| I_{\bm{\Psi}}(\mu\,,\nu) |^2 \le
    I_{\bm{\Psi}}(|\mu|)\cdot I_{\bm{\Psi}}(|\nu|)$.
    If $\sigma $ is a finite [nonnegative] Borel measure on $\R^d$,
    then $I_{\bm{\Psi}}(\sigma\,, \sigma)$ agrees with
    $I_{\bm{\Psi}}(\sigma)$, and this is positive as long as $\sigma $
    is not the zero measure. However, we may note that slightly
    more general fact that if $\sigma$ is a non-zero
    finite \emph{signed} measure, then  we still have
    \begin{equation}\label{eq:EM:pos}
        I_{\bm{\Psi}}(\sigma\,,\sigma)>0.
    \end{equation}
    This follows from the fact that
    $K_{\bm{\Psi}}(\xi)>0$ for all
    $\xi\in\R^d$.

    Let $c:=1/\mathrm{cap}_{\bm{\Psi}}(F)$,
    and suppose $\mu$ and $\nu$
    were two distinct equilibrium measures on $F$.
    That is, $\mu\neq\nu$ but
    $I_{\bm{\Psi}}(\mu)= I_{\bm{\Psi}}(\nu)=1/c$.
    In accord with \eqref{eq:EM:pos},
    \begin{equation}
        0 < I_{\bm{\Psi}}\left( \frac{\mu-\nu}{2}
            ~,~\frac{\mu-\nu}{2}\right)
        = \frac{c-I_{\bm{\Psi}}(\mu\,,\nu)}{2}.
    \end{equation}
    Consequently, $I_{\bm{\Psi}}(\mu\,,\nu)<c$,
    and hence
    \begin{equation}\begin{split}
        I_{\bm{\Psi}}\left( \frac{\mu+\nu}{2}\right)
            &= \frac14 I_{\bm{\Psi}}(\mu) +
            \frac14 I_{\bm{\Psi}}(\nu) +
            \frac12 I_{\bm{\Psi}}(\mu\,,\nu)\\
        &= \frac{c+I_{\bm{\Psi}}(\mu\,,\nu)}{2}\\
        &<c.
    \end{split}\end{equation}
    Because $\frac12(\mu+\nu)$ is a Borel probability measure
    on $F$, this is contradicts the fact that $c$ is
    the smallest possible energy on $F$.
\end{proof}

Next we note with the following computation of the equilibrium
measure in a specific class of examples. The following result is
related quite closely to the celebrated local ergodic theorem of
\ocite{Csiszar}. [We hope to elaborate on this connection
elsewhere.]
See also Proposition A3 of
\fullocite{KXZ:03} for a related result.

\begin{proposition}\label{pr:flat}
    Suppose $F$ is a fixed compact subset of
    $\R^d$ with a nonvoid interior. If
    $\mathrm{cap}_{\bm{\Psi}}(F)>0$, then
    $e_F$ is the normalized Lebesgue measure on $F$.
\end{proposition}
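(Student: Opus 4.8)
The plan is to leverage the uniqueness of the equilibrium measure proved in Proposition \ref{pr:one:equil}. Set $\ell_F:=\lambda_d(\,\cdot\,\cap F)/\lambda_d(F)$, which is a bona fide element of $\mathcal{P}_c(F)$ since $F$ is compact with $0<\lambda_d(F)<\infty$. Because $\mathrm{cap}_{\bm{\Psi}}(F)>0$, Proposition \ref{pr:one:equil} produces a \emph{unique} minimizer $e_F$ of $I_{\bm{\Psi}}$ over $\mathcal{P}_c(F)$, so it is enough to check that $\ell_F$ is itself a minimizer. Since $I_{\bm{\Psi}}$ is a nonnegative quadratic form, strictly positive on nonzero signed measures by \eqref{eq:EM:pos} and polarizable exactly as in the proof of Proposition \ref{pr:one:equil}, a measure $\mu\in\mathcal{P}_c(F)$ minimizes $I_{\bm{\Psi}}$ if and only if $I_{\bm{\Psi}}(\mu\,,\nu)\ge I_{\bm{\Psi}}(\mu)$ for every $\nu\in\mathcal{P}_c(F)$. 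Thus the task becomes: verify this variational inequality for $\mu=\ell_F$.

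I would then pass to physical space. By Parseval, $I_{\bm{\Psi}}(\ell_F\,,\nu)=\int p_F\,d\nu$, where $p_F$ is the real function with $\widehat{p_F}=\widehat{\ell_F}\,K_{\bm{\Psi}}$; in the regular case this is $p_F(x)=\int_F\bigl(\tfrac12[v(x-y)+v(y-x)]\bigr)\,\ell_F(dy)$, with $v$ the one-potential kernel of Lemma \ref{lem:vhat:kappa} (and in general one works with its positive-definite, lower-semicontinuous symmetrization). Then $I_{\bm{\Psi}}(\ell_F)=\int_F p_F\,d\ell_F$, and since $\nu$ ranges over all probability measures on $F$, the inequality $\int p_F\,d\nu\ge\int_F p_F\,d\ell_F$ for all such $\nu$ is equivalent to the pair of equilibrium-type conditions $p_F\ge c$ on $F$ and $p_F=c$ for $\lambda_d$-a.e.\ point of $F$, where $c:=\int_F p_F\,d\ell_F$. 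So the proposition reduces to the flatness statement that the averaged potential $p_F$ is $\lambda_d$-a.e.\ constant on $F$.

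This flatness claim is the heart of the matter, and it is here that the nonvoid interior of $F$ must enter; the natural tool is a translation-averaging argument of the type underlying the local ergodic theorem of Csisz\'ar. The two elementary facts to exploit are that $I_{\bm{\Psi}}(\tau_a\mu)=I_{\bm{\Psi}}(\mu)$ for every translate $\tau_a\mu:=\mu(\,\cdot\,-a)$ (because $|\widehat{\tau_a\mu}|\equiv|\widehat{\mu}|$) and that $I_{\bm{\Psi}}(\mu*\theta)\le I_{\bm{\Psi}}(\mu)$ for every probability measure $\theta$ (because $|\widehat{\theta}|\le1$). Fixing a closed ball $B\subseteq\mathrm{int}(F)$ and a smooth, symmetric probability density $\theta$ supported in a ball small enough that $\mathrm{supp}(e_F|_B)\oplus\mathrm{supp}(\theta)\subseteq\mathrm{int}(F)$, one compares $e_F$ with the competitor $e_F|_{F\setminus B}+(e_F|_B)*\theta\in\mathcal{P}_c(F)$; minimality of $e_F$, the convolution bound, and the strict convexity \eqref{eq:EM:pos} should force $e_F|_B$ to be invariant under all small translations, hence absolutely continuous with a locally constant density, and propagating this through a chain of overlapping balls covering $\mathrm{int}(F)$ yields one global constant. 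What remains — and what I expect to be the genuinely delicate part — is (i) turning the ``$e_F$ is not improved'' information into honest translation-invariance of $e_F|_B$ rather than a mere inequality, and (ii) ruling out mass of $e_F$ on $\partial F$, and thereby pinning the constant to $1/\lambda_d(F)$; this last bookkeeping is exactly where the topological thickness of $F$ and the normalization $e_F(F)=1$ are to be combined.
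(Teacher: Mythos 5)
Your plan has a genuine gap at the energy comparison, one step before either of the two points (i) and (ii) that you flag. To invoke uniqueness of $e_F$ (Proposition \ref{pr:one:equil}) and conclude $\rho:=e_F|_{F\setminus B}+(e_F|_B)*\theta=e_F$, you would need $I_{\bm{\Psi}}(\rho)\le I_{\bm{\Psi}}(e_F)$; the opposite inequality is automatic since $\rho\in\mathcal{P}_c(F)$ and $e_F$ is the minimizer. You cite the bound $I_{\bm{\Psi}}(\mu*\theta)\le I_{\bm{\Psi}}(\mu)$, but that is a statement about a \emph{full} convolution. Your $\rho$ is a \emph{partial} convolution: $\hat\rho=\widehat{e_F|_{F\setminus B}}+\hat\theta\,\widehat{e_F|_B}$, and the cross-term $2\Re\bigl(\overline{\widehat{e_F|_{F\setminus B}}}\,\hat\theta\,\widehat{e_F|_B}\bigr)$ is not controlled by $|\hat\theta|\le1$, so there is no pointwise bound $|\hat\rho|\le|\widehat{e_F}|$ and the needed energy comparison does not follow from what you cite. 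Without it, uniqueness gives nothing and the translation-invariance of $e_F|_B$ never gets off the ground. (Your first paragraph's reduction---show the averaged potential $p_F$ is a.e.\ constant on $F$---is a clean reformulation of minimality, but it is not pursued further, and the two halves of the write-up do not actually connect.)

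The paper's argument differs in exactly the respect that would need repairing. Rather than a smoothing competitor, it compares $e_F$ to the pushforwards $e_F\circ\theta_{i,j,r}^{-1}$ under explicit piecewise-linear involutions $\theta_{i,j,r}:F\to F$ that swap two congruent cubes $I_i(r),I_j(r)\subseteq F$ by translation and fix everything else; it asserts that each such pushforward has the same Fourier modulus---hence the same energy $I_{\bm{\Psi}}$---as $e_F$, applies the triangle inequality for $\mu\mapsto I_{\bm{\Psi}}^{1/2}(\mu)$ to the arithmetic mean $\rho_{n,r}:=n^{-2}\sum_{i,j\le n}e_F\circ\theta_{i,j,r}^{-1}$, and then uses uniqueness to force $\rho_{n,r}=e_F$, whence $e_F(I_i(r))=e_F(I_j(r))$ for every pair of equal-size cubes, and finally translation-invariance and normalized Lebesgue by a monotone-class argument. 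So both you and the paper head for translation-invariance via uniqueness, but the paper's competitors are built from congruent-cube swaps for which exact energy preservation is claimed, whereas yours are partial convolutions for which neither equality nor a one-sided bound is available. To finish your argument you would have to supply the missing estimate, and the ``convolution bound'' you invoke is not it.
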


\begin{proof}
    Our strategy is to prove that $e_F$ is translation invariant.

    Let $\mathcal{R}(r)$ denote the collection of all
    closed ``upright'' \emph{cubes} of the form
    \begin{equation}
        I:=[s_1\,,s_1+r]\times\cdots\times[s_d\,,s_d+r]
        \subseteq F,
    \end{equation}
    such that all of the $s_i$'s
    are rational numbers and $r>0$.
    Each $\mathscr{R}(r)$ is
    a countable collection, and hence we can (and will)
    enumerate its elements as $I_1(r),I_2(r),\ldots\,.$

    For all $i,j\ge 1$ and $r>0$
    we choose and fix a one-to-one
    onto piecewise-linear map $\theta_{i,j,r}:F\to F$
    that has the following
    properties:
    \begin{itemize}
        \item If $a\not\in I_i(r)\cup J_j(r)$, then
            $\theta_{i,j,r}(a)=a$;
        \item $\theta_{i,j,r}$ maps $I_i(r)$ onto $J_i(r)$
            bijectively; and
        \item $\theta_{i,j,r}$ maps $J_i(r)$ onto $I_i(r)$ bijectively.
    \end{itemize}
    To be concrete,
    let us write $I_i(r)= I_j(r)+b$, where
    $b\in\R^d$. Then we define
    \begin{equation}
        \theta_{i,j,r}(a) =\begin{cases}
            a-b&\text{if $a\in I_i(r)$},\\
            a +b&\text{if $a\in I_j(r)$},\\
            a&\text{if $a\not\in I_i(r)\cup J_i(r)$}.
        \end{cases}
    \end{equation}
It can be verified that $\theta_{i,j,r} \circ \theta_{i,j,r} = id$,
the identity map.

    For all integers $n\ge 1$ and $r>0$ consider
    \begin{equation}
        \rho_{n,r} := \frac{1}{n^2} \mathop{\sum\sum}_{1\le i,j\le n}
        \left( e_F\circ \theta_{i,j,r}^{-1} \right).
    \end{equation}
    Obviously, each $\rho_{n,r}$ is a probability measure on
    $F$, and
    \begin{equation}\label{eq:flat}
        \rho_{n,r}(I_i(r))=\rho_{n,r}
        (I_j(r))\qquad\text{for all $1\le i,j\le n$}.
    \end{equation}
    Furthermore, a direct computation reveals that for all $i,j\ge 1$
    and $r>0$,
    \begin{equation}
        \left| \widehat{e_F\circ\theta_{i,j,r}^{-1}} \right|
        =\left| \widehat{e_F}\right|\qquad\text{pointwise}.
    \end{equation}
    Therefore, we can apply Minkowski's inequality, to the
    norm $\mu\mapsto \sqrt{I_{\bm{\Psi}}(\mu)}$, to find that
    \begin{equation}\begin{split}
        \sqrt{I_{\bm{\Psi}}(\rho_{n,r})} &\le \frac{1}{n^2}
            \mathop{\sum\sum}_{1\le i,j\le n}
            \sqrt{I_{\bm{\Psi}}\left( e_F\circ \theta_{i,j,r}^{-1} \right)}\\
        &=\sqrt{I_{\bm{\Psi}}(e_F)}.
    \end{split}\end{equation}
    By the uniqueness of equilibrium measure
    (Proposition \ref{pr:one:equil}), $e_F=\rho_{n,r}$
    for all $n\ge 1$ and rationals $r>0$. This and \eqref{eq:flat}
    together prove that $e_F(I)=e_F(J)$
    for all $r>0$
    and all $I,J\in \mathscr{R}(r)$. A monotone-class
    argument reveals that for all
    Borel sets $I\subset F$ and $b\in\R^d$,
    $e_F(I)=e_F(b+I)$, provided that $I,b+I\subseteq F$.
    Because the Lebesgue measure is characterized by
    its translation invariance, this implies the proposition.
\end{proof}

Lemma \ref{lem:ac:fourier}
and Proposition \ref{pr:flat}, and Theorem
\ref{th:FS} together imply the following
variant of a theorem of \ocite{FitzsimmonsSalisbury}.

\begin{corollary}\label{cor:FS--1}
    Let $X_1,\ldots,X_N$ be independent L\'evy
    processes on $\R^d$, and assume that each $X_j$
    has a one-potential density $u_j$ that is
    continuous on $\R^d$, positive at zero,
    and finite on $\R^d\setminus\{0\}$. Then,
    \begin{equation}\label{eq:FS--1}
        \P\left\{ X_1(t_1)=\cdots= X_N(t_N)
        \text{ for some $t_1,\ldots,t_N>0$}\right\}>0
    \end{equation}
    if and only if
    \begin{equation}
        \prod_{j=1}^N \left(
        \frac{u_j (\bullet )
        +u_j (-\bullet)}{2}\right)
        \in L^1_{\text{\it loc}}(\R^d).
    \end{equation}
\end{corollary}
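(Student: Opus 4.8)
The plan is to deduce Corollary~\ref{cor:FS--1} from the second part of Theorem~\ref{th:FS} together with the potential theory of Sections 5--7. Put $v_j:=\tfrac12(u_j+\breve u_j)$ and $v:=\prod_{j=1}^N v_j$ on $\R^d$. Each $v_j$ is the one-potential density of $\widetilde X_j$, so by Lemma~\ref{lem:vhat:kappa} it is an integrable kernel of positive type with $\widehat{v_j}=\Re\big(1/(1+\Psi_j)\big)$, a continuous, bounded, strictly positive function; moreover $v_j$ is continuous on $\R^d$, finite on $\R^d\setminus\{0\}$, and (since $u_j(0)>0$, by Lemma~3.2 of \ocite{Evans:87b}, cf.\ Lemma~\ref{lem:product:one:pot}) strictly positive everywhere. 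Hence $v$ is symmetric ($v=\breve v$), continuous from $\R^d$ into $\bar\R_+$, finite on $\R^d\setminus\{0\}$, strictly positive, and $v(0)=\prod_j u_j(0)\in(0,\infty]$. Since the event in \eqref{eq:FS--1} is the increasing union over $n\ge1$ of the events $\{X_1(t_1)=\cdots=X_N(t_N)\in\bar B(0,n)\text{ for some }t_1,\dots,t_N>0\}$, the second part of Theorem~\ref{th:FS} (applicable by the standing hypotheses on the $u_j$), combined with the identity $\iint\prod_j\tfrac12\big(u_j(x-y)+u_j(y-x)\big)\,\mu(dx)\,\mu(dy)=\iint v(x-y)\,\mu(dx)\,\mu(dy)=:\mathscr{E}_v(\mu)$, shows that positivity of \eqref{eq:FS--1} is equivalent to the statement that $\mathscr{C}_v\big(\bar B(0,n)\big)>0$ for some $n$.

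For the ``if'' half, suppose $v\in L^1_{\text{\it loc}}(\R^d)$ and let $\mu$ be normalized Lebesgue measure on $B:=\bar B(0,1)$. By the Fubini--Tonelli theorem,
\[
    \mathscr{E}_v(\mu)=\frac1{\lambda_d(B)^2}\int_B\int_B v(x-y)\,dx\,dy
    \le\frac1{\lambda_d(B)}\int_{\bar B(0,2)}v(z)\,dz<\infty,
\]
so $\mathscr{C}_v(B)>0$ and \eqref{eq:FS--1} holds with positive probability.

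For the ``only if'' half, assume $\mathscr{C}_v(F)>0$ for some closed ball $F$; by monotonicity and translation invariance of $\mathscr{C}_v$ we may take $F$ to have nonvoid interior. Since $v$ is continuous and finite on $\R^d\setminus\{0\}$, it remains only to show that $v$ is integrable near the origin. If $v(0)<\infty$, then continuity of $v$ at $0$ makes $v$ bounded on a neighbourhood of $0$, so $v\in L^1_{\text{\it loc}}(\R^d)$ and we are done. If $v(0)=\infty$, then $v$ meets condition~(1) of Lemma~\ref{lem:ac:fourier}, whence $\mathscr{E}_v(\mu)=(2\pi)^{-d}\int_{\R^d}\hat v(\xi)\,|\hat\mu(\xi)|^2\,d\xi$ for every Borel probability measure $\mu$, where $\hat v:=(2\pi)^{-d(N-1)}\widehat{v_1}*\cdots*\widehat{v_N}$ is strictly positive (a convolution of everywhere-positive functions). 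Granted this Fourier representation, the arguments proving Proposition~\ref{pr:one:equil} (uniqueness of the equilibrium measure) and Proposition~\ref{pr:flat} carry over verbatim with $I_{\bm{\Psi}}$ and $K_{\bm{\Psi}}$ replaced by $\mathscr{E}_v$ and $\hat v$---they use only the Fourier representation of the energy, strict positivity of the multiplier, and the triangle inequality for $\mu\mapsto\sqrt{\mathscr{E}_v(\mu)}$---so the equilibrium measure for $\mathscr{E}_v$ on $F$ is $\lambda_d|_F/\lambda_d(F)$. Because $\mathscr{C}_v(F)>0$, this measure has finite energy, i.e.\ $\int_F\int_F v(x-y)\,dx\,dy<\infty$; since $\mathrm{int}\,F\neq\varnothing$, Fubini--Tonelli then yields $\int_{B(0,\delta)}v(z)\,dz<\infty$ for some $\delta>0$, whence $v\in L^1_{\text{\it loc}}(\R^d)$.

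The delicate step---and the one I expect to demand the most care---is the appeal to Lemma~\ref{lem:ac:fourier} and Proposition~\ref{pr:flat} in the case $v(0)=\infty$: these results take $v$ to be a \emph{kernel} of positive type and hence presuppose $v\in L^1_{\text{\it loc}}(\R^d)$, which is exactly the conclusion sought, so the argument of the previous paragraph is on its face circular. To break the circularity I would run the equilibrium-measure computation not for $v$ itself but for the regularizations $\kappa_\e:=\prod_{j=1}^N(v_j*\phi_\e)$, with $\phi_\e$ as in \eqref{Def:phie}: each $\kappa_\e$ is a bounded, continuous, integrable kernel of positive type, so Lemma~\ref{lem:ac:fourier} and Propositions~\ref{pr:one:equil}--\ref{pr:flat} apply to $\kappa_\e$ with no side conditions and give $\mathscr{E}_{\kappa_\e}\big(\lambda_d|_F/\lambda_d(F)\big)=\mathscr{C}_{\kappa_\e}(F)^{-1}=\inf_\mu\mathscr{E}_{\kappa_\e}(\mu)$. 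One then lets $\e\downarrow0$, using $\widehat{\kappa_\e}\le\widehat{v_1}*\cdots*\widehat{v_N}$ (since $0\le\widehat{\phi_\e}\le1$ and convolution is monotone in nonnegative arguments), together with $\widehat{\kappa_\e}\to\widehat{v_1}*\cdots*\widehat{v_N}$ pointwise and $\kappa_\e\to v$ pointwise (the latter by continuity of the $v_j$ into $\bar\R_+$); feeding these into the two sides of the identity above (choosing $\phi_1$ so that $\widehat{\phi_1}$ is radially nonincreasing makes $\widehat{\kappa_\e}$ increase to $\widehat{v_1}*\cdots*\widehat{v_N}$, so monotone convergence governs the Fourier side, and Fatou's lemma the geometric side) recovers $\int_F\int_F v(x-y)\,dx\,dy<\infty$, as required. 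An alternative, which I would also consider, is to observe that $\mathscr{E}_v(\mu)$ coincides with the energy of the genuine $L^1$ product kernel $V(x^1,\dots,x^N)=\prod_{j=1}^N v_j(x^j)$ on $(\R^d)^N$ against the push-forward of $\mu$ by the diagonal embedding, and to carry out the equilibrium-measure argument in that setting.
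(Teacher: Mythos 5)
Your strategy is the same as the paper's (localize, apply Theorem~\ref{th:FS}, and then invoke Lemma~\ref{lem:ac:fourier} and Proposition~\ref{pr:flat} to pass from a capacity statement to local integrability of $v=\prod_j v_j$), and you correctly flag a real subtlety that the paper's three-line proof does not spell out: Lemma~\ref{lem:ac:fourier} and Proposition~\ref{pr:flat} are stated for \emph{kernels}, i.e.\ elements of $L^1_{\text{\it loc}}(\R^d)$, and in the ``only if'' direction $v\in L^1_{\text{\it loc}}(\R^d)$ is precisely what is to be shown. Your ``if'' half, the reduction $v(0)<\infty\Rightarrow v\in L^1_{\text{\it loc}}$, and the idea of regularizing are all sound.

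The gap is in the limiting step of the regularization. You take $\kappa_\e=\prod_j(v_j*\phi_\e)$, apply Proposition~\ref{pr:flat} to $\kappa_\e$, and want to let $\e\downarrow0$ in $\mathscr{E}_{\kappa_\e}(\lambda_d|_F/\lambda_d(F))=\inf_\mu\mathscr{E}_{\kappa_\e}(\mu)$. Fatou on the left does give $\mathscr{E}_v(\lambda_d|_F/\lambda_d(F))\le\liminf_\e\mathscr{E}_{\kappa_\e}(\lambda_d|_F/\lambda_d(F))$, but you never obtain a finite bound that is uniform in $\e$ for the right-hand side. You would want $\inf_\mu\mathscr{E}_{\kappa_\e}(\mu)\le\mathscr{E}_{\kappa_\e}(\mu_0)$ to stay bounded, where $\mu_0$ is the measure with $\mathscr{E}_v(\mu_0)<\infty$. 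However $\kappa_\e\not\le v$ pointwise in general (the $v_j*\phi_\e$ may overshoot $v_j$), so $\mathscr{E}_{\kappa_\e}(\mu_0)\le\mathscr{E}_v(\mu_0)$ fails; and the Fourier-side bound $\mathscr{E}_{\kappa_\e}(\mu_0)\le(2\pi)^{-d}\int\bigl(\widehat{v_1}*\cdots*\widehat{v_N}\bigr)|\hat\mu_0|^2$ is of no use unless you already know that integral is finite, which would follow from Lemma~\ref{lem:ac:fourier} applied to $v$---exactly the circularity you set out to avoid. The missing ingredient is the \emph{first} (Fourier) form of the criterion in Theorem~\ref{th:FS}: from positive hitting probability it gives directly a $\mu_0$ with $\int|\hat\mu_0(\xi^1+\cdots+\xi^N)|^2\prod_jK_{\Psi_j}(\xi^j)\,d\xi<\infty$, i.e.\ $\int\bigl(\widehat{v_1}*\cdots*\widehat{v_N}\bigr)|\hat\mu_0|^2<\infty$, at which point Proposition~\ref{pr:flat} for the multiplier $\widehat{v_1}*\cdots*\widehat{v_N}$ plus the Parseval identity for $\kappa_\e$ and Fatou close the argument. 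Your set-up invokes only the second form and hence never produces this Fourier-side finiteness. (Your parenthetical alternative also does not repair matters: the product kernel $V(x^1,\ldots,x^N)=\prod_jv_j(x^j)$ is in $L^1((\R^d)^N)$ but, when some $v_j(0)=\infty$, it is infinite on the entire coordinate hyperplane $\{x^j=0\}$, so it satisfies neither condition (1) nor (2) of Lemma~\ref{lem:ac:fourier} on $(\R^d)^N$.)
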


\begin{proof}
    Clearly, \eqref{eq:FS--1} holds if
    and only if there exists $n>0$ such that
    \begin{equation}\label{eq:FS--2}
        \P\left\{ X_1(t_1)=\cdots= X_N(t_N)\in[-n\,,n]^d
        \text{ for some $t_1,\ldots,t_N>0$}\right\}>0.
    \end{equation}
    Theorem \ref{th:FS} implies that \eqref{eq:FS--2}
    holds if and only if
    $\mathrm{cap}_{\bm{\Psi}} \big([-n\,, n]^d\big) > 0$.
    Lemma \ref{lem:ac:fourier} and Proposition \ref{pr:flat}
    together prove the result.
\end{proof}

\subsection{Proof of Theorem \ref{th:dimH}}

We begin by proving the first part; thus, we assume
only that the $u_j$'s exist and are a.e.-positive.

Let $\mathfrak{S}$ denote an independent $M$-parameter
additive stable process of index $\alpha\in(0\,,2)$;
see \eqref{def:frakS}. Next, we consider the $(N+M)$-parameter
process $\mathfrak{Y}:= \otimes_{j=1}^N (X_j-\mathfrak{S})$; i.e.,
\begin{equation}
    \mathfrak{Y}(\bm{s}\otimes\bm{t}) := \left(\begin{matrix}
        X_1(s_1)-\mathfrak{S}(\bm{t})\\
        \vdots\\
        X_N(s_N)-\mathfrak{S}(\bm{t})
    \end{matrix}
    \right)\quad\text{for all}\quad\bm{s}\in\R^N_+,\, \bm{t}\in\R^M_+.
\end{equation}
It is not hard to adapt the discussion of the first few paragraphs
in \S\ref{sec:Pf:cor:FS} to the present situation and deduce that
$\mathfrak{Y}$ is an $(N+M)$-parameter
additive L\'evy process, with values in $(\R^d)^N$, and that
for all $\bm{s}\in\R^N_+$, $\bm{t}\in\R^M_+$, and
$\xi:=\xi^1\otimes\cdots\otimes\xi^N\in(\R^d)^N$,
\begin{equation}
    \E\exp\left( i\xi\cdot \mathfrak{Y}(\bm{s}\otimes
    \bm{t}) \right) = \exp\left(-\sum_{k=1}^N s_k\Psi_k(\xi^k)
    -\sum_{l=1}^Mt_l\left\|\xi^1+\cdots+\xi^N\right\|^\alpha    \right).
\end{equation}
We can conclude readily from this that the
characteristic exponent of $\mathfrak{Y}$ is
defined by
\begin{equation}
    \bm{\Theta}(\xi) := \left( \Psi_1(\xi^1)\,,\ldots,
    \Psi_N(\xi^N)\,,\underbrace{\left\|
    \sum_{j=1}^N\xi^j\right\|^\alpha,\cdots\,,\left\|
    \sum_{j=1}^N\xi^j\right\|^\alpha}_{\text{$M$ times}}\right),
\end{equation}
for all $\xi:=\xi^1\otimes\cdots\otimes\xi^N\in(\R^d)^N$.

It follows readily from this and Lemma \ref{lem:product:one:pot}
that $\mathfrak{Y}$ and $\widetilde{\mathfrak{Y}}$
both have positive one-potential densities. Moreover, a
direct computation involving the inversion formula reveals that the
potential density of $\widetilde{\mathfrak{Y}}$ is defined by
\begin{equation}\label{eq:cor1.5:1}
    v(x^1\otimes\cdots\otimes x^N) = \int_{\R^d}
    \prod_{j=1}^N \left(
    \frac{u_j(x^j-y)+u_j(y-x^j)}{2}\right)
    w(y)\, dy,
\end{equation}
for all $x:=(x^1\otimes\cdots\otimes x^N)\in(\R^d)^N$.
Here, $w$ denotes the one-potential density of $\mathfrak{S}$. That is,
\begin{equation}
    w(y) := \int_{\R^M_+} p_{\bm t}(y) e^{-[\bm{t}]}\, d\bm{t},
\end{equation}
where $p_{\bm t}$ denotes the density of $\mathfrak{S}(\bm{t})$.
That is,
$p_{\bm t}(y) := (2\pi)^{-d}\int_{\R^d}
\exp(-iy\cdot z - [\bm{t}]\|z\|^\alpha)\, dz$
for all $\bm{t}\in\R^M_+$ and $y\in\R^d$.

Since the $u_j$'s are everywhere positive (Lemma \ref{lem:product:one:pot}),
Proposition \ref{pr:H_F} tell us that
$0\in\mathfrak{Y}(\R^N_+\times\R^M_+)$ with positive probability
if and only if $\text{cap}_{\bm{\Theta}}(\{0\})>0$. Thus, the
preceding positive capacity condition is equivalent to the
integrability of the function $K_{\bm{\Theta}}$. That is,
\begin{equation}\label{cond:cap1}
\begin{split}
    &0\in \mathfrak{Y}(\R^N_+\times\R^M_+)\quad\text{with positive probability }\ \
    \\
    &\qquad \Leftrightarrow \quad \int_{(\R^d)^N} \prod_{j=1}^N\Re\left( \frac{1}{1+\Psi_j(\xi^j)}\right)
    \, \frac{ d\xi}{1+\|\xi^1+\cdots+\xi^N\|^{\alpha M}}<\infty.
    \end{split}
\end{equation}
On the other hand, it is manifestly the case that
$\mathfrak{Y}(\R^N_+\times\R^M_+)$ contains the origin if
and only if the intersection of
$\cap_{j=1}^N X_j(\R_+)$ and $\mathfrak{S}(\R^M_+)$ is nonempty.
Thanks to Theorem 4.4.1 of \ocite{Kh:book}*{p.\ 428},
for all Borel sets $F\subseteq \R^d$,
$\P\{\mathfrak{S}(\R^M_+)\cap F\neq\varnothing\}>0$
if and only $\mathcal{C}_{d-M\alpha}(F)>0$,
provided that we also assume that $d>M\alpha$.
Suppose, then, that $d>M\alpha$. We can apply
the preceding, conditionally on $X_1,\ldots,X_N$, and deduce that
\begin{equation}\label{last1}\begin{split}
    &0\in \mathfrak{Y}(\R^N_+\times\R^M_+)\quad
    	\text{with positive probability }\ \\
    & \qquad \Leftrightarrow \quad
    	\mathcal{C}_{d-M\alpha}\left( \bigcap_{j=1}^N X_j(\R_+)\right)>0
    	\quad\text{with positive probability}.
\end{split}\end{equation}
We compare the preceding display to \eqref{cond:cap1},
and choose $M$ and $\alpha\in(0\,,(d/M)\wedge 2)$, such that
$d-M\alpha$ is any predescribed number $s\in(0\,,d)$.
This yields the following: For all $s\in(0\,,d)$,
\begin{equation}\label{last2}\begin{split}
    &\mathcal{C}_s\left( \bigcap_{j=1}^N X_j(\R_+)\right)>0
        \quad\text{with positive probability }\ \ \\
    &\qquad \Leftrightarrow \quad
    \int_{(\R^d)^N} \prod_{j=1}^N\Re\left( \frac{1}{1+\Psi_j(\xi^j)}\right)
        \, \frac{ d\xi}{1+\|\xi^1+\cdots+\xi^N\|^{d-s}}<\infty.
\end{split}\end{equation}
It follows from \eqref{last2}, Frostman's theorem \cite{Kh:book}*{p.\ 521} and
an argument similar to the proof of Theorem 3.2 in Khoshnevisan, Shieh
and Xiao (2007) that the first identity of \eqref{co:dimH:1} holds almost
surely on $\big\{\cap_{j=1}^N X_j(\R_+) \ne \varnothing\big\}$.

In order to obtain \eqref{co:dimH:2}, we assume also
that the $u_j$'s are continuous on $\R^d$ and finite on $\R^d\setminus
\{0\}$. Thanks to Proposition \ref{pr:H_F},
$\mathfrak{Y}(\R^N_+\times\R^M_+)$ contains zero with positive probability
if and only if
$v(0)<\infty$. Thus, \eqref{eq:cor1.5:1} and \eqref{last1} imply that
\begin{equation}\label{goal:UUU}\begin{split}
    &\mathcal{C}_{d-M\alpha}\left( \bigcap_{j=1}^N X_j(\R_+)\right)>0
        \ \ \text{ with positive probability } \\
    &\qquad \Leftrightarrow \quad \int_{\R^d}
        \prod_{j=1}^N \left(
        \frac{u_j(y)+u_j(-y)}{2}\right)
        w(y)\, dy<\infty.
\end{split}\end{equation}
If we could replace $w(y)$ by $\|y\|^{-d+\alpha M}$, then we
could finish the proof by choosing $M$ and $\alpha$ suitably,
and then appealing to the Frostman theorem and the argument
in Khoshnevisan, Shieh and Xiao (2007). [This is how
we completed the proof of the first part of the proof as
well.] Thus, our goal is to derive
\eqref{goal:UUU}.

Unfortunately, it is not possible to simply replace
$w(y)$ by $\|y\|^{-d+\alpha M}$ by simple real-variable
arguments.
Nonetheless, we recall the following
fact from \ocite{Kh:book}*{Exercise 4.1.4, p.\ 423}:
There exist
$c,C\in(0\,,\infty)$ such that
$c\|y\|^{-d+M\alpha}\le w(y) \le
C\|y\|^{-d+M\alpha}$
for all $y\in(-1\,,1)^d$. Moreover, the upper bound
holds for all $y\in\R^d$ ({\it loc.\ cit.},
Eq.\ (2), p.\ 423);
the lower bound [provably] does not.
It follows then that
\begin{equation}\begin{split}
    &\int_{\R^d}
        \prod_{j=1}^N \left(
        \frac{u_j(y)+u_j(-y)}{2}\right)
        \, \frac{dy}{\|y\|^{d-M\alpha}}<\infty\\
    &\qquad\Rightarrow\quad \mathcal{C}_{d-M\alpha}\left( \bigcap_{j=1}^N X_j(\R_+)\right)>0
        \ \ \ \hbox{with positive probability}\\
    & \qquad\Rightarrow\quad
        \int_{(-1,1)^d}
        \prod_{j=1}^N \left(
        \frac{u_j(y)+u_j(-y)}{2}\right)
        \, \frac{dy}{\|y\|^{d-M\alpha}}<\infty.
\end{split}\end{equation}
If we could replace $(-1\,,1)^d$ by $\R^d$ in the last
display, then our proof follows the outline mentioned earlier.
Thus, we merely point out how to derive \eqref{goal:UUU},
with $w(y)$ replaced by $\|y\|^{-d+M\alpha}$,
and omit the remainder of the argument.

The proof hinges on a few modifications to the entire
theory outlined here. We describe them [very] briefly,
since it is easy---though tedious---to check that the present changes go
through unhindered.

Define the operator $R^{1\otimes 0}$ via
\begin{equation}
    \left( R^{1\otimes 0} f\right)(x) :=
    \frac{1}{2^N} \E\left[\int_{\R^N\times\R^M}
    f\left( x+ \widetilde{\mathfrak{Y}}(\bm{s}\otimes\bm{t})
    \right) e^{-[\bm{s}]}\, d\bm{s}\, d\bm{t}\right]
    \quad\text{for all $x\in(\R^d)^N$}.
\end{equation}
If, we replace $\exp(-[\bm{s}])$ by $\exp(-[\bm{s}]-[
\bm{t}])$, then $R^{1\otimes 0} f$ turns into $Rf$.

As is, the operator $R^{1\otimes 0} f$ fails to map $L^p((\R^d)^N)$ into
$L^p((\R^d)^N)$. But this is a minor technical nuisance,
since one can check directly that
\begin{equation}
    \left( R^{1\otimes 0} f\right)(x) = \int_{\R^d} f(x+y)
    v^{1\otimes 0}(y)\, dy,
\end{equation}
where $v^{1\otimes 0}$ is defined exactly as $v$ was, but with
$w(y)$ replaced by a certain constant times $\|y\|^{-d+M\alpha}$.
And this shows fairly readily that if $f$ is a
compactly-supported function in $L^1((\R^d)^N)$, then
$R^{1\otimes 0} f\in L^1_{\text{\it loc}}((\R^d)^N)$. To complete
our proof, we need to redevelop the potential theory of
the additive L\'evy process $\mathfrak{Y}$, but this time in terms
of $R^{1\otimes 0} f$ and $v^{1\otimes 0}$.

The fact that $R^{1\otimes 0}:L^1_c((\R^d)^N)
\to L^1_{\text{\it loc}}((\R^d)^N)$ provides sufficient
regularity to allow us to push the
Fourier analysis of the present paper through without change.
And the end result is that
$\mathfrak{Y}(\R^N_+\times\R^M_+)$ contains zero if and only
if $v^{1\otimes 0}(0)$ is finite. Now we can complete the proof, but
with $v^{1\otimes 0}$ in place of $v$ everywhere.
\qed

\subsection{An example}

Suppose $X_1,\ldots,X_N$ are independent isotropic stable
processes in $\R^d$ with respective Fourier transforms
$\E\exp(i\xi\cdot X_j(t)) = \exp (-c_jt\|\xi\|^{\alpha_j} )$
for all $t\ge 0$, $\xi\in\R^d$, and $1\le j\le N$.
Here, $c_1,\ldots,c_N$ are constants, and $0< \alpha_1,\ldots,
\alpha_N< 2\wedge d$ are the indices of stability. We are primarily
interested in the case that $N\ge 2$, but the following remarks
apply to the case $N=1$ equally well.

In this section we work out some of intersection properties of
$X_1,\ldots,X_N$. It is possible to construct much
more sophisticated examples. We study the present setting because
it provides us with the simplest nontrivial example of its type.

It is known that each $X_j$ has a continuous positive one-potential
density $u_j$, and there exist $c,C\in(0\,,\infty)$
such that $c\|x\|^{-d+\alpha_j}\le
u_j(x) \le C\|x\|^{-d+\alpha_j}$
for all $x\in(-1\,,1)^d$ and $1\le j\le N$.
[These assertions follow, for example, from Corollary
3.2.1 on  page 379, and Lemma 3.4.1 on  page 383 of
\ocite{Kh:book}.]
Consequently, Theorem \ref{th:FS} immediately implies that for all
Borel sets $F\subseteq\R^d$,
\begin{equation}
    \P\left\{ \bigcap_{j=1}^N X_j(\R_+)\cap F\neq\varnothing\right\}
    >0 \quad\Leftrightarrow\quad
    \mathcal{C}_{Nd-\sum_{j=1}^N\alpha_j}(F)>0.
\end{equation}
In the case that $N=2$, a slightly more general form of this
was found in \ocite{Kh:book}*{Theorem 4.4.1, p.\ 428} by
using other methods.
We may apply the preceding with $F=\R^d$, and appeal
to Taylor's theorem (\emph{loc.\ cit.}, Corollary 2.3.1, p.\ 525)
to find that
\begin{equation}
\begin{split}
    \P\left\{ \bigcap_{k=1}^N X_k(\R_+)\neq\varnothing  \right\}
    >0 \quad\Leftrightarrow\quad (N-1)d<\sum_{j=1}^N \alpha_j.
    \end{split}
\end{equation}
Theorem \ref{th:dimH}, and a direct computation in polar coordinates,
together show that the slack in the preceding inequality determines
the Hausdorff dimension of the set $\cap_{k=1}^NX_k(\R_+)$ of
intersection points. That is, almost surely on
$\big\{\cap_{k=1}^N X_k(\R_+) \ne \varnothing\big\}$,
\begin{equation}
    \dimh \bigcap_{k=1}^N X_k(\R_+) = \left[\sum_{k=1}^N\alpha_k-(N-1)d
    \right]_+.
\end{equation}
This formula continues to hold in case
some, or even all, of the $\alpha_j$'s are
equal to or exceed $d$. We omit the details.

\subsection{Remarks on multiple points}

Next we mention how the preceding
fits in together with the well-known conjecture of
\ocite{HendricksTaylor} that was solved in Fitzsimmons
and Salisbury \ycite{FitzsimmonsSalisbury}, and also
make a few related remarks.

\begin{remark}\label{rem:FS}
    Recall that a [single] L\'evy process $X$ with values in
    $\R^d$ has $N$-multiple points if and only if
    there exist times $0<t_1<\ldots<t_N<\infty$ such that
    $X(t_1)=X(t_2)=\cdots=X(t_N)$. [We are ruling out
    the possibility that $t_1=0$ merely to avoid degeneracies.]

    By localization and
    the Markov property, $X$ has $N$-multiple points almost surely
    if and only if there are times $t_1,\ldots,t_N\in(0\,,\infty)$
    such that $X_1(t_1)=\cdots=X_N(t_N)$ with positive probability,
    where $X_1,\ldots,X_N$ are $N$ i.i.d.\ copies of $X$.

    Suppose that $X$ has a
    a continuous and positive [equivalently, positive-at-zero]
    one-potential density $u$ that is finite on $\R^d\setminus\{0\}$.
    Then according to Corollary \ref{cor:FS--1}, $X$ has $N$-multiple points
    if and only if $u(\bullet)+u(-\bullet)\in L^N_{\text{\it
    loc}}(\R^d)$.
    An application of H\"older's inequality reveals then that
    $X$ has $N$-multiple points
    if and only if $u\in L^N_{\text{\it loc}}(\R^d)$. This is
    more or less the well-known condition of Hendricks and Taylor
    \ycite{HendricksTaylor}.

    More generally, the following can be deduced with no extra
    effort: Under the preceding conditions, given a nonrandom
    Borel set $F\subseteq\R^d$,
    \begin{equation}
        \P\left\{ \text{there exist $0<t_1<\cdots<t_N$ such that }
        X(t_1)=\cdots=X(t_N)\in F\right\}>0
    \end{equation}
        if and only if there exists a compact-support Borel probability
        measure $\mu$ on $F$ such that
    \begin{equation}
        \iint \big[ u(x-y) \big]^N\,
        \mu(dx)\,\mu(dy)<\infty.
    \end{equation}
    See, for example, Theorem 5.1 of \ocite{FitzsimmonsSalisbury}.
    \qed
\end{remark}
\begin{remark}
    We mention the following formula for the Hausdorff
    dimension of the $N$-multiple points
    of a L\'evy process $X$: Under
    the conditions stated in Remark \ref{rem:FS},
    \begin{equation}\label{Mn}
        \dimh M_N =\sup\left\{s\in(0\,,d):\
        \int_{\R^d} \frac{[u(z)]^N}{\|z\|^s}\, dz<\infty
        \right\}\quad\text{a.s.,}
    \end{equation}
    where $M_N$ denotes the collection of all $N$-multiple
    points. That is, $M_N$ is the set of all $x\in\R^d$
    for which the cardinality of $X^{-1}(\{x\})$ is at least
    $N$. This formula appears to be new. \ocite{Hawkes:78b}*{Theorem 2}
    contains a similar formula---with $\int_{\R^d}$ replaced by
    $\int_{(-1,1)^d}$---which is shown to be valid for all isotropic
    [spherically symmetric] L\'evy processes that have measurable
    transition densities.

    In order to prove \eqref{Mn}, we appeal to Theorem \ref{th:dimH}
    and the Markov property to first demonstrate that the dimension
    formula in \eqref{Mn} is valid almost surely
    on $\{M_N \ne \varnothing\}$.
    Then the conclusion follows from
    the fact that the event $\{M_N \ne \varnothing\}$ satisfies a
    zero-one law; that zero-one law is itself proved by adapting
    the argument of \ocite{Orey}*{p.\ 124} or
    \ocite{Evans:87b}*{pp.\
    365--366}. We omit the details as the method is nowadays considered
    standard.
    \qed
\end{remark}
\begin{remark}\label{rem:allude}
    This is a natural place to complete a computation that we
    alluded to earlier in Open Problem \ref{pbm:4}. Namely,
    we wish to prove that under the preceding conditions
    on the L\'evy process $X$,
    \begin{equation}\label{M2}
        \dimh M_2=\sup\left\{s\in(0\,,d):\
        \int_{(-1,1)^d} \frac{[u(z)]^2}{\|z\|^s}\, dz<\infty
        \right\}\quad\text{a.s.,}
    \end{equation}
    By the Markov property, it suffices to derive
    \eqref{M2} with $M_2$ replaced by
    $X_1(\R_+)\cap X_2(\R_+)$, where $X_1$ and $X_2$ are
    independent copies of $X$.

    We have seen already that
    if $u\not\in L^2_{\text{\it loc}}(\R^d)$, then
    $X_1(\R_+)\cap X_2(\R_+)=\varnothing$, and there is
    nothing left to prove. Thus, we may consider only the
    case that $u\in L^2_{\text{\it loc}}(\R^d)$. In this
    case, $X_1(\R_+)\cap X_2(\R_+)\neq\varnothing$, which
    as we have seen is equivalent  to the condition that
    $(X_1\ominus X_2)(\R^2_+)$ contains the origin. This
    and Theorem \ref{th:main} together prove that
    $K_\Psi\in L^2(\R^d)$,
    where $K_\Psi(\xi)=\Re(1+\Psi(\xi))^{-1}$
    and $\E\exp(i\xi\cdot X(t))=\exp(-t\Psi(\xi))$
    in the present case. Now Lemma \ref{lem:vhat:kappa}
    shows that $\hat{v}$---and hence $v$---is square integrable, where
    $v:=\frac12(u(\bullet)+u(-\bullet))$. That is, $u\in
    L^2(\R^d)$. From here, it is a simple matter to check
    that \eqref{Mn} with $N=2$ implies \eqref{M2}.
    \qed
\end{remark}
\section{\bf Zero-one laws}

We conclude this paper by deriving two zero-one laws: One for the
Lebesgue measure of the range of additive L\'evy processes;
and another for capacities of the range
of additive L\'evy processes.

The following was proved first in \fullocite{KXZ:03} under a mild
technical condition. Here we remove the technical condition
(1.3) of that paper, and derive this result as an elementary
consequence of Theorem \ref{th:main}.

\begin{proposition}\label{Co:Image}
    Let $\X$ be a general $N$-parameter
    additive L\'evy process in $\R^d$ with
    exponent $\bm{\Psi}$. Then,
    \begin{equation}\label{eq:cor:KXZ:03:1.1}
        \E\left[\lambda_d\left(\X(\R^N_+)\right)\right]>0
        \quad\text{if and only if}\quad
        \int_{\R^d} \prod_{j=1}^N \Re\left(
        \frac{1}{1+\Psi_j(\xi)}\right)\, d\xi<\infty.
    \end{equation}
    Suppose, in addition, that $\widetilde{\X}$
    has a one-potential density that is continuous away
    from the origin, and $\X$ has an a.e.-positive one-potential
    density. Then, with probability one,
    $\lambda_d ( \X(\R^N_+) )$ is zero or infinity.
\end{proposition}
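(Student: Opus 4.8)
The plan is to read off the first assertion from Theorem~\ref{th:main} and then to prove the dichotomy by a Borel--Cantelli argument applied to a decomposition of the range into i.i.d.\ ``cells.'' Since $\hat{\delta_0}\equiv 1$ and $\delta_0$ is the only member of $\mathcal{P}_c(\{0\})$, one has $\mathrm{cap}_{\bm{\Psi}}(\{0\})>0$ iff $\int_{\R^d}K_{\bm{\Psi}}(\xi)\,d\xi<\infty$; applying Theorem~\ref{th:main} with $F=\{0\}$, and using $\X(\R^N_+)\oplus\{0\}=\X(\R^N_+)$, gives the first assertion. In particular, if $\int_{\R^d}K_{\bm{\Psi}}<\infty$ fails then $\E[\lambda_d(\X(\R^N_+))]=0$, so $\lambda_d(\X(\R^N_+))=0$ a.s.\ and the zero--one law is trivial. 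Henceforth I assume $\int_{\R^d}K_{\bm{\Psi}}<\infty$, and the target becomes: $\lambda_d(\X(\R^N_+))=\infty$ a.s. (As an aside, $\lambda_d(\X(\R^N_+))$ is in any case a.s.\ a deterministic constant in $[0,\infty]$: writing $\bm{m}_*:=(m,\dots,m)$ and $\X^{(\bm{m}_*)}(\bm{t}):=\X(\bm{m}_*+\bm{t})-\X(\bm{m}_*)$, the sets $\X(\bm{m}_*+\R^N_+)=\X(\bm{m}_*)+\X^{(\bm{m}_*)}(\R^N_+)$ decrease in $m$, have Lebesgue measure distributed as $\lambda_d(\X(\R^N_+))$, and $\{\lambda_d(\X(\bm{m}_*+\R^N_+))>c\}$ lies in the Kolmogorov tail $\sigma$-field of $(X_1,\dots,X_N)$; so it would suffice to prove $\E[\lambda_d(\X(\R^N_+))]=\infty$. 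The cell argument below gives the almost-sure statement directly.)

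For the decomposition, let $R_n:=\{\sum_{j=1}^N(X_j(n+t_j)-X_j(n)):\bm{t}\in[0,1]^N\}$ be the range of the increments of $\X$ over the unit cube $\bm{n}_*+[0,1]^N$, and $L(n):=\X(\bm{n}_*)=\sum_{j=1}^N X_j(n)$. Then $\X(\R^N_+)\supseteq\bigcup_{n\ge0}\bigl(L(n)+R_n\bigr)$; the sets $R_0,R_1,\dots$ are i.i.d.\ (they depend on disjoint increment intervals), each distributed as $\X([0,1]^N)$; $\{L(n)\}_{n\ge0}$ is a random walk on $\R^d$; and $R_n$ is independent of $L(n)$ for each fixed $n$. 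By Proposition~5.7 of \fullocite{KXZ:03} (quoted as \eqref{asserted:LB}, with $F=\{0\}$, $r=1$) and $\mathrm{cap}_{\bm{\Psi}}(\{0\})>0$ we get $\E[\lambda_d(\X([0,1]^N))]>0$; hence, fixing a ball $B=B(0,D)$ with $\E[\lambda_d(R_0\cap B)]>0$, there is $\e_0>0$ with $p:=\P\{\lambda_d(R_0\cap B)>\e_0\}>0$. The truncated cells $R_n\cap B$ have diameter $\le 2D$ and still satisfy $\X(\R^N_+)\supseteq\bigcup_n\bigl(L(n)+(R_n\cap B)\bigr)$.

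Now I would argue: the events $\{\lambda_d(R_n\cap B)>\e_0\}$ are i.i.d.\ with probability $p>0$, so by Borel--Cantelli $\mathcal{A}:=\{n:\lambda_d(R_n\cap B)>\e_0\}$ is a.s.\ infinite. Next, for every fixed $M>0$ the set $\{n\in\mathcal{A}:|L(n)|>M\}$ is a.s.\ infinite: $L$ is either transient, whence $|L(n)|\to\infty$ and this is clear, or recurrent, hence null-recurrent (a genuinely $d$-dimensional walk has no finite invariant measure) and therefore dissipative, so $\tfrac1N\#\{n\le N:|L(n)|\le M\}\to0$ a.s., while $\tfrac1N\#\{n\le N:n\in\mathcal{A}\}\to p$ a.s., and the difference of these gives the claim. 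Finally a greedy selection produces $n_1<n_2<\cdots$ in $\mathcal{A}$ with the blobs $L(n_i)+(R_{n_i}\cap B)$ pairwise disjoint: having chosen $n_1<\cdots<n_i$, these blobs lie in $B(0,\rho_i)$ with $\rho_i:=2D+\max_{l\le i}|L(n_l)|$, and since $\{n\in\mathcal{A}:|L(n)|>\rho_i+2D\}$ is a.s.\ infinite (it contains $\{n\in\mathcal{A}:|L(n)|>M\}$ for a deterministic $M\ge\rho_i+2D$), pick $n_{i+1}$ in it; then $L(n_{i+1})+(R_{n_{i+1}}\cap B)\subseteq B(L(n_{i+1}),2D)$ is disjoint from $B(0,\rho_i)$. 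Hence $\lambda_d(\X(\R^N_+))\ge\sum_i\lambda_d(R_{n_i}\cap B)\ge\sum_i\e_0=\infty$ a.s.

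The main obstacle is the middle step, i.e.\ the dissipativity input for the walk $L$ (and the related, genuine point that one must pass to the truncations $R_n\cap B$ to get a uniform diameter bound for the greedy disjointness); the rest is bookkeeping, since both the first assertion and the estimate $\E[\lambda_d(\X([0,1]^N))]>0$ are immediate from Theorem~\ref{th:main}, and the cell decomposition only uses the coordinatewise stationary-independent-increments structure of $\X$. I note that the extra hypotheses of the second part (continuity and positivity of the one-potential densities) do not appear to be needed for this argument; they do afford the alternative route through $\E[\lambda_d(\X(\R^N_+))]=\infty$ via Proposition~\ref{pr:H_F}(c) (which gives $\mathrm{H}_{\{0\}}=\R^d$, i.e.\ $\P\{x\in\X((0,\infty)^N)\}>0$ for all $x$) together with a second-moment estimate, but the cell argument above is self-contained.
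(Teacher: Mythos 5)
Your derivation of the first assertion (take $F=\{0\}$ in Theorem~\ref{th:main}, note $\mathcal{P}_c(\{0\})=\{\delta_0\}$ and $|\hat\delta_0|\equiv1$) is exactly the paper's. For the zero--one law, however, you take a genuinely different route. The paper uses an inclusion--exclusion bound in expectation to get $\int\phi=\int\phi^2$ (so $\phi\in\{0,1\}$ a.e.) and then invokes Proposition~\ref{pr:H_F}(c)---which is where the hypothesis that $\X$ has an a.e.-positive one-potential density enters---to upgrade this to $\phi\equiv0$ or $\phi\equiv1$. You instead decompose the range over unit cubes into i.i.d.\ cells $R_n$ carried by a random walk $L(n)=\sum_jX_j(n)$ and try to pack infinitely many disjoint cells greedily. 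Your ``aside'' showing $\lambda_d(\X(\R^N_+))$ is a.s.\ constant is clean and correct (a decreasing sequence of identically distributed random variables is a.s.\ equal, and tail-measurability finishes it). You also correctly observe that your route would not need the extra regularity/positivity hypotheses at all, which, if the argument were complete, would be a sharpening of the proposition. But it is not complete; there are two genuine gaps.

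First, the cell decomposition $\X(\R^N_+)\supseteq\bigcup_n(L(n)+R_n)$ can drastically undercount the range when the step $L(1)=\sum_jX_j(1)$ is degenerate. Because the $X_j$ are independent, $L(1)$ is deterministic if and only if every $X_j(1)$ is; in that case each $X_j$ is a pure drift $X_j(t)=tc_j$, and if $\sum_jc_j=0$ then $L\equiv0$ so $\bigcup_n(L(n)+R_n)=R_0$ is bounded. Concretely, take $N=2$, $d=1$, $X_1(t)=t$, $X_2(t)=-t$: then $K_{\bm\Psi}(\xi)=(1+\xi^2)^{-2}\in L^1(\R)$ and $\X(\R^2_+)=\R$ has infinite measure, but every $L(n)=0$ and every $R_n=[-1,1]$, so your packing produces at most one cell. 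The degenerate case is harmless to the \emph{statement} (the range is then a deterministic convex cone, whose Lebesgue measure is $0$ or $\infty$), but it breaks your argument and needs to be split off explicitly; it is also exactly where ``recurrent $\Rightarrow$ null-recurrent'' fails, since the walk frozen at the origin is positive recurrent. Second, the ``dissipativity'' step is asserted, not proved. What you actually need is that for a non-degenerate recurrent random walk $L$ on $\R^d$, $\frac1N\#\{n\le N:|L(n)|\le M\}\to0$ a.s.\ for every $M$. This is true---it follows, e.g., from the Hopf/Chacon--Ornstein ratio ergodic theorem for the shift preserving the $\sigma$-finite Lebesgue (Haar) measure, which is the unique invariant measure and is infinite on $\R^d$---but it is not a one-line consequence of ``no finite invariant measure,'' and the word ``dissipative'' is actually the opposite of what a recurrent walk is in the ergodic-theoretic sense (recurrent walks are conservative). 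As written, this step is a real jump that needs either a careful argument or a precise citation.
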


\begin{proof}
    According to Theorem \ref{th:main},
    $\E [ \lambda_d ( \X(\R^N_+) ) ]$ is positive if
    and only if $\mathrm{cap}_{\bm{\Psi}}(\{0\})>0$.
    But the only probability measure on $\{0\}$ is
    $\delta_0$, and hence $\mathrm{cap}_{\bm{\Psi}}(\{0\})>0$
    iff $K_{\bm{\Psi}}\in L^1(\R^d)$.
    This is the integrability condition of \eqref{eq:cor:KXZ:03:1.1},
    and implies the assertion made in \eqref{eq:cor:KXZ:03:1.1}.

    Now the proof of Proposition 6.5 of \ocite{KXZ:03}
    goes through unhindered to conclude the remainder of the proof.
    We include it here for the sake of completeness.
    Throughout the rest of this proof, we assume that $\X$ has
    an a.e.-positive one-potential density.

    Our task is to prove that if $\E[\lambda_d(\X(\R^N_+))]$
    is finite, then it is zero. Note that for all $n>0$,
    \begin{equation}\begin{split}
        &\E\left[\lambda_d\left( \X(\R^N_+) \right)\right] \\
        &\ge
            \E\left[ \lambda_d\left( \X([0\,,n]^N) \right)\right]
            + \E\left[ \lambda_d\left( \X([n\,,\infty)^N) \right)\right]
            - \E\left[\lambda_d\left( \X([0\,,n]^N)
            \cap \X'(\R^N_+)  \right)\right],
    \end{split}\end{equation}
    where $\X'$ is an independent copy of $\X$. Therefore,
    if $\E[\lambda_d(\X(\R^N_+))]$
    is finite, then
    \begin{equation}
        \E\left[\lambda_d\left( \X([0\,,n]^N\right)\right]
        \le \E\left[\lambda_d\left( \X([0\,,n]^N)
        \cap \X'(\R^N_+)  \right)\right].
    \end{equation}
    The inequality is, in fact, an equality.
    Let $n\uparrow\infty$ to deduce that if $\E[\lambda_d(\X(\R^N_+))]$
    is finite, then
    $\E [\lambda_d ( \X(\R^N_+) ) ]
    = \E [\lambda_d ( \X(\R^N_+)
    \cap \X'(\R^N_+)   ) ]$.
    Consider the function
    \begin{equation}
        \phi(a) := \P\left\{ a\in \X(\R^N_+)\right\}\qquad
        \text{for all $a\in\R^d$}.
    \end{equation}
    Then, we have just proved that
    $\int_{\R^d} \phi(a)\, da = \int_{\R^d} |
    \phi(a) |^2\, da$.
    Since $0\le \phi(a)(1-\phi(a))\le 1$ for all $a\in\R^d$,
    it follows that $\phi\in\{0\,,1\}$
    almost everywhere. Consequently, if $\E[\lambda_d
    (\X(\R^N_+))]$ is finite, then
    \begin{equation}\label{eq:Exp:phi}
        \E\left[\lambda_d \left( \X(\R^N_+)\right) \right]
        = \lambda_d\left( \phi^{-1} (\{1\})\right).
    \end{equation}
    According to Proposition \ref{pr:H_F}, either
    $\phi(a)=0$ for all $a$, or $\phi(a)=1$ for all $a$.
    Since $\E[\lambda_d(\X(\R^N_+))]<\infty$,
    this and \eqref{eq:Exp:phi} together prove that
    $\phi(a)=0$ for all $a$, and hence
    $\E[\lambda_d(\X(\R^N_+))]=0$.
\end{proof}

For all $s>0$ define $\mathcal{C}_s(A)$ to be
the $s$-dimensional \emph{Bessel--Riesz capacity} of the Borel set
$A\subseteq\R^d$. That is,
\begin{equation}
    \mathcal{C}_s(A) := \left[ \inf_{\mu\in\mathcal{P}_c(A)}
    \iint \frac{\mu(dx)\,\mu(dy)}{\|x-y\|^s}\right]^{-1},
\end{equation}
where $\inf\varnothing:=\infty$, $1/\infty:=0$,
and we recall that
$\mathcal{P}_c(A)$ denotes the collection of all
compact-support Borel probability measures on $A$.
Thus, $\mathcal{C}_s=\mathcal{C}_{\kappa_{d-s}}$, where
$\kappa_\alpha(x):=\|x\|^{-d+\alpha}$ denotes
the $(d-\alpha)$-dimensional
Riesz kernel. The goal of this subsection is to derive a
zero-one law for the capacity of the range of an arbitrary
additive L\'evy process $\X$.

\begin{proposition}\label{Prop:zero-one}
    If $\X$ denotes an $N$-parameter additive
    L\'evy process in $\R^d$, then for all $\beta\in(0\,,d)$
    fixed, the chances are
    either zero or one that
    $\mathscr{C}_\beta  (\X(\R_+^N)  )$ is strictly positive.
\end{proposition}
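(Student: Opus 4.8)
The plan is to first establish a $0$--$1$ law for the ``open-orthant'' range and then reduce the stated proposition to it. Since $\X(\R^N_+)=\bigcup_{T\subseteq\{1,\dots,N\}}\X_T\big((0,\infty)^{|T|}\big)$ --- the union over the $2^N$ faces of $\R^N_+$, with $\X_\varnothing$ the constant $0$ --- and since, for any finite family of sets, $\mathscr{C}_\beta$ of the union is positive iff $\mathscr{C}_\beta$ of at least one member is positive [restrict a finite-$\beta$-energy probability measure to a member carrying positive mass and renormalize; the Riesz kernel is nonnegative, so the energy does not increase, and the support stays compact], and since $\mathscr{C}_\beta(\{0\})=0$ for $\beta\in(0,d)$ [the only probability measure on $\{0\}$ has infinite $\beta$-energy], the event $\{\mathscr{C}_\beta(\X(\R^N_+))>0\}$ is the finite union $\bigcup_{\varnothing\neq T}\{\mathscr{C}_\beta(\X_T((0,\infty)^{|T|}))>0\}$. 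A finite union of $0$--$1$ events is a $0$--$1$ event, so it suffices to show: for every $M$-parameter additive L\'evy process $\mathfrak{Z}$ on $\R^d$ ($M\ge1$), $\P\{\mathscr{C}_\beta(\mathfrak{Z}((0,\infty)^M))>0\}\in\{0,1\}$.

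For this, set $E_a:=\{\mathscr{C}_\beta(\mathfrak{Z}([a,\infty)^M))>0\}$ for $a>0$. These events increase as $a\downarrow0$, and $\bigcup_{a>0}E_a=\{\mathscr{C}_\beta(\mathfrak{Z}((0,\infty)^M))>0\}$: if $\mu$ is a compact-support probability measure carried by $\mathfrak{Z}((0,\infty)^M)$ with finite $\beta$-energy, then $\mu(\mathfrak{Z}([a,\infty)^M))\to1$ as $a\downarrow0$, so for $a$ small the normalized restriction of $\mu$ to $\mathfrak{Z}([a,\infty)^M)$ still has finite $\beta$-energy. Next, by the independent-increments property, writing $\bm{a}:=(a,\dots,a)\in\R^M$ we have $\mathfrak{Z}([a,\infty)^M)=\mathfrak{Z}(\bm{a})\oplus\mathfrak{Z}'(\R^M_+)$, where $\mathfrak{Z}'$ is the $M$-parameter additive L\'evy process formed from the post-$a$ increments of the constituent L\'evy processes; in particular $\mathfrak{Z}'$ is distributed as $\mathfrak{Z}$. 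Since $\mathscr{C}_\beta$ is translation invariant, $\mathscr{C}_\beta(\mathfrak{Z}([a,\infty)^M))=\mathscr{C}_\beta(\mathfrak{Z}'(\R^M_+))$, so $E_a$ is measurable with respect to $\mathscr{F}_a:=\sigma(\{X_j(t)-X_j(a):\ t\ge a,\ j\le M\})$; moreover its distribution equals that of $\mathscr{C}_\beta(\mathfrak{Z}(\R^M_+))$, so $\P(E_a)$ does not depend on $a$.

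Now I combine the two observations. Because $a\mapsto E_a$ is monotone and $\P(E_a)$ is constant in $a$, we get $E_a=E_{a'}$ $\P$-a.s.\ for all $a,a'>0$; in particular $E:=\{\mathscr{C}_\beta(\mathfrak{Z}((0,\infty)^M))>0\}=\bigcup_{n\ge1}E_{1/n}$ coincides $\P$-a.s.\ with $E_a$ for \emph{every single} $a>0$. Consequently $E\in\bigcap_{k\ge1}\overline{\mathscr{F}_k}$. But $\mathscr{F}_k=\bigvee_{j\le M}\bigvee_{n\ge k}\mathscr{B}^j_n$ with $\mathscr{B}^j_n:=\sigma(X_j(n+s)-X_j(n):\ 0\le s\le1)$, and the $\sigma$-algebras $(\mathscr{B}_n)_{n\ge1}$, where $\mathscr{B}_n:=\bigvee_{j\le M}\mathscr{B}^j_n$, form an independent sequence (by independence of $X_1,\dots,X_M$ together with the independent-increments property). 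Hence $\bigcap_{k\ge1}\overline{\mathscr{F}_k}$ lies inside the completed tail $\sigma$-algebra of the independent sequence $(\mathscr{B}_n)_n$, which is $\P$-trivial by Kolmogorov's $0$--$1$ law. Therefore $\P(E)\in\{0,1\}$, completing the reduction and the proof.

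The routine ingredients are the measurability that makes $\mathscr{C}_\beta$ applicable to the random images (images of closed sets under the cadlag field are Suslin, hence capacitable, and $\mathscr{C}_\beta$ of the range is a measurable functional of the path), the restriction-of-energy estimate, and the translation invariance of $\mathscr{C}_\beta$. The point that deserves care --- and is really the crux --- is the passage from ``each $E_a$ is measurable with respect to the post-$a$ increments'' to ``$E$ is tail-measurable'': this is legitimate only because the constancy of $\P(E_a)$ forces $E$ to agree, mod $\P$-null sets, with $E_a$ \emph{simultaneously} for all $a>0$, after which sending $a=k\to\infty$ places $E$ in the tail of the i.i.d.\ unit-length increment blocks.
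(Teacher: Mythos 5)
Your argument is correct, but it takes a genuinely different route from the paper's. You decompose $\X(\R^N_+)$ over the faces of the orthant, reduce to the open-orthant range of each sub-process $\X_T$, and then run a classical Kolmogorov $0$--$1$ law: by stationary and independent increments, the event $E_a=\{\mathscr{C}_\beta(\mathfrak{Z}([a,\infty)^M))>0\}$ is measurable with respect to the post-$a$ increments and has a probability that does not depend on $a$, and monotonicity of $a\mapsto E_a$ with constant probability forces $E=\bigcup_a E_a$ to coincide a.s.\ with each $E_a$, hence with an event in the (completed) tail of the independent unit-length increment blocks. The paper does something entirely different and tied to its harmonic-analytic machinery: invoking Theorem~7.2 of KXZ:03, the positivity of $\P\{\mathscr{C}_\beta(\X(\R_+^N))>0\}$ is shown equivalent to a Fourier-integrability condition via a stable-process probe $\mathfrak{S}$, and then the explicit random occupation measure $m(A)=\int_{\R_+^N}\1_A(\X(\bm t))e^{-[\bm t]}d\bm t$ is shown to have finite expected $\beta$-energy under that condition, whence $\mathscr{C}_\beta(\X(\R_+^N))>0$ a.s. Your approach is more elementary and more general --- it would work verbatim for any translation-invariant, monotone set function in place of $\mathscr{C}_\beta$ --- while the paper's proof is constructive (it exhibits a specific finite-energy measure on the range) and gives as a byproduct the explicit Fourier criterion for positivity. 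Both rely implicitly on the measurability of $\omega\mapsto\mathscr{C}_\beta(\X(\R^N_+)(\omega))$, which you acknowledge; the paper's route sidesteps part of this delicacy by working with the manifestly measurable energy of $m$.
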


The following formula for the Hausdorff dimension of
$\X(\R_+^N)$ is an immediate application of Proposition \ref{Prop:zero-one}
and the methods of \ocite{KX04a}*{Theorem 4.1}: If
$\X$ is an additive L\'evy process with values in $\R^d$ with
characteristic exponent $(\Psi_1\,,\ldots, \Psi_N)$, then almost
surely,
\begin{equation}\label{Eq:rangedim}
	\dimh \left( \X(\R^N_+) \right)  = \sup\left\{ \beta\in(0\,,d):
	\int_{\R^d} \prod_{j=1}^N \Re \left( {1\over 1+ \Psi_j
	(\xi)}\right)\,  {d\xi \over
	\|\xi\|^{d -\beta}} < +\infty
	\right\}.
\end{equation}
Here  $\sup\varnothing := 0$.

\begin{proof}[Proof of Proposition \ref{Prop:zero-one}]
    We choose and fix a $\beta \in (0\,, d)$, and assume that
    \begin{equation}
        \P \{ \mathscr{C}_\beta(\X(\R_+^N)) >0
        \} > 0,
    \end{equation}
    for there is nothing to prove otherwise.

    Let us choose an integer $M \ge 1$ and a real
    number $\alpha \in (0\,, 2]$
    such that $\beta = d - M \alpha$. After enlarging the probability
    space, if need be, we may introduce $M$ i.i.d.\ isotropic
    stable processes $S_1,\ldots,S_M$---independent also
    of all $X_j$'s---such that each $S_j$ has
    stability index $\alpha$. In this way, we can consider also the
    $M$-parameter additive L\'evy process
    \begin{equation}\label{def:frakS}
        \mathfrak{S}(\bm{u}) := S_1(u_1)+\cdots+S_M(u_M)
        \quad\text{for all}\quad\bm{u}:=(u_1\,,\ldots,u_M)\in\R^M_+.
    \end{equation}
    defined on $\R^d$.

    According to Theorem 7.2 of \fullocite{KXZ:03},
    \begin{equation}\label{Eq:Cap-step1}\begin{split}
        \P\left\{ \mathscr{C}_\beta \left(\X(\R_+^N) \right) >0
        \right\} > 0 \ \Leftrightarrow\
        \E\left[ \lambda_d\big(\X(\R_+^N) \oplus
        \mathfrak{S} (\R_+^M)\big)\right] > 0.
    \end{split}\end{equation}
    Because $\X \oplus \mathfrak{S}$ is itself
    an $(N+M)$-parameter additive L\'evy process,
    Proposition \ref{Co:Image} implies that the
    right-hand side of \eqref{Eq:Cap-step1}
    is equivalent to
    \begin{equation}\label{Eq:Cap-step2}
        \int_{\R^d} \left(\frac 1 {1 + \|\xi\|^{\alpha}}
        \right)^M\, \prod_{j=1}^N \Re\left(
        \frac{1}{1+\Psi_j(\xi)}\right)\, d\xi<\infty.
    \end{equation}
    It remains to prove $\P \{\mathscr{C}_\beta  (\X(\R_+^N)  ) > 0 \}=1$.

    The proof is similar to that of Proposition 2.8 in 
    \ocite{KXZ:05}. Define the random probability 
    measure $m$ on $\X(\R_+^N)$ by
    \begin{equation}
        \int_{\R^d} f(x)\, m(dx) = \int_{\R_+^N}
        f(\X(\bm{t}))\, e^{-[\bm{t}]}\, d \bm{t},
    \end{equation}
    where $f: \R^d \to \R_+$ denotes a Borel measurable function.
    It follows from Lemma \ref{lem:ac:fourier}
    and the Fubini-Tonelli theorem that
    \begin{equation}\label{Eq:Cap-step3}
        \E \left[ \iint\frac {m(dx)\, m(dy)}{\|x - y\|^\beta}\right]
        = \frac 1 {(2 \pi)^d} \int_{\R^d} \E
        \left(\left\|\hat{m}(\xi)\right\|^2\right)\,
        \frac{d\xi} {\|\xi\|^{d - \beta}}.
    \end{equation}
    We may observe that
    \begin{equation}\label{Eq:Cap-step4}\begin{split}
        \E\left(\left\|\hat{m}(\xi)\right\|^2\right)
            &= \int_{\R_+^N} \int_{\R_+^N}
            \E\left[ e^{- i \xi\cdot \left(
            \X(\bm{t}) - \X(\bm{s}) \right)}\right]\,
            e^{-[\bm{s}] - [\bm{t}]}\, d \bm{s}\, d \bm{t}\\
        & = \prod_{j=1}^N \left[\int_0^\infty \int_0^\infty e^{-s_j - t_j -
            |s_j - t_j |\Psi_j({\rm sgn}(s_j-t_j)\xi)}\, ds_j\, dt_j\right].
    \end{split}\end{equation}
    A direct computation shows that the preceding is equal to
    $K_{\bm{\Psi}}(\xi)$;
    see also the proof of Lemma \ref{lem:technical}.
    Thanks to this and \eqref{Eq:Cap-step2}, the
    final integral in \eqref{Eq:Cap-step3} is finite, whence it follows
    that $\mathscr{C}_\beta(\X(\R_+^N)) >0$ almost surely.
\end{proof}

\bigskip
\noindent\textbf{Acknowledgements.}
A few years ago, Professor Jean Bertoin suggested to us a
problem that is addressed by
Theorem \ref{th:bertoin} of
the present paper. We thank him wholeheartedly.

We have been writing several versions of this paper since April
2005. During the writing of one of this draft we have received
many preprints by Dr.\ Ming Yang who, among other things,
has independently discovered
Propositions \ref{Co:Image} and \ref{Prop:zero-one}
and equation \eqref{Eq:rangedim}
of the present paper. His method combines
the theory of \fullocite{KXZ:03} with a clever symmetrization idea,
and is worthy of further investigation. We wish to thank Dr.\ Yang
for communicating his work with us.

\begin{bibdiv}
\begin{biblist}
\bib{Aizenman}{article}{
   author={Aizenman, Michael},
   title={The intersection of Brownian paths as a case study of a
   renormalization group method for quantum field theory},
   journal={Comm. Math. Phys.},
   volume={97{\it (1-2)}},
   date={1985},
   pages={91--110},
}
\bib{AlbeverioZhou}{article}{
   author={Albeverio, Sergio},
   author={Zhou, Xian Yin},
   title={Intersections of random walks and Wiener sausages in four
   dimensions},
   journal={Acta Appl.\ Math.},
   volume={45{\it (2)}},
   date={1996},
   pages={195--237},
}
\bib{bergforst}{book}{
    author={Berg, Christian},
    author={Forst, Gunnar},
     title={Potential Theory on Locally Compact Abelian Groups},
 publisher={Springer-Verlag},
     place={New York},
      date={1975},
}
\bib{bertoin:st-flour}{article}{
    author={Bertoin, Jean},
     title={Subordinators: Examples and Applications},
 booktitle={Lectures on Probability Theory and Statistics (Saint-Flour,
            1997)},
    series={Lecture Notes in Math.},
    volume={1717},
     pages={1\ndash 91},
 publisher={Springer},
     place={Berlin},
      date={1999a},
}
\bib{bertoin:1999}{article}{
    author={Bertoin, Jean},
     title={Intersection of independent regenerative sets},
   journal={Probab. Theory Related Fields},
    volume={114{\it (1)}},
      date={1999b},
     pages={97\ndash 121},
}
\bib{bertoin:book}{book}{
    author={Bertoin, Jean},
     title={L\'evy Processes},
     publisher={Cambridge University Press},
     place={Cambridge},
      date={1996},
}
\bib{BG}{book}{
   author={Blumenthal, R. M.},
   author={Getoor, R. K.},
   title={Markov Processes and Potential Theory},
   publisher={Academic Press},
   place={New York},
   date={1968},
}
\bib{Csiszar}{article}{
    author={Csisz{\'a}r, Imre},
     title={A note on limiting distributions on topological groups},
  language={English, with Russian summary},
   journal={Magyar Tud. Akad. Mat. Kutat\'o Int. K\"ozl.},
    volume={9},
      date={1965},
     pages={595\ndash 599 (1965)},
}
\bib{DM}{book}{
   author={Dellacherie, Claude},
   author={Meyer, Paul-Andr{\'e}},
   title={Probabilities and Potential},
   volume={29},
   publisher={North-Holland Publishing Co.},
   place={Amsterdam},
   date={1978},
}
\bib{Doob}{book}{
   author={Doob, Joseph L.},
   title={Classical Potential Theory and Its Probabilistic Counterpart},
   note={Reprint of the 1984 edition},
   publisher={Springer-Verlag},
   place={Berlin},
   date={2001},
}
\bib{dyn1}{article}{
    author={Dynkin, E. B.},
     title={Self-intersection local times, occupation fields, and stochastic
            integrals},
   journal={Adv. in Math.},
    volume={65{\it (3)}},
      date={1987},
     pages={254\ndash 271},
}
\bib{dyn2}{article}{
    author={Dynkin, E. B.},
     title={Generalized random fields related to self-intersections of the
            Brownian motion},
   journal={Proc. Nat. Acad. Sci. U.S.A.},
    volume={83{\it (11)}},
      date={1986},
     pages={3575\ndash 3576},
}
\bib{dyn3}{article}{
    author={Dynkin, E. B.},
     title={Random fields associated with multiple points of the Brownian
            motion},
   journal={J. Funct. Anal.},
    volume={62{\it (3)}},
      date={1985},
     pages={397\ndash 434},
}
\bib{dyn4}{article}{
    author={Dynkin, E. B.},
     title={Local times and quantum fields},
 booktitle={Seminar on stochastic processes, 1983 (Gainesville, Fla., 1983)},
    series={Progr.\ Probab.\ Statist.},
    volume={7},
     pages={69\ndash 83},
 publisher={Birkh\"auser Boston},
     place={Boston, MA},
      date={1984},
}
\bib{dyn5}{article}{
    author={Dynkin, E. B.},
     title={Polynomials of the occupation field and related random fields},
   journal={J. Funct. Anal.},
    volume={58{\it (1)}},
      date={1984},
     pages={20\ndash 52},
      issn={0022-1236},
}
\bib{dyn6}{article}{
    author={Dynkin, E. B.},
     title={Gaussian and non-Gaussian random fields associated with Markov
            processes},
   journal={J. Funct. Anal.},
    volume={55{\it (3)}},
      date={1984},
     pages={344\ndash 376},
}
\bib{dyn7}{article}{
    author={Dynkin, E. B.},
     title={Gaussian random fields and Gaussian evolutions},
 booktitle={Theory and application of random fields (Bangalore, 1982)},
    series={Lecture Notes in Control and Inform.\ Sci.},
    volume={49},
     pages={28\ndash 39},
 publisher={Springer},
     place={Berlin},
      date={1983},
}
\bib{dyn8}{article}{
    author={Dynkin, E. B.},
     title={Markov processes as a tool in field theory},
   journal={J. Funct. Anal.},
    volume={50{\it (2)}},
      date={1983},
     pages={167\ndash 187},
}
\bib{dyn9}{article}{
    author={Dynkin, E. B.},
     title={Markov processes, random fields and Dirichlet spaces},
      note={New stochastic methods in physics},
   journal={Phys. Rep.},
    volume={77{\it (3)}},
      date={1981},
pages={239\ndash 247},
      issn={0370-1573},
}
\bib{dyn10}{article}{
    author={Dynkin, E. B.},
     title={Markov processes and random fields},
   journal={Bull. Amer. Math. Soc. (N.S.)},
    volume={3{\it (3)}},
      date={1980},
     pages={975\ndash 999},
}
\bib{DET:54}{article}{
   author={Dvoretzky, A.},
   author={Erd{\H{o}}s, P.},
   author={Kakutani, S.},
   title={Multiple points of paths of Brownian motion in the plane},
   journal={Bull. Res. Council Israel},
   volume={3},
   date={1954},
   pages={364--371},
}
\bib{DET:50}{article}{
   author={Dvoretzky, A.},
   author={Erd{\H{o}}s, P.},
   author={Kakutani, S.},
   title={Double points of paths of Brownian motion in $n$-space},
   journal={Acta Sci. Math. Szeged},
   volume={12},
   date={1950},
   note={Leopoldo Fejer et Frederico Riesz LXX annos natis dedicatus, Pars
   B},
   pages={75--81},
}
\bib{DvoretzkyErdosKakutaniTaylor}{article}{
    author={Dvoretzky, A.},
    author={Erd\H{o}s, P.},
    author={Kakutani, S.},
    author={Taylor, S. J.},
     title={Triple points of Brownian paths in $3$-space},
   journal={Proc. Cambridge Philos. Soc.},
    volume={53},
      date={1957},
     pages={856\ndash 862},
}
\bib{Evans:87a}{article}{
    author={Evans, Steven N.},
     title={Potential theory for a family of several Markov processes},
  language={English, with French summary},
   journal={Ann. Inst. H. Poincar\'e Probab. Statist.},
    volume={23{\it (3)}},
      date={1987a},
     pages={499\ndash 530},
}
\bib{Evans:87b}{article}{
    author={Evans, Steven N.},
     title={Multiple points in the sample paths of a L\'evy process},
   journal={Probab. Theory Related Fields},
    volume={76{\it (3)}},
      date={1987b},
     pages={359\ndash 367},
}
\bib{farkasjacobschilling}{article}{
   author={Farkas, Walter},
   author={Jacob, Niels},
   author={Schilling, Ren{\'e} L.},
   title={Function spaces related to continuous negative definite functions:
   $\psi$-Bessel potential spaces},
   journal={Dissertationes Math. (Rozprawy Mat.)},
   volume={393},
   date={2001},
   pages={62},
}
\bib{farkasleopold}{article}{
   author={Farkas, Walter},
   author={Leopold, Hans-Gerd},
   title={Characterisations of function spaces of generalised smoothness},
   journal={Ann. Mat. Pura Appl. (4)},
   volume={185{\it (1)}},
   date={2006},
   pages={1--62},
}
\bib{FelderFrohlich}{article}{
   author={Felder, G.},
   author={Fr{\"o}hlich, J.},
   title={Intersection properties of simple random walks: a renormalization
   group approach},
   journal={Comm. Math. Phys.},
   volume={97{\it (1-2)}},
   date={1985},
   pages={111--124},
}
\bib{FitzsimmonsSalisbury}{article}{
    author={Fitzsimmons, P. J.},
    author={Salisbury, Thomas S.},
     title={Capacity and energy for multiparameter Markov processes},
  language={English, with French summary},
   journal={Ann. Inst. H. Poincar\'e Probab. Statist.},
    volume={25{\it (3)}},
      date={1989},
     pages={325\ndash 350},
}
\bib{fristedt}{article}{
    author={Fristedt, Bert},
     title={Sample {F}unctions of {S}tochastic {P}rocesses with
     {S}tationary, {I}ndependent {I}ncrements},
 booktitle={Advances in probability and related topics, Vol. 3},
     pages={241\ndash 396},
 publisher={Dekker},
     place={New York},
      date={1974},
}
\bib{FOT}{book}{
   author={Fukushima, Masatoshi},
   author={{\=O}shima, Y{\=o}ichi},
   author={Takeda, Masayoshi},
   title={Dirichlet Forms and Symmetric Markov Processes},
   volume={19},
   publisher={Walter de Gruyter \& Co.},
   place={Berlin},
   date={1994},
}
\bib{Getoor:book}{book}{
    author={Getoor, R. K.},
     title={Excessive Measures},
 publisher={Birkh\"auser Boston Inc.},
     place={Boston, MA},
      date={1990},
}
\bib{Hawkes:84}{article}{
   author={Hawkes, John},
   title={Some geometric aspects of potential theory},
   conference={
      title={Stochastic analysis and applications},
      address={Swansea},
      date={1983},
   },
   book={
      series={Lecture Notes in Math.},
      volume={1095},
      publisher={Springer},
      place={Berlin},
   },
   date={1984},
   pages={130--154},
}
\bib{Hawkes:79}{article}{
    author={Hawkes, John},
     title={Potential theory of L\'evy processes},
   journal={Proc. London Math. Soc. (3)},
    volume={38{\it (2)}},
      date={1979},
     pages={335\ndash 352},
}
\bib{Hawkes:78a}{article}{
    author={Hawkes, John},
     title={Image and intersection sets for subordinators},
   journal={J. London Math. Soc. (2)},
    volume={17{\it (3)}},
      date={1978a},
     pages={567\ndash 576},
}
\bib{Hawkes:78b}{article}{
    author={Hawkes, John},
     title={Multiple points for symmetric L\'evy processes},
   journal={Math. Proc. Cambridge Philos. Soc.},
    volume={83{\it (1)}},
      date={1978b},
     pages={83\ndash 90},
}
\bib{Hawkes:77}{article}{
    author={Hawkes, John},
     title={Local properties of some Gaussian processes},
   journal={Z. Wahrscheinlichkeitstheorie und Verw. Gebiete},
    volume={40{\it (4)}},
      date={1977},
     pages={309\ndash 315},
}
\bib{Hawkes:76:77}{article}{
   author={Hawkes, John},
   title={Intersections of Markov random sets},
   journal={Z. Wahrscheinlichkeitstheorie und Verw. Gebiete},
   volume={37{\it (3)}},
   date={1976/77},
   pages={243--251},
}
\bib{Hendricks:99}{article}{
    author={Hendricks, W. J.},
     title={Multiple points for transient symmetric L\'evy processes in
            ${\bf R}\sp{d}$},
   journal={Z. Wahrsch. Verw. Gebiete},
    volume={49{\it (1)}},
      date={1979},
     pages={13\ndash 21},
      issn={0044-3719},
}
\bib{Hendricks:74}{article}{
    author={Hendricks, W. J.},
     title={Multiple points for a process in $R\sp{2}$ with stable
            components},
   journal={Z. Wahrscheinlichkeitstheorie und Verw. Gebiete},
    volume={28},
      date={1973/74},
     pages={113\ndash 128},
}
\bib{HendricksTaylor}{article}{
    author={Hendricks, W. J.},
    author={Taylor, S. J.},
    title={Concerning some problems about polar sets for processes
        with stationary independent increments},
    status={preprint},
    year={1979}
}
\bib{Hirsch}{article}{
    author={Hirsch, Francis},
     title={Potential theory related to some multiparameter processes},
   journal={Potential Anal.},
    volume={4{\it (3)}},
      date={1995},
     pages={245\ndash 267},
}
\bib{HS:99}{article}{
    author={Hirsch, Francis},
    author={Song, Shiqi},
     title={Multiparameter Markov processes and capacity},
 booktitle={Seminar on Stochastic Analysis, Random Fields and Applications
            (Ascona, 1996)},
    series={Progr.\ Probab.},
    volume={45},
     pages={189\ndash 200},
 publisher={Birkh\"auser},
     place={Basel},
      date={1999},
}
\bib{HS:96}{article}{
    author={Hirsch, Francis},
    author={Song, Shiqi},
     title={Inequalities for Bochner's subordinates of two-parameter
            symmetric Markov processes},
  language={English, with English and French summaries},
   journal={Ann. Inst. H. Poincar\'e Probab. Statist.},
    volume={32{\it (5)}},
      date={1996},
     pages={589\ndash 600},
}
\bib{HS:95a}{article}{
    author={Hirsch, Francis},
    author={Song, Shiqi},
     title={Markov properties of multiparameter processes and capacities},
   journal={Probab. Theory Related Fields},
    volume={103{\it (1)}},
      date={1995a},
     pages={45\ndash 71},
}
\bib{HS:95b}{article}{
    author={Hirsch, Francis},
    author={Song, Shiqi},
     title={Symmetric Skorohod topology on $n$-variable functions and
            hierarchical Markov properties of $n$-parameter processes},
   journal={Probab. Theory Related Fields},
    volume={103{\it (1)}},
      date={1995b},
     pages={25\ndash 43},
}
\bib{HS:95c}{article}{
    author={Hirsch, Francis},
    author={Song, Shiqi},
     title={Une in\'egalit\'e maximale pour certains processus de Markov \`a
            plusieurs param\`etres. II},
  language={French},
   journal={C. R. Acad. Sci. Paris S\'er. I Math.},
    volume={320{\it (7)}},
      date={1995c},
     pages={867\ndash 870},
}
\bib{HS:95d}{article}{
    author={Hirsch, Francis},
    author={Song, Shiqi},
     title={Une in\'egalit\'e maximale pour certains processus de Markov \`a
            plusieurs param\`etres. I},
  language={French},
   journal={C. R. Acad. Sci. Paris S\'er. I Math.},
    volume={320{\it (6)}},
      date={1995d},
     pages={719\ndash 722},
}
\bib{HS:94}{article}{
    author={Hirsch, Francis},
    author={Song, Shiqi},
     title={Propri\'et\'es de Markov des processus \`a plusieurs
            param\`etres et capacit\'es},
  language={French},
   journal={C. R. Acad. Sci. Paris S\'er. I Math.},
    volume={319{\it (5)}},
      date={1994},
     pages={483\ndash 488},
}
\bib{hunt:III}{article}{
   author={Hunt, G. A.},
   title={Markoff processes and potentials. III},
   journal={Illinois J. Math.},
   volume={2},
   date={1958},
   pages={151--213},
}
\bib{hunt:II}{article}{
    author={Hunt, G. A.},
     title={Markoff processes and potentials. I, II},
   journal={Illinois J. Math.},
    volume={1},
      date={1957},
     pages={316\ndash 369},
}
\bib{hunt:I}{article}{
    author={Hunt, G. A.},
     title={Markoff processes and potentials. I, II},
   journal={Illinois J. Math.},
    volume={1},
      date={1957},
     pages={44\ndash 93},
}
\bib{hunt:56}{article}{
   author={Hunt, G. A.},
   title={Markoff processes and potentials},
   journal={Proc. Nat. Acad. Sci. U.S.A.},
   volume={42},
   date={1956},
   pages={414--418},
}
\bib{jacob3}{book}{
   author={Jacob, Niels},
   title={Pseudo Differential Operators and Markov Processes. Vol. III:
    Markov Processes and Applications},
   publisher={Imperial College Press},
   place={London},
   date={2005},
}
\bib{jacob2}{book}{
   author={Jacob, Niels},
   title={Pseudo Differential Operators \& Markov Processes. Vol. II:
    Generators and Their Potential Theory},
   publisher={Imperial College Press},
   place={London},
   date={2002},
}
\bib{jacob1}{book}{
    author={Jacob, Niels},
     title={Pseudo-Differential Operators and Markov Processes.
     Vol. I: Fourier Analysis and Semigroups},
 publisher={Imperial College Press},
     place={London},
      date={2001},
}
\bib{jacobschilling}{article}{
   author={Jacob, Niels},
   author={Schilling, Ren{\'e} L.},
   title={Function spaces as Dirichlet spaces (about a paper by W.
   Maz\cprime ya and J. Nagel). Comment on: ``On equivalent standardization
   of anisotropic functional spaces $H\sp \mu({\bf R}\sp n)$'' (German)
   [Beitr\"age Anal. No. 12 (1978), 7--17; MR0507094]},
   journal={Z. Anal. Anwendungen},
   volume={24{\it (1)}},
   date={2005},
   pages={3--28},
}
\bib{Kahane:SRSF}{book}{
    author={Kahane, Jean-Pierre},
     title={Some Random Series of Functions},
   edition={2},
 publisher={Cambridge University Press},
     place={Cambridge},
      date={1985},
}
\bib{Kakutani:44a}{article}{
   author={Kakutani, Shizuo},
   title={On Brownian motions in $n$-space},
   journal={Proc. Imp. Acad. Tokyo},
   volume={20},
   date={1944},
   pages={648--652},
}
\bib{Kakutani:44b}{article}{
   author={Kakutani, Shizuo},
   title={Two-dimensional Brownian motion and harmonic functions},
   journal={Proc. Imp. Acad. Tokyo},
   volume={20},
   date={1944},
   pages={706--714},
}
\bib{Kesten}{book}{
    author={Kesten, Harry},
     title={Hitting Probabilities of Single Points for Processes with
            Stationary Independent Increments},
    series={Memoirs of the American Mathematical Society, No. 93},
 publisher={American Mathematical Society},
     place={Providence, R.I.},
      date={1969},
}
\bib{Kh:2003}{article}{
   author={Khoshnevisan, Davar},
   title={Intersections of Brownian motions},
   journal={Expo. Math.},
   volume={21{\it (2)}},
   date={2003},
   pages={97--114},
   issn={0723-0869},
}
\bib{Kh:1999}{article}{
    author={Khoshnevisan, Davar},
     title={Brownian sheet images and Bessel-Riesz capacity},
   journal={Trans. Amer. Math. Soc.},
    volume={351{\it (7)}},
      date={1999},
     pages={2607\ndash 2622},
}
\bib{Kh:book}{book}{
    author={Khoshnevisan, Davar},
     title={Multiparameter Processes: An Introduction to Random Fields},
 publisher={Springer-Verlag},
     place={New York},
      date={2002},
}

\bib{KSX07}{article}{
       author = {Khoshnevisan, Davar},
       author = {Shieh, Narn-Rueih},
       author = {Xiao, Yimin},
        title = {Hausdorff dimension of the contours of symmetric additive L\'evy processes},
      journal = {Probab. Th. Rel. Fields},
        pages = {to appear},
         year = {2007},
    }
\bib{KX04a}{book}{
       author = {Khoshnevisan, Davar},
       author = {Xiao, Yimin},
       title = {Additive L\'evy processes: capacity and Hausdorff dimension},
       series = {In: Proc. of Inter. Conf. on
			Fractal Geometry and Stochastics III., Progress in Probability},
       volume = {57, pp. 62--100},
       publisher={Birkh\"auser},
         year = {2004},
    }
\bib{KXZ:05}{article}{
    author={Khoshnevisan, Davar},
    author={Xiao, Yimin},
     title={L\'evy processes: capacity and
Hausdorff dimension},
   journal={Ann. Probab.},
    volume={33{\it (3)}},
      date={2005},
     pages={841\ndash 878},
}

\bib{KXZ:03}{article}{
    author={Khoshnevisan, Davar},
    author={Xiao, Yimin},
    author={Zhong, Yuquan},
     title={Measuring the range of an additive L\'evy process},
   journal={Ann. Probab.},
    volume={31{\it (2)}},
      date={2003},
     pages={1097\ndash 1141},
}
\bib{Lawler:89}{article}{
   author={Lawler, Gregory F.},
   title={Intersections of random walks with random sets},
   journal={Israel J. Math.},
   volume={65{\it (2)}},
   date={1989},
   pages={113--132},
}
\bib{Lawler:85}{article}{
   author={Lawler, Gregory F.},
   title={Intersections of random walks in four dimensions. II},
   journal={Comm. Math. Phys.},
   volume={97{\it (4)}},
   date={1985},
   pages={583--594},
}
\bib{Lawler:82}{article}{
   author={Lawler, Gregory F.},
   title={The probability of intersection of independent random walks in
   four dimensions},
   journal={Comm. Math. Phys.},
   volume={86},
   date={1982},
   number={4},
   pages={539--554},
}
\bib{LeGall92}{article}{
   author={Le Gall, Jean-Fran{\c{c}}ois},
   title={Some properties of planar Brownian motion},
   conference={
      title={\'Ecole d'\'Et\'e de Probabilit\'es de Saint-Flour XX---1990},
   },
   book={
      series={Lecture Notes in Math.},
      volume={1527},
      publisher={Springer},
      place={Berlin},
   },
   date={1992},
   pages={111--235},
}
\bib{LeGall87}{article}{
    author={Le Gall, Jean-Fran{\c{c}}ois},
    title={Le comportement du mouvement brownien entre les
    deux instants o\`u il passe par un point double},
   journal={J. Funct. Anal.},
    volume={71{\it (2)}},
      date={1987},
     pages={246\ndash 262},
}
\bib{LeGallShiehRosen}{article}{
    author={Le Gall, Jean-Fran{\c{c}}ois},
    author={Rosen, Jay S.},
    author={Shieh, Narn-Rueih},
     title={Multiple points of L\'evy processes},
   journal={Ann. Probab.},
    volume={17{\it (2)}},
      date={1989},
     pages={503\ndash 515},
}
\bib{Levy}{article}{
   author={L{\'e}vy, Paul},
   title={Le mouvement brownien plan},
   language={French},
   journal={Amer. J. Math.},
   volume={62},
   date={1940},
   pages={487--550},
}
\bib{MarcusRosen:b}{article}{
    author={Marcus, Michael B.},
    author={Rosen, Jay},
     title={Multiple Wick product chaos processes},
   journal={J. Theoret. Probab.},
    volume={12{\it (2)}},
      date={1999b},
     pages={489\ndash 522},
}
\bib{MarcusRosen:a}{article}{
    author={Marcus, Michael B.},
    author={Rosen, Jay},
     title={Renormalized self-intersection local times and Wick power chaos
            processes},
   journal={Mem. Amer. Math. Soc.},
    volume={142 {\it (675)}},
      date={1999a},
}
\bib{masjanagel}{article}{
   author={Masja, Wladimir},
   author={Nagel, J{\"u}rgen},
   title={\"Uber \"aquivalente Normierung der anisotropen Funktionalr\"aume
   $H\sp{\mu }({\bf R}\sp{n})$},
   language={German},
   journal={Beitr\"age Anal.},
   number={12},
   date={1978},
   pages={7--17},
}

\bib{Mattila}{book}{
     author={Mattila, Pertti}
     title ={Geometry of Sets and Measures in Euclidean Spaces},
     publisher={Cambridge University Press},
     place   = {Cambridge}
     date = {1995}
}
\bib{Orey}{incollection}{
   author={Orey, Steven},
   title={Polar sets for processes with stationary independent increments},
   booktitle={Markov Processes and Potential Theory (Proc. Sympos. Math. Res.
      Center, Madison, Wis., 1967)},
   publisher={Wiley},
   place={New York},
   date={1967},
   pages={117--126},
}
\bib{PPS}{article}{
   author={Pemantle, Robin},
   author={Peres, Yuval},
   author={Shapiro, Jonathan W.},
   title={The trace of spatial Brownian motion is capacity-equivalent to the
     unit square},
   journal={Probab. Theory Related Fields},
   volume={106{\it (3)}},
   date={1996},
   pages={379--399},
}
\bib{Peres:99}{article}{
    author={Peres, Yuval},
     title={Probability on Trees: An Introductory Climb},
 booktitle={Lectures on probability theory and statistics (Saint-Flour,
            1997)},
    series={Lecture Notes in Math.},
    volume={1717},
     pages={193\ndash 280},
 publisher={Springer},
     place={Berlin},
      date={1999},
}
\bib{Peres:96a}{article}{
    author={Peres, Yuval},
     title={Remarks on intersection-equivalence and capacity-equivalence},
  language={English, with English and French summaries},
   journal={Ann. Inst. H. Poincar\'e Phys. Th\'eor.},
    volume={64{\it (3)}},
      date={1996a},
     pages={339\ndash 347},
}
\bib{Peres:96b}{article}{
    author={Peres, Yuval},
     title={Intersection-equivalence of Brownian paths and certain branching
            processes},
   journal={Comm. Math. Phys.},
    volume={177{\it (2)}},
      date={1996b},
     pages={417\ndash 434},
}
\bib{Ren}{article}{
   author={Ren, Jia Gang},
   title={Topologie $p$-fine sur l'espace de Wiener et th\'eor\`eme des
   fonctions implicites},
   language={French, with English summary},
   journal={Bull. Sci. Math.},
   volume={114{\it (2)}},
   date={1990},
   pages={99--114},
}
\bib{rockner}{article}{
   author={R{\"o}ckner, Michael},
   title={General Theory of Dirichlet Forms and Applications},
   conference={
      title={In: Dirichlet forms},
      address={Varenna},
      date={1992},
   },
   book={
      series={Lecture Notes in Math.},
      volume={1563},
      publisher={Springer},
      place={Berlin},
   },
   date={1993},
   pages={129--193},
}
\bib{rogers}{article}{
    author={Rogers, L. C. G.},
     title={Multiple points of Markov processes in a complete metric space},
 booktitle={S\'eminaire de Probabilit\'es, XXIII},
    series={Lecture Notes in Math.},
    volume={1372},
     pages={186\ndash 197},
 publisher={Springer},
     place={Berlin},
      date={1989},
}
\bib{rosen:1984}{article}{
    author={Rosen, Jay},
     title={Self-intersections of random fields},
   journal={Ann. Probab.},
    volume={12{\it (1)}},
      date={1984},
     pages={108\ndash 119},
}
\bib{rosen:1983}{article}{
    author={Rosen, Jay},
     title={A local time approach to the self-intersections of Brownian
            paths in space},
   journal={Comm. Math. Phys.},
    volume={88{\it (3)}},
      date={1983},
     pages={327\ndash 338},
}
\bib{Salisbury:96}{article}{
   author={Salisbury, Thomas S.},
   title={Energy, and intersections of Markov chains},
   conference={
      title={Random discrete structures},
      address={Minneapolis, MN},
      date={1993},
   },
   book={
      series={IMA Vol.\ Math.\ Appl.},
      volume={76},
      publisher={Springer},
      place={New York},
   },
   date={1996},
   pages={213--225},
}
\bib{Salisbury:92}{article}{
   author={Salisbury, Thomas S.},
   title={A low intensity maximum principle for bi-Brownian motion},
   journal={Illinois J. Math.},
   volume={36{\it (1)}},
   date={1992},
   pages={1--14},
}
\bib{Salisbury:87}{article}{
   author={Salisbury, Thomas S.},
   title={Brownian bitransforms},
   conference={
      title={Seminar on Stochastic Processes, 1987},
      address={Princeton, NJ},
      date={1987},
   },
   book={
      series={Progr.\ Probab.\ Statist.},
      volume={15},
      publisher={Birkh\"auser Boston},
      place={Boston, MA},
   },
   date={1988},
   pages={249--263},
}
\bib{sato}{book}{
    author = {Sato, Ken-iti},
     title = {L\'evy Processes and Infinitely Divisible Distributions},
      note = {Translated from the 1990 Japanese original,
              Revised by the author},
 publisher = {Cambridge University Press},
   address = {Cambridge},
      year = {1999},
}
\bib{Schoenberg:38}{article}{
    author={Schoenberg, I. J.},
     title={Metric spaces and positive definite functions},
   journal={Trans.\@ Amer.\@ Math.\@ Soc.},
    volume={44},
      date={1938},
     pages={522\ndash 536},
}
\bib{slobodecki}{article}{
   author={Slobodecki{\u\i}, L. N.},
   title={S. L. Sobolev's spaces of fractional order and their application
    to boundary problems for partial differential equations},
   language={Russian},
   journal={Dokl. Akad. Nauk SSSR (N.S.)},
   volume={118},
   date={1958},
   pages={243--246},
}
\bib{Tongring}{article}{
   author={Tongring, Nils},
   title={Which sets contain multiple points of Brownian motion?},
   journal={Math.\ Proc. Cambridge Philos. Soc.},
   volume={103{\it (1)}},
   date={1988},
   pages={181--187},
}

\bib{Ville}{article}{
   author={Ville, Jean},
   title={Sur un probl\`eme de g\'eom\'etrie sugg\'er\'e par l'\'etude du
   mouvement brownien},
   language={French},
   journal={C. R. Acad. Sci Paris},
   volume={215},
   date={1942},
   pages={51--52},
}
\bib{Walsh}{article}{
   author={Walsh, John B.},
   title={Martingales with a Multidimensional Parameter and Stochastic
   Integrals in the Plane},
   conference={
      title={Lectures in probability and statistics},
      address={Santiago de Chile},
      date={1986},
   },
   book={
      series={Lecture Notes in Math.},
      volume={1215},
      publisher={Springer},
      place={Berlin},
   },
   date={1986},
   pages={329--491},
}
\bib{Westwater:I}{article}{
   author={Westwater, J.},
   title={On Edwards' model for long polymer chains},
   journal={Comm. Math. Phys.},
   volume={72{\it (2)}},
   date={1980},
   pages={131--174},
}
\bib{Westwater:II}{article}{
   author={Westwater, John},
   title={On Edwards' model for polymer chains. II. The self-consistent
   potential},
   journal={Comm. Math. Phys.},
   volume={79{\it (1)}},
   date={1981},
   pages={53--73},
}
\bib{Westwater:III}{article}{
   author={Westwater, John},
   title={On Edwards' model for polymer chains. III. Borel summability},
   journal={Comm. Math. Phys.},
   volume={84{\it (4)}},
   date={1982},
   pages={459--470},
}
\bib{Wolpert}{article}{
   author={Wolpert, Robert L.},
   title={Local time and a particle picture for Euclidean field theory},
   journal={J. Funct. Anal.},
   volume={30{\it (3)}},
   date={1978},
   pages={341--357},
}
\end{biblist}
\end{bibdiv}

\end{document}